\documentclass[
a4paper, 
fontsize=12pt, 
twoside, 
openright, 
numbers=noenddot, 
]{scrreprt}


\usepackage[utf8]{inputenc}        
\usepackage[T1]{fontenc}             
\usepackage[english]{babel}          
\usepackage{lmodern}
\usepackage{csquotes} 
\usepackage[intlimits]{amsmath}      
\usepackage{amsfonts}                
\usepackage{amssymb}                 
\usepackage{amsthm}
\usepackage{array}                   
\usepackage{enumerate}               
\usepackage{latexsym}
\usepackage{graphicx}
\usepackage{multirow}
\usepackage{mathtools}
\usepackage[plainpages=false,pdfpagelabels,bookmarks=true]{hyperref}
\usepackage[a4paper,left=3cm,right=3cm,top=3cm,bottom=3.5cm,includeheadfoot, headsep=1cm, footskip=2cm,bindingoffset=0mm]{geometry}	
\usepackage{faktor}
\usepackage{paralist}
\usepackage{mathrsfs}
\usepackage{lastpage}
\usepackage{cleveref}
\usepackage{pdfpages}
\usepackage{blindtext}
\usepackage{tcolorbox}
\usepackage{enumitem}
\usepackage{tikz}
\usepackage{youngtab}
\usepackage{tkz-euclide}
\usepackage{tabularx}
\usepackage{longtable}
\usepackage{bbm}
\usepackage{MnSymbol}

\usetikzlibrary{cd, matrix,arrows,decorations.pathmorphing, babel, positioning, calc}

\setlist[enumerate, 1]{nosep, label=(\arabic*)}
\setlist[enumerate, 2]{nosep, label=(\roman*)}
\setlist[itemize, 1]{nosep, label=$\circ$} 
\setlist[itemize, 2]{nosep, label=-} 

\allowdisplaybreaks[3]				

\addtokomafont{disposition}{\rmfamily}




      \theoremstyle{plain}
      
      \newtheorem{theorem}{Theorem}
       \newtheorem{theoremanddefinition}[theorem]{Theorem and Definition}
      \newtheorem*{theorem*}{Theorem}
      \newtheorem{lemma}[theorem]{Lemma}
      \newtheorem*{lemma*}{Lemma}
      \newtheorem{cor}[theorem]{Corollary}
      \newtheorem*{cor*}{Corollary}
      \newtheorem{prop}[theorem]{Proposition}
      \newtheorem*{prop*}{Proposition}

      \theoremstyle{definition}
      
      \newtheorem{definition}[theorem]{Definition}
      \newtheorem*{definition*}{Definition}
      \newtheorem{notation}[theorem]{Notation}

 \newtheorem{exercise}{Exercise}    
\newtheorem*{exercise*}{Exercise}

      \theoremstyle{remark}
      
      \newtheorem{remark}[theorem]{Remark}
 \newtheorem{motivation}[theorem]{Motivation}
      \newtheorem*{remark*}{Remark}     
      \newtheorem{example}[theorem]{Example}
      \newtheorem*{example*}{Example}    
      \newtheorem*{question*}{Question}    
      \newtheorem*{fact*}{Fact}


\newcommand{\NN}{\mathbb{N}}
\newcommand{\N}{\mathbb{N}}

\newcommand{\RR}{\mathbb{R}}
\newcommand{\R}{\mathbb{R}}
\newcommand{\CC}{\mathbb{C}}
\newcommand{\C}{\mathbb{C}}

\newcommand{\cF}{\mathcal{F}}

\newcommand{\cA}{\mathcal{A}}
\newcommand{\cB}{\mathcal{B}}
\newcommand{\cC}{\mathcal{C}}
\newcommand{\cD}{\mathcal{D}}

\newcommand{\HH}{\mathcal{H}}

\newcommand{\cP}{\mathcal{P}}

\let\a\relax
\newcommand{\a}{\mathfrak{a}}

\newfont{\suet}{suet14}
\DeclareTextFontCommand{\scrpt}{\suet}
\renewcommand{\a}{\scrpt{a}}

\newcommand{\ee}{\varepsilon}

\newcommand{\GUE}{\hbox{\textsc{gue}}}

\newcommand{\ff}{\varphi}
\newcommand{\lb}{\langle}
\newcommand{\rb}{\rangle}

\newcommand{\la}{\langle}
\newcommand{\ra}{\rangle}

\newcommand{\kk}{\kappa}

\newcommand{\F}{\mathcal F}



\let\Re\relax
\let\Im\relax
\DeclareMathOperator{\Re}{Re}
\DeclareMathOperator{\Im}{Im}

\DeclareMathOperator{\supp}{supp}

\DeclareMathOperator{\tr}{tr}

\DeclareMathOperator{\id}{id}


\begin{document}


\includepdf{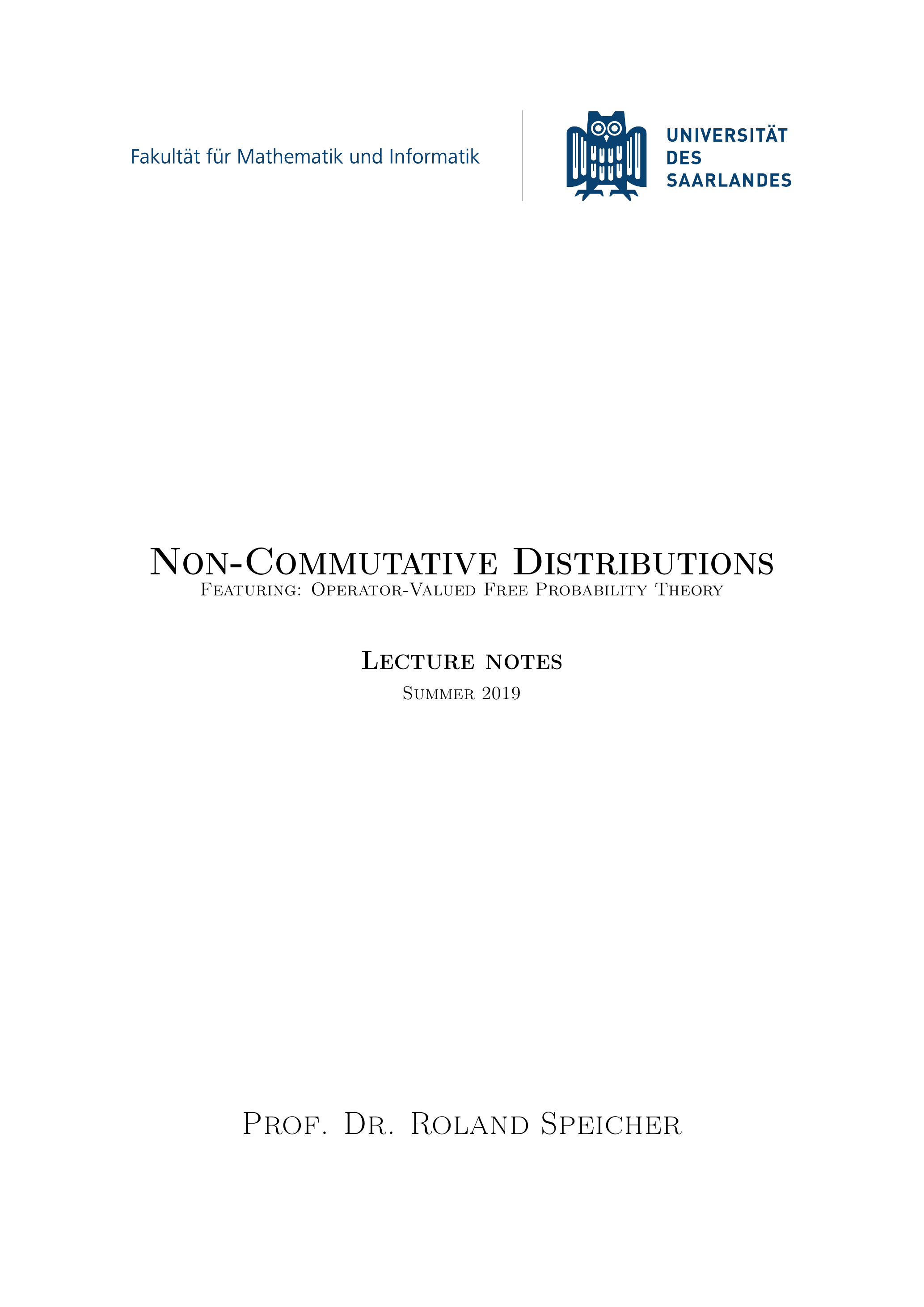}

This in an introduction to the theory of non-commutative distributions of non-commuting operators or random matrices. Starting from the basic problem to find a good approach to the meaning of “non-commutative distribution” we will, in particular, cover:  free analysis, which is a version of complex analysis for several non-commuting variables; the operator-valued version of free probability theory (combinatorial but also analytic aspects); the linearization trick to reduce non-linear scalar problems to linear operator-valued problems; the combination of operator-valued convolution and linearization to calculate the distribution of polynomials in free variables; the basic theory of non-commutative rational functions.
   
On one hand, this is a continuation of the \href{https://rolandspeicher.files.wordpress.com/2019/08/free-probability.pdf}{Free Probability Lecture Notes}. On the other hand, the theory of free probability is developed again, but in a more general, operator-valued context. So, in principle and with some additional efforts, it should be possible to read the present notes without having a prior knowledge on free probability. Big parts of the material do also not deal so much with free variables, but more general with analytic and algebraic aspects of maximal non-commuting variables. 

The material here was presented in the summer term 2019 at Saarland University in 20 lectures of 90 minutes each. The lectures were recorded and can be found online at \url{https://www.math.uni-sb.de/ag/speicher/web_video/index.html}.

Many of the presented results were actually achieved in recent years in the context of the ERC-grant ``\href{https://www.uni-saarland.de/lehrstuhl/speicher/research/erc-advanced-grant.html}{Non-Commutative Distributions in Free Probability}'' (2014-19).


\renewcommand{\contentsname}{Table of contents}
\tableofcontents

\numberwithin{theorem}{chapter}


\setcounter{chapter}{-1}
\chapter{Introduction}

We are interested in properties, preferably analytic, of distributions $\mu_{X_1,\dots,X_n}$ of 
\renewcommand{\labelitemi}{$\blacksquare$}
\begin{itemize}
\item[$\circ$]
operators $X_1,\dots,X_n$ on Hilbert spaces (typically from $C^*$-algebras or von Neumann algebras);
\item[$\circ$]
often those operators are ``limits in distribution'' of random matrix models;
\item[$\circ$]
typically our operators don't commute, which makes our distributions ``non-commutative''.
\end{itemize}

\section{Classical case}
Consider first the classical case of ``commutative'' distributions. Then random variables $X_1,\dots,X_n$ are measurable functions $X_i:\Omega\to\RR$ ($i=1,\dots,n$), where $(\Omega,\mathfrak{A},P)$ is a probability space, i.e., $P$ is a probability measure on the $\sigma$-algebra $\mathfrak{A}$ over $\Omega$, and the distribution $\mu_{X_1,\dots,X_n}$ is a probability measure on $\RR^n$, given as push-forward of $P$, i.e.,
$$\mu_{X_1,\dots,X_n}(B)=P(\{ \omega\in\Omega \mid (X_1(\omega),\dots,
X_n(\omega))\in B\}) \qquad\text{for any Borel set $B$ of $\RR^n$.}$$
There are various ways of describing or working with this object: $\mu_{X_1,\dots,X_n}$ is
\begin{enumerate}
\item[(i)]
a probability measure on $\RR^n$;
\item[(ii)]
a positive linear map, which allows to average over continuous functions of $X_1,\dots,X_n$:
\begin{align*}
E[f(X_1,\dots,X_n)]&=\int_{\RR^n}f(t_1,\dots,t_n)d\mu_{X_1,\dots,X_n}(t_1,\dots,t_n)\\
&=\int_\Omega f(X_1(\omega),\dots,X_n(\omega))dP(\omega),
\end{align*}
for continuous $f:\RR^n\to\CC$;
this is the same as (i) via the Riesz respresentation theorem;
\item[(iii)]
uniquely determined by its Fourier transform
$$\cF(t_1,\dots,t_n)=E[e^{-i(t_1X_1+\cdots +t_nX_n)}];$$
or other nice analytic functions on $\RR^n$ or $\CC^n$;
e.g., for $n=1$, one also has the Cauchy or Stieltjes transform $G(z)=E[(z-X)^{-1}]$;
\item[(iv)]
in many cases (e.g., compactly supported case) uniquely determined by its
moments
$$E[X_1^{k_1}\cdots X_n^{k_n}]\qquad\text{for all $k_1,\dots,k_n\in\NN_0$.}$$
\end{enumerate}

\section{Non-commutative case}
Consider now general (i.e., not necessarily commuting) $X_1,\dots,X_n\in\cA$ for a non-commutative probability space $(\cA,\ff)$, where $\cA$ is a unital algebra and $\ff:\cA\to\CC$ a unital linear functional (usually with some additional analytic structure). 
Can we give sense to $\mu_{X_1,\dots,X_n}$ in this setting?

The only item which makes directly sense is the combinatorial item (iv); and this will serve as our definition in the non-commutative case: \emph{$\mu_{X_1,\dots,X_n}$ is the collection of all moments}
$$\ff(X_{i(1)}\cdots X_{i(k)}) \qquad \text{for all $k\in\NN$; $1\leq i(1),\dots, i(k)\leq n$}$$
\emph{of our (non-commutative) random variables $X_1,\dots,X_n$.}

Our goal is an analytic understanding of this; i.e., to find non-commutative versions or replacements for items (i) - (iii). In particular, we would like to to
have notions for and results on
\begin{itemize}
\item
``smoothness'' or ``regularity'' of non-commutative distributions;
\item
absence of ``atoms'';
\item
existence of ``densities''.
\end{itemize}
We still don't know what a ``non-commutative probability measure'' is, but there has been quite some progress in recent years on dealing with this via versions of (ii) and (iii). In particular, we can say quite a bit about the distribution of
$f(X_1,\dots,X_n)$ for big classes of $(X_1,\dots,X_n)$ and big classes of $f$.
In particular, this gives results on the asymptotic eigenvalue distribution for polynomials in independent random matrices; or, equivalently, the distribution of polynomials in free variables. 

These results rely in particular on progress on
\begin{itemize}
\item
operator-valued versions of free probability theory of Voiculescu;
\item
free analysis (aka free non-commutative function theory);
\item
relating analytic questions about operators in von Neumann algebras with the theory  (of Cohn et al.) of non-commutative linear algebra and the free skew field (aka non-commutative rational functions).
\end{itemize}

All of this, and much more, will be covered in the coming chapters.

\chapter{Basic Definitions and Examples}

We start with the basic definitions and the most prominent example of 
a non-commutative distribution: namely free semicircular variables. They show up as the sum of creation and annihilation operators on
the full Fock space as well as the limit of our most beloved random
matrices, namely independent Gaussian random matrices.

\section{Non-commutative distributions and moments}

\begin{definition}\label{def:1-1}
\begin{enumerate}
\item
A \emph{non-commutative probability space} $(\cA,\ff)$ consists of 
\begin{itemize}
\item
a unital algebra $\cA$;
\item
a unital linear functional $\ff:\cA\to\CC$; unital means $\ff(1)=1$.
\end{itemize}
\item
A \emph{$C^*$-probability space} is a non-commutative probability space $(\cA,\ff)$, where
\begin{itemize}
\item
$\cA$ is a unital $C^*$-algebra;
\item
$\ff$ is a state, i.e., $\ff(A^*A)\geq 0$ for all $A\in\cA$.
\end{itemize}
\item
Elements $A_1,\dots,A_n\in (\cA,\ff)$ are called \emph{(non-commutative) random variables}.
\end{enumerate}

\end{definition}

\begin{remark}
By the GNS construction, a $C^*$-probability space can always be written as:
\begin{itemize}
\item
$\cA\subset B(\HH)$, for a Hilbert space $\HH$;
\item
$\ff(A)=\lb A\xi,\xi\rb$, for some unit vector $\xi\in\HH$.
\end{itemize}

\end{remark}

\begin{definition}\label{def:1-3}
Let $(\cA,\ff)$ be a non-commutative probability space and consider $A_1,\dots,A_n\in\cA$. The \emph{(non-commutative) distribution} $\mu_{A_1,\dots,A_n}$ of $A_1,\dots,A_n$ is given by the collection of all their joint moments:
$$\mu_{A_1,\dots,A_n}\quad\hat=\quad \{\ff(A_{i_1}\cdots A_{i_k})\mid k\in \NN;\ 
1\leq i_1,\dots,i_k\leq n\}.$$

\end{definition}

\section{The quest for an analytic understanding of non-commutative distributions}

\begin{remark}
We will usually work in a $C^*$-probability space and consider selfadjoint operators $X_1,\dots,X_n$. Our main goal is to get a better \emph{analytic} understanding of the distribution $\mu_{X_1,\dots,X_n}$. For $n=1$ or also for the multivariate classical case (i.e., general $n$, but the $X_i$ commute) there is a lot of (commutative) analysis available.
\end{remark}

\begin{example}

\begin{enumerate}
\item
$n=1$: Consider $X=X^*\in \cA$, where $(\cA,\ff)$ is a $C^*$-probability space. Then $\mu_X$ can be identified with a probability measure on $\RR$ (with compact support) via
$$\ff(X^k)=\int_\RR t^k d\mu_X(t)\qquad \text{for all $k\in \NN$}.$$
This follows by Weierstra\ss\ Approximation Theorem of continuous functions by polynomials on compact intervals and by Riesz Representation Theorem.
\item
The same applies to the general commutative situation. For a $C^*$-probability space $(\cA,\ff)$ and selfadjoint \emph{commuting} $X_1,\dots,X_n\in\cA$ the distribution $\mu_{X_1,\dots,X_n}$ can be identified with a compactly supported probability measure $\mu$ on $\RR^n$ via
$$\ff(X_{i_1}\cdots X_{i_k})=\int_{\RR^n} t_{i_1}\cdots t_{i_k} d\mu(t_1,\dots,t_n)\qquad\text{for all $n\in\NN$; $1\leq i_1,\dots,i_n\leq n$}.$$

\end{enumerate}
\end{example}

\begin{remark}
\begin{enumerate}
\item
Thus, in the the classical case, distributions ``are'' probability measures on $\RR^n$ and we can ask questions about their regularity:
\begin{itemize}
\item
do they have atoms;
\item
do they have a density (with respect to Lebesgue measure, or - equivalently, but maybe conceptually better - with respect to Gaussian measure);
\item 
what are the regularity properties of those densities?
\end{itemize}
\item
There are nice analytic functions which contain all the relevant information about classical distributions; in particular we have
\begin{enumerate}
\item
Fourier transform (aka characteristic function)
$$\cF(t_1,\dots,t_n)=E[e^{-i(t_1X_1+\cdots +t_nX_n)}];$$
\item
Cauchy transform (in the case $n=1$)
$$G(z)=\int \frac 1{z-t} d\mu(t) =\ff(\frac 1{z-X}),$$
which is defined and analytic on
$$\CC^+:=\{z\in\CC\mid \Im z>0\}.$$
\end{enumerate}
\item
If we have a $C^*$-probability space $(\cA,\ff)$, why are we not happy with $\ff$ restricted to the $C^*$-algebra generated by $X_1\dots,X_n$ as our analytic description? Actually, don't we say that 
$C^*(X_1,\dots,X_n)$ is like the continuous functions of $X_1,\dots, X_n$
and
$\text{vN}(X_1,\dots,X_n)$ is like the measurable functions of $X_1,\dots, X_n$; indeed 
\dots but in these phrases we cannot separate the functions from the operators.

What we really want is to compare random variables $X_1,\dots,X_n$ in $(\cA,\ff)$ with random variables $Y_1,\dots,Y_n$ in $(\cB,\psi)$, for two possibly different non-commutative probability spaces $(\cA,\ff)$ and $(\cB,\psi)$. We can only do this by comparing $\ff(f(X_1,\dots,X_n))$ with $\psi(f(Y_1,\dots,Y_n))$ for as big classes of $f$ as possible. Thus, $f$ must make sense as an abstract function which can be applied to tuples of non-commuting operators.

The same applies to the classical situation. Given classical probability spaces $(\Omega,P)$ and $(\tilde \Omega,\tilde P)$ and random variables
$X:\Omega\to\RR$ and $Y:\tilde \Omega\to\RR$, we are not comparing $P$ with $\tilde P$ or $X$ and $Y$ directly, but just their distribution, i.e.
$$\int_{\Omega}f(X(\omega))dP(\omega)\qquad\text{with}\qquad
\int_{\tilde \Omega} f(Y(\tilde \omega))d\tilde P(\tilde \omega)$$
for special classses of functions $f$; like: monomials, continuous, measurable.
\end{enumerate}
\end{remark}

\section{Examples of non-commutative distributions: full Fock space and random matrices}\label{section:1.3}

\begin{example}\label{ex:1.7}
For a Hilbert space $\HH$ we define the \emph{full Fock space} by
$$\cF(\HH):=\bigoplus_{k\ge0}\HH^{\otimes k}=\CC\cdot \Omega\oplus\HH \oplus
 \HH^{\otimes 2}\oplus\cdots
,$$
where $\Omega$ is a unit vector in $\HH^{\otimes 0}\simeq \CC$, called
\emph{vacuum}.

Elements in $\cF(\HH)$ are given by square summable linear combinations of
$f_1\otimes \cdots \otimes f_k$ ($k=0,1,\dots; f_1,\dots,f_k\in\HH$) with
inner product
$$\lb f_1\otimes\cdots\otimes f_k,g_1\otimes\cdots \otimes g_l\rb=
\delta_{kl} \lb f_1,g_1\rb \cdots \lb f_k,g_k\rb.$$
For $f\in\HH$, we define the \emph{(left) creation operator} $l(f)$, determined by
\begin{align*}
l(f)\Omega&= f\\
l(f) f_1\otimes\cdots\otimes f_k&=f\otimes f_1\otimes\cdots\otimes f_k.
\end{align*}
Its adjoint is the \emph{(left) annihilation operator} $l^*(f)$, given by
\begin{align*}
l^*(f)\Omega&=0\\
l^*(f) f_1\otimes\cdots \otimes f_k&=\lb f_1,f\rb f_2\otimes\cdots \otimes f_k.
\end{align*}
Let $\xi_1,\dots,\xi_n$ be an orthonormal system of vectors in $\HH$
(i.e., $\xi_i\perp \xi_j$ for $i\not=j$ and $\Vert \xi_i\Vert =1 $ for all $i$), then we consider the selfadjoint operators 
$$S_i:=l(\xi_i)+l^*(\xi_i)\qquad (i=1,\dots,n).$$
For $\ff$ we take
$$\ff(A):=\lb A\Omega,\Omega\rb\qquad \text{``vacuum expectation state''.}$$
We are interested in the non-commutative distribution $\mu_{S_1,\dots,S_n}$ of the operators $S_1,\dots,S_n$ in the $C^*$-probability space $(B(\cF(\HH)),\ff)$.
We have a quite good understanding of this, namely we know:
\begin{itemize}
\item
$S_1,\dots,S_n$ are free (in the sense of Voiculescu's free probability theory)
\item
and each $S_i$ has a \emph{semicircular distribution}
$$d\mu_{S_i}(t)=\frac 1{2\pi} \sqrt{4-t^2}dt \qquad \text{on $[-2,2]$},$$
\[ \begin{tikzpicture}	
	\tikzset{dot/.style={circle,fill=#1,inner sep=0,minimum size=4pt}}

	\node[dot=black] at (-2, 0)   (1) {};
	\node[dot=black] at (2, 0)   (2) {};
	\node[dot=black] at (0, 2)   (3) {};
		
	\draw (-2,-0.3) -- node {$-2$} (-2,-0.3);
	\draw (2,-0.3) -- node {$2$} (2,-0.3);
	\draw (0.3,2.3) -- node {$\frac{1}{\pi}$} (0.3,2.3);

	\draw [->] (-2.5,0) -- (2.5,0);	
	\draw [->] (0,-0.5) -- (0,2.5);	
	\draw (-2,0) -- (2,0) arc(0:180:2) --cycle;
	\end{tikzpicture}
	\]	
i.e., 
$$\ff(S_i^k)=\frac 1{2\pi} \int_{-2}^{+2} t^k \sqrt{4-t^2}dt
=\begin{cases}
0,& \text{$k$ odd}\\
\frac 1{k/2+1} \binom k{k/2}, & \text{$k$ even.}
\end{cases}$$
The non-zero moments are the \emph{Catalan numbers}. 
\end{itemize}
This $\mu_{S_1,\dots,S_n}$, the non-commutative distribution of free semicircular variables, is our benchmark; other distributions will be compared to this. In particular, the notion of a density (if there is any!) should be with regard to this.

\end{example}

\begin{example}\label{ex:1.8}
Many important distributions are given as limits of random matrices. Let $P(x_1,\dots,x_n)$ be a non-commutative selfadjoint polynomial in $n$ non-commuting variables. For example, for $n=2$,
$$P(x_1,x_2)=x_1^2+x_2^2\qquad\text{or}\qquad
P(x_1,x_2)=x_1^4+x_1x_2^2x_1+5.$$
We consider on the space of $n$-tuples $(X_1^{(N)},\dots,X_n^{(N)})$ of
selfadjoint $N\times N$ matrices the probability measure $\mu_N$ given by
$$d\mu_N(X_1^{(N)},\dots,X_n^{(N)})=c_N\cdot e^{-N^2 \tr[P(X_1^{(N)},\dots,X_n^{(N)})]}
d\lambda(X_1^{(N)})\dots d\lambda(X_n^{(N)}),$$
where $c_N$ is a normalization constant such that $\mu_N$ is a probability measure, $\tr$ denotes the normalized trace on matrices and 
$$d\lambda(X^{(N)})=\prod_{i=1}^N d(\Re x_{ii}) \prod_{1\leq i<j\leq N}
d(\Re x_{ij}) d(\Im x_{ij}) $$
is the Lebesgue measure on all entries of the selfadjoint matrix $X^{(N)}=(x_{ij})_{i,j=1}^N$ which are not constrained by the selfadjointness condition.
Then we consider on selfadjoint $N\times N$ matrices a state $\ff_N$ given by, for $k\in\NN$ and $1\leq i_1,\dots,i_k\leq n$, 
$$\ff_N(X_{i_1}^{(N)}\cdots X_{i_k}^{(N)}):=\int \tr[X_{i_1}^{(N)}\cdots X_{i_k}^{(N)}]d\mu_N(X_1^{(N)},\dots,X_n^{(N)})$$
and denote by $\mu_{X_1,\dots,X_n}$ the limit of this distribution, given
by the moments
$$\ff(X_{i_1}\cdots X_{i_k}):=\lim_{N\to\infty} \ff_N(X_{i_1}^{(N)}\cdots X_{i_k}^{(N)}),$$
provided these limits exist. The latter depends on $P$ and is, for $n\geq 2$, a big open question. Only some simple situations are will understood. E.g., for
$P(x_1,\dots,x_n)=x_1^2+\cdots +x_n^2$, corresponding to independent Gaussian random matrices (\GUE),
this limit exists and is, by results of Voiculescu, equal to the one from Example \ref{ex:1.7}, given by free semicirculars. 

To summarize, we are interested in the limit of multi-matrix models and want to understand whether such limits exist and, in particular, how to describe them.

\end{example}

The assignments address some more details about free semicirculars, in the context of the full Fock space (Exercise \ref{exercise:1}) and as the limit of random matrices (Exercise \ref{exercise:2}).

\section{Non-commutative polynomials and distributions}

\begin{definition}\label{def:1-9}
\begin{enumerate}
\item
We denote by $\CC\lb x_1,\dots,x_n\rb$ the \emph{polynomials in $n$ non-com-muting indeterminates} $x_1,\dots,x_n$; i.e., the unital algebra in $n$ algebraically free non-commuting generators $x_1,\dots,x_n$. Thus, a linear basis of $\CC\lb x_1,\dots,x_n\rb$ is given by 
all monomials  $x_{i_1}\cdots x_{i_k}$ ($k\in\NN_0$; $1\leq i_1,\dots,i_k\leq n$; $k=0$ corresponds to the constant polynomial $1$), and multiplication of two such monomials is done by justaposition. A general polynomial $p=p(x_1,\dots,x_n)\in \CC\lb x_1,\dots,x_n\rb$ is thus of the form
\begin{equation}\label{eq:1-gen-poly}
p(x_1,\dots,x_n)=\alpha_0+\sum_{k=1}^d \sum_{i_1,\dots,i_k=1}^n
\alpha_{i_1,\dots,i_k}x_{i_1}\cdots x_{i_k},
\end{equation}
for $d\in\NN_0$, $\alpha_0,\alpha_{i_1,\dots,i_k}\in\CC.$
We can make $\CC\lb x_1,\dots,x_n\rb$  to a $*$-algebra by declaring $x_i^*=x_i$ for all $i=1,\dots,n$.
\item
If $(\cA,\ff)$ is a $C^*$-probability space and $X_i=X_i^*\in\cA$ ($i=1,\dots,n$), then we have the evaluation map
\begin{align*}
\CC\lb x_1,\dots,x_n\rb&\to \cA\\
p(x_1,\dots,x_n)&\mapsto p(X_1,\dots,X_n),
\end{align*}
which is the $*$-homomorphism given by $1\mapsto 1$ and $x_i\mapsto X_i$ ($i=1,\dots,n$). More explicitly, for a non-commutative polynomial $p(x_1,\dots,x_n)$ of the form \eqref{eq:1-gen-poly} we have
\begin{equation}\label{eq:2-gen-poly-eval}
p(X_1,\dots,X_n)=\alpha_0+\sum_{k=1}^d \sum_{i_1,\dots,i_k=1}^n
\alpha_{i_1,\dots,i_k}X_{i_1}\cdots X_{i_k}.
\end{equation}
We denote by $\CC\lb X_1,\dots,X_n\rb\subset\cA$ the image of this map, i.e., the unital $*$-subalgebra of $\cA$, which is generated by $X_1,\dots,X_n$.
\item
We define now, more precisely as in Definition \ref{def:1-3}, the \emph{(non-commutative) distribution} $\mu_{X_1,\dots,X_n}$ as the linear functional
\begin{align*}
\mu_{X_1,\dots,X_n}:\CC\lb x_1,\dots,x_n\rb &\to\CC\\
p(x_1,\dots,x_n)&\mapsto \ff(p(X_1,\dots,X_n)).
\end{align*}
\end{enumerate}
\end{definition}

\begin{remark}
\begin{enumerate}
\item
With $\CC[x_1,\dots,x_n]$ we denote, as usual, the ring of polynomials in $n$ commuting variables.
\item
We might also need at some point the non-selfadjoint versions of Definition \ref{def:1-9}; i.e., if $(\cA,\ff)$ is just a non-commutative probability space the we do not put a $*$-structure on $\CC\lb x_1,\dots,x_n\rb$; or, if we deal with general, not necessarily selfadjoint, $A_1,\dots,A_n$ in a $C^*$-probability space, we have the $*$-polynomials in $n$ non-commuting non-selfadjoint indeterminates $z_1,\dots,z_n$,
$\CC\lb z_1,\dots,z_n,z_1^*,\dots,z_n^*\rb$.
\end{enumerate}
\end{remark}

\section{Generalizations of non-commutative distributions}

\begin{remark}
There appeared recently some generalizations of non-commutative distributions in the context of free probability, like:
\begin{enumerate}
\item[(i)]
Bi-distribution or pairs of faces (Voiculescu 2014 \cite{Voi-bifree}). There the random variables are divided into two classes, some random variables are declared as right variables, others as left variables.
\item[(ii)]
Trace polynomial distributions (Cebron 2013 \cite{Ceb}). There $\CC\lb x_1,\dots,x_n\rb$, the polynomials in $x_1,\dots,x_n$ with ``constant'' coefficients, is replaced by $\CC\{x_1,\dots,x_n\}$, the polynomials in $x_1,\dots,x_n$ with coefficients depending on ``(tracial) moments'' of $x_1,\dots,x_n$.
\item[(iii)]
Traffic distribution (Male 2011 \cite{Mal}). Moments can be identified with cyclic graphs
(for the case when $\ff$ is a trace); for example, 
$$\ff(T_1T_2T_3)=\frac 1N\sum_{i,j,k=1}^N t_{ij}^{(1)}t_{jk}^{(2)}t_{ki}^{(3)}$$
corresponds to
$$\includegraphics[scale=0.7]{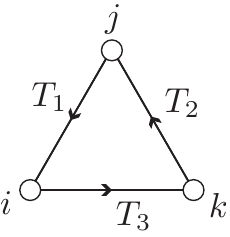}.$$
More general graphs, like
$$\includegraphics[scale=.8]{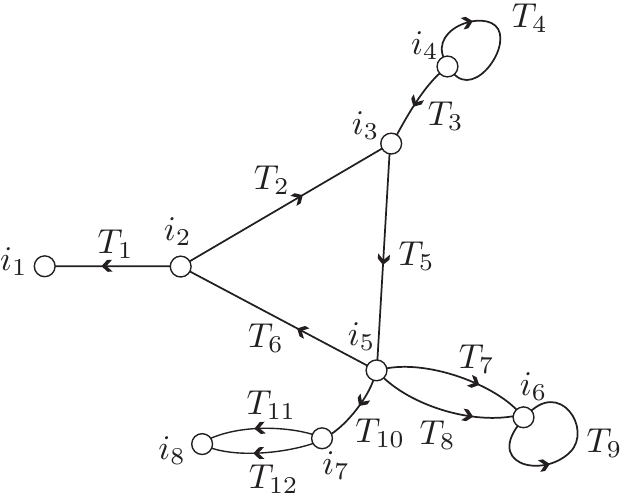}$$

correspond then to more general ``graph-moments''
\begin{equation*} 
\sum_{i_1,\dots,i_8=1}^N
t^{(1)}_{i_1i_2}t^{(2)}_{i_3i_2}t^{(3)}_{i_3i_4}t^{(4)}_{i_4i_4}
t^{(5)}_{i_5i_3}t^{(6)}_{i_2i_5}t^{(7)}_{i_6i_5}t^{(8)}_{i_6i_5}t^{(9)}_{i_6i_6}
t^{(10)}_{i_7i_5}t^{(11)}_{i_8i_7}t^{(12)}_{i_8i_7}
\end{equation*}
\end{enumerate}
For those generalizations, a general analytic theory is even more unclear than for the ordinary non-commutative distributions, and we will not address those generalizations in the following.
\end{remark}

\chapter{Operator-Valued Distributions and Operator-Valued Cauchy Transform}

Our main analytic object for dealing with non-commutative distributions will be a
version of the Cauchy transform. However, this can only be defined easily for one operator,
but in a more general, operator-valued setting. Since the information about the
non-commutative distribution of a non-commutative tuple can be rewritten in terms
of one operator-valued operator this opens the door to the analytic world of non-commutative
distributions.

\section{Going from several non-commuting operators to one operator-valued operator}

\begin{definition}
Let $(\cA,\ff)$ be a $C^*$-probability space and $X=X^*\in \cA$. The function
\begin{align}
G_X:\CC^+\to \CC^-;\quad
z\mapsto \ff(\frac 1{z-X})=\int_\RR \frac 1{z-t}d\mu_X(t)
\end{align}
is called \emph{Cauchy transform} of $X$ (or of $\mu_X)$.
\end{definition}

\begin{remark}
A Cauchy transform $G_X$ has the following properties.
\begin{itemize}
\item[(i)]
$G_X$ is analytic on $\CC^+$;
\item[(ii)]
$G_X$ has a power series expansion about $\infty$:
$$G_X(z)=\sum_{k=0}^\infty\frac{\ff(X^k)}{z^{k+1}}\qquad\text{for $\vert z\vert>\Vert X\Vert$;}$$
\item[(iii)]
we have
$$\lim_{\substack{z\in\CC^+\\ \vert z\vert\to\infty}} z G_X(z)=\ff(X^0)=1;$$
\item[(iv)]
$\mu_X$ can be recovered from $G_X$ by the \emph{Stieltjes inversion formula}
$$d\mu_X(t)=-\lim_{\ee\searrow 0}\frac 1\pi \Im G_X(t+i\ee)dt;$$
one should note that $t\mapsto -\Im G_X(t+i\ee)/\pi$ is, for each $\ee>0$, the density of a probability measure.
\end{itemize}
\end{remark}

\begin{motivation}\label{mot:2-3}
Let $(\cC,\ff)$ be a $C^*$-probability space and consider selfadjoint $X_1,\dots,$ $X_n\in\cC$. We would like to encode the information about $\mu_{X_1,\dots,X_n}$ in an analytic function, something like
\begin{equation}\label{eq:2-2}
\sum_{k=0}^\infty\sum_{i_1,\dots,i_k=1}^n z_{i_1}\cdots z_{i_k}\ff(X_{i_1}\cdots X_{i_k}).
\end{equation}
Since the $X_i$ do not commute in general, the variables $z_1,\dots,z_n$ should also not commute. Thus we need something like an analytic function in non-commuting variables. It is not clear how to give \eqref{eq:2-2} a good analytic meaning (in particular, if we want this in some non-commutative half-planes for $z_1,\dots,z_n$).

Instead, we will rewrite the above in terms of one variable $X$, but in an operator-valued setting. For this we put
$$M_n(\cC):=M_n(\CC)\otimes\cC=
\{(A_{ij})_{i,j=1}^n\mid A_{ij}\in\cC\}$$
and
$$\id\otimes\ff: M_n(\cC)\to M_n(\CC); \quad
(A_{ij})_{i,j=1}^n\mapsto (\ff(A_{ij}))_{i,j=1}^n. $$
Denoting
$$\cA:=M_n(\cC),\qquad \cB:=M_n(\CC),\qquad E:=\id\otimes\ff:\cA\to\cB,$$
we have now an operator-valued probability space, where, compared to Definition \ref{def:1-1}, $\CC$ is replaced by a (non-commutative) subalgebra $\cB$ of $\cA$ and $\ff$ is replaced by a conditional expectation $E$ onto $\cB$. In this setting we put
$$X:=\begin{pmatrix}
X_1&0&\hdots&0\\
0&X_2&\hdots&0\\
\vdots&\vdots&\ddots&\vdots\\
0&0&\hdots&X_n
\end{pmatrix}\in\cA.$$
All moments of $X_1,\dots,X_n$ with respect to $\ff$ can then be recovered from the $\cB$-valued moments $E[b_0 Xb_1X\cdots b_{k-1}Xb_k]$ ($b_0,\dots,b_k\in\cB$) of $X$. For example, $\ff(X_1X_2)$ can be recovered from
$$\begin{pmatrix}
1&0\\
0&0
\end{pmatrix}
\begin{pmatrix}
X_1&0\\
0&X_2
\end{pmatrix}
\begin{pmatrix}
0&1\\
0&0
\end{pmatrix}
\begin{pmatrix}
X_1&0\\
0&X_2
\end{pmatrix}
\begin{pmatrix}
0&0\\
1&0
\end{pmatrix}
=\begin{pmatrix}
X_1X_2&0\\0&0
\end{pmatrix}$$
as
$$\begin{pmatrix}
\ff(X_1X_2)&0\\0&0
\end{pmatrix}=E[b_0Xb_1Xb_2]$$
with
$$b_0=\begin{pmatrix}
1&0\\0&0
\end{pmatrix},\quad
b_1=\begin{pmatrix}
0&1\\0&0
\end{pmatrix},\quad
b_2=\begin{pmatrix}
0&0\\1&0
\end{pmatrix}.$$
\end{motivation}

\section{The operator-valued setting}

\begin{definition}
\begin{enumerate}
\item
An \emph{operator-valued (non-commutative) probability space} $(\cA,\cB,E)$ consists of
\begin{itemize}
\item
a unital algebra $\cA$;
\item
a unital subalgebra $1\in\cB\subset \cA$;
\item
a conditonal expectation $E:\cA\to\cB$, i.e.,
\begin{itemize}
\item
$E$ is linear;
\item
$E(1)=1$;
\item
$E$ has the bimodule property:
$$E[b_1Ab_2]=b_1 E[A]b_2\qquad\text{for all $b_1,b_2\in\cB$, $A\in\cA$;}$$
thus also in particular: $E[b]=b$ for all $b\in\cB$.
\end{itemize}
\end{itemize}
\item
If $\cA$ and $\cB$ are unital $C^*$-algebras and $E$ is positive (i.e., for all 
$A\in\cA$  there is $b\in\cB$ such that $E[A^*A]=b^*b$), then $(\cA,\cB,E)$ is an \emph{operator-valued $C^*$-probability space}.
\item
Elements $X\in\cA$ are called \emph{operator-valued} (or \emph{$\cB$-valued}) \emph{random variables}.
\item
The \emph{operator-valued moments} of $X$ are of the form
$$E[X],\quad E(Xb_1X],\quad E[Xb_1Xb_2X],\quad\dots\quad
E[Xb_1Xb_2\cdots Xb_{k-1}X],\quad\dots$$
\item
The collection of all operator-valued moments constitutes the \emph{operator-valued distribution} $\mu_X$ of $X$.
\end{enumerate}
\end{definition}

\begin{definition}
Let $(\cA,\cB,E)$ be an operator-valued $C^*$-probability space and $X=X^*\in\cA$. Then we define the \emph{operator-valued Cauchy transform} $G_X:
\cB\to\cB$ (actually not everywhere defined, nice domain will be specified later) by
$$G_X(b)=E[(b-X)^{-1}]\qquad \text{(if $b-X$ is invertible).}$$
\end{definition}

\begin{remark}
\begin{enumerate}
\item
$G_X$ is an analytic function between the Banach spaces $\cB\to\cB$ in Gateaux or Fr\`echet sense; more on this later.
\item
Formally, $G_X$ has a power series expansion: for $\Vert b^{-1}\Vert< 1/\Vert X\Vert$ we have
\begin{align*}
(b-X)^{-1}&=(b[1-b^{-1}X])^{-1}\\[.5em]
&=\sum_{k\geq 0} (b^{-1}X)^k b^{-1}\\[.5em]
&=b^{-1} + b^{-1}Xb^{-1}+b^{-1}Xb^{-1}Xb^{-1}X+\cdots,
\end{align*}
and thus
$$G_X(b)=\sum_{k\geq0} E[(b^{-1}X)^kb^{-1}].$$
\item
As we see from the power series expansion, $G_X$ does not contain information about all moments, but only about symmetric moments of the form
$E[XbXbXb\cdots bX]$. In order to get all moments we have to consider matricial extensions (amplifications) $G_X^{(m)}$ of $G_X$. For each $m\in\NN$, we amplify our setting to
$$(M_m(\cA),E\otimes\id, M_m(\cB))$$
and consider there the Cauchy transform of 
$$X\otimes 1=\begin{pmatrix}
X&0&\hdots&0\\
0&X&\hdots&0\\
\vdots&\vdots&\ddots&\vdots\\
0&0&\hdots&X
\end{pmatrix}\in M_m(\cA),$$
i.e., for $G_X^{(m)}:M_m(\cB)\to M_m(B)$ with
$b=(b_{ij})_{i,j=1}^m$ we have 
$$G_X^{(m)}(b)=
E\otimes\id [(b-X\otimes 1)^{-1}]
=E\otimes \id\left[ 
\begin{pmatrix}
b_{11}-X&b_{12}&\hdots&b_{1m}\\
b_{21}&b_{22}-X&\hdots&b_{2m}\\
\vdots&\vdots&\ddots&\vdots\\
b_{m1}&b_{m2}&\hdots&b_{mm}-X
\end{pmatrix}^{-1}\right].
$$
\item
Note that unsymmetric moments on base level $m=1$ can be recovered from symmetric moments on higher levels, similar as in \ref{mot:2-3}. For example, $E[Xb_1Xb_2X]$ can be recovered from a symmetric moment for $m=3$ as follows. For 
$$b=\begin{pmatrix}
0&b_1&0\\0&0&b_2\\0&0&0
\end{pmatrix},
\qquad
X\otimes 1=\begin{pmatrix}
X&0&0\\0&X&0\\0&0&X
\end{pmatrix}$$
we have
$$E\otimes \id[X\otimes 1\cdot b\cdot X\otimes 1\cdot b\cdot X\otimes 1]=
\begin{pmatrix}
0&0& E[Xb_1Xb_2X]\\0&0&0\\
0&0&0
\end{pmatrix}.$$
\item
Thus, in order to encode all operator-valued moments of $X$ in some analytic function, we do not just need $G_X=G_X^{(1)}$, but also all its matrix amplifications $G_X^{(m)}$. Those $G_X^{(m)}$ are related to each other for different $m$ as follows:
\begin{enumerate}
\item[(i)]
For invertible $b_1\in M_{m_1}(\cB)$ and $b_2\in M_{m_2}(\cB)$ we have
\begin{align*}
G_X^{m_1+m_2}
\begin{pmatrix}
b_1&0\\0&b_2
\end{pmatrix}&=
E\otimes\id \left[
\begin{pmatrix} 
(b_1-X\otimes 1)^{-1}&0\\
0& (b_2-X\otimes 1)^{-1}
\end{pmatrix}
\right]\\[.5em]
&=
\begin{pmatrix}
G_X^{m_1}(b_1)&0\\
0& G_X^{m_2}(b_2)
\end{pmatrix};
\end{align*}
\item[(ii)]
for invertible $S\in M_m(\CC)$ and $b\in M_m(\cB)$ we have (note that we have $S\cdot X\otimes 1\cdot S^{-1}=X\otimes 1$)
\begin{align*}
G_X^{(m)} (S b S^{-1})&
=E\otimes\id[(SbS^{-1} -X\otimes 1)^{-1}]\\
&=E\otimes \id[(SbS^{-1}-S \cdot X\otimes 1 \cdot S^{-1})^{-1}]\\
&=E\otimes\id [S (b-X\otimes 1)^{-1} S^{-1}]\\
&= S\cdot E\otimes\id [(b-X\otimes 1)^{-1}] \cdot S^{-1}\\
&=S\cdot G_X^{(m)}(b) \cdot S^{-1}.
\end{align*}
\end{enumerate}
\item
Collections of functions which satisfy (i) and (ii) are called fully matricial functions (by Voiculescu \cite{Voi-free-analysis}) or (free) non-commutative functions (by Vinnikov et al. \cite{KVV}). We will have to have a closer look on them in the next chapter.
\end{enumerate}
\end{remark}

\chapter{Non-Commutative Functions}

We will now formalize the algebraic properties of the $G_X^{(m)}$; but ignore first the question of domain. The main point will be to see that ``analyticity'' can be encoded in algebraic properties over matrices. Later, in Section \ref{section:3.2} we will also address the question of the domain. A good source for the material in this and the next chapter are the expository notes \href{http://davidjekel.com/wp-content/uploads/2019/07/OVNCP_06.pdf}{Operator-valued non-commutative probability} by David Jekel \cite{Jek}.

\section{How to encode analyticity in algebraic properties}

\begin{definition}
Let $\cB$ be a unital algebra. A collection $f=(f_m)_{m\in\NN}$ of functions
\begin{align*}
f_m:M_m(\cB)&\to M_m(\cB)\\
z&\mapsto f_m(z)
\end{align*}
is called a \emph{non-commutative function} (or \emph{fully matricial function}), if it satisfies the following two conditions.
\begin{itemize}
\item[(i)]
$f$ respects direct sums: 
\begin{equation}\label{eq:dir-sum}
f_{m_1+m_2}\left[
\begin{pmatrix}
z_1& 0\\
0& z_2
\end{pmatrix} \right]=
\begin{pmatrix}
f_{m_1}(z_1)& 0\\
0& f_{m_2}(z_2)
\end{pmatrix}
\end{equation}
for all $m_1,m_2\in\NN$, $z_1\in M_{m_1}(\cB)$, $z_2\in M_{m_2}(\cB)$.
\item[(ii)]
$f$ respects similarities:
\begin{equation}\label{eq:similarities}
f_m(SzS^{-1})=S f_m(z) S^{-1}
\end{equation}
for all $m\in\NN$, $z\in M_m(\cB)$, $S\in M_m(\CC)$ invertible.
\end{itemize}
\end{definition}

\begin{remark}
\begin{enumerate}
\item
It is fairly easy to see (and you are asked in Exercise \ref{exercise:4} to see this) that (i) and (ii) are equivalent to the fact that $f$ respects intertwinings: for all $n,m\in\NN$, $z_1\in M_n(\cB)$, $z_2\in M_m(\cB)$, and an $n\times m$ matrix $T\in M_{n,m}(\CC)$ we have
$$z_1T=Tz_2 \quad\implies\quad f_n(z_1)T=Tf_m(z_2).$$
\item
Usually, our interesting functions (like $G$) are not defined on all of $M_n(\cB)$, but only on subsets. The conditions above have then to be modified accordingly. We will ignore this for the moment, but come back to this issue later.
\item
We will often just write $f(z)$ instead of $f_m(z)$, when the $m$ is clear.
\item
We claim now that (i) and (ii) encode analyticity in an algebraic way. In particular, they should allow us to distinguish analytic functions like
$$f(z)=z\qquad \text{for all $z\in M_m(\cB)$}$$
from non-analytic ones, like
$$g(z)=z^*\qquad\text{for all $z\in M_m(\cB)$.}$$
Note that (i) does not see a difference here,
$$g
\begin{pmatrix}
z_1&0\\
0& z_2
\end{pmatrix}
=\begin{pmatrix}
z_1^*&0\\0&z_2^*
\end{pmatrix}=
\begin{pmatrix}
g(z_1)&0\\
0& g(z_2)
\end{pmatrix},$$
but (ii) does:
$$f(SzS^{-1})=SzS^{-1}=S f(z) S^{-1},$$
but
$$g(SzS^{-1})=(SzS^{-1})^*=S^{*-1} z^* S^*\not= S g(z)S^{-1}$$
in general for $S\in M_m(\CC)$ with $m\geq 2$.
Note that (ii) is for $m=1$ always trivially satisfied, since then $S\in\CC$.
\end{enumerate}
\end{remark}

\begin{example}
Consider the case $\cB=\CC$; i.e., let $f_1:\CC\to\CC$ be an analytic function.
Then one can extend this by holomorphic functional calculus to matrices via
\begin{align*}
f_m:M_m(\CC)&\to M_m(\CC)\\
z&\mapsto f_m(z):=\frac 1{2\pi i}\int_\Gamma \frac {f_1(\xi)}{\xi-z} d\xi,
\end{align*}
where we integrate around the eigenvalues of the matrix $z$. The collection
$f=(f_m)_{m\in\NN}$ satisfies then (i) and (ii):
\begin{align*}
f \begin{pmatrix} z_1 &0 \\ 0& z_2 \end{pmatrix}&=
\frac 1{2\pi i}\int_\Gamma f_1(\xi) \begin{pmatrix} \xi-z_1&0\\ 0& \xi-z_2
\end{pmatrix}^{-1} d\xi\\[.5em]
&=\frac 1{2\pi i}\int_\Gamma f_1(\xi) \begin{pmatrix} (\xi-z_1)^{-1}&0\\ 0& (\xi-z_2)^{-1}
\end{pmatrix} d\xi\\[.5em]
&=\begin{pmatrix} f(z_1) &0 \\ 0& f(z_2) \end{pmatrix}
\end{align*}
and
\begin{align*}
f(SzS^{-1})= \frac 1{2\pi i}\int_\Gamma f_1(\xi) (\xi-SzS^{-1})^{-1}d\xi
= \frac 1{2\pi i}\int_\Gamma f_1(\xi) S (\xi-z)^{-1} S^{-1}d\xi
=S f(z) S^{-1}.
\end{align*}
We can in this case also recover the derivative $f_1'$ from the action of the higher $f_m$, without taking limits. For this consider $z_1,z_2,w\in\CC$, then
\begin{align*}
f_2 \begin{pmatrix} z_1& w\\ 0&z_2 \end{pmatrix}&=
\frac 1{2\pi i} \int_\Gamma f(\xi) \begin{pmatrix}
\xi-z_1& -w\\
0& \xi-z_2
\end{pmatrix}^{-1} d\xi\\[.5em]
&=\frac 1{2\pi i} \int_\Gamma f(\xi)
\begin{pmatrix}
(\xi-z_1)^{-1}& (\xi-z_1)^{-1} w (\xi - z_2)^{-1}\\
0 & (\xi-z_2)^{-1}
\end{pmatrix}d\xi\\[.5em]
&=\begin{pmatrix}
f_1(z_1)& *\\
0& f_1(z_2)
\end{pmatrix}
\end{align*}
with
\begin{align*}
*&=\frac 1 {2\pi i} w \int_\Gamma f(\xi) (\xi-z_1)^{-1} (\xi-z_2)^{-1} d\xi\\[.5em]
&=\frac 1 {2\pi i} w\int_\Gamma f(\xi) \frac 1 {z_1-z_2}\left[ \frac 1{\xi-z_1} -
\frac 1{\xi-z_2}\right]d\xi\\[.5em]
&= w \frac {f(z_1)-f(z_2)}{z_1-z_2},
\end{align*}
and thus
$$f_2
\begin{pmatrix}
z& w\\
0 &z
\end{pmatrix}
=
\begin{pmatrix}
f_1(z)& f_1'(z) w\\
0& f_1(z)
\end{pmatrix}.
$$
\end{example}

\begin{remark}
In the same way, derivatives can be recovered for non-commutative functions, just relying on properties (i) and (ii) (and some continuity or boundedness condition). We address this in the following. For this one should note that upper triangular matrices are similar to diagonal matrices:
$$\begin{pmatrix}
z_1 & z_2-z_1\\
0& z_2
\end{pmatrix}=
\underbrace{
\begin{pmatrix}
1 &1\\ 0 & 1 \end{pmatrix} }_S
\begin{pmatrix}
z_1&0\\ 0& z_2 \end{pmatrix}
\underbrace{
\begin{pmatrix}
1 &-1\\ 0 & 1 \end{pmatrix} }_{S^{-1}}.
$$
\end{remark}

\begin{lemma}
Let $f$ be a non-commutative function. Then we have for $z_1\in M_n(\cB)$, 
$z_2\in M_m(\cB)$, $w\in M_{n,m}(\cB)$: 
$$f\begin{pmatrix}
z_1& w\\
0 &z_2
\end{pmatrix}
=
\begin{pmatrix}
f_n(z)& *\\
0& f_m(z)
\end{pmatrix}.
$$
\end{lemma}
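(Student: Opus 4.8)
The plan is to use property (ii) — respecting similarities — to reduce the upper-triangular block matrix to a block-diagonal one, and then apply property (i). The key observation, spelled out in the remark preceding the lemma, is that
$$\begin{pmatrix} z_1 & w\\ 0 & z_2\end{pmatrix}$$
is similar (via a matrix over $\CC$, not over $\cB$!) to a block-diagonal matrix only in the scalar case; for general $w\in M_{n,m}(\cB)$ we cannot conjugate $w$ away by a constant matrix. So the honest strategy is slightly different: conjugate by $S=\begin{pmatrix}1 & t\\ 0 & 1\end{pmatrix}\otimes(\text{identity-type block})$, i.e. by the block matrix $S_t=\begin{pmatrix}\id_n & t\,\id\\ 0 & \id_m\end{pmatrix}$ where $t\in\CC$ — wait, that still does not kill $w$ unless $z_1,z_2$ interact with it. Let me restate the actual route.

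First I would apply property (i) to the obvious block-diagonal matrix $\begin{pmatrix} z_1 & 0\\ 0 & z_2\end{pmatrix}$ to get $f\!\begin{pmatrix} z_1 & 0\\ 0 & z_2\end{pmatrix}=\begin{pmatrix} f_n(z_1) & 0\\ 0 & f_m(z_2)\end{pmatrix}$, which already pins down the two diagonal blocks of the desired identity. Then the content of the lemma is only the two statements: the $(1,1)$-block of $f\!\begin{pmatrix} z_1 & w\\ 0 & z_2\end{pmatrix}$ equals $f_n(z_1)$, the $(2,2)$-block equals $f_m(z_2)$, and the $(2,1)$-block vanishes. For this I would use property (ii) with the scalar-parametrized similarity $S_t=\begin{pmatrix}\id_n & 0\\ 0 & t\,\id_m\end{pmatrix}$ for $t\in\CC^\times$: conjugating gives $S_t\begin{pmatrix} z_1 & w\\ 0 & z_2\end{pmatrix}S_t^{-1}=\begin{pmatrix} z_1 & t^{-1}w\\ 0 & z_2\end{pmatrix}$, hence $S_t\, f\!\begin{pmatrix} z_1 & w\\ 0 & z_2\end{pmatrix}S_t^{-1}=f\!\begin{pmatrix} z_1 & t^{-1}w\\ 0 & z_2\end{pmatrix}$. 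Writing $f\!\begin{pmatrix} z_1 & w\\ 0 & z_2\end{pmatrix}=\begin{pmatrix} a(w) & b(w)\\ c(w) & d(w)\end{pmatrix}$ in block form, the left side is $\begin{pmatrix} a(w) & t^{-1}b(w)\\ t c(w) & d(w)\end{pmatrix}$, so comparing blocks yields $a(w)=a(t^{-1}w)$, $d(w)=d(t^{-1}w)$, $b(w)=t\,b(t^{-1}w)$, $c(w)=t^{-1}c(t^{-1}w)$ for all $t\in\CC^\times$. Letting $t\to\infty$ and using continuity/boundedness of $f$ (which, as the paper flags, is an implicit hypothesis, or one replaces it by local boundedness coming from analyticity), $a(w)=a(0)=f_n(z_1)$, $d(w)=d(0)=f_m(z_2)$, and $c(w)=t^{-1}c(t^{-1}w)\to 0$, so $c(w)=0$; the block $b(w)$ is the unconstrained $*$.

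The main obstacle is the continuity input: properties (i) and (ii) alone are purely algebraic and do \emph{not} force $a(w)$ and $d(w)$ to be independent of $w$ — one genuinely needs the limit $t\to\infty$, hence some regularity of $f$ near the diagonal locus $w=0$, which is exactly the "and some continuity or boundedness condition" hedge in the preceding remark. I would therefore state clearly which regularity I invoke (e.g. that each $f_m$ is locally bounded, or norm-continuous, on its domain), note that for the non-commutative functions of interest — the $G_X^{(m)}$ — this holds automatically, and then the limiting argument above closes the proof. A secondary, purely bookkeeping point is that all the block matrices $S_t$ live in $M_{n+m}(\CC)$, so property (ii) applies verbatim; no conjugation by matrices over $\cB$ is needed, which is what makes the argument go through.
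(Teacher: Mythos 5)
Your scaling argument is correct as far as it goes, but your central claim that ``properties (i) and (ii) alone are purely algebraic and do \emph{not} force $a(w)$ and $d(w)$ to be independent of $w$'' is false, and this is where your proof genuinely diverges from, and falls short of, the lemma as stated. The lemma is purely algebraic (no continuity hypothesis), and it \emph{can} be proved using only (i) and (ii). The trick you are missing is to enlarge the matrix rather than scale the off-diagonal block: writing $f\!\begin{pmatrix} z_1 & w\\ 0 & z_2\end{pmatrix}=\begin{pmatrix} a & b\\ c & d\end{pmatrix}$, apply (i) to form
\[
f\begin{pmatrix}
z_1 & w & 0\\
0 & z_2 & 0\\
0 & 0 & z_1
\end{pmatrix}=\begin{pmatrix}
a & b & 0\\
c & d & 0\\
0 & 0 & f_n(z_1)
\end{pmatrix},
\]
and then observe that the $(n+m+n)\times(n+m+n)$ argument matrix is fixed by conjugation with the constant block matrix $S=\begin{pmatrix}1 & 0 & -1\\ 0 & 1 & 0\\ 0 & 0 & 1\end{pmatrix}\in M_{n+m+n}(\CC)$. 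Property (ii) then forces the value above to be fixed under the same conjugation, and comparing the $(1,3)$- and $(2,3)$-blocks of $S\,(\cdots)\,S^{-1}$ with the original yields $a-f_n(z_1)=0$ and $c=0$; a symmetric argument (appending $z_2$ instead of $z_1$) gives $d=f_m(z_2)$. No limit $t\to\infty$ and hence no continuity is needed.

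Your route does give a correct proof \emph{if} one grants continuity (or local boundedness), and you flag this hypothesis honestly; but then you have proved a strictly weaker statement. Continuity is not assumed in the lemma — indeed, the later Propositions in the same section take pains to show that for non-commutative functions, continuity can be \emph{derived} from local boundedness via exactly this kind of algebraic manipulation, and local boundedness in turn is a hypothesis imposed only later. Keeping the present lemma continuity-free is what makes that bootstrapping possible. So the concrete gap is: you asserted an impossibility that does not hold, and consequently imported an unnecessary hypothesis. Replace the $t\to\infty$ limit with the block-enlargement similarity argument and the proof closes algebraically.
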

We denote the entry in $*$ by
$\partial f(z_1,z_2)\sharp w$ or by  $\Delta f(z_1,z_2)[w]$. Note that this is an element in $M_{n,m}(\cB)$.

\begin{proof}
Write
$$f\begin{pmatrix}
z_1& w\\
0 &z_2
\end{pmatrix}
=
\begin{pmatrix}
a&b\\
c&d
\end{pmatrix}.
$$
Note that we have then by \eqref{eq:dir-sum}
$$
f
\begin{pmatrix}
z_1&w&0\\
0&z_2&0\\
0&0&z_1
\end{pmatrix}=
\begin{pmatrix}
f \begin{pmatrix}
z_1& w\\
0&z_2
\end{pmatrix}&
0\\0&f(z_1)
\end{pmatrix}=
\begin{pmatrix}
a&b&0\\c&d&0\\
0&0& f(z_1)
\end{pmatrix}.
$$
Furthermore, we have
$$
\begin{pmatrix}
z_1&w&0\\
0&z_2&0\\
0&0&z_1
\end{pmatrix}=
\underbrace{\begin{pmatrix}
1&0&-1\\
0&1&0\\
0&0&1
\end{pmatrix}}_S
\begin{pmatrix}
z_1 & w &0\\
0&z_2&0\\
0&0&z_1
\end{pmatrix}
\underbrace{
\begin{pmatrix}
1&0&1\\
0&1&0\\
0&0&1
\end{pmatrix}}_{S^{-1}}$$
and thus by \eqref{eq:similarities}
\begin{align*}
\begin{pmatrix}
a&b&0\\c&d&0\\
0&0& f(z_1)
\end{pmatrix}=
f
\begin{pmatrix}
z_1&w&0\\
0&z_2&0\\
0&0&z_1
\end{pmatrix}&=
\begin{pmatrix}
1&0&-1\\
0&1&0\\
0&0&1
\end{pmatrix}\cdot
f
\begin{pmatrix}
z_1 & w &0\\
0&z_2&0\\
0&0&z_1
\end{pmatrix}\cdot
\begin{pmatrix}
1&0&1\\
0&1&0\\
0&0&1
\end{pmatrix}\\[.5em]
&=
\begin{pmatrix}
1&0&-1\\
0&1&0\\
0&0&1
\end{pmatrix}\cdot
\begin{pmatrix}
a&b&0\\c&d&0\\
0&0& f(z_1)
\end{pmatrix}
\cdot
\begin{pmatrix}
1&0&1\\
0&1&0\\
0&0&1
\end{pmatrix}\\[.5em]
&=\begin{pmatrix}
a&b&a-f(z_1)\\
c&d& c\\
0&0&f(z_1)
\end{pmatrix}.
\end{align*}
This implies that $a=f(z_1)$ and $c=0$. Similarly, one gets that $d=f(z_1)$.
\end{proof}

\begin{lemma}\label{lem:partial-linear}
$\partial f(z_1,z_2)\sharp w$ is linear in $w$.
\end{lemma}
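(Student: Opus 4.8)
The plan is to establish the two defining properties of a $\CC$-linear map separately: homogeneity, $\partial f(z_1,z_2)\sharp(\lambda w)=\lambda\,\partial f(z_1,z_2)\sharp w$ for $\lambda\in\CC$, and additivity, $\partial f(z_1,z_2)\sharp(w+w')=\partial f(z_1,z_2)\sharp w+\partial f(z_1,z_2)\sharp w'$ for $w,w'\in M_{n,m}(\cB)$. The only tools used are property \eqref{eq:dir-sum}, property \eqref{eq:similarities}, the intertwining reformulation from the remark (if $z_1T=Tz_2$ for a scalar $T$ then $f(z_1)T=Tf(z_2)$), and the previous lemma, which reads off $\partial f(z_1,z_2)\sharp w$ as the upper right block of $f\left(\begin{smallmatrix}z_1&w\\0&z_2\end{smallmatrix}\right)$.

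For homogeneity (first $\lambda\neq0$) I would conjugate by the scalar block matrix $S=\diag(\lambda I_n,I_m)\in M_{n+m}(\CC)$. Since $\lambda$ is central, $S\left(\begin{smallmatrix}z_1&w\\0&z_2\end{smallmatrix}\right)S^{-1}=\left(\begin{smallmatrix}z_1&\lambda w\\0&z_2\end{smallmatrix}\right)$, so applying \eqref{eq:similarities} and evaluating both sides via the previous lemma, the comparison of upper right blocks yields $\partial f(z_1,z_2)\sharp(\lambda w)=\lambda\,\partial f(z_1,z_2)\sharp w$. The case $\lambda=0$ is immediate from \eqref{eq:dir-sum}, which gives $f\left(\begin{smallmatrix}z_1&0\\0&z_2\end{smallmatrix}\right)=\left(\begin{smallmatrix}f_n(z_1)&0\\0&f_m(z_2)\end{smallmatrix}\right)$, i.e.\ $\partial f(z_1,z_2)\sharp 0=0$.

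For additivity I would pass to the enlarged matrix
\[
A:=\begin{pmatrix}z_1&0&w\\0&z_1&w'\\0&0&z_2\end{pmatrix}\in M_{2n+m}(\cB),
\]
viewed as block upper triangular with diagonal blocks $\diag(z_1,z_1)$ and $z_2$. By the previous lemma together with \eqref{eq:dir-sum} one has $f(A)=\left(\begin{smallmatrix}f_n(z_1)&0&u_1\\0&f_n(z_1)&u_2\\0&0&f_m(z_2)\end{smallmatrix}\right)$, where $\binom{u_1}{u_2}=\partial f(\diag(z_1,z_1),z_2)\sharp\binom{w}{w'}$. Then I would feed the intertwining reformulation three scalar rectangular matrices in $M_{n+m,\,2n+m}(\CC)$: the matrix $T_1=\left(\begin{smallmatrix}I_n&0&0\\0&0&I_m\end{smallmatrix}\right)$ satisfies $\left(\begin{smallmatrix}z_1&w\\0&z_2\end{smallmatrix}\right)T_1=T_1A$ and so forces $u_1=\partial f(z_1,z_2)\sharp w$; similarly $T_2=\left(\begin{smallmatrix}0&I_n&0\\0&0&I_m\end{smallmatrix}\right)$ forces $u_2=\partial f(z_1,z_2)\sharp w'$; and $T_3=\left(\begin{smallmatrix}I_n&I_n&0\\0&0&I_m\end{smallmatrix}\right)$ satisfies $\left(\begin{smallmatrix}z_1&w+w'\\0&z_2\end{smallmatrix}\right)T_3=T_3A$ and forces $\partial f(z_1,z_2)\sharp(w+w')=u_1+u_2$. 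Chaining these three identities gives additivity, and together with homogeneity this proves the lemma.

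The routine (if slightly tedious) part is the bookkeeping of block sizes and the verification of the three intertwining relations and of the block positions after multiplying $f(A)$ by the $T_i$. The one genuinely conceptual step — the main obstacle if one does not see it — is the decision to enlarge to $M_{2n+m}(\cB)$ with a block-diagonal copy $\diag(z_1,z_1)$ upstairs: a single $2\times 2$ block matrix does not have enough room, whereas here one evaluation of $f$ simultaneously produces $\partial f(z_1,z_2)\sharp w$ and $\partial f(z_1,z_2)\sharp w'$ (as $u_1$ and $u_2$), which the intertwiner $T_3$ then recombines into $\partial f(z_1,z_2)\sharp(w+w')$.
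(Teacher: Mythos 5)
Your proof is correct. The homogeneity argument is exactly the paper's: conjugate $\left(\begin{smallmatrix}z_1&w\\0&z_2\end{smallmatrix}\right)$ by $\diag(\lambda I_n,I_m)$ and compare upper-right blocks, with $\lambda=0$ handled by \eqref{eq:dir-sum}. The paper stops there and defers additivity to Exercise \ref{exercise:5}; your additivity argument — enlarging to $M_{2n+m}(\cB)$ with a $\diag(z_1,z_1)$ block upstairs, reading off $u_1$ and $u_2$ via the scalar intertwiners $T_1,T_2$, and recombining with $T_3=\left(\begin{smallmatrix}I_n&I_n&0\\0&0&I_m\end{smallmatrix}\right)$ — is a correct and essentially canonical solution to that exercise, and the three intertwining identities you would need to verify do indeed hold.
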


\begin{proof}
We have to show that
\begin{itemize}
\item[(i)] for all $\lambda\in\CC$ and $w\in M_{n,m}(\cB)$:
$$\partial f(z_1,z_2)\sharp (\lambda w)=\lambda \cdot 
\partial f(z_1,z_2)\sharp w;$$
\item[(ii)] for all $w_1,w_2\in M_{n,m}(\cB)$:
$$\partial f(z_1,z_2)\sharp (w_1+w_2)=
\partial f(z_1,z_2)\sharp w_1+\partial f(z_1,z_2)\sharp w_2.$$
\end{itemize}
We only show (i); the second part is similar, see Exercise \ref{exercise:5}.
\begin{itemize}
\item[(i)]
The case $\lambda=0$ is clear, since $\partial f(z_1,z_2)\sharp 0=0$. Thus assume that $\lambda\not=0$. We have
$$\begin{pmatrix}
z_1&\lambda w\\ 0 &z_2
\end{pmatrix}=
\begin{pmatrix}
\lambda&0\\ 0&1
\end{pmatrix}
\begin{pmatrix}
z_1& w\\
0&z_2
\end{pmatrix}
\begin{pmatrix}
 1/\lambda&0\\0&1
\end{pmatrix}$$
and thus
\begin{align*}
\begin{pmatrix}
f(z_1)&\partial f(z_1,z_2)\sharp(\lambda w)\\ 0 &f(z_2)
\end{pmatrix}&=
\begin{pmatrix}
\lambda&0\\ 0&1
\end{pmatrix}
\begin{pmatrix}
f(z_1)&  f(z_1,z_2)\sharp w\\
0&f(z_2)
\end{pmatrix}
\begin{pmatrix}
 1/\lambda&0\\0&1
\end{pmatrix}\\[.5em]
&=\begin{pmatrix}
f(z_1)& \lambda \cdot f(z_1,z_2)\sharp w\\
0&f(z_2)
\end{pmatrix}.
\end{align*}
\end{itemize}
\end{proof}

\begin{prop}\label{prop:3.7}
\begin{enumerate}
\item
$\partial f(z_1,z_2)$ is a difference operator, i.e., we have for all $m\in\NN$ and all $z_1,z_2\in M_m(\cB)$
$$f(z_1)-f(z_2)=\partial f(z_1,z_2)\sharp (z_2-z_1).$$
\item
If $f$ is continuous, then, for all $m\in\NN$ and all $z\in M_m(\cB)$, $\partial f(z,z)$ is a differential operator, i.e.,
$$\partial f(z,z)\sharp w =\lim_{\ee\searrow 0} \frac{f(z+\ee w)-f(z)}\ee.$$
\end{enumerate}
\end{prop}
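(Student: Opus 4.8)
For part (1), the plan is to exploit the similarity identity that was highlighted in the remark immediately before the previous lemma: an upper-triangular block matrix with equal diagonal blocks is conjugate to the block-diagonal matrix. Concretely, for $z_1, z_2 \in M_m(\cB)$ I would write
\[
\begin{pmatrix} z_1 & z_1 - z_2 \\ 0 & z_2 \end{pmatrix}
= \begin{pmatrix} 1 & 1 \\ 0 & 1 \end{pmatrix}
\begin{pmatrix} z_1 & 0 \\ 0 & z_2 \end{pmatrix}
\begin{pmatrix} 1 & -1 \\ 0 & 1 \end{pmatrix}.
\]
Applying $f_{2m}$ and using first \eqref{eq:similarities} and then \eqref{eq:dir-sum} on the right-hand side gives
\[
\begin{pmatrix} f_m(z_1) & \partial f(z_1,z_2)\sharp(z_1-z_2) \\ 0 & f_m(z_2) \end{pmatrix}
= \begin{pmatrix} 1 & 1 \\ 0 & 1 \end{pmatrix}
\begin{pmatrix} f_m(z_1) & 0 \\ 0 & f_m(z_2) \end{pmatrix}
\begin{pmatrix} 1 & -1 \\ 0 & 1 \end{pmatrix}
= \begin{pmatrix} f_m(z_1) & f_m(z_1)-f_m(z_2) \\ 0 & f_m(z_2) \end{pmatrix},
\]
so comparing the upper-right entries yields $f(z_1)-f(z_2) = \partial f(z_1,z_2)\sharp(z_1-z_2)$. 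Replacing $w = z_1 - z_2$ by $z_2 - z_1$ and using linearity of $\partial f(z_1,z_2)\sharp w$ in $w$ (the previous lemma) gives the stated sign convention $f(z_1)-f(z_2) = \partial f(z_1,z_2)\sharp(z_2-z_1)$; if the intended identity is literally as displayed one just absorbs the sign there.

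For part (2), I would apply part (1) with $z_1 = z + \ee w$ and $z_2 = z$, obtaining
\[
f(z+\ee w) - f(z) = \partial f(z+\ee w, z)\sharp(\ee w) = \ee\,\partial f(z+\ee w, z)\sharp w,
\]
using linearity in the second slot to pull out the scalar $\ee$. Hence
\[
\frac{f(z+\ee w)-f(z)}{\ee} = \partial f(z+\ee w, z)\sharp w,
\]
and it remains to let $\ee \searrow 0$. The conclusion $\partial f(z,z)\sharp w = \lim_{\ee\searrow 0}\frac{f(z+\ee w)-f(z)}{\ee}$ then follows provided $\partial f(z+\ee w, z)\sharp w \to \partial f(z,z)\sharp w$ as $\ee\to 0$, i.e.\ provided $\zeta \mapsto \partial f(\zeta, z)\sharp w$ is continuous at $\zeta = z$. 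This is where the continuity hypothesis on $f$ enters, and it is the main obstacle: one has to argue that continuity of the collection $f$ (on the relevant $M_m$-levels) forces continuity of the off-diagonal entry of $f_{2m}\!\begin{pmatrix} \zeta & w \\ 0 & z\end{pmatrix}$ in $\zeta$. Since $\partial f(\zeta,z)\sharp w$ is by definition precisely that off-diagonal entry, and the map $\zeta \mapsto \begin{pmatrix}\zeta & w \\ 0 & z\end{pmatrix}$ is continuous $M_m(\cB)\to M_{2m}(\cB)$, continuity of $f_{2m}$ together with continuity of the projection onto the upper-right block does the job; I would spell out exactly this composition. (Strictly, one only needs continuity of $f$ along the segment $\ee \mapsto z + \ee w$, lifted to level $2m$, so even a mild regularity assumption suffices.)

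One small bookkeeping point I would flag: $\partial f(z_1,z_2)$ was defined in the previous lemma for $z_1 \in M_n(\cB)$, $z_2 \in M_m(\cB)$ with possibly $n \ne m$, but part (1) is stated only for $z_1, z_2$ of the same size $m$, which is exactly the case needed for the block-triangular similarity trick to apply (the conjugating matrix $\begin{pmatrix}1 & 1\\0&1\end{pmatrix}$ must be square with square diagonal blocks). So no generality is lost, and the square-size restriction in the statement is not an accident.
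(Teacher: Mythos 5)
Your approach to both parts is exactly the one the paper takes: conjugate the block-diagonal matrix by the unipotent $S=\begin{pmatrix}1&1\\0&1\end{pmatrix}$ and read off the upper-right entry for part (1); then feed this back in, divide by $\ee$, and pass to the limit using continuity of $f_{2m}$ for part (2). The core ideas are all there. There are, however, sign slips in part (1) that should be cleaned up.

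Your first displayed identity is literally false: computing $S\begin{pmatrix}z_1&0\\0&z_2\end{pmatrix}S^{-1}$ gives $\begin{pmatrix}z_1&z_2-z_1\\0&z_2\end{pmatrix}$, not $\begin{pmatrix}z_1&z_1-z_2\\0&z_2\end{pmatrix}$. Correspondingly, $S\begin{pmatrix}f(z_1)&0\\0&f(z_2)\end{pmatrix}S^{-1}$ has upper-right entry $f(z_2)-f(z_1)$, not $f(z_1)-f(z_2)$. These two errors cancel, so the conclusion you arrive at, $f(z_1)-f(z_2)=\partial f(z_1,z_2)\sharp(z_1-z_2)$, is in fact the correct identity — and it is exactly what the paper's own proof derives (namely $\partial f(z_1,z_2)\sharp(z_2-z_1)=f(z_2)-f(z_1)$, which is the same thing after using linearity in the last slot). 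The sentence at the end of your part (1), claiming that one can replace $w=z_1-z_2$ by $z_2-z_1$ and still keep $f(z_1)-f(z_2)$ on the left, is not correct: by linearity, flipping the sign of $w$ flips the right-hand side, giving $f(z_2)-f(z_1)=\partial f(z_1,z_2)\sharp(z_2-z_1)$. The form appearing in the proposition statement is off by a sign relative to the proof's own computation (a typo in the source), so you should not try to contort the argument to match it; the correct relation is the one your cancelling sign errors produced.

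Part (2) is fine and matches the paper up to a harmless relabeling: you vary the first argument of $\partial f$ (so the perturbation $z+\ee w$ sits in the $(1,1)$ block), while the paper varies the second (so $z+\ee w$ sits in the $(2,2)$ block); either way the block-triangular matrix converges to $\begin{pmatrix}z&w\\0&z\end{pmatrix}$ and continuity of $f_{2m}$, together with continuity of the coordinate projection onto the upper-right block, gives the claimed limit. Your explicit spelling-out of the composition of continuous maps is a perfectly good way to justify the passage to the limit that the paper leaves a bit terse.
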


\begin{proof}
\begin{enumerate}
\item
Put $S=\begin{pmatrix} 1& 1\\ 0&1 \end{pmatrix}$; then
$$\begin{pmatrix}
z_1& z_2-z_1\\0&z_2 \end{pmatrix}=
 S\begin{pmatrix}
z_1 &0\\ 0& z_2 \end{pmatrix} S^{-1},$$
and thus
\begin{align*}
\begin{pmatrix}
f(z_1)&\partial f(z_1,z_2)\sharp (z_2-z_1)\\
0&z_2 \end{pmatrix} &=
S \begin{pmatrix}
f(z_1)&0\\ 0& f(z_2)\end{pmatrix} S^{-1} \\[.5em]
&=
\begin{pmatrix}
f(z_1)& f(z_2)-f(z_1)\\
0& f(z_2)
\end{pmatrix}.
\end{align*}
\item
By Lemma \ref{lem:partial-linear} and by part (1), we have
\begin{align*}
\ee\cdot \partial f(z,z+\ee w)\sharp w=\partial f(z,z+\ee w)\sharp (\ee w)
=f(z+\ee w)-f(z).
\end{align*}
This yields
$$\partial f(z,z+\ee w)\sharp w=\frac 1\ee[f(z+\ee w)-f(z)],$$
and thus
$$f\begin{pmatrix}
z&w\\0& z+\ee w \end{pmatrix}=
\begin{pmatrix}
f(z) &\frac 1\ee [f(z+\ee w)-f(z)]\\
0 & f(z+\ee w) 
\end{pmatrix}.$$
As $f$ is assumed to be continuous, the left hand side of this converges for $\ee\searrow 0$ to
$$f\begin{pmatrix}
z&w\\0&z \end{pmatrix}=
\begin{pmatrix} f(z)& \partial f(z,z)\sharp w\\ 0& f(z) \end{pmatrix}$$
This implies then that also the right hand side of the above equation converges and we must have
$$\partial f(z,z)\sharp w=\lim_{\ee\searrow 0} \frac 1\ee [f(z+\ee w)-f(z)].$$
\end{enumerate}
\end{proof}

\begin{definition}\label{def:analytic}
Let $(E,\Vert\cdot\Vert_E)$ and $(F,\Vert\cdot\Vert_E)$ be complex Banach spaces and let $\emptyset\not=\Omega\subset E$ be open. A function $f:\Omega\to F$ is called
\begin{itemize}
\item[(i)] \emph{G\^{a}teau holomorphic} on $\Omega$, if
$$\lim_{\substack{ z\to 0\\ z\in\CC\backslash\{0\}}} \frac 1 z[f(x+zh)-f(x)]=:\delta f(x;h)$$
exists in $(F,\Vert\cdot\Vert_F)$ for all $x\in\Omega$ and all $h\in E$;
\item[(ii)]
\emph{analytic} on $\Omega$, if it is G\^{a}teau holomorphic and locally bounded, i.e., for all $x\in \Omega$ there exists $r=r(x)>0$ such that
$$\sup_{\substack{y\in\Omega\\ \Vert y-x\Vert_E<r}} \Vert f(y)\Vert_F <\infty.$$
\end{itemize}
\end{definition}

\begin{remark}
\begin{enumerate}
\item
By a theorem of Hille (1944) one knows that an analytic function is actually also
\emph{Fr\'echet holomorphic}, i.e., the ``total derivative'' $\delta f(x;\cdot):E\to F$ is a
bounded linear operator and
$$\lim_{\Vert h\Vert_E\to 0} \frac 1{\Vert h\Vert_E} \Vert f(x+h)-f(x) -\delta
f(x;h)\Vert_F=0.$$
Moreover, $f$ has locally a uniformly convergent ``Taylor series expansion''.
\item
In Proposition \ref{prop:3.7} we have seen how to get G\^{a}teau holomorphic from the algebraic conditions on our non-commutative functions, under the condition of continuity. According to Definition \ref{def:analytic} and the first part of this remark local boundedness is a more natural condition to ask for. It turns out that this is actually sufficient to ensure continuity (and thus analyticity) for our non-commutative functions.
\end{enumerate}
\end{remark}

\begin{prop}
Let $f=(f_m)_{m\in\NN}$, $f_m:M_m(\cB)\to M_m(\cB)$, be a non-commutative function. If $f$ is locally bounded (i.e., each $f_m$ is locally bounded), then $f$ is continuous (i.e., each $f_m$ is continuous). [To be precise: Boundedness and continuity is here with respect to the $C^*$-norm on each $M_m(\cB)$.]
\end{prop}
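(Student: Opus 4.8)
The plan is to upgrade the hypothesis of local boundedness to a \emph{local Lipschitz} bound, using the difference operator $\partial f$ that properties (i) and (ii) have already produced, and then read off continuity directly from the difference-operator identity $f_m(z_1)-f_m(z_2)=\partial f(z_1,z_2)\sharp(z_2-z_1)$ of Proposition~\ref{prop:3.7}(1).

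First I would fix $m\in\NN$ and a point $z\in M_m(\cB)$ and invoke local boundedness of $f_{2m}$: pick $r>0$ and $C<\infty$ with $\Vert f_{2m}(u)\Vert\le C$ whenever $\Vert u-\diag(z,z)\Vert<r$. The crucial point is that for $z_1,z_2,w\in M_m(\cB)$ the block $f_{2m}\!\left(\begin{smallmatrix}z_1&w\\0&z_2\end{smallmatrix}\right)$ has $\partial f(z_1,z_2)\sharp w$ in its upper-right corner, that a corner of a matrix over a $C^*$-algebra has norm at most that of the whole matrix, and that $\left\Vert\left(\begin{smallmatrix}z_1&w\\0&z_2\end{smallmatrix}\right)-\diag(z,z)\right\Vert\le\Vert z_1-z\Vert+\Vert z_2-z\Vert+\Vert w\Vert$. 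Hence, once $z_1,z_2$ lie within $r/3$ of $z$ and $\Vert w\Vert<r/3$, we get $\Vert\partial f(z_1,z_2)\sharp w\Vert\le C$.

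Next I would feed in linearity of $w\mapsto\partial f(z_1,z_2)\sharp w$ (Lemma~\ref{lem:partial-linear}) to convert this ball estimate into a homogeneous one: applying the bound to $\tfrac{r}{4\Vert w\Vert}w$ and scaling back gives $\Vert\partial f(z_1,z_2)\sharp w\Vert\le\tfrac{4C}{r}\Vert w\Vert$ for \emph{all} $w$, uniformly over $z_1,z_2$ in the $r/3$-ball about $z$. Finally, Proposition~\ref{prop:3.7}(1) applied with $z_1=z'$, $z_2=z$ yields, for $\Vert z'-z\Vert<r/3$,
$$\Vert f_m(z')-f_m(z)\Vert=\Vert\partial f(z',z)\sharp(z-z')\Vert\le\frac{4C}{r}\,\Vert z'-z\Vert,$$
so $f_m$ is locally Lipschitz, hence continuous, at $z$; since $m$ and $z$ were arbitrary, $f$ is continuous.

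I do not anticipate a real obstacle; the only things needing (routine) care are that the $C^*$-norm of an off-diagonal corner of a $2\times2$ block matrix is dominated by the norm of the block matrix — immediate, since the corner is a compression by norm-one rectangular scalar matrices — and that in the last step the difference-operator bound is needed at the pair $(z',z)$, not $(z,z)$, so the neighborhood must be chosen small enough that \emph{both} arguments remain in the region where $f_{2m}$ is bounded; that is exactly why I work with radius $r/3$ rather than $r$.
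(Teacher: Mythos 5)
Your proof is correct and is essentially the same argument as the paper's: both pass to $M_{2m}(\cB)$, use local boundedness of $f_{2m}$ around $\diag(z,z)$, extract a bound on the off-diagonal corner of $f_{2m}\bigl(\begin{smallmatrix}z_1&w\\0&z_2\end{smallmatrix}\bigr)$ via the difference operator and its linearity, and then read off the estimate on $f(z')-f(z)$ from the difference formula of Proposition~\ref{prop:3.7}(1). The only cosmetic difference is that you first package the estimate as a uniform local Lipschitz bound $\Vert\partial f(z_1,z_2)\sharp w\Vert\le\frac{4C}{r}\Vert w\Vert$ valid for $z_1,z_2$ near $z$, whereas the paper proceeds directly by choosing the scaled matrix $\bigl(\begin{smallmatrix}w&\frac{C}{\ee}(w-z)\\0&z\end{smallmatrix}\bigr)$ and running an $\ee$--$\delta$ argument; your version slightly sharpens the conclusion to local Lipschitz continuity, which the paper's choice of $\delta$ also implicitly gives, but the mechanism is identical.
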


\begin{proof}
We know, by \ref{prop:3.7}, that for $z_1,z_2\in M_m(\cB)$
$$f\begin{pmatrix}
z_1&z_2-z_1\\ 0&z_2 \end{pmatrix}=
\begin{pmatrix}
f(z_1)& f(z_2)-f(z_1)\\ 0& f(z_2) \end{pmatrix};$$
and, since $\partial f(z_1,z_2)\sharp w$ is linear in $w$ (by Lemma \ref{lem:partial-linear}), also for $\lambda\in\CC$
\begin{equation}\label{eq:deriv-formula}
f\begin{pmatrix}
z_1&\lambda(z_2-z_1)\\ 0&z_2 \end{pmatrix}=
\begin{pmatrix}
f(z_1)&\lambda[ f(z_2)-f(z_1)]\\ 0& f(z_2) \end{pmatrix}.
\end{equation}
Now take $z\in M_m(\cB)$ and $\ee>0$; we want to find $\delta>0$ such that
$w\in M_m(\cB)$ and $\Vert w-z\Vert <\delta$ implies that
$\Vert f(w)-f(z)\Vert \leq\ee$.\\
For this we go to $M_{2m}(\cB)$ and consider there 
$$z\oplus z:=\begin{pmatrix} z&0\\ 0&z \end{pmatrix}.$$ 
Since $f_{2m}$ is locally bounded we find $r>0$ such that $\sup_{\dots} \Vert f(y)\Vert =:C<\infty$, where the
supremum is over $y\in M_{2m}(\cB)$ such that
$\Vert y-z\oplus z\Vert<r$.\\
Now we choose $\delta:=\min\{\frac r2, \ee\frac r{2C}\}$ and consider $w\in M_m(\cB)$ with $\Vert w-z\Vert< \delta$. Then we have
\begin{align*}
\left\Vert \begin{pmatrix}
w& \frac C\ee(w-z)\\ 0&z \end{pmatrix} -
\begin{pmatrix} z&0\\0&z \end{pmatrix}\right\Vert&=
\left\Vert\begin{pmatrix} w-z& \frac C\ee(w-z)\\0&0\end{pmatrix}\right\Vert
&\leq \underbrace{\Vert w-z\Vert}_{<\delta\leq\frac r2} +\frac C\ee\underbrace{\Vert w-z\Vert}_{<\delta\leq\frac \ee C \frac r2}<r
\end{align*}
and thus, by also using \eqref{eq:deriv-formula},
$$\left\Vert \begin{pmatrix}
f(w)& \frac C\ee[f(w)-f(z)]\\0& f(z)\end{pmatrix}\right\Vert=
\left\Vert f \begin{pmatrix} w&\frac C\ee (w-z)\\0&z\end{pmatrix}
\right\Vert\leq C.$$
This implies then
$\Vert \frac C\ee[f(w)-f(z)]\Vert\leq C$, thus
$\Vert f(w)-f(z)\Vert\leq\ee$.
\end{proof}

\begin{remark}
\begin{enumerate}
\item
This shows that for locally bounded non-commutative functions we get the derivative
$\delta f(z;w)=\partial f(z,z)\sharp w$ as part of the data of higher $f_m$:
$$f\begin{pmatrix} z&w\\0&z\end{pmatrix}=
\begin{pmatrix} f(z)&\delta f(z;w)\\ 0&f(z)\end{pmatrix}.$$
\item
In the same way we get also higher derivatives:
$$f \begin{pmatrix}
z_1 &w_1&0\\ 0&z_2&w_2\\ 0&0&z_3 \end{pmatrix}=
\begin{pmatrix}
f(z_1)& \partial f(z_1,z_2)\sharp w_1&*\\
0& f(z_2) & \partial f(z_1,z_3)\sharp w_2\\
0&0&f(z_3) \end{pmatrix},$$
where 
$$*=:\partial^2f(z_1,z_2,z_3)\sharp (w_1,w_2)$$
is a second-order difference quotient, which gives the second derivative \linebreak[4]
$\partial^2f(z,z,z)\sharp (w_1,w_2)$.
\item
One should also note that uniform local boundedness of $f$ allows us to control the size of the derivatives, so that one gets a convergent ``Taylor-Taylor expansion''
$$f(z+w)=\sum_{k=0}^\infty \partial^k(\underbrace{z,z,\dots,z)}_{k+1}\sharp (\underbrace{w,\dots,w}_k).$$
In Exercise \ref{exercise:7} you are asked to prove this expansion.\\
The Taylors here are two different people: Brook Taylor ($\sim$ 1715) from the usual Taylor series, and Joseph Taylor (1972), who started the theory of non-commutative functions in \cite{Tay}.
\\
For more on the Taylor-Taylor expansion (and also other aspects of non-commutative functions) one should consult the monograph
 \emph{Foundations of Free Non-Commutative Function Theory} (2014) by
D. Kaliuzhnyi-Verbovetskyi and V. Vinnikov \cite{KVV}.
\end{enumerate}
\end{remark}

\section{Rigorous definition of fully matricial functions, caring also about domain}\label{section:3.2}

\begin{definition}
\begin{enumerate}
\item
For a $C^*$-algebra $\cB$ we denote:
\begin{enumerate}
\item
$M_n(\cB)=M_n(\CC)\otimes\cB=:\cB^{(n)}$;
\item
for $z\in\cB^{(n)}$ we put
$$z^{(m)}=1_m\otimes z=\begin{pmatrix}
z&0&\hdots&0\\
0&z&\hdots&0\\
\vdots&\vdots&\ddots&\vdots\\
0&0&\hdots&z\end{pmatrix}
\in M_{nm}(\cB);$$
\item
for $z_1\in\cB^{(n)}$, $z_2\in \cB^{(m)}$ we put
$$z_1\oplus z_2=\begin{pmatrix}
z_1&0\\0&z_2
\end{pmatrix}\in \cB^{(n+m)};$$
\item
for $z\in\cB^{(n)}$ and $r>0$ we put
$$B^{(n)}(z,r):=\{w\in\cB^{(n)}\mid \Vert z-w\Vert<r\}\quad\text{and}
\quad
B(z,r):=\bigcup_{m\geq 1} B^{(nm)}(z^{(m)},r).$$
\end{enumerate}
\item
A \emph{fully matricial domain} $\Omega=(\Omega^{(n)})_{n\in\NN}$ over $\cB$ is a sequence of sets $\Omega^{(n)}\subset \cB^{(n)}$ satisfying the following conditions:
\begin{enumerate}
\item
$\Omega$ respects direct sums: $z_1\in \Omega^{(n)}$ and $z_2\in \Omega^{(m)}$ implies that $z_1\oplus z_2\in\Omega^{(n+m)}$;
\item
$\Omega$ is uniformly open; i.e., for each $z\in\Omega^{(n)}$ there exists 
$r>0$ such that $B(z,r)\subset\Omega$;
\item
$\Omega$ is non-empty; i.e., at least one $\Omega^{(n)}$ is non-empty.
\end{enumerate}
\item
Let $\Omega_1$ and $\Omega_2$ be fully matricial domains over $\cB_1$ and
$\cB_2$, respectively. A \emph{fully matricial function} $f=(f^{(n)})_{n\in\NN}:\Omega_1\to\Omega_2$ is a sequence of functions $f^{(n)}:\Omega_1^{(n)}\to\Omega_2^{(n)}$ satisfying the following conditions:
\begin{enumerate}
\item
$f$ respects intertwinings; i.e.,  for $z_1\in \Omega_1^{(n)}$, $z_2\in
\Omega_2^{(m)}$, $T\in M_{n\times m}(\CC)$ we have: $z_1T=Tz_2$ implies
that $f^{(n)}(z_1) T=T f^{(m)}(z_2)$.
\item
$f$ is uniformly locally bounded; i.e., for each $z\in \Omega_1^{(n)}$ there exit
$r>0$ and $M>0$ such that
$$B(z,r)\subset\Omega_1\qquad\text{and}\qquad f(B(z,r))\subset B(0,M).$$
\end{enumerate}
\end{enumerate}
\end{definition}

\begin{example}\label{ex:3.13}
\begin{enumerate}
\item
Non-commutative monomials and polynomials over $\cB$ are fully matricial with domains $\Omega_1^{(n)}=\Omega_2^{(n)}=M_n(\cB)$; see Exercise \ref{exercise:6}.
\item
Consider
$$\Omega^{(n)}:=\{z\in\cB^{(n)}\mid \text{$z$ is invertible}\}.$$
Then $\Omega=(\Omega^{(n)})_{n\in\NN}$ is a fully matricial domain and
$$f:\Omega\to\Omega;\quad z\mapsto f(z):=z^{-1}$$
is fully matricial.
\begin{proof}
It is clear that $\Omega$ respects direct sums and is non-empyt. To see that $\Omega$ is uniformly open, we claim that
$B(z,1/\Vert z^{-1}\Vert)\subset\Omega$. To check this,  note that for
$w\in B(z,1/\Vert z^{-1}\Vert)$ we have
\begin{align*}
w^{-1}=[z-(z-w)]^{-1}
=z^{-1}[1-(z-w)z^{-1}]^{-1}
=z^{-1} \sum_{k=0}^\infty [(z-w)z^{-1}]^k.
\end{align*}
Since $\Vert (z-w)z^{-1}\Vert<1$, the series converges in norm, and thus $w\in\Omega$.\\
From this calculation we also get that
$$\Vert w^{-1}\Vert\leq \frac {\Vert z^{-1}\Vert}{1-\Vert z^{-1}\Vert\cdot
\Vert z-w\Vert},$$
which shows that
$$f[B(z,1/(2\Vert z^{-1}\Vert))]\subset B(0,\Vert z^{-1}\Vert/2);$$
thus $f$ is uniformly locally bounded.\\
$f$ also respects intertwinings: suppose that $z_1T=Tz_2$; this implies that
$Tz_2^{-1}=z_1^{-1}T$, i.e., $Tf(z_2)=f(z_1)T$.
\end{proof}
\end{enumerate}
\end{example}

\begin{prop}
\begin{enumerate}
\item
Suppose that $f,g:\Omega_1\to\Omega_2$ are fully matricial. Then so are $f+g$ and $fg$.
\item
Suppose that $f:\Omega_1\to\Omega_2$ and $g:\Omega_2\to\Omega_3$ are fully matricial. Then so is the composition $g\circ f:\Omega_1\to\Omega_3$.
\end{enumerate}
\end{prop}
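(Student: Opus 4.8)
The plan is, in each of the three cases, simply to verify the two defining properties of a fully matricial function: that it respects intertwinings, and that it is uniformly locally bounded. The intertwining conditions will be pure bookkeeping; the only genuine work is the uniform local boundedness of the composition, for which I would invoke a domain version of the fact that fully matricial functions are continuous.

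For part (1), suppose $z_1\in\Omega_1^{(n)}$, $z_2\in\Omega_1^{(m)}$, $T\in M_{n\times m}(\CC)$ with $z_1T=Tz_2$, so that $f^{(n)}(z_1)T=Tf^{(m)}(z_2)$ and $g^{(n)}(z_1)T=Tg^{(m)}(z_2)$. Adding these gives the intertwining relation for $f+g$, and $f^{(n)}(z_1)g^{(n)}(z_1)T=f^{(n)}(z_1)Tg^{(m)}(z_2)=Tf^{(m)}(z_2)g^{(m)}(z_2)$ gives it for $fg$. For uniform local boundedness, given $z\in\Omega_1^{(n)}$ pick radii and bounds $(r_f,M_f)$, $(r_g,M_g)$ for $f$ and $g$; on $B(z,\min\{r_f,r_g\})\subset\Omega_1$ one has $\Vert(f+g)(w)\Vert\le\Vert f(w)\Vert+\Vert g(w)\Vert<M_f+M_g$ and $\Vert(fg)(w)\Vert\le\Vert f(w)\Vert\,\Vert g(w)\Vert<M_fM_g$, so $(\min\{r_f,r_g\},M_f+M_g)$ resp.\ $(\min\{r_f,r_g\},M_fM_g)$ work. (For the target to literally be $\Omega_2$ one of course needs $\Omega_2$ closed under the operation in question, e.g.\ $\Omega_2^{(n)}=M_n(\cB_2)$; what has to be checked are precisely the two conditions above.)

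For part (2), set $(g\circ f)^{(n)}:=g^{(n)}\circ f^{(n)}$, which maps $\Omega_1^{(n)}$ into $\Omega_3^{(n)}$ since $f^{(n)}(\Omega_1^{(n)})\subset\Omega_2^{(n)}$ and $g^{(n)}(\Omega_2^{(n)})\subset\Omega_3^{(n)}$. It respects intertwinings: if $z_1T=Tz_2$ then $f^{(n)}(z_1)T=Tf^{(m)}(z_2)$, and since $f^{(n)}(z_1)\in\Omega_2^{(n)}$, $f^{(m)}(z_2)\in\Omega_2^{(m)}$, applying the intertwining property of $g$ yields $g^{(n)}(f^{(n)}(z_1))T=Tg^{(m)}(f^{(m)}(z_2))$.

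It remains to prove uniform local boundedness of $g\circ f$, and this is the one place that needs an extra ingredient; I regard it as the main obstacle. I would use that a fully matricial $f$ is continuous: the argument establishing that locally bounded non-commutative functions are continuous carries over to the setting with domains, since the auxiliary block matrices in that proof stay inside $\Omega_1$ (because $\Omega_1$ respects direct sums and is uniformly open), and the resulting continuity estimate is uniform across all amplifications precisely because the local bound for $f$ is itself a uniform one. This gives: for every $z\in\Omega_1^{(n)}$ and every $\ee>0$ there is $r>0$ with $B(z,r)\subset\Omega_1$ and $f(B(z,r))\subset B(f^{(n)}(z),\ee)$. Granting this, fix $z\in\Omega_1^{(n)}$, put $y:=f^{(n)}(z)\in\Omega_2^{(n)}$, and choose $s>0$, $M>0$ with $B(y,s)\subset\Omega_2$ and $g(B(y,s))\subset B(0,M)$; then choose $r>0$ with $B(z,r)\subset\Omega_1$ and $f(B(z,r))\subset B(y,s)$. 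Hence $(g\circ f)(B(z,r))=g(f(B(z,r)))\subset g(B(y,s))\subset B(0,M)$, so $g\circ f$ is uniformly locally bounded with bound $M$. Modulo the continuity statement for $f$, the whole proposition is thus a direct unwinding of the two axioms.
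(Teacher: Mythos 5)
Your proof of part (1) is essentially identical to the paper's: the intertwining check by distributivity and the uniform local bound via $\min(r_1,r_2)$ and $M_1+M_2$, $M_1M_2$ are exactly the paper's argument. Your side remark that the codomain must actually be closed under the operation (e.g.\ $\Omega_2^{(n)}=M_n(\cB_2)$) is a fair observation about the way the statement is phrased.

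For part (2) the paper explicitly declines to give a proof ("We only prove (1)"), so your argument is new content rather than a variant of the paper's; it is correct, and it uses the right tool. The one substantive point is the uniform continuity estimate for $f$, and you have identified why it holds: the proof that a locally bounded non-commutative function is continuous produces the modulus $\delta=\min\{r/2,\,\ee r/(2C)\}$ from the radius $r$ and bound $C$ of a local bound around $z\oplus z$, and in the fully matricial setting those can be taken to be the \emph{uniform} $r_0,M_0$ coming from condition 3.12(3)(b). Two small points worth being conscious of when filling in the details: first, the auxiliary matrix $\left(\begin{smallmatrix} w & (M_0/\ee)(w-z^{(m)}) \\ 0 & z^{(m)} \end{smallmatrix}\right)$ lies in $\Omega_1$ because $z^{(m)}\oplus z^{(m)}$ \emph{is literally} $z^{(2m)}$ (not just similar to it), so this matrix is inside $B(z,r_0)\subset\Omega_1$; second, the identity $f^{(nm)}(z^{(m)})=\bigl(f^{(n)}(z)\bigr)^{(m)}$, which is needed to pass from the continuity estimate at $z^{(m)}$ to the inclusion $f(B(z,r))\subset B(y,s)$, follows from $f$ respecting direct sums. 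With those two observations in place your argument is complete, and the rest — composing the two radii — is as you wrote it.
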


\begin{proof}
We only prove (1). Suppose that $z_1T=Tz_2$. Then we have
\begin{align*}
(f+g)(z_1)\cdot T=f(z_1)T+g(z_1)T=
Tf(z_2)+Tg(z_2)=
T\cdot (f+g)(z_2)
\end{align*}
and
$$(fg)(z_1)\cdot T=f(z_1)g(z_1)T=f(z_1)Tg(z_2)=Tf(z_2)g(z_2)=T\cdot (fg)(z_2).$$
Uniform local boundedness can be seen as follows: Consider $z\in\Omega^{(n)}$, then there are $r_1,M_1$ and $r_2,M_2$ such that
$$f(B(z,r_1))\subset B(0,M_1)\qquad\text{and}\qquad
g(B(z,r_2))\subset B(0,M_2).$$
Put $r:=\min(r_1,r_2)$; then we have for $w\in B(z,r)$
$$\Vert(f+g)(w)\Vert\leq \Vert f(w)\Vert+\Vert g(w)\Vert\leq M_1+M_2$$
and
$$\Vert (fg)(w)\Vert \leq \Vert f(w)\Vert\cdot \Vert g(w)\Vert\leq M_1\cdot M_2.$$
\end{proof}

\chapter{The Operator-Valued Cauchy Transform}

Now let's get serious about the operator-valued Cauchy transform as a fully
matricial function.

\section{The upper half plane as domain of the Cauchy transform}

\begin{definition}
Let $(\cA,\cB,E)$ be an operator-valued $C^*$-probability space and let $X=X^*\in\cA$. The \emph{Cauchy transform} $G_X=(G_X^{(n)})_{n\in\NN}$ of $X$ is defined by
\begin{align*}
G_X^{(n)}:H^+(M_n(\cB))\to H^-(M_n(\cB)),\qquad
z\mapsto \id\otimes E[\underbrace{(z-1\otimes X)^{-1}}_{\in M_n(\cA)}],
\end{align*}
where $H^+$ and $H^-$ denote the upper and lower, respectively, half-plane.
\end{definition}

\begin{notation}
Let $\cA$ be a unital $C^*$-algebra.
\begin{enumerate}
\item
For $A\in\cA$ we put
\begin{align*}
\Re(A)&:=\frac 12 (A+A^*) \qquad &\text{real part}\\[.5em]
\Im(A)&:=\frac 1{2i}(A-A^*)\qquad&\text{imaginary part}
\end{align*}
\item
We define the \emph{strict upper/lower half-plane} of $\cA$ by
\begin{align*}
H^+(\cA)&:=\{ A\in\cA\mid \exists\, \ee>0: \Im(A)\geq \ee\cdot 1\}\\
H^-(\cA)&:=\{ A\in\cA\mid \exists\, \ee>0: \Im(A)\leq -\ee\cdot 1\}.
\end{align*}
Instead of $\exists\, \ee>0: \Im(A)\geq \ee\cdot 1$ we will usually write
$\Im(A)>0$; and in the same spirit $\Im(A)<0$ for the second condition.
\end{enumerate}
\end{notation}

\begin{prop}\label{prop:4.3}
Let $A\in H^+(\cA)$. Then $A$ is invertible and $A^{-1}\in H^-(\cA)$.
\end{prop}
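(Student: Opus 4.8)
The plan is to write $A = \Re(A) + i\,\Im(A)$ with $\Im(A) \geq \ee\cdot 1$ for some $\ee > 0$, and to factor out the invertible positive part $\Im(A)$. Set $H := \Im(A)$, so $H \geq \ee > 0$ and hence $H$ is invertible with $H^{-1/2}$ well-defined and positive. Then
$$A = H^{1/2}\bigl(H^{-1/2}\Re(A)H^{-1/2} + i\cdot 1\bigr)H^{1/2},$$
so it suffices to show that $T + i\cdot 1$ is invertible whenever $T = T^*$, and to track what happens to the imaginary part under this conjugation.

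First I would prove the self-adjoint case: for $T = T^*\in\cA$, the element $T + i\cdot 1$ is invertible with $\Im\bigl((T+i\cdot 1)^{-1}\bigr) < 0$. Invertibility follows because $-i\notin\spec(T)\subseteq\RR$ (the spectrum of a self-adjoint element of a unital $C^*$-algebra is real), or more elementarily because $\norm{(T+i)x}^2 = \norm{Tx}^2 + \norm{x}^2 \geq \norm{x}^2$ in the universal representation, giving a bounded inverse. For the sign of the imaginary part, compute
$$\Im\bigl((T+i)^{-1}\bigr) = \frac{1}{2i}\bigl((T+i)^{-1} - (T-i)^{-1}\bigr) = \frac{1}{2i}(T-i)^{-1}\bigl((T-i)-(T+i)\bigr)(T+i)^{-1} = -(T-i)^{-1}(T+i)^{-1},$$
and since $(T+i)^{-1} = \bigl((T-i)^{-1}\bigr)^*$ this equals $-B^*B$ with $B = (T+i)^{-1}$; moreover $B^*B \geq \delta\cdot 1$ for some $\delta > 0$ because $B$ is invertible (indeed $B^*B \geq \norm{B^{-1}}^{-2}\cdot 1$), so $\Im\bigl((T+i)^{-1}\bigr) \leq -\delta\cdot 1 < 0$.

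Now I would transfer this back to $A$ via the conjugation above. Invertibility of $A$ is immediate: $A = H^{1/2}(T + i\cdot 1)H^{1/2}$ with $T = H^{-1/2}\Re(A)H^{-1/2}$ self-adjoint, $H^{1/2}$ invertible, so $A^{-1} = H^{-1/2}(T+i\cdot 1)^{-1}H^{-1/2}$. For the imaginary part, write $C := (T+i\cdot 1)^{-1}$, so $A^{-1} = H^{-1/2}CH^{-1/2}$ and, since $H^{-1/2}$ is self-adjoint,
$$\Im(A^{-1}) = \tfrac{1}{2i}\bigl(H^{-1/2}CH^{-1/2} - H^{-1/2}C^*H^{-1/2}\bigr) = H^{-1/2}\,\Im(C)\,H^{-1/2} \leq H^{-1/2}(-\delta\cdot 1)H^{-1/2} = -\delta\cdot H^{-1} \leq -\delta'\cdot 1,$$
using $H^{-1} \geq \norm{H}^{-1}\cdot 1$ and $\delta' := \delta\norm{H}^{-1} > 0$. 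Hence $A^{-1}\in H^-(\cA)$.

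The only real obstacle is being careful with the noncommutativity: one cannot simply say $\Im(XCX^*) = X\,\Im(C)\,X^*$ for general $X$, but it is true when $X = X^*$, which is exactly the case we engineered with $X = H^{-1/2}$; so the decomposition $A = H^{1/2}(T+i)H^{1/2}$ is doing the essential work and must be set up first. Everything else — positivity of $H^{-1/2}$, the spectral bound $B^*B \geq \norm{B^{-1}}^{-2}$, and the resolvent identity manipulation — is routine $C^*$-algebra.
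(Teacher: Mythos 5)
Your proposal follows essentially the same route as the paper: factor $A = \Im(A)^{1/2}\bigl(T + i\bigr)\Im(A)^{1/2}$ with $T = \Im(A)^{-1/2}\Re(A)\Im(A)^{-1/2}$ selfadjoint, conclude invertibility from $-i\notin\spec(T)$, and push the imaginary part through the selfadjoint conjugation. The paper writes $(T+i)^{-1} = (T^2+1)^{-1}(T-i)$ to read off $\Im\bigl((T+i)^{-1}\bigr) = -(T^2+1)^{-1}$, while you reach the equivalent $-(T-i)^{-1}(T+i)^{-1} = -B^*B$ via the resolvent identity; these are the same thing since $T\pm i$ commute. If anything your write-up is slightly more scrupulous about the \emph{strict} negativity required by the definition of $H^-(\cA)$, supplying explicit uniform bounds $B^*B\geq\norm{B^{-1}}^{-2}\cdot 1$ and $H^{-1}\geq\norm{H}^{-1}\cdot 1$, where the paper leaves the corresponding "$<0$" implicit.
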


\begin{proof}
Put $X:=\Re(A)$ and $Y:=\Im(A)$; by assumption $Y$ is positive and invertible, and thus we can write
$$A=X+iY=Y^{1/2}[Y^{-1/2}XY^{-1/2}+i]Y^{1/2}.$$
Since $Y^{-1/2}XY^{-1/2}$ is selfadjoint, we have that $i$ is not in its spectrum and hence $A$ is invertible, with
$$A^{-1}=Y^{-1/2}[Y^{-1/2}XY^{-1/2}+i]^{-1}Y^{-1/2}.$$
Let us denote $Y^{-1/2}XY^{-1/2}$ by $\tilde X$, then we can calculate
$$[\tilde X+i]^{-1}=[(\tilde X-i)(\tilde X+i)]^{-1} (\tilde X-i)
=(\tilde X^2+1)^{-1} (\tilde X-i),$$
which gives finally
$$\Im(A^{-1})=Y^{-1/2}\cdot  \Im[\tilde X+i]^{-1}\cdot Y^{-1/2}
=- Y^{-1/2}\cdot \underbrace{(\tilde X^2+1)^{-1}}_{>0}\cdot Y^{-1/2}<0.$$
\end{proof}

\begin{prop}\label{prop:4.4}
$H^+(\cB_{nc}):=(H^+(M_n(\cB)))_{n\in\NN}$ is a fully matricial domain over 
$\cB$.
\end{prop}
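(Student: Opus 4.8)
The plan is to verify the three defining conditions of a fully matricial domain from the Definition directly for $\Omega^{(n)} = H^+(M_n(\cB))$. The key observation is that for $z \in M_n(\cB)$ one has $M_m(M_n(\cB)) = M_{nm}(\cB)$, and under this identification $z^{(m)} = 1_m \otimes z$ has imaginary part $1_m \otimes \Im(z)$, while $z_1 \oplus z_2$ has imaginary part $\Im(z_1) \oplus \Im(z_2)$; so everything reduces to elementary facts about positivity of self-adjoint elements in block form.

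First I would check that $\Omega$ respects direct sums: if $\Im(z_1) \geq \ee_1 \cdot 1$ and $\Im(z_2) \geq \ee_2 \cdot 1$, then $\Im(z_1 \oplus z_2) = \Im(z_1) \oplus \Im(z_2) \geq \min(\ee_1,\ee_2) \cdot 1$, so $z_1 \oplus z_2 \in H^+(M_{n+m}(\cB))$. Non-emptiness is immediate, since $i \cdot 1 \in H^+(M_n(\cB))$ for every $n$ (take $\ee = 1$). The substantive point is uniform openness: given $z \in H^+(M_n(\cB))$ with $\Im(z) \geq \ee \cdot 1$, I would show that the ball $B(z, \ee/2)$ is contained in $\Omega$. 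By definition of $B(z,r)$ it suffices to fix $m$ and take $w \in B^{(nm)}(z^{(m)}, \ee/2)$; writing $w = z^{(m)} + h$ with $\Vert h \Vert < \ee/2$, we have $\Im(w) = \Im(z^{(m)}) + \Im(h) = 1_m \otimes \Im(z) + \Im(h) \geq \ee \cdot 1 - \Vert \Im(h)\Vert \cdot 1 \geq (\ee - \ee/2)\cdot 1 = (\ee/2)\cdot 1$, using that $\Vert \Im(h) \Vert \leq \Vert h \Vert$ and that $-\Vert a \Vert \cdot 1 \leq a \leq \Vert a\Vert \cdot 1$ for self-adjoint $a$. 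Hence $w \in H^+(M_{nm}(\cB))$, i.e. $B(z,\ee/2) \subset \Omega$.

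I would also remark (optionally, as it is not strictly part of the claim but clarifies the setup) that $H^-(M_n(\cB))$ likewise respects direct sums and is uniformly open by the symmetric argument, and that Proposition \ref{prop:4.3} applied inside each $M_{nm}(\cB)$ guarantees $z - 1\otimes X$ is invertible whenever $z \in H^+$, so the Cauchy transform is actually defined on $\Omega$.

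There is essentially no main obstacle here: the only thing one must be slightly careful about is the bookkeeping in the nested matrix identification $M_m(M_n(\cB)) \cong M_{nm}(\cB)$ and the fact that $\Im$ commutes with forming $1_m \otimes (\,\cdot\,)$ and with direct sums — once that is set up, each of the three conditions is a one-line estimate. If anything, the ``hard part'' is purely notational: making sure that the radius $r$ in the definition of $B(z,r)$ (a union over all amplification levels $m$) is chosen uniformly in $m$, which the estimate above does automatically since $\Vert h \Vert < \ee/2$ was the only constraint used and $\ee$ came from $z$ alone.
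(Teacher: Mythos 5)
Your proof is correct and follows essentially the same route as the paper's: verify direct sums via the block-diagonal formula for $\Im$, note non-emptiness is trivial, and establish uniform openness by estimating $\Im w$ for $w$ in a ball around $z^{(m)}$, using $\Vert \Im h\Vert \leq \Vert h\Vert$ and $\Im z^{(m)} = (\Im z)^{(m)}$. The only difference is cosmetic: you take radius $\ee/2$ so the lower bound $\Im w \geq (\ee/2)\cdot 1$ is immediate, whereas the paper uses the full radius $\ee$ and relies on the strictness of $\Vert z^{(m)} - w\Vert < \ee$ to still get a strictly positive gap; both give the same conclusion.
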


\begin{proof}
\begin{itemize}
\item[(i)] 
$H^+(\cB_{nc})$ respects direct sums.
\\
Consider $z_1\in H^+(M_n(\cB))$ and $z_2\in H^+(M_m(\cB))$; then $\Im z_1\geq \ee_1 \cdot 1$ and $\Im z_2\geq
\ee_2 \cdot 1$ and thus
$$\Im \begin{pmatrix}
z_1&0\\ 0& z_2
\end{pmatrix}=
\begin{pmatrix}
\Im z_1&0\\ 0&\Im z_2
\end{pmatrix}
\geq
\begin{pmatrix}
\ee_1\cdot 1& 0\\ 0& \ee_2\cdot 1\end{pmatrix}
\geq
\min(\ee_1,\ee_2)\cdot 1;$$
hence $z_1\oplus z_2\in H^+(M_{n+m}(\cB))$.
\item[(ii)] $H^+(\cB_{nc})$ is uniformly open. \\
Consider $z\in H^+(M_n(\cB))$, i.e., $\Im z\geq \ee\cdot 1$; we claim that then
$B(z,\ee)\subset H^+(\cB_{nc})$. Namely, consider $w\in B(z,\ee)$, i.e., $w\in
M_{mn}(\cB)$ with $\Vert z^{(m)}-w\Vert <\ee$. Then we have
$$\Vert \Im z^{(m)}-\Im w \Vert \leq \Vert z^{(m)}-w\Vert<\ee,$$
and thus $\Im z^{(m)}-\Im w < \ee\cdot 1$, or 
$$\Im w> \underbrace{\Im z^{(m)}}_{=(\Im z)^{(m)}\geq\ee\cdot 1}-\ee \cdot 1\geq 0;$$
which shows that $w\in H^+(M_{mn}(\cB))$.
\item[(iii)]
$H^+(\cB_{nc})$ is clearly non-empty.
\end{itemize}
\end{proof}

\begin{theorem}\label{thm:4.5}
Let $(\cA,\cB,E)$ be an operator-valued $C^*$-probability space and $X=X^*\in\cA$. Then the Cauchy transform
\begin{align*}
G_X:H^+(\cB_{nc})\to H^-(\cB_{nc}),\qquad
z\mapsto \id\otimes E[(z-1\otimes X)^{-1}]
\end{align*}
is a fully matricial function.
\end{theorem}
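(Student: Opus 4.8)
The plan is to verify the two defining conditions of a fully matricial function for $G_X$: (a) it maps $H^+(\cB_{nc})$ into $H^-(\cB_{nc})$; (b) it respects intertwinings; (c) it is uniformly locally bounded. The codomain statement is essentially already available: for $z\in H^+(M_n(\cB))$, we have $z-1\otimes X\in H^+(M_n(\cA))$ since $\Im(z-1\otimes X)=\Im z$ (because $1\otimes X$ is selfadjoint), so by Proposition \ref{prop:4.3} the element $z-1\otimes X$ is invertible with $(z-1\otimes X)^{-1}\in H^-(M_n(\cA))$; applying the completely positive, $\cB$-bimodular map $\id\otimes E$ then lands us in $H^-(M_n(\cB))$ — one should check that $\id\otimes E$ preserves the strict lower half-plane, which follows from positivity of $E$ together with the bimodule property (write $\Im A\le -\ee\cdot 1$, apply $\id\otimes E$, use $(\id\otimes E)(1)=1$). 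This also shows $G_X^{(n)}$ is well-defined on all of $H^+(M_n(\cB))$, which is what Proposition \ref{prop:4.4} set up.

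For the intertwining property, suppose $z_1\in H^+(M_n(\cB))$, $z_2\in H^+(M_m(\cB))$ and $T\in M_{n\times m}(\CC)$ satisfy $z_1T=Tz_2$. Since $T$ has scalar entries, it commutes with $1\otimes X$ in the appropriate sense, i.e.\ $(1_n\otimes X)T=T(1_m\otimes X)$, so $(z_1-1_n\otimes X)T=T(z_2-1_m\otimes X)$. Both factors being invertible (by the first paragraph), this rearranges to $T(z_2-1_m\otimes X)^{-1}=(z_1-1_n\otimes X)^{-1}T$. Now apply $\id\otimes E$; because $T$ is a scalar matrix, $\id\otimes E$ commutes with left/right multiplication by $T$ (this is just the bimodule property applied entrywise, or directly $(\id\otimes E)(TA)=T(\id\otimes E)(A)$ since the entries of $T$ are scalars which $E$ fixes), giving $T\,G_X^{(m)}(z_2)=G_X^{(n)}(z_1)\,T$, as required.

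For uniform local boundedness, fix $z\in H^+(M_n(\cB))$ with $\Im z\ge\ee\cdot 1$. From the proof of Proposition \ref{prop:4.4} we know $B(z,\ee)\subset H^+(\cB_{nc})$, and on this ball, for any $w\in B^{(nm)}(z^{(m)},\ee/2)$ we have $\Im w\ge(\ee/2)\cdot 1$. It then suffices to bound $\|(w-1\otimes X)^{-1}\|$ uniformly: from the factorization in Proposition \ref{prop:4.3}, writing $Y=\Im(w-1\otimes X)=\Im w$ and $\tilde X=Y^{-1/2}\Re(w-1\otimes X)Y^{-1/2}$, one gets $\|(w-1\otimes X)^{-1}\|\le\|Y^{-1/2}\|^2\,\|(\tilde X^2+1)^{-1}(\tilde X-i)\|\le\|Y^{-1}\|\le 2/\ee$, using that $\|(\tilde X^2+1)^{-1}(\tilde X-i)\|\le 1$ for selfadjoint $\tilde X$ (spectral mapping: $|{(t^2+1)^{-1}(t-i)}|=(t^2+1)^{-1/2}\le 1$). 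Since $\id\otimes E$ is contractive (it is a conditional expectation between $C^*$-algebras, hence norm one), $\|G_X^{(m)}(w)\|\le 2/\ee=:M$ for all $w\in B(z,\ee/2)$. Thus $f(B(z,\ee/2))\subset B(0,M)$, which is uniform local boundedness.

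The main obstacle is not any single step but getting the bookkeeping of the amplifications right: one must consistently track that $1\otimes X$ in $M_n(\cA)$ really is the block-diagonal $X\oplus\cdots\oplus X$, that $\id\otimes E$ on $M_n(\cA)$ is applied entrywise, and that scalar matrices $T$ (and the similarity matrices implicit in the domain conditions) pass through $\id\otimes E$ untouched. The genuinely substantive input — invertibility of $z-1\otimes X$ and the half-plane behaviour — is entirely delegated to Proposition \ref{prop:4.3}, so once the structural commutation facts are in place the proof is short; the only mild care needed is checking that positivity of $E$ (not just linearity) is what forces $\Im G_X\le -\ee\cdot 1$ rather than merely $\Im G_X\le 0$, and for this one uses that $E$ is faithful-enough / that $(\id\otimes E)$ of a strictly negative imaginary part stays strictly negative, which in the $C^*$-setting follows from $E$ being completely positive and unital.
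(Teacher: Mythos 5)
Your proof is correct and follows essentially the same three-part structure as the paper (codomain via Proposition \ref{prop:4.3} and complete positivity of $\id\otimes E$; intertwinings via scalar commutation; uniform local boundedness via the $\|\Im(z)^{-1}\|$ estimate and contractivity of the conditional expectation). The only small slip is the aside about needing $E$ to be ``faithful-enough'': faithfulness plays no role — unitality plus complete positivity is exactly what gives $\Im A\le -\ee\cdot 1\Rightarrow \Im(\id\otimes E)(A)\le -\ee\cdot 1$, as you in fact state in the very next clause.
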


\begin{proof}
\begin{itemize}
\item[(i)]
First we should check that $G_X^{(n)}$ sends $H^+(M_n(\cB))$ to $H^-(M_n(\cB))$. For this, consider $z\in H^+(M_n(\cB))$, i.e., $\Im z\geq \ee\cdot 1$. Then we have
$\Im(z-1\otimes X)=\Im z\geq \ee\cdot 1$, and thus
$z-1\otimes X\in H^+(M_n(\cA))$; then Proposition \ref{prop:4.3} tells us that
$(z-1\otimes X)^{-1}\in H^-(M_n(\cA))$.\\
Now we apply $\id\otimes E:M_n(\cA)\to M_n(\cB)$. By our assumption that $(\cA,\cB,E)$ is an operator-valued $C^*$-probability space, we have that $E:\cA\to\cB$ is positive. Since $E$ is a conditional expectations this implies that it is completely positive, i.e., all its amplifications $\id\otimes E$ are also positive. (Note that positivity of a linear map from $\cA$ to $\cB$ does in general not imply complete positivity; one needs some more structure, like conditional expectations.) So this implies then that
$$\id\otimes E:H^-(M_n(\cA))\to H^-(M_n(\cB))$$
and finally we have
$$G_X^{(n)}(z)=\id\otimes E[ \underbrace{(z-1\otimes X)^{-1}}_{\in H^-(M_n(\cA)}] \in H^-(M_n(\cB)).$$
\item[(ii)]
It is clear that $G_X$ respects intertwinings; compare Example \ref{ex:3.13} (2).
\item[(iii)]
It remains to see uniform local boundedness. Consider $z\in H^+(M_n(\cB))$, i.e.,
$\Im z\geq \ee\cdot 1$. As in the proof of Proposition \ref{prop:4.3}, we write 
\begin{align*}
&(z-1\otimes X)^{-1}\\&=
\left\{ \Im(z)^{1/2} \left[i\cdot 1+\Im(z)^{-1/2} \cdot(\Re(z)-1\otimes X)\cdot \Im(z)^{-1/2}\right]
\Im(z)^{1/2}\right\}^{-1}\\[.5em]
&=
\Im(z)^{-1/2}\underbrace{\bigl[ i\cdot 1+\underbrace{\Im(z)^{-1/2}\cdot (\Re(z)-1\otimes X)\cdot
\Im(z)^{-1/2}}_{\text{s.a. operator}}\bigr]^{-1}}_{\Vert\cdot\Vert\leq 1\quad\text{by functional calculus}} \Im(z)^{-1/2},
\end{align*}
which yields
$$\Vert (z-1\otimes X)^{-1}\Vert\leq \Vert \Im(z)^{-1/2}\Vert^2=
\Vert \Im(z)^{-1}\Vert.$$
Now note that $\id\otimes E$ has, as a normalized completely positive mapping, norm 1 and thus we have
\begin{align*}
\Vert G_X^{(n)}(z)\Vert&=
\Vert \id\otimes E[z-1\otimes X)^{-1}]\Vert\\
&
\leq\Vert (z-1\otimes X)^{-1}\Vert\\
&\leq \Vert \Im(z)^{-1}\Vert\\
&\leq \frac 1\ee
\end{align*}
since $\Im z\geq \ee\cdot 1$. Now we are ready to consider
$w\in B(z,\ee/2)$, say $w\in M_{mn}(\cB)$. According to the calculations in the proof of Proposition \ref{prop:4.4} we have 
$$\Im w\geq \Im z^{(m)} -\ee\cdot 1\geq \frac \ee 2\cdot 1,\quad\text{and thus}\quad
\Vert G_X(w)\Vert\leq \Vert \Im(w)^{-1}\Vert\leq\frac 2\ee;$$
hence we have a local uniform bound.
\end{itemize}
\end{proof}

\section{Positivity and boundedness properties of non-commutative distributions}

\begin{remark}
\begin{enumerate}
\item
In the scalar-valued case, i.e., $\cB=\CC$, all relevant information about distributions, i.e., probability measures, is encoded in the Cauchy transform; in particular we have
\begin{enumerate}
\item
weak convergence of probability measures corresponds to pointwise convergence of the Cauchy transforms;
\item
there are precise characterizations when an analytic function is a Cauchy transform.
\end{enumerate}
\item
There are kind of analogues of this in the operator-valued case. Of course, now
we are essentially encoding information about moments. Note that in the scalar-valued case moments describe probability measures uniquely if the latter are compactly supported, which corresponds to bounded operators. In the operator-valued case we restrict for now to bounded operators (in our $C^*$-probability spaces), thus to the non-commutative analogue of compactly supported measures. In the scalar case we can deal with any probability measure (via analytic tools, not via moments), in the operator-valued case the unbounded situation is quite unclear.
\item
Note that a compactly supported measure on the level of moments is characterized by
\begin{enumerate}
\item
positive definiteness of moments, in the sense that
$$\int p(t)\overline{p(t)} d\mu(t)\geq 0\qquad
\text{for any polynomial $\CC[t]$;}$$
\item
and exponential boundedness of moments: if $\supp \mu\subset [-M,M]$ then
$$\vert m_n\vert=\vert \int t^nd\mu(t)\vert
\leq \int \vert t\vert^n d\mu(t)
=\int_{-M}^M \vert t\vert^n d\mu(t)\leq M^n.$$
\end{enumerate}
We will now define non-commutative distributions abstractly via moments via such properties
\end{enumerate}
\end{remark}

\begin{definition}\label{def:4.7}
\begin{enumerate}
\item
Let $\cB$ be a unital algebra. We denote by $\cB\lb x\rb$ the polynomials in the formal variable $x$ with coefficients from $\cB$, i.e., the free product of $\CC\lb x\rb$ and $\cB$, with amalgamation  over $\CC\cdot 1$. Elements in $\cB\lb x\rb$ are thus linear combinations of monomials of the form
$$b_0xb_1x\cdots b_{k-1}xb_k\qquad\text{for $k\in\NN_0$, $b_0,\dots,b_k\in\cB$.}$$
The elements in $\cB$, corresponding to $k=0$, are the constant polynomials.
If $\cB$ is a $*$-algebra, then $\cB\lb x\rb$ becomes a $*$-algebra, too, by declaring $x^*=x$, i.e.,
$$(b_0xb_1x\cdots b_{k-1}xb_k)^*=b_k^*xb_{k-1}^*\cdots x b_1^*xb_0^*.$$
\item
If $\cB$ is a unital $C^*$-algebra, then a \emph{$\cB$-valued distribution} is a 
linear map $\mu:\cB\lb x\rb\to\cB$ sucht that:
\begin{enumerate}
\item
$\mu$ is unital, $\mu(1)=1$;
\item
$\mu$ is a $\cB$-$\cB$-bimodule map, i.e.,
$$\mu(bp(x)b')=b\mu(p(x))b'\qquad\text{for all $p(x)\in\cB\lb x\rb$, $b,b'\in\cB$;}$$
\item
$\mu$ is completely positive, i.e.,
$$\mu^{(n)}(p(x)^*p(x))\geq 0\qquad\text{for all $n\in\NN$ and all $p(x)\in M_n(\cB\lb x\rb)$,}$$
where $\mu^{(n)}$ is, as usual, the amplification $\id\otimes \mu$.
\end{enumerate}
We denote the set of all $\cB$-valued distributions by $\Sigma_\cB$.\\
$\mu\in\Sigma_\cB$ is \emph{exponentially bounded} if there exists $M>0$ such that we have for all $n\in \NN_0$ and all $b_1,\dots,b_n\in \cB$ that
$$\Vert\mu(xb_1xb_2\cdots xb_nx)\Vert\leq M^{n+1}\Vert b_1\Vert\cdots
\Vert b_n\Vert.$$
We write then $\mu\in \Sigma_\cB^0$.
\item
If $(\cA,\cB,E)$ is a $\cB$-valued $C^*$-probability space and $X=X^*\in\cA$, then the \emph{($\cB$-valued) distribution $\mu_X:\cB\lb x\rb\to \cB$ of $X$} is given by
$$\mu_X(p(x))=E[p(X)] \qquad
\text{for all $p(x)\in\cB\lb x\rb$.}$$
\end{enumerate}
\end{definition}

\begin{remark}
$\Sigma_\cB^0$ should consist of all possible $\cB$-valued distributions of selfadjoint random variables $X$ in $\cB$-valued $C^*$-probability spaces. That $\mu_X\in\Sigma_\cB^0$ for such $X$ is clear (see Exercise \ref{exercise:13}), that we also have the other direction is the main content of the following theorem of Popa and Vinnikov \cite{PoV}.
\end{remark}

\begin{theorem}[Popa,Vinnikov 2013]\label{thm:4.9}
For a unital $C^*$-algebra $\cB$ the following are equivalent for a linear
map $\mu:\cB\lb x\rb\to \cB$.
\begin{itemize}
\item[(i)]
$\mu\in\Sigma_\cB^0$.
\item[(ii)]
There exists a $\cB$-valued $C^*$-probability space $(\cA,\cB,E)$ and a
selfadjoint $X\in\cA$ such that $\mu_X=\mu$.
\end{itemize}
\end{theorem}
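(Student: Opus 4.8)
The direction (ii)$\Rightarrow$(i) is routine (it is relegated to Exercise \ref{exercise:13}): if $X=X^*$ lives in a $\cB$-valued $C^*$-probability space, then $\mu_X(p)=E[p(X)]$ is manifestly unital, a $\cB$-bimodule map, completely positive (since $E$ is completely positive and $p\mapsto p(X)$ is a $*$-homomorphism), and exponential boundedness follows from $\Vert\mu_X(xb_1x\cdots xb_nx)\Vert=\Vert E[Xb_1X\cdots Xb_nX]\Vert\leq\Vert X\Vert^{n+1}\Vert b_1\Vert\cdots\Vert b_n\Vert$, so $M=\Vert X\Vert$ works. The substance is (i)$\Rightarrow$(ii), which I would prove by a \emph{GNS-type construction} adapted to the operator-valued setting.

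The plan is the following. Starting from $\mu\in\Sigma_\cB^0$, form the algebraic $\cB$-bimodule $\cB\lb x\rb$ and equip it with the $\cB$-valued ``inner product'' $\langle p,q\rangle:=\mu(p^*q)\in\cB$. Complete positivity of $\mu$ is exactly what makes this a $\cB$-valued pre-inner product (a positive $\cB$-valued sesquilinear form), so that a Cauchy--Schwarz inequality holds and the set $N=\{p\in\cB\lb x\rb\mid \mu(p^*p)=0\}$ is a right $\cB$-submodule; pass to the quotient $\cB\lb x\rb/N$. Next one must promote this to a genuine Hilbert $C^*$-module: here I would embed $\cB$ into $B(\HH_0)$ via a faithful representation and build the Hilbert space $\HH$ from $\cB\lb x\rb/N\otimes_\cB\HH_0$ with scalar inner product $\langle p\otimes\xi,q\otimes\eta\rangle_\HH=\langle\xi,\mu(p^*q)\eta\rangle_{\HH_0}$ (the interior tensor product of modules), thereby linearizing the $\cB$-valued form. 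Left multiplication by $x$ on $\cB\lb x\rb$ descends to an operator $X$ on this space; one checks it is well-defined on the quotient, symmetric, and — this is where exponential boundedness enters — \emph{bounded}, with $\Vert X\Vert\leq M$. The key estimate is a bound of the form $\mu((xp)^*(xp))\leq M^2\,\mu(p^*p)$ in the order of $\cB$ (or, after amplification, in $M_n(\cB)$), which one extracts by a telescoping/Cauchy--Schwarz argument from the defining inequality $\Vert\mu(xb_1x\cdots xb_nx)\Vert\leq M^{n+1}\prod\Vert b_i\Vert$; this controls $\Vert X^k(p\otimes\xi)\Vert$ uniformly and gives boundedness of $X$, whence self-adjointness of its closure. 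Finally let $\cA:=C^*(X,\cB)\subset B(\HH)$ (with $\cB$ acting on the left), let $\xi_0$ be the cyclic vector coming from the class of $1\in\cB\lb x\rb$, and define $E:\cA\to\cB$ by the compression $E[A]=\langle\,\cdot\,\xi_0,\,A\,\cdot\,\xi_0\rangle$ read off as a $\cB$-valued object via the module structure; one verifies $E$ is a conditional expectation (unital, $\cB$-bimodule map, positive hence completely positive) and that $E[p(X)]=\mu(p)$ by construction, so $\mu_X=\mu$.

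The main obstacle is the passage from the purely algebraic positive $\cB$-valued form to an honest Hilbert $C^*$-module together with the verification that $X$ is \emph{bounded}: complete positivity alone only yields a pre-inner-product at the algebraic level, and one needs the Stinespring/KSGNS machinery (or an explicit interior tensor product over a faithful representation of $\cB$) to get a Hilbert space on which $\cB$ and $X$ act by bounded operators, and then the exponential-boundedness hypothesis must be leveraged — via the operator inequality $\mu((xp)^*(xp))\le M^2\mu(p^*p)$ in every matrix amplification — to bound $\Vert X\Vert$. Once $X$ is a bounded self-adjoint operator and $E$ is identified as a completely positive unital $\cB$-bimodule projection, recovering $\mu_X=\mu$ is immediate from the definitions.
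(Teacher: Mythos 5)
Your proposal is correct and follows the same underlying idea as the paper's (sketch of a) proof: a GNS/KSGNS-type construction starting from the $\cB$-valued sesquilinear form $\langle p,q\rangle_\mu=\mu(p^*q)$ on $\cB\lb x\rb$, with $X$ realized as left multiplication by $x$ and $E[A]=\langle 1,A1\rangle_\mu$. The one point of divergence is the ambient space: the paper completes $\cB\lb x\rb$ with respect to the norm $\Vert p\Vert_\mu=\Vert\langle p,p\rangle_\mu\Vert^{1/2}$ and works directly in the resulting Hilbert $\cB$-module, taking $\cA$ to be the $C^*$-algebra of bounded adjointable module operators; you instead localize by tensoring the quotient $\cB\lb x\rb/N$ over $\cB$ with a faithful representation $\HH_0$ of $\cB$, producing an honest Hilbert space $\HH$ on which $\cB$ and $X$ act. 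These are two presentations of the same KSGNS machinery. The module route has the advantage that adjointability and the fact that $E$ takes values in $\cB$ are built in; your localization is more concrete but forces you to verify separately that the compression $E[A]=\langle\,\cdot\,\xi_0,A\,\cdot\,\xi_0\rangle$ really lands in $\cB$ (which is true, but worth stating explicitly rather than ``reading off via the module structure''). Your identification of the crucial estimate $\mu\bigl((xp)^*(xp)\bigr)\le M^2\,\mu(p^*p)$ (and its matrix amplifications), obtained from exponential boundedness by iterated Cauchy--Schwarz, is exactly the ``couple of technical details'' the paper alludes to for boundedness of $X$. Both directions are fine.
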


\begin{proof}[Rough sketch of the proof]
In the scalar-valued case we realize $X$ via left-multiplication by $x$ on $\CC\lb x\rb$ via GNS-like construction. Now we do an operator-valued version of this, i.e., we put on $\cB\lb x\rb$ a $\cB$-valued inner product by
$$\lb p(x),q(x)\rb_\mu:=\mu(p(x)^*q(x))\in\cB.$$
This gives on $\cB\lb x\rb$ a ($\CC$-valued) norm
$$\Vert p(x)\Vert_\mu:=\Vert \lb p(x),p(x)\rb_\mu\Vert_\cB^{1/2}.$$
Completing $\cB\lb x\rb$ with respect to this gives a Banach space 
$\overline{\cB\lb x\rb}^\mu$. $\cB\lb x\rb$ acts on this space via left multiplications. Checking a couple of technical details shows then that this action is bounded and adjointable and thus generates a $C^*$-algebra $\cA$. Let $X\in\cA$ be multiplication with $x$. We also have a conditional expectation
$E:\cA\to\cB$ given by $E[A]:=\lb 1,A1\rb_\mu$. With respect to this, $X$ has distribution $\mu$:
\begin{align*}
E[b_0Xb_1\cdots b_nXb_{n+1}]&=\lb 1,b_0Xb_1\cdots b_nXb_{n+1} 1\rb_\mu\\
&=\lb 1,b_0Xb_1\cdots b_nXb_{n+1}\rb_\mu\\
&=\mu(1b_0Xb_1\cdots b_nXb_{n+1})\\
&=\mu(b_0Xb_1\cdots b_nXb_{n+1}).
\end{align*}
\end{proof}

\section{Moments and Cauchy transform}

\begin{remark}
\begin{enumerate}
\item
Since $G_X$ depends only on the distribution $\mu_X$ of $X$, we can also write
$G_X=G_\mu$ for $\mu_X=\mu$.
\item
As in the classical case the moments of $X$ should be the coefficients in the power series expansion of $G_X$ in $z^{-1}$ about infinity. To formulate this nicely, it is better to go over to the function $H_X(z):=G_X(z^{-1})$.
\end{enumerate}
\end{remark}

\begin{prop}\label{prop:4.11}
Let $(\cA,\cB,E)$ be a $\cB$-valued $C^*$-probability space and $X=X^*\in\cA$. Then the function
\begin{align*}
H_X:H^-(\cB_{nc})\to H^-(\cB_{nc}),\qquad
z\mapsto H_X(z):=G_X(z^{-1})
\end{align*}
is a fully matricial function which has a fully matricial extension to a uniform
neighbourhood of $0$ and we have
$$E[b_0Xb_1\cdots b_{n-1}Xb_{n}]=\partial^{n+1}H_X(0,\dots,0)\sharp
(b_0,b_1,\dots,b_{n}).$$
\end{prop}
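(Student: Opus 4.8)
The plan is to establish the three assertions in turn, leaning on the structural results already proved. Throughout, $1\otimes X$ denotes the diagonal amplification of $X$ of whatever matrix size is needed, and $\id\otimes E$ the corresponding amplification of $E$.

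\emph{$H_X$ is fully matricial.} First I would record that $H^-(\cB_{nc})$ is a fully matricial domain, by the proof of Proposition~\ref{prop:4.4} with $\Im z\ge\ee\cdot 1$ replaced by $\Im z\le-\ee\cdot 1$ throughout (equivalently, $H^-(\cA)=-H^+(\cA)$ and $z\mapsto -z$ respects direct sums and uniform openness). By Proposition~\ref{prop:4.3} applied to $-z$, every $z\in H^-(M_n(\cB))$ is invertible with $z^{-1}\in H^+(M_n(\cB))$, so $z\mapsto z^{-1}$ restricts to a fully matricial function $H^-(\cB_{nc})\to H^+(\cB_{nc})$ (it is the restriction of the fully matricial inversion map of Example~\ref{ex:3.13}(2) to a smaller fully matricial domain, and respecting intertwinings and uniform local boundedness are inherited). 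Composing with $G_X\colon H^+(\cB_{nc})\to H^-(\cB_{nc})$ from Theorem~\ref{thm:4.5}, and using that compositions of fully matricial functions are fully matricial, shows that $H_X=G_X\circ(\,\cdot\,)^{-1}$ is fully matricial from $H^-(\cB_{nc})$ to $H^-(\cB_{nc})$.

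\emph{The extension near $0$.} For $z\in M_m(\cB)$ with $\Vert z\Vert<1/\Vert X\Vert$ I would factor $z^{-1}-1\otimes X=z^{-1}\bigl(1-z(1\otimes X)\bigr)$ and expand the Neumann series,
$$(z^{-1}-1\otimes X)^{-1}=\bigl(1-z(1\otimes X)\bigr)^{-1}z=\sum_{k\ge0}z(1\otimes X)z(1\otimes X)\cdots z(1\otimes X)z$$
(with $k$ factors $1\otimes X$ and $k+1$ factors $z$), which converges since $\Vert z(1\otimes X)\Vert\le\Vert z\Vert\,\Vert X\Vert<1$. Applying the norm-one map $\id\otimes E$ termwise defines
$$\tilde H_X(z):=\sum_{k\ge0}\id\otimes E\bigl[z(1\otimes X)z(1\otimes X)\cdots z(1\otimes X)z\bigr],$$
which converges uniformly on each $B(0,\rho)$, $\rho<1/\Vert X\Vert$, being dominated by $\rho\sum_{k\ge0}(\rho\Vert X\Vert)^k$. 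I would then check: that $B\bigl(0,1/\Vert X\Vert\bigr)$ is a fully matricial domain (it respects direct sums because $\Vert z_1\oplus z_2\Vert=\max(\Vert z_1\Vert,\Vert z_2\Vert)$, and is uniformly open because $B\bigl(w,1/\Vert X\Vert-\Vert w\Vert\bigr)\subset B\bigl(0,1/\Vert X\Vert\bigr)$); that each summand is a fully matricial function of $z$ (each is $z\mapsto\id\otimes E[\,\cdot\,]$ of a product of copies of $z$ and the scalar-commuting amplification $1\otimes X$, hence respects intertwinings and is uniformly locally bounded); and that a locally uniformly convergent series of fully matricial functions is fully matricial. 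Finally, on $B(0,1/\Vert X\Vert)\cap H^-(\cB_{nc})$, where $z$ is invertible and $z^{-1}-1\otimes X\in H^+$, the computation above gives $\tilde H_X=H_X$, so $\tilde H_X$ is the desired fully matricial extension to the uniform neighbourhood $B(0,1/\Vert X\Vert)$ of $0$.

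\emph{The moment formula.} Recall that $\partial^k f$ is recovered as the appropriate corner block of $f$ evaluated on a block-bidiagonal matrix (the block-triangular Lemma of Chapter~3 and the ensuing remark on higher difference quotients); in particular $\partial^{n+1}H_X(0,\dots,0)\sharp(b_0,\dots,b_n)$ is the $(1,n+2)$ block of $\tilde H_X$ evaluated on the $(n+2)\times(n+2)$ block-bidiagonal matrix $Z$ with $0$ on the diagonal and $b_0,b_1,\dots,b_n$ on the superdiagonal. Since $\partial^{n+1}H_X(0,\dots,0)\sharp(\,\cdot\,)$ is multilinear (Lemma~\ref{lem:partial-linear} and its higher analogue), I may rescale $b_i\mapsto\lambda b_i$, $\lambda>0$ small, so that $\Vert Z\Vert<1/\Vert X\Vert$ and the series for $\tilde H_X$ applies, and divide out $\lambda^{n+1}$ at the end. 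Now $Z(1\otimes X)$ is block-superdiagonal with $(i,i+1)$-entry $b_{i-1}X$, so a product of $k+1$ copies of $Z$ interlaced with $k$ copies of $1\otimes X$ shifts block indices by exactly $k+1$; hence its $(1,n+2)$ entry vanishes unless $k=n$, and for $k=n$ the unique surviving path $1\to2\to\cdots\to(n+2)$ contributes $b_0Xb_1X\cdots Xb_n$. Applying $\id\otimes E$ blockwise, the $(1,n+2)$ entry of $\tilde H_X(Z)$ is $E[b_0Xb_1X\cdots Xb_n]$, as claimed.

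The main obstacle is not a single deep step but the bookkeeping in the last two parts: checking that $B(0,r)$ genuinely is a fully matricial domain in the sense of Section~\ref{section:3.2} and that the Neumann series inherits full matriciality, and then organizing the index computation (the nilpotency of $Z$ is what collapses the series to one term and makes this clean). As an alternative to the block-matrix argument one can compare the series $\tilde H_X(w)=\sum_k\id\otimes E[w(1\otimes X)w\cdots w]$ term-by-term with the Taylor-Taylor expansion $\tilde H_X(w)=\sum_k\partial^kH_X(0,\dots,0)\sharp(w,\dots,w)$, identify the homogeneous pieces, and polarize using multilinearity; but the direct computation is more self-contained.
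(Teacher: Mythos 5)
Your proof is correct and follows essentially the same route as the paper: expand $H_X(z)=z\sum_{k\ge0}E[(Xz)^k]$ as a uniformly convergent Neumann series on $\|z\|<1/\|X\|$, then evaluate on the nilpotent block-superdiagonal matrix so that only the $k=n$ term survives and yields $E[b_0Xb_1\cdots Xb_n]$ in the $(1,n+2)$ corner. You are more explicit than the paper on a few bookkeeping points the paper leaves implicit — that $H_X$ is fully matricial on $H^-(\cB_{nc})$ as a composition, that $B(0,1/\|X\|)$ is a fully matricial domain on which the series defines a fully matricial function, and the rescaling $b_i\mapsto\lambda b_i$ to place the nilpotent $Z$ inside the ball of convergence — but these are refinements of the same argument, not a different one.
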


\begin{proof}
We have, uniformly in all $n$ (where we just write $X$ instead of $1\otimes X$ and $E$ instead of $\id\otimes E$):
\begin{align*}
H_X(z)=G_X(z^{-1})
=E[(z^{-1}-X)^{-1}]
=z\cdot E[(1-Xz)^{-1}]
=z\sum_{k\geq 0} E[(Xz)^k],
\end{align*}
where the sum converges uniformly for $\Vert z\Vert<1/\Vert X\Vert$. Thus $H_X$ has an extension to $B(0,1/\Vert X\Vert)$.\\
Note (see Exercise \ref{exercise:7}) that we have in general  that
$\partial^{n+1} H_X(0,\dots,0)\sharp (b_0,\dots,b_{n})$ is the upper right entry in $H_X(z)$, where
$$z:=\begin{pmatrix}
0&b_0&0&\dots&0&0\\
0&0&b_1&\hdots&0&0\\
\vdots&\vdots&\ddots& \ddots&0&0\\
0&0&0&\ddots&b_{n-1}&0\\
0&0&0&\hdots&0&b_{n}\\
0&0&0&\hdots&0&0
\end{pmatrix}.$$

Since $z$ is nilpotent we can use the expansion $H_X(z)=z\sum_{k\geq 0}
E[(Xz)^k]$ to calculate $H_X(z)$ in this case; the series will stop after the term $k=n$. Let us evaluate the cases $n=0$ and $n=1$.

For $n=0$ we have
$$H_X\begin{pmatrix}
0&b_0\\0&0\end{pmatrix}=
\begin{pmatrix}
0&b_0\\0&0\end{pmatrix},\quad
\text{and thus}\quad
\partial H_X(0,0)\sharp b_0=b_0=E[b_0].$$

For $n=1$ we have
\begin{align*}
H_X
\begin{pmatrix}
0&b_0&0\\
0&0&b_1\\
0&0&0
\end{pmatrix}
&=\begin{pmatrix}
0&b_0&0\\
0&0&b_1\\
0&0&0
\end{pmatrix}+
\begin{pmatrix}
0&b_0&0\\
0&0&b_1\\
0&0&0
\end{pmatrix}
\begin{pmatrix}
E[X]&0&0\\
0&E[X]&0\\
0&0&E[X]
\end{pmatrix} 
\begin{pmatrix}
0&b_0&0\\
0&0&b_1\\
0&0&0
\end{pmatrix}\\[.5em]
&=\begin{pmatrix}
0&b_0&b_0E[X]b_1\\
0&0&b_1\\
0&0&0
\end{pmatrix},
\end{align*}
and thus
$$\partial^2H_X(0,0,0)\sharp(b_0,b_1)=b_0E[X]b_1=E[b_0Xb_1].$$

The case of general $n$ works in the same way.
\end{proof}

\section{Analytic characterization of Cauchy transforms}

\begin{remark}\label{rem:4.12}
\begin{enumerate}
\item
In addition to the analyticity property from Proposition \ref{prop:4.11}, our Cauchy transforms $G_X$ have also a specific leading order for $z\to\infty$, namely
$$G_X(z)=z^{-1}+\cdots \qquad\text{or}\qquad
H_X(z)=z+\cdots$$
or more precisely:
$z_k G_X(z_k)\to 1$ in $M_n(\cB)$ for any sequence $(z_k)_k$ in $M_n(\cB)$ for which $\Vert z_k^{-1}\Vert \searrow 0$.
Those properties are sufficient to characterize Cauchy transforms $G_\mu$ for $\mu\in\Sigma_\cB^0$, as shown in the following theorem of John Williams \cite{Wil}.
\item
Recall first the classical scalar-valued version: Let $g:\CC^+\to\CC^-$ be an 
analytic function such
\begin{itemize}
\item[(i)] $iy g(iy)\to 1$ as $\RR\ni y\to\infty$
\item[(ii)]
and $h(z):=g(1/z)$ has an analytic continuation to a neighborhood of 0.
\end{itemize}
Then there exists a (uniquely determined) compactly supported Borel probability measure $\mu$ on $\RR$ such that $g=G_\mu$, i.e.,
$$g(z)=\int \frac 1{z-t}d\mu(t).$$
Note that without (ii) this gives a characterization of $G_\mu$ for arbitrary 
probability measures on $\RR$.
\end{enumerate}
\end{remark}

\begin{theorem}[Williams 2017]\label{thm:Williams}
Let $\cB$ be a unital $C^*$-algebra and $g=(g^{(n)})_{n\in\NN}$ be a fully matrical function $g:H^+(\cB_{nc})\to H^-(\cB_{nc})$ such that 
\begin{itemize}
\item[(i)]
for any $n\in\NN$ and for any sequence $(z_k)_{k\in\NN}$ with $z_k\in M_n(\cB)$ and which satisfies $\lim_{k\to\infty}\Vert z_k^{-1}\Vert= 0$ we have
$$\lim_{k\to\infty} z_kg^{(n)}(z_k)=1\qquad\text{in $M_n(\cB)$;}$$
\item[(ii)]
the fully matricial function $h=(h^{(n)})_{n\in\NN}$, with $h^{(n)}(z):=
g^{(n)}(z^{-1})$, has a fully matricial extension to a uniform neighborhood of 0.
\end{itemize}
Then $g=G_\mu$ for some $\mu\in \Sigma_\cB^0$.
\end{theorem}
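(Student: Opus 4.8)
The plan is to reconstruct a selfadjoint operator $X$ from the candidate $g$ by first extracting a $\cB$-valued distribution $\mu$ from the germ of $h$ at $0$, then checking that $\mu$ lies in $\Sigma_\cB^0$, and finally invoking the Popa--Vinnikov Theorem~\ref{thm:4.9} to realize $\mu$ as $\mu_X$ and verifying $g = G_X = G_\mu$. Concretely: since $h$ has a fully matricial extension to a uniform neighborhood of $0$, it has a uniformly convergent Taylor--Taylor expansion there (as recorded in the Remark following Proposition~\ref{prop:3.7}), and by analogy with Proposition~\ref{prop:4.11} its coefficients $\partial^{n+1} h(0,\dots,0)\sharp(b_0,\dots,b_n)$ are multilinear in the $b_i$ and $\cB$-balanced. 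Define $\mu:\cB\lb x\rb \to \cB$ by declaring $\mu(b_0 x b_1 \cdots x b_n) := \partial^{n+1} h(0,\dots,0)\sharp(b_0,\dots,b_n)$ (and $\mu(1)=1$, which follows from condition (i) as in Remark~\ref{rem:4.12}, since the normalization $z_k g(z_k)\to 1$ forces $h(z)=z+\cdots$). This is well-defined and a $\cB$-bimodule map essentially by construction.

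Next I would establish the two nontrivial properties of $\mu$: exponential boundedness and complete positivity. Exponential boundedness should come from uniform local boundedness of $h$ near $0$: on a ball $B(0,r)$ we have $\|h\| \le M$, and the Taylor--Taylor coefficients of an analytic function bounded by $M$ on $B(0,r)$ are bounded by $M/r^{n}$ by a Cauchy-estimate argument for fully matricial functions, which gives exactly the bound $\|\mu(x b_1 \cdots x b_n x)\| \le C^{n+1}\|b_1\|\cdots\|b_n\|$ defining $\Sigma_\cB^0$. For complete positivity one must show $\mu^{(m)}(p(x)^* p(x)) \ge 0$ for all $p \in M_m(\cB\lb x\rb)$; the idea is to exploit that $g$ maps $H^+$ into $H^-$, which is an operator-valued positivity statement, together with the fully matricial structure. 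A standard route: for $z$ with $\Im z$ large and negative, $-\Im h(z) = -\Im g(z^{-1})$ is positive (on appropriate cones), and expanding $-\Im h(z)$ as a power series in $z$ with the test element $z$ built from a Hankel-type block matrix whose entries encode $p$ (exactly the nilpotent block-shift trick used in Proposition~\ref{prop:4.11}, suitably arranged to produce $p(x)^*p(x)$ in an upper-right corner) yields the desired positivity of the partial sums, hence of $\mu^{(m)}(p(x)^*p(x))$ as a limit of positive elements.

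Having shown $\mu \in \Sigma_\cB^0$, Theorem~\ref{thm:4.9} produces a $\cB$-valued $C^*$-probability space $(\cA,\cB,E)$ and a selfadjoint $X \in \cA$ with $\mu_X = \mu$. By Proposition~\ref{prop:4.11}, the associated Cauchy transform $G_X$ has $H_X(z) = G_X(z^{-1})$ with Taylor--Taylor coefficients at $0$ equal to the operator-valued moments $E[b_0 X b_1 \cdots X b_n] = \mu(b_0 x b_1 \cdots x b_n)$, which are by definition the Taylor--Taylor coefficients of $h$. Hence $h$ and $H_X$ agree on a uniform neighborhood of $0$ (two fully matricial functions with the same Taylor--Taylor expansion at a point coincide on a neighborhood, by the uniqueness part of the Taylor--Taylor theory). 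It remains to upgrade this local agreement on $H_X$ versus $h$ near $0$ to global agreement of $g$ and $G_X$ on all of $H^+(\cB_{nc})$: since $H^+(M_n(\cB))$ is connected and both $g^{(n)}$ and $G_X^{(n)}$ are analytic there, with $g^{(n)}(z^{-1}) = h^{(n)}(z) = H_X^{(n)}(z) = G_X^{(n)}(z^{-1})$ on an open set, the identity theorem for analytic functions between Banach spaces forces $g^{(n)} = G_X^{(n)}$ everywhere, i.e. $g = G_X = G_\mu$.

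The main obstacle I expect is the complete positivity of $\mu$. Exponential boundedness and the bimodule/unital properties are bookkeeping once the Taylor--Taylor machinery is in place, and the final identity-theorem step is routine; but extracting positivity of all the amplified moment forms $\mu^{(m)}(p^*p)$ from the single geometric condition $g(H^+) \subset H^-$ requires the right choice of block test-matrices and a careful limiting argument to turn ``$-\Im h(z) \ge 0$ for $\Im z \ll 0$'' into positivity of finite Taylor sections. One has to be careful that the block matrix $z$ one plugs in actually lies in (or can be perturbed into) $H^+(\cB_{nc})$ while still being essentially nilpotent so the series terminates; this balancing act, rather than any deep new idea, is where the real work lies, and it is presumably the technical heart of Williams' argument.
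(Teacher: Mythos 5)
Your skeleton matches the paper's exactly: extract $\mu$ from the Taylor--Taylor coefficients of $h$ at $0$, read off $\mu|_\cB = \id$ from condition (i), get exponential boundedness from uniform local boundedness of $h$, derive the bimodule property, show complete positivity, apply Theorem~\ref{thm:4.9} (Popa--Vinnikov) to realize $\mu = \mu_X$, match Taylor--Taylor coefficients via Proposition~\ref{prop:4.11}, and finish with an identity-theorem argument. All of that is what the paper does.

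The divergence -- and the genuine gap -- is in the complete positivity step, which you correctly flag as the crux but do not actually carry out. Your proposed route (build a Hankel-type block matrix $z$ encoding $p$, use $-\Im h(z) \geq 0$, take limits of partial sums) founders on exactly the obstruction you yourself name at the end: the nilpotent block-shift matrices that make the Taylor--Taylor expansion terminate are \emph{not} in $H^+(\cB_{nc})$, and perturbing by $i\varepsilon$ to push them into $H^+$ destroys the nilpotency, so the expansion no longer truncates and the ``positivity of partial sums'' you want does not come for free. You describe this as a balancing act that is ``presumably the technical heart,'' but you offer no mechanism to resolve it; as written, the positivity argument is incomplete.

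The paper's approach to this step is genuinely different and sidesteps the issue entirely. It reduces to the scalar-valued case by applying states on $\cB$: for each state $\phi$, the map $\xi \mapsto \phi(g^{(1)}(\xi \cdot 1))$ is an analytic function $\CC^+ \to \CC^-$ with the correct normalization, so by the \emph{classical} one-variable characterization of Cauchy transforms it equals $\int (\xi - t)^{-1}\, d\mu_\phi(t)$ for a probability measure $\mu_\phi$ on $\RR$. Its moment sequence is $\bigl(\phi(\mu(x^k))\bigr)_k$, hence positive definite; since this holds for every state, the $\cB$-valued moment sequence is positive definite in $\cB$. To cover the general $\cB$-valued moments $\mu(b_0 x b_1 \cdots x b_n)$ (rather than just powers of $x$), one passes to matrix amplifications and applies states to the $(1,1)$-entry. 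The payoff is that the hard positivity is outsourced to the classical one-dimensional Stieltjes theory rather than being extracted directly from a power series at a delicately positioned block argument. Closing your proposed route would need a new idea that neither you nor the paper supplies; the state-reduction is the missing ingredient.
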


\begin{proof}[Sketch of proof]
According to Proposition \ref{prop:4.11} we define the distribution by
$$\mu(b_0xb_1\cdots b_nxb_{n}):=\partial^{n+1} h(0,\dots,0)\sharp
(b_0,b_1,\dots,b_{n}).$$
One has to check that this has all the properties required in Definition \ref{def:4.7} for $\Sigma_\cB^0$.

Exponential boundenness comes from uniform boundedness of $h$; furthermore we have
$$\mu(b)=\partial h(0,0)\sharp b=\frac d{dt} h(0+tb)\vert_{t=0}=
\lim_{t\to 0} \frac {h(tb)}t=
\lim_{t\to 0} b\cdot \underbrace{(tb)^{-1} g((tb)^{-1})}_{\to 1}=b,$$
and thus: $\mu\vert_\cB=\id$. From this and complete positivity follows by general arguments the bimodule property.

The main problem is to show the positivity property. We reduce the problem to the scalar-valued version by applying states. For this note that 
$$\text{$b\in\cB$ positive} \quad\Longleftrightarrow \quad\text{$\phi(b)\geq 0$ for all states $\phi:\cB\to\CC$.}$$
Hence we consider, for a state $\phi$, 
$$\phi(g(\xi\cdot 1)):\CC\to\CC$$
as a function in $\xi\in\CC$; it satisfies the classical characterizing properties of a Cauchy transform, hence
$$\phi(g(\xi\cdot 1))=\int \frac 1{\xi-t}d\mu_\phi(t)$$
for some probability measure $\mu_\phi$. But the coefficients in the expansion about $\infty$ for this are $\phi(E[X^k])$, hence the $E[X^k]$ are under all $\phi$ a positive definite sequence in $\CC$, and thus the $E[X^k]$ themselves are positive definite in $\cB$. In order to get this also for general moments in $\cB\lb x\rb$ one has to consider matrix versions of this and apply states $\phi$ to the $(1,1)$-entry of matrices in $M_n(\cB)$.

\end{proof}

\chapter{Operator-Valued Freeness}

In order to be able to say something more on operator-valued distributions we need more structure in the distribution. The most prominent case is given by variables which are free. It is crucial that we have an operator-valued version of free probability theory, which behaves nicely with respect to matrix amplifications. This operator-valued freeness
will be presented here and will play a main role in most of the coming chapters.
Operator-valued free probability theory is, as its scalar-valued version, due to Voiculescu \cite{Voi-Ast}. Our presentation of operator-valued freeness is mainly based on \cite{Sp,MSp}.

\section{Definition and basic properties of operator-valued freeness}

\begin{definition}
\begin{enumerate}
\item
Let $(\cA,\cB,E)$ be an operator-valued probability space. Subalgebras $\cB\subset \cA_i\subset \cA$, $i\in I$, are called \emph{free} if $E[a_1\cdots a_k]=0$, whenever we have:
\begin{itemize}
\item
$k\in\NN$;
\item
$a_j\in \cA_{i_j}$, with $i_j\in I$, for all $j=1,\dots,k$;
\item
$E[a_j]=0$ for all $j=1,\dots,k$;
\item
$i_1\not= i_2\not= i_3\not=\dots \not= i_{k-1}\not= i_k$
(neighboring elements are from different subalgebras).
\end{itemize}
Instead of \emph{free} we will also say \emph{freely independent}, or more precisely \emph{free with respect to $E$} or \emph{free (with amalgamation) over $\cB$} or similar phrases.
\item
Random variables $X_i\in\cA$, $i\in I$, are \emph{free} if the corresponding
subalgebras 
$$\cB\lb X_i\rb:=\text{algebra generated by $X$ and $\cB$}
=\{p(X_i)\mid p(x)\in\cB\lb x\rb\}$$
are free
\end{enumerate}
\end{definition}

\begin{prop}\label{prop:5.2}
If $\cA_i$, $i\in I$, are free then $E$ is on the algebra generated by all $\cA_i$ determined by the restrictions $E\vert_{\cA_i}$ for all $i\in I$ and by the freeness condition.
\end{prop}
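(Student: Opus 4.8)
The plan is to show that any product $E[a_1 \cdots a_k]$ with each $a_j \in \cA_{i_j}$ can be reduced, using only the bimodule property of $E$, the definition of freeness, and the restrictions $E|_{\cA_i}$, to an expression built out of such data. The strategy is the standard ``centering and telescoping'' argument from free probability, carried over verbatim to the operator-valued setting. The point is that freeness as stated only directly computes $E$ on alternating products of \emph{centered} elements (those with $E[a_j]=0$), so the work is to rewrite an arbitrary product in terms of those.

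First I would argue by induction on the length $k$ of the product $a_1\cdots a_k$, $a_j\in\cA_{i_j}$. For each factor write $a_j = (a_j - E[a_j]) + E[a_j]$, where $\mathring a_j := a_j - E[a_j]$ satisfies $E[\mathring a_j]=0$ and still lies in $\cA_{i_j}$ (since $\cB\subset\cA_{i_j}$, so $E[a_j]\in\cB\subset\cA_{i_j}$). Expanding the product $\prod_j\bigl(\mathring a_j + E[a_j]\bigr)$ by multilinearity, each resulting term is a product in which some positions carry a centered factor $\mathring a_{j}$ and the others carry an element $E[a_j]\in\cB$. Using the bimodule property $E[b_1 A b_2] = b_1 E[A] b_2$ for $b_1,b_2\in\cB$, every maximal block of consecutive $\cB$-factors can be absorbed into the adjacent centered factors (staying inside the same $\cA_i$, again because $\cB\subset\cA_i$), or, if no centered factor survives at all, the whole term lies in $\cB$ and $E$ acts as the identity on it. So each term is reduced to the form $b\, E[\mathring c_1 \cdots \mathring c_\ell]\, b'$ with $b,b'\in\cB$, the $\mathring c_m$ centered, $\mathring c_m\in\cA_{j_m}$, and $\ell\le k$.

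Next, in each such surviving term I would further collapse any two adjacent centered factors belonging to the \emph{same} subalgebra: if $j_m = j_{m+1}$, replace $\mathring c_m \mathring c_{m+1}$ by a single element $d\in\cA_{j_m}$, then recenter $d = \mathring d + E[d]$ and expand again. This strictly decreases the number of factors, so the process terminates. After finitely many steps every term is either a genuinely alternating product of centered factors — for which the freeness axiom gives $E[\cdots]=0$ — or has collapsed entirely into $\cB$, where $E$ is the identity, or is a product of length $\le 1$, handled by $E|_{\cA_i}$. Collecting everything, $E[a_1\cdots a_k]$ is expressed as a finite sum of terms each determined by the $E|_{\cA_i}$ together with the bimodule property and the freeness condition; an easy induction on $k$ packages this rigorously.

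The only real subtlety is bookkeeping: one must be careful that the recentering/absorption steps always keep factors inside a single $\cA_i$ (which holds precisely because $\cB$ is a common subalgebra) and that one has a genuine termination measure — the pair (number of factors, then some secondary count of adjacent same-algebra pairs) decreases under each reduction. There is no deep obstacle; the content is entirely that freeness plus the $\cB$-bimodule structure suffices, and the proof is a direct translation of the scalar-valued universality argument. I would present it as the induction on $k$ with the centering expansion as the inductive step, relegating the ``collapse adjacent same-algebra factors'' bookkeeping to a remark or leaving it as the routine verification it is.
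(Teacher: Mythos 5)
Your proof is correct and takes essentially the same centering-and-telescoping approach as the paper: write $a_j = a_j^o + E[a_j]$, expand, use freeness to kill the all-centered term, and induct on length for the rest. The paper streamlines this by assuming at the outset that the spanning products are alternating ($i_1\neq i_2\neq\dots\neq i_k$) and then quietly folding the absorption of $\cB$-factors and the resulting re-alternation into the claim that the remaining terms have ``smaller length''; you make the collapse of adjacent same-subalgebra factors and the two-part termination measure explicit, which is the same underlying argument presented more carefully.
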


\begin{proof}
The algebra generated by all $\cA_i$ consists of elements which are linear combinations of $a_1\cdots a_k$ where $k\in\NN_0$, $a_j\in\cA_{i_j}$ with
$i_j\in I$; we an also assume that $i_1\not=i_2\not=\dots\not=i_k$. Consider such $a_1\cdots a_k$. We have to show that $E[a_1\cdots a_k]$ is determined by $E\vert_{\cA_i}$ ($i\in I$). We do this by induction. The case $k=0$ ($E[1]=1$) and $k=1$ are clear (as $a_1\in\cA_{i_1}$).

Consider now general $k$. We put
$$a_j^o:=a_j-\underbrace{E[a_j]}_{\in\cB\subset\cA_{i_j}} \in \cA_{i_j},\quad\text{then}\quad
E[a_j^o]=0.$$ 
We get then
$$E[a_1\cdots a_k]=E\bigl[(a_1^o+E[a_1])\cdots (a_k^o+E[a_k])\bigr]=
E[a_1^o\cdots a_k^o]+rest.$$
The first term vanishes by the definition of freeness and the $rest$-term is a sum of terms of smaller length, which are already determined by the induction hypothesis.
\end{proof}

\begin{example}\label{ex:5.3}
\begin{enumerate}
\item
Consider $a_1\in\cA_1$ and $a_2\in\cA_2$. Then we have
\begin{align*}
0&=E\bigl[(a_1-E[a_1])(a_2-E[a_2])\bigr]\\[.5em]
&=E[a_1a_2]-E\bigl[a_1\cdot E[a_2]\bigr]-E\bigl[E[a_1]\cdot a_2\bigr]+E\bigl[E[a_1]\cdot E[a_2]\bigr]
\end{align*}
The three last terms are actually all equal to $E[a_1]\cdot E[a_2]$, which leads to
$E[a_1a_2]=E[a_1]\cdot E[a_2]$.
\item
Consider $a_1,\tilde a_2\in\cA_1$ and $a_2\in\cA_2$. Then we have
\begin{align*}
0&=E\bigl[(a_1-E[a_1])(a_2-E[a_2])(\tilde a_1-E[\tilde a_1])\bigr]\\[.5em]
&=E[a_1a_2\tilde a_1]-E\bigl[a_1\cdot E[a_2]\cdot \tilde a_1\bigr] +\text{six other terms which cancel}.
\end{align*}
Thus we obtain $E[a_1a_2\tilde a_1]=E[a_1E[a_2] \tilde a_1]$. This cannot be factorized further, as $E[a_2]\in\cB$ does in general not commute with $a_1$ or $\tilde a_1$. However, this is okay, as $E[a_2]\in\cB$ and hence $a_1E[a_2]\tilde a_1\in \cA_1$, so $E[a_1E[a_2] \tilde a_1]$ is a moment which is determined by $E[a_2]$ and by $E\vert_{\cA_1}$.
\item
For $a_1,\tilde a_1\in\cA_1$ and $a_2,\tilde a_2\in\cA_2$ one calculates in the same way
$$E[a_1a_2\tilde a_1\tilde a_2]=E[a_1E[a_2]\tilde a_1]\cdot E[\tilde a_2]
+E[a_1]\cdot E[a_2 E[\tilde a_1]\tilde a_2]
-E[a_1] E[a_2] E[\tilde a_1] E[\tilde a_2].$$
\end{enumerate}
\end{example}

\begin{remark}
\begin{enumerate}
\item
If $\cB=\CC$ and $E=\ff$, then $\ff(a)$ commutes with everything and we can factorize the final results, like
$$\ff(a_1a_2\tilde a_1)=\ff(a_1\ff(a_2)\tilde a_1)=\ff(a_1\tilde a_1)\ff(a_2),$$
and we get the formulas from usual (scalar-valued) free probability.
\item
Note: on the level of moments, operator-valued freeness works like scalar-valued freeness, but one has to keep the original order of the elements.
\item
Note also that with respect to $E:\cA\to\cB$ the ``non-commutative scalars'' $\cB$ are free from any subalgebra.
\item
For a random variable $X\in\cA$, the restriction of $E$ to $\cB\lb X\rb$ is exactly the information about the moments of $X$. Hence Proposition \ref{prop:5.2} says in this case that the joint moments of free variables $X_i$ ($i\in I$) are determined by the moments of the individual variables.

For example, for $X$ and $Y$ free we have
$$E[XbY]=E[X]\cdot b\cdot E[Y]=E[Xb]\cdot E[Y]=E[X] \cdot E[bY]$$
and
$$E[Xb_1Yb_2X]=\underbrace{E\bigl[ Xb_1 \cdot \underbrace{E[Y]}_{\substack{\text{moment}\\ \text{of $Y$}}} \cdot b_2X\bigr]}_{\text{moment of $X$}}.$$
\item
Note that Proposition \ref{prop:5.2} gives us essentially a free product construction on an algebraic level. Since we want to do our constructions on an analytic $C^*$-probability level, we should extend our abstract notion of $\cB$-valued distributions from Definition \ref{def:4.7} from the case of one variable to the multivariate case.
\end{enumerate}
\end{remark}

\section{$\cB$-valued joint distributions}

\begin{definition}\label{def:5.5}
\begin{enumerate}
\item
Let $\cB$ be a unital $C^*$-algebra. We denote by $\cB\lb x_i;i\in I\rb$ the non-commutative polynomials in the formal variables $x_i$ ($i\in I$) with coefficients from $\cB$; they are linearly spanned by monomials of the form 
$$b_0x_{i_1}b_1 x_{i_2}\cdots b_{k-1} x_{i_k} b_k\qquad
\text{with $k\in\NN_0$; $b_0,\dots b_k\in \cB$; $i_1,\dots,i_k\in I$.}$$
This becomes a $*$-algebra by declaring $x_i^*=x_i$ for all $i\in I$. 
\item
A \emph{$\cB$-valued (joint) distribution} is a linear map $\mu:\cB\lb x_i; i\in I\rb\to \cB$ such that
\begin{itemize}
\item[(i)]
$\mu(1)=1$;
\item[(ii)]
$\mu$ is a $\cB$-$\cB$-bimodule map;
\item[(iii)]
$\mu$ is completely positive, i.e., 
$$\id\otimes\mu(p^*p)\geq 0 \qquad
\text{for all $n\in\NN$ and $p=p(x_i;i\in I)\in M_n(\cB\lb x_i; i\in I\rb)$;}$$
\item[(iv)]
$\mu$ is exponentially bounded, i.e., there exists $M>0$ such that for all
$k\in\NN_0$, $b_1,\dots, b_{k-1}\in\cB$, $i_1,\dots,i_k\in I$ the following holds:
$$\Vert \mu(x_{i_1}b_1x_{i_2}\cdots b_{k-1} x_{i_k}\Vert\leq
M^k \Vert b_1\Vert \cdots \Vert b_k\Vert.$$
\end{itemize}
We denote
\begin{align*}
\Sigma_\cB^I:=\{\text{$\mu$ satisfying (i), (ii), (iii)}\},\qquad
\Sigma_\cB^{I,0}:=\{ \mu\in \Sigma_\cB^I, \text{satisfying also (iv)}\}.
\end{align*}
\item
If $(\cA,\cB,E)$ is a $\cB$-valued $C^*$-probability space and $X_i=X_i^*\in\cA$ for all $i\in I$, then the $\cB$-valued joint distribution
$\mu_{(X_i;i\in I)}\in \Sigma_\cB^{I,0}$ is given by
$$\mu_{(X_i;i\in I)}(p(x_i;i\in I)):=E[p(X_i;i\in I)]\qquad
\text{for all $p(x_i;i\in I)\in \cB\lb x_i;i\in I\rb$.}$$
\end{enumerate}
\end{definition}

\begin{theorem}
For a unital $C^*$-algebra $\cB$ and for a linear map $\mu:\cB\lb x_i;i\in I\rb\to \cB$ the following are equivalent.
\begin{itemize}
\item[(i)]
$\mu\in\Sigma_\cB^{I,0}$.
\item[(ii)]
There exist a $\cB$-valued $C^*$-probability space $(\cA,\cB,E)$ and 
$X_i=X_i^*\in\cA$ for each $i\in I$ sucht that
$\mu=\mu_{(X_i;i\in I)}$.
\end{itemize}
\end{theorem}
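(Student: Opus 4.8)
The statement is the multivariate version of Theorem~\ref{thm:4.9}, and the plan is to prove $(ii)\Rightarrow(i)$ by a direct check and $(i)\Rightarrow(ii)$ by running the operator-valued GNS construction of that theorem with the single variable $x$ replaced by the family $(x_i)_{i\in I}$. For $(ii)\Rightarrow(i)$: given a $\cB$-valued $C^*$-probability space $(\cA,\cB,E)$ and selfadjoint $X_i\in\cA$ with $M:=\sup_{i\in I}\Vert X_i\Vert<\infty$, the map $\mu:=\mu_{(X_i;i\in I)}$ is unital and a $\cB$-$\cB$-bimodule map because $E$ is; it is completely positive since a conditional expectation onto $\cB$ is automatically completely positive, so for $P\in M_n(\cB\lb x_i;i\in I\rb)$ one evaluates $P$ in $M_n(\cA)$ and applies the positive map $\id\otimes E$ to get $\mu^{(n)}(P^*P)=(\id\otimes E)(P(X)^*P(X))\ge0$; and it is exponentially bounded with constant $M$ since $\Vert E\Vert=1$ and $\Vert X_{i_1}b_1\cdots b_{k-1}X_{i_k}\Vert\le M^{k}\Vert b_1\Vert\cdots\Vert b_{k-1}\Vert$. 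Hence $\mu\in\Sigma_\cB^{I,0}$; compare Exercise~\ref{exercise:13}.

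\textbf{The hard direction: the Hilbert module and the operators.} For $(i)\Rightarrow(ii)$, equip $V:=\cB\lb x_i;i\in I\rb$ with the $\cB$-valued inner product $\lb p,q\rb:=\mu(p^*q)$, which is sesquilinear over $\cB$ by the bimodule property of $\mu$ and positive by its complete positivity; note $\mu\vert_\cB=\id$, so $\lb\widehat b,\widehat b\rb=b^*b$. Put $\Vert p\Vert_\mu:=\Vert\lb p,p\rb\Vert_\cB^{1/2}$ and $N:=\{p\in V:\Vert p\Vert_\mu=0\}$; the Cauchy--Schwarz inequality for $\cB$-valued inner products gives $N=\{p:\lb p,q\rb=0\text{ for all }q\in V\}$, so $N$ is a subspace invariant under left multiplication by any $r\in V$ (because $\lb rp,q\rb=\lb p,r^*q\rb$). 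Let $\HH$ be the completion of $V/N$ in $\Vert\cdot\Vert_\mu$, a Hilbert $\cB$-module with cyclic vector $\widehat 1$, and let $\lambda_b\colon\widehat p\mapsto\widehat{bp}$ ($b\in\cB$) and $X_i\colon\widehat p\mapsto\widehat{x_ip}$ ($i\in I$) be the operators on $V/N$ induced by left multiplication. Then $\lambda_b$ is bounded with $\Vert\lambda_b\Vert\le\Vert b\Vert$ — apply positivity of $\mu$ to $p^*(\Vert b\Vert^2\cdot1-b^*b)p=(cp)^*(cp)\ge0$ with $c:=(\Vert b\Vert^2\cdot1-b^*b)^{1/2}\in\cB$ — and adjointable with $\lambda_b^*=\lambda_{b^*}$; each $X_i$ is symmetric since $x_i^*=x_i$. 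The key point, exactly as in Theorem~\ref{thm:4.9}, is that each $X_i$ is \emph{bounded}, with $\Vert X_i\Vert\le M$, so that it extends to a selfadjoint adjointable operator on $\HH$.

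\textbf{The algebra, the expectation, and the conclusion.} Granting the boundedness, set $\cA:=C^*(\{\lambda_b:b\in\cB\}\cup\{X_i:i\in I\})\subset\LC(\HH)$, a unital $C^*$-algebra containing a faithful copy of $\cB$ via $b\mapsto\lambda_b$, and define $E\colon\cA\to\cB$ by $E[A]:=\lb\widehat1,A\widehat1\rb$. Then $E[1]=\mu(1)=1$ and $E[A^*A]=\lb A\widehat1,A\widehat1\rb\ge0$, so $E$ is positive; and $E$ is a $\cB$-bimodule map because on the cyclic vector the two $\cB$-actions agree, $\lambda_b\widehat1=\widehat1\cdot b$, and adjointable operators are automatically right $\cB$-linear, whence $E[\lambda_{b_1}A\lambda_{b_2}]=\lb\widehat1\cdot b_1^*,(A\widehat1)\cdot b_2\rb=b_1\lb\widehat1,A\widehat1\rb b_2=b_1E[A]b_2$. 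Thus $(\cA,\cB,E)$ is a $\cB$-valued $C^*$-probability space, and for every monomial $E[b_0X_{i_1}b_1\cdots X_{i_k}b_k]=\lb\widehat1,\widehat{b_0x_{i_1}b_1\cdots x_{i_k}b_k}\rb=\mu(b_0x_{i_1}b_1\cdots x_{i_k}b_k)$, so $\mu_{(X_i;i\in I)}=\mu$.

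\textbf{The main obstacle.} The nontrivial step is the estimate $\Vert X_i\Vert\le M$. Unlike the case of $\lambda_b$, the element $M^2\cdot1-x_i^2$ is not visibly a sum of squares in $\cB\lb x_i;i\in I\rb$, so positivity of $\mu$ does not directly yield $\lb x_ip,x_ip\rb\le M^2\lb p,p\rb$; one must instead, following Popa--Vinnikov as in Theorem~\ref{thm:4.9}, combine the exponential bound with complete positivity — for instance bounding $\Vert X_i^n\widehat1\Vert_\mu\le M^n$ and then bootstrapping to a genuine operator-norm estimate using the symmetry of $X_i$ and Cauchy--Schwarz. Everything else is a verbatim transcription of the one-variable argument; since each polynomial involves only finitely many of the $x_i$, the cardinality of $I$ is irrelevant.
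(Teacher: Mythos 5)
Your proof is correct and takes the approach the paper's one-line proof indicates: rerun the GNS construction of Theorem~\ref{thm:4.9} with $\cB\lb x_i;i\in I\rb$ in place of $\cB\lb x\rb$, and you rightly isolate boundedness of each $X_i$ as the same technical crux that the paper only sketches there. Your parenthetical remark that this route is indifferent to the cardinality of $I$ is apt, since the paper's alternative (diagonal-matrix) reduction to Theorem~\ref{thm:4.9} works only for finite $I$; note also that the direction $(ii)\Rightarrow(i)$ needs the uniform bound $\sup_{i\in I}\Vert X_i\Vert<\infty$ that you quietly build in, without which the exponential bound in Definition~\ref{def:5.5} can fail when $I$ is infinite.
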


\begin{proof}[``Proof'']
This can be done as in the proof of Theorem \ref{thm:4.9}, or it can also be reduced (at least for $\vert I\vert< \infty$) directly to Theorem \ref{thm:4.9} with the usual matrix trick by taking  a diagonal matrix $X$, where the $X_i$ are sitting on the diagonal.
\end{proof}

\section{Compatibility of operator-valued freeness with matrix amplifications}

\begin{prop}\label{prop:5.7}
Let $(\cA,\cB,E)$ be an operator-valued probability space and let
$\cB\subset\cA_i\subset\cA$, $i\in I$, be free over $\cB$. Then, for any $n\in\NN$, in the operator-valued probability space $(M_n(\cA),M_n(\cB),\id\otimes E)$, the subalgebras $M_n(\cB)\subset M_n(\cA_i)\subset M_n(\cA)$, $i\in I$, are free over $M_n(\cB)$.
\end{prop}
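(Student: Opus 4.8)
The plan is to verify the defining vanishing condition of freeness over $M_n(\cB)$ directly, reducing it entrywise to the freeness of the $\cA_i$ over $\cB$ by means of a matrix-unit decomposition.

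First I would dispense with the routine preliminaries: that $(M_n(\cA),M_n(\cB),\id\otimes E)$ is an operator-valued probability space (linearity, unitality and the $M_n(\cB)$-bimodule property of $\id\otimes E$ all follow entrywise from the corresponding properties of $E$), and that $M_n(\cB)\subset M_n(\cA_i)\subset M_n(\cA)$ are unital subalgebras. None of this uses freeness.

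The heart of the argument: take $k\in\NN$, indices $i_1\neq i_2\neq\cdots\neq i_k$ in $I$, and matrices $A_j\in M_n(\cA_{i_j})$ with $(\id\otimes E)[A_j]=0$; one must show $(\id\otimes E)[A_1\cdots A_k]=0$. Writing $A_j=\sum_{p,q=1}^n e_{pq}\otimes a^{(j)}_{pq}$ in terms of the standard matrix units $e_{pq}$, with $a^{(j)}_{pq}\in\cA_{i_j}$, the key observation is that $(\id\otimes E)[A_j]=\sum_{p,q}e_{pq}\otimes E[a^{(j)}_{pq}]=0$ forces $E[a^{(j)}_{pq}]=0$ for \emph{all} $p,q$; that is, $M_n(\cB)$-centeredness of $A_j$ is the same thing as $\cB$-centeredness of every entry. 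Multiplying out with $e_{pq}e_{rs}=\delta_{qr}e_{ps}$ and applying $\id\otimes E$ gives
\[
(\id\otimes E)[A_1\cdots A_k]=\sum_{j_0,\dots,j_k=1}^n e_{j_0 j_k}\otimes E\bigl[a^{(1)}_{j_0 j_1}a^{(2)}_{j_1 j_2}\cdots a^{(k)}_{j_{k-1} j_k}\bigr],
\]
and for each fixed tuple $(j_0,\dots,j_k)$ the word inside $E[\cdot]$ has its $l$-th letter $a^{(l)}_{j_{l-1}j_l}$ in $\cA_{i_l}$, each letter centered, and neighbouring indices $i_l\neq i_{l+1}$; so the freeness of the $\cA_i$ over $\cB$ makes it vanish. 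Hence the whole sum is zero, which is precisely the freeness of the $M_n(\cA_i)$ over $M_n(\cB)$.

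As for the main obstacle: there is no deep one — the entire content is the single observation that the matrix-unit decomposition converts $M_n(\cB)$-centeredness of $A_j$ into $\cB$-centeredness of each entry $a^{(j)}_{pq}$ while leaving the alternation pattern of the indices $i_j$ untouched, so that operator-valued freeness applies termwise. The only points needing a little care are the index bookkeeping in the product of matrix units and the use of linearity of $E$ to pull the sums outside. One could alternatively argue via Proposition~\ref{prop:5.2}, observing that $\id\otimes E$ on $M_n(\cA)$ and the would-be free structure are determined by the same data, but the explicit computation above is cleaner and self-contained.
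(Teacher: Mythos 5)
Your proof is correct and follows essentially the same route as the paper's: both decompose the matrices entrywise (the paper via $A_j=(a^{(j)}_{kl})_{k,l}$, you via matrix units), observe that $M_n(\cB)$-centeredness of $A_j$ is equivalent to $\cB$-centeredness of every entry, expand the product of matrices into a sum over index paths, and apply freeness of the $\cA_i$ termwise to conclude each summand vanishes. The matrix-unit notation is just a cosmetic variant of the paper's entry-indexed computation.
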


\begin{proof}
Consider $A_j\in M_n(\cA_{i_j})$ such that 
$i_1\not=i_2\not=\dots\not= i_r$ and $\id\otimes E[A_j]=0$ for all $j=1,\dots,r$. We have to show that
$\id\otimes E[A_1\cdots A_r]=0$. Write
$A_j=(a_{kl}^{(j)})_{k,l=1}^n$ with $a_{kl}^{(j)}\in\cA_{i_j}$. Then
$$\id\otimes E[A_j]=(E[a_{kl}^{(j)}])_{k,l}=0$$
means that all $E[a_{kl}^{(j)}]=0$. For $A:=A_1\cdots A_r=(a_{kl})_{k,l=1}^n$ we have
$$a_{kl}=\sum_{\substack{r_1,\dots,r_{k-1}=1}}^n \underbrace{a_{kr_1}^{(1)}}_{\in\cA_{i_1}} \underbrace{a_{r_1r_2}^{(2)}}_{\in\cA_{i_2}}\cdots \underbrace{a_{r_{k-1}l}^{(r)}}_{\in \cA_{i_r}}.$$
For each fixed choice of $r_1,\dots,r_{k-1}$, the factors in the product are coming alternatingly from different subalgebras and each is centred under $E$.
Hence, by the freeness of the $\cA_i$, we get
$$E[a_{kl}]=\sum \underbrace{E[a_{kr_1}^{(1)} a_{r_1r_2}^{(2)}\cdots a_{r_{k-1}l}^{(r)}]}_{=0,\text{ for all $r_1,\dots,r_{k-1}$}}=0;
$$
but this means that
$$\id\otimes A=(E[a_{kl}])_{k,l=1}^n=0.$$
\end{proof}

\begin{remark}
\begin{enumerate}
\item
Note that $M_n(\cA)$ is also a $\cB$-valued probability space with respect to $\tr\otimes E$, where $\tr$ denotes the normalized trace on $M_n(\CC)$. We are not claiming freeness in this space -- this is actually not true in general.

For example, consider a scalar-valued probability space $(\cA,\ff)$. Then $M_2(\cA)$ is both a scalar-valued probability space (with respect to $\tr\otimes \ff$) and an operator-valued probability space (with respect to
$\id\otimes \ff$). Freeness with respect to $\ff$ goes only over to freeness
with respect to $\id\otimes\ff$, but not with respect to $\tr\otimes\ff$. For example, if $a_1,\tilde a_1\in\cA_1$ and $a_2,\tilde a_2\in\cA_2$ are free in $\cA$, then for
$$A_1=\begin{pmatrix}
a_1&0\\ 0&\tilde a_1
\end{pmatrix}\in M_2(\cA_1)
\qquad\text{and}\qquad
A_2=\begin{pmatrix}
a_2&0\\ 0&\tilde a_2
\end{pmatrix}\in M_2(\cA_2)$$
we have
$$A_1A_2=\begin{pmatrix}
a_1a_2&0\\ 0& \tilde a_1\tilde a_2
\end{pmatrix}$$
and thus on the operator-valued level:
\begin{align*}
\id\otimes\ff[A_1A_2]&=
\begin{pmatrix}
\ff(a_1a_2)&0\\
0& \ff(\tilde a_1\tilde a_2)
\end{pmatrix}\\[.5em]
&=\begin{pmatrix}
\ff(a_1)\ff(a_2)&0\\
0& \ff(\tilde a_1)\ff(\tilde a_2)
\end{pmatrix}\\[.5em]
&=\begin{pmatrix}
\ff(a_1)&0\\
0& \ff(\tilde a_1)
\end{pmatrix}
\begin{pmatrix}
\ff(a_2)&0\\
0& \ff(\tilde a_2)
\end{pmatrix}\\[.5em]
&=\id\otimes\ff [A_1]\cdot \id\otimes\ff [A_2];
\end{align*}
on the scalar-valued level, on the other side, we have in general:
\begin{align*}
\tr\otimes\ff(A_1A_2)&=\frac 12[\ff(a_1)\ff(a_2)+\ff(\tilde a_1)\ff(\tilde a_2)]\\[.5em]
&\not= \frac 12[\ff(a_1)+\ff(\tilde a_1)]\cdot \frac 12 [\ff(a_2+\ff(\tilde a_2)]\\[.5em]
&=\tr\otimes\ff(A_1)\cdot \tr\otimes \ff(A_2).
\end{align*}
\item
Note however that, even if in the end we are only interested in moments with respect to 
$\tr\otimes E$, it is good to know something about the moments with respect to $\id\otimes E$, since those are related by $\tr\otimes E=\tr[\id\otimes E]$; i.e., instead of going directly down to $\cB$,
$$M_n(\cA)\overset{\tr\otimes E}{\longrightarrow} \cB$$
we can also decompose this into two steps:
$$M_n(\cA)\overset{\id\otimes E}{\longrightarrow} M_n(\cB)\overset{\tr}{\longrightarrow}\cB.$$
This simple observation will be crucial for our latter investigations!
\end{enumerate}
\end{remark}

\section{Structure of formulas for mixed moments in free variables}

\begin{remark}
\begin{enumerate}
\item
We have to understand better the structure of the formulas for mixed moments in free variables. This is analogous to the scalar-valued case, in particular non-crossing partitions will feature prominently. For the relevant definitions and notations in relation with partitions and kernels of multi-indices we refer to Chapter 2 of the \href{https://rolandspeicher.files.wordpress.com/2019/08/free-probability.pdf}{Free Probability Lecture Notes}.
\item
As in the scalar-valued case, we get for ``non-crossing moments'' a kind of factorizing into the moments of the individual subalgebras; however, we have now to respect the nestings of the blocks. This is just an iteration of the
``factorization'' from Example \ref{ex:5.3},
\begin{equation}\label{eq:factorization}
E[a_1a_2\tilde a_1]=E\bigl[ a_1\cdot E[a_2]\cdot \tilde a_1\bigr]\qquad
\text{for $\{a_1,\tilde a_1\}$ free from $a_2$.}
\end{equation}
For example, consider $\{a_1,a_2,a_3\}$, $\{e_1,e_2\}$, $c$, $d$ which are free
with respect to $E$. Then we can iterate the factorization \eqref{eq:factorization} as follows:
\begin{align*}
\begin{tikzpicture}[baseline=-2pt]
    \begin{scope}[xshift=-2.em, xscale=0.8]
      \draw (0,0em) -- ++ (0,-2em) -- ++(4em,0) -- ++(0,2em);
      \draw (4em,-2em) -| ++(2em,2em);
      \draw (5em,0) -- ++ (0,-1em);
      \draw (1em,0) -- ++ (0,-1.5em) -| (3em,0);
      \draw (2em,0) -- ++ (0,-1em);      
    \end{scope}
     \node[fill=white, inner sep=1pt] at (0,0) {$E[a_1e_1ce_2a_2da_3]$};  
   \end{tikzpicture}
&=E\bigl[\bigl(a_1 E[e_1 c e_2] a_2 \bigr) d \bigl(a_3\bigr)\bigr]\\
&=E\bigl[ a_1 \underbrace{E[e_1 c e_2]}_{E[e_1E(c)e_2]} a_2 E[d] a_3\bigr]\\
&=E\bigl\{ a_1 E\bigl[ e_1 E[c] e_2\bigr] a_2 E[d] a_3\bigr\}.
\end{align*}
We will denote this ``factorization'' by
$$E_\pi[a_1,e_1,c,e_2,a_2,d,a_3]\qquad\text{for $\pi=$  }
\begin{tikzpicture}[baseline=-15pt]
    \begin{scope}[xshift=2.2em, scale=0.8]
      \draw (0,0em) -- ++ (0,-2em) -- ++(4em,0) -- ++(0,2em);
      \draw (4em,-2em) -| ++(2em,2em);
      \draw (5em,0) -- ++ (0,-1em);
      \draw (1em,0) -- ++ (0,-1.5em) -| (3em,0);
      \draw (2em,0) -- ++ (0,-1em);      
    \end{scope};  
   \end{tikzpicture}
$$
\item
Note that also for ``crossing moments'' only non-crossing factorizations show up in the formula expressing it in the individual moments, like in part (3) of Example \ref{ex:5.3}, for $\{a_1,\tilde a_1\}$ free from $\{a_2,\tilde a_2\}$:
 \begin{align*}
       \begin{tikzpicture}[baseline=0pt]
    \begin{scope}[xshift=-1.25em, xscale=0.95]
      \draw (0,-1em) --  (0,-2em) -- (2em,-2em) -- (2em,-1em);
      \draw (1em,-1em) --  (1em,-2.5em) -- (3em,-2.5em) -- (3em,-1em);
    \end{scope}
    \node[fill=white, inner sep=1pt] at (0,0) {$E[a_1a_2\tilde a_1 \tilde a_2)$};
    \useasboundingbox (-2.625em,-1.625em) rectangle (2.625em, 0.75em);
  \end{tikzpicture}  
&=\begin{tikzpicture}[baseline=0pt]
    \begin{scope}[xshift=-2.3em, xscale=1.6]
      \draw (0,-.5em) -- ++ (0,-2em) -- ++(2em,0) -- ++(0,2em);
 \draw (1em,-.5em) -- ++ (0,-1.5em);
 \draw (3.5em,-0.5em) -- ++ (0,-2em);
    \end{scope}
    \node[fill=white, inner sep=1pt] at (0,0) {$E\bigl[a_1E[a_2]\tilde a_1\bigr] E[ \tilde a_2]$};
    \useasboundingbox (-2.625em,-1.625em) rectangle (2.625em, 0.75em);
  \end{tikzpicture}  
                  +           
\begin{tikzpicture}[baseline=0pt]
    \begin{scope}[xshift=-1.5em, xscale=1.6]
 \draw (1em,-1em) --  (1em,-2.5em) -- (3em,-2.5em) -- (3em,-1em);
 \draw (-.5em,-.5em) -- ++ (0,-2em);
 \draw (2em,-0.5em) -- ++ (0,-1.5em);
    \end{scope}
    \node[fill=white, inner sep=1pt] at (0,0) {$E[a_1] E\bigl[a_2 E[\tilde a_1] \tilde a_2\bigr]$};
    \useasboundingbox (-2.625em,-1.625em) rectangle (2.625em, 0.75em);
  \end{tikzpicture}  
 -          \begin{tikzpicture}[baseline=0pt]                        \begin{scope}[xshift=0.375em,xscale=1]
       \draw (1em,-.5) --  (1em,-2.5em);
    \end{scope}
    \node[fill=white, inner sep=1pt] at (1em,0) {$E[a_1]$};
    \useasboundingbox (-0.05em,-1.125em) rectangle (2.075em, 0.75em); 
  \end{tikzpicture}
                           \begin{tikzpicture}[baseline=0pt]                        \begin{scope}[xshift=0.375em,xscale=1]
       \draw (1em,-.5) --  (1em,-2.5em);
    \end{scope}
    \node[fill=white, inner sep=1pt] at (1em,0) {$E[a_2]$};
    \useasboundingbox (-0.05em,-1.125em) rectangle (2.075em, 0.75em); 
  \end{tikzpicture}
                           \begin{tikzpicture}[baseline=0pt]                        \begin{scope}[xshift=0.375em,xscale=1]
    \draw (1em,-.5) --  (1em,-2.5em);
    \end{scope}
    \node[fill=white, inner sep=1pt] at (1em,0) {$E[\tilde a_1]$};
  \end{tikzpicture}
                           \begin{tikzpicture}[baseline=0pt]                        \begin{scope}[xshift=0.375em,xscale=1]
    \draw (1em,-.5) --  (1em,-2.5em);
    \end{scope}
    \node[fill=white, inner sep=1pt] at (1em,0) {$E[\tilde a_2]$};
    \useasboundingbox (-0.05em,-1.125em) rectangle (2.075em, 0.75em); 
  \end{tikzpicture}.
 \end{align*}
This is quite relevant in the operator-valued case; whereas in the scalar-valued situation the meaning of a crossing term like
$$\ff_{
  \begin{tikzpicture}[baseline=0pt]
    \begin{scope}[xshift=-1.25em, scale=0.5, yshift=1.5em]
      \draw (0,-1em) --  (0,-2em) -- (2em,-2em) -- (2em,-1em);
      \draw (1em,-1em) --  (1em,-2.5em) -- (3em,-2.5em) -- (3em,-1em);
    \end{scope}
  \end{tikzpicture}  }
(a_1,a_2,\tilde a_1,\tilde a_2)=\ff(a_1\tilde a_1)\cdot \ff(a_2\tilde a_2)$$
is clear, there is no canonical definition for
$$E_{
  \begin{tikzpicture}[baseline=0pt]
    \begin{scope}[xshift=-1.25em, scale=0.5, yshift=1.5em]
      \draw (0,-1em) --  (0,-2em) -- (2em,-2em) -- (2em,-1em);
      \draw (1em,-1em) --  (1em,-2.5em) -- (3em,-2.5em) -- (3em,-1em);
    \end{scope}
  \end{tikzpicture}  }
[a_1,a_2,\tilde a_1,\tilde a_2]$$
in the operator-valued case:
$$E[a_1\tilde a_2]\cdot E[a_2\tilde a_2]\not= E[a_2\tilde a_2]\cdot E[a_1 \tilde a_1]$$
in general, and there is no nested version which respects the order of the variables.
\end{enumerate}
\end{remark}

\begin{definition}\label{def:5.10}
\begin{enumerate}
\item
Let $\cB\subset \cA$ be an inclusion of unital subalgebras. A \emph{$\cB$-balanced map} $T:\cA^n\to\cB$ is a $\CC$-multilinear map, which satisfies also the following conditions for all $a_1,\dots,a_n
\in\cA$, $b,b'\in\cB$, $k=1,\dots,n-1$:
\begin{align*}
T(ba_1,a_2,\dots, a_nb')&=b T(a_1,a_2,\dots,a_n) b'\\
T(a_1,\dots,a_k b,a_{k+1},\dots,a_n)&=T(a_1,\dots,a_k, ba_{k+1},\dots,a_n).
\end{align*}
\item
For a given sequence $T_n:\cA^n\to\cB$ ($n\in\NN$) of $\cB$-balanced maps, we define the corresponding multiplicative maps $T_\pi$ ($n\in\NN$, $\pi\in NC(n)$) recursively on the number of blocks by:
$T_{1_n}:=T_n$ for all $n\in \NN$; and, for
$$\pi=\sigma\cup \underbrace{(p+1,p+2,\dots,p+q)}_{\text{interval block}} \in NC(n)$$
we set
$$T_\pi(a_1,\dots,a_n):=T_\sigma(a_1,\dots,a_p\cdot T_q(a_{p+1},\dots,a_{p+q}),
a_{p+q+1},\dots,a_n).$$
Note that $T_\pi$ is also $\cB$-balanced.
\end{enumerate}
\end{definition}

\begin{example}
For 
$\pi=
\begin{tikzpicture}[baseline=0pt]
    \begin{scope}[yshift=.7em, scale=0.4]
 \draw (0em,-0em) --  (0em,-3em) -- (9em,-3em) -- (9em,-0em);
 \draw (1em,-0em) --  (1em,-2.em) -- (4em,-2.em) -- (4em,-0em);
 \draw (4em,-2.em) --  (8em,-2.em) -- (8em,0);
 \draw (2em,-0em) --  (2em,-1.em) -- (3em,-1.em) -- (3em,-0em);
\draw (6em,-0em) --  (6em,-1.em) -- (7em,-1.em) -- (7em,-0em);
\draw (5em,-0em) --  (5em,-1.em);
    \end{scope}
  \end{tikzpicture} $
we have
\begin{align*}
&T_\pi\underset
{
\begin{tikzpicture}[baseline=0pt]
    \begin{scope}[yshift=0em, scale=1.4]
 \draw (0em,-0em) --  (0em,-3em) -- (9em,-3em) -- (9em,-0em);
 \draw (1em,-0em) --  (1em,-2.em) -- (4em,-2.em) -- (4em,-0em);
 \draw (4em,-2.em) --  (8em,-2.em) -- (8em,0);
 \draw (2em,-0em) --  (2em,-1.em) -- (3em,-1.em) -- (3em,-0em);
\draw (6em,-0em) --  (6em,-1.em) -- (7em,-1.em) -- (7em,-0em);
\draw (5em,-0em) --  (5em,-1.em);
    \end{scope}
  \end{tikzpicture}  
} 
{(a_1,a_2,a_3,a_4,a_5,a_6,a_7,a_8,a_9,a_{10})}\\\quad \\&=
T_2\Bigl(a_1\cdot T_3\bigl(a_2\cdot T_2(a_3,a_4),a_5\cdot T_1(a_6)\cdot T_2(a_7,a_8),
a_9\bigr),a_{10}\Bigr).
\end{align*}
\end{example}

\begin{prop}
Let $(\cA,\cB,E)$ be a $\cB$-valued probability space and let $\cB\subset
\cA_i\subset\cA$, $i\in I$, be free with respect to $E$. We denote, for $n\in\NN$, by $E_n:\cA^n\to \cB$ the $\cB$-balanced map given by
$E_n(a_1,a_2,\dots,a_n):=E[a_1a_2\cdots a_n]$
and by $E_\pi$, for all $n\in\NN$, $\pi\in NC(n)$, the corresponding 
multiplicative map. 
Consider now $a_j\in\cA_{i_j}$ for $j=1,\dots,k$. If $\ker i\in\cP(k)$ is
non-crossing, then 
$$E[a_1 a_2\cdots a_k]=E_{\ker i}(a_1,a_2,\dots,a_k).$$
\end{prop}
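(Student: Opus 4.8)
The plan is to prove the identity $E[a_1\cdots a_k]=E_{\ker i}(a_1,\dots,a_k)$ by strong induction on the length $k$, turning every step into an application of the elementary factorization rule \eqref{eq:factorization} from Example~\ref{ex:5.3} applied along an interval block of $\ker i$.

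\medskip
\noindent\emph{Setup and normalization.} The cases $k=0$ and $k=1$ are immediate ($E[1]=1$, $E|_{\cB}=\id$, and $E_{1_k}=E_k$). For $k\ge 2$ I would first reduce to the case of \emph{alternating} indices $i_1\neq i_2\neq\dots\neq i_k$. If $i_t=i_{t+1}$, then the positions $t,t+1$ lie in a common block of $\ker i$, the product $a_ta_{t+1}$ lies in $\cA_{i_t}$, and replacing $(a_t,a_{t+1})$ by the single letter $a_ta_{t+1}$ gives a word of length $k-1$ with the same value of $E[\cdot]$; its index kernel is obtained from $\ker i$ by deleting one point, hence is still non-crossing, and its $E_\pi$ is unchanged, since in any legal order of peeling interval blocks the letters $a_t$ and $a_{t+1}$ are multiplied consecutively. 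Iterating, we may assume the word is alternating; then, since a non-crossing partition on $\ge 1$ points has an interval block and an interval block of size $\ge 2$ would force two equal neighbouring indices, $\ker i$ has a \emph{singleton} block $\{t\}$, i.e.\ the index $\ell:=i_t$ occurs in $i$ only at position $t$.

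\medskip
\noindent\emph{The factorization step.} Write $a_t=a_t^\circ+E[a_t]$ with $a_t^\circ:=a_t-E[a_t]\in\cA_\ell$ and $E[a_t^\circ]=0$. The goal is
\[
E[a_1\cdots a_k]=E\bigl[a_1\cdots a_{t-1}\,E[a_t]\,a_{t+1}\cdots a_k\bigr],
\]
i.e.\ $E[a_1\cdots a_{t-1}\,a_t^\circ\,a_{t+1}\cdots a_k]=0$. I would prove this vanishing by centering the remaining letters one at a time, working inward from both ends of the word: whenever a letter $a_r$ ($r\neq t$) is split as $a_r=a_r^\circ+E[a_r]$, the scalar term $E[a_r]\in\cB$ is absorbed into an adjacent letter. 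Absorbing into a neighbour $a_s$ with $s\neq t$ keeps that letter in $\cA_{i_s}$; absorbing into $a_t^\circ$ keeps that letter in $\cA_\ell$ \emph{and centred}, by the bimodule property of $E$. Either way one obtains a strictly shorter word whose index kernel is a restriction of $\ker i$ — hence still non-crossing — and in which the (unique) $\ell$-letter is still centred and still forms a singleton block. By the induction hypothesis the $E$ of this shorter word equals its $E_\pi$, which vanishes because, peeling off that singleton block first, $E_\pi$ becomes multilinear in the corresponding letter through $E$ of it, and that quantity is now $0$. After all such scalar branches are discarded, the one surviving term is $E[a_1^\circ\cdots a_k^\circ]$, an alternating word of centred letters, which is $0$ by the definition of freeness. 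This is exactly the iterated version of \eqref{eq:factorization}.

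\medskip
\noindent\emph{Closing the induction and the main obstacle.} Since $E[a_t]\in\cB$, the letter $a_{t-1}E[a_t]$ lies in $\cA_{i_{t-1}}$, so $a_1,\dots,a_{t-1}E[a_t],a_{t+1},\dots,a_k$ is a word of length $k-1$ whose index kernel is $\ker i$ with position $t$ removed, call it $\sigma$, which is again non-crossing. By the induction hypothesis,
\[
E\bigl[a_1\cdots a_{t-1}\,E[a_t]\,a_{t+1}\cdots a_k\bigr]=E_\sigma\bigl(a_1,\dots,a_{t-1}E[a_t],a_{t+1},\dots,a_k\bigr),
\]
and by the recursive definition of $E_\pi$ in Definition~\ref{def:5.10} — peeling off the interval block $\{t\}$, whose contribution is $E_1(a_t)=E[a_t]$ — the right-hand side is precisely $E_{\ker i}(a_1,\dots,a_k)$; combined with the factorization step this finishes the proof. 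The cases $t=1$ or $t=k$ (the scalar attaches on the one available side) and the case where the alternation reduction terminates at length $1$ are handled in the same way using $\cB$-balancedness. I expect the main obstacle to be the bookkeeping inside the factorization step: one must verify that splitting off and absorbing scalars genuinely preserves (a) centredness of the distinguished $\cA_\ell$-letter and (b) non-crossingness of the shortened kernel together with its singleton block, so that the induction hypothesis really applies at each stage. Both points reduce to the bimodule property of $E$ and to the fact that restrictions of non-crossing partitions are non-crossing, but they are exactly where a careless argument would break down.
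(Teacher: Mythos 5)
Your proof is correct and follows the same strategy the paper gestures at with its one-line ``iterate \eqref{eq:factorization}''; you have simply supplied the inductive scaffolding that the paper leaves implicit. The reduction to an alternating word by merging adjacent same-index letters, the observation that a non-crossing kernel of an alternating word has a singleton interval block $\{t\}$, the sub-claim $E[a_1\cdots a_{t-1}a_t^\circ a_{t+1}\cdots a_k]=0$ proved by peeling off one centred letter at a time with each scalar branch killed by the induction hypothesis (via peeling the singleton $\{t\}$ in $E_\sigma$ and using that $E_1(a_t^\circ)=0$), and the final collapse to the defining freeness relation on the fully centred alternating word — all of this is sound, and the two invariants you flag (centredness of the $\ell$-letter under $\cB$-absorption, non-crossingness and persistence of the singleton under restriction) are indeed exactly what must be verified and are both easy consequences of the bimodule property and of restriction of non-crossing partitions. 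The only cosmetic remark is that once one merge $a_t a_{t+1}$ has been performed, the induction hypothesis already applies to the length-$(k-1)$ word, so the ``iterate until alternating'' phrasing is dispensable; and the claim that merging leaves $E_\pi$ invariant, while true, deserves a sentence noting that since $t,t+1$ are adjacent positions in the same block, no peeling step of the recursive Definition~\ref{def:5.10} ever inserts a $\cB$-factor between them, so they are multiplied consecutively when their block is finally stripped.
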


\begin{proof}
By iteration of \eqref{eq:factorization}:
\begin{equation*}
E[a_1a_2\tilde a_1]=E\bigl[ a_1\cdot E[a_2]\cdot \tilde a_1\bigr]
=E_{
\begin{tikzpicture}[baseline=0pt]
    \begin{scope}[yshift=0.4em, scale=.4]
      \draw (0,0em) --  (0,-2em) -- (2em,-2em) -- (2em,0em);
 \draw (1em,0em) --  (1em,-1.em);
    \end{scope}
  \end{tikzpicture}  }(a_1,a_2,a_3)
\qquad
\text{for $\{a_1,\tilde a_1\}$ free from $a_2$.}
\end{equation*}
\end{proof}

\section{Positivity of free product contructions}

\begin{prop}\label{prop:5.13}
Let $(\cA,\cB,E)$ be a $\cB$-valued probability space. Assume that $\cB$ is a unital $C^*$-algebra and $\cA$ a $*$-algebra. Let $*$-subalgebras $\cB\subset
\cA_i\subset\cA$, $i\in I$, be free with respect to $E$ and assume that $\cA$ is generated by all $\cA_i$, $i\in I$, as an algebra. If $E$ is positive restricted to each $\cA_i$ then it is also positive on $\cA$.\newline
(Recall that ``positive'' on a $*$-algebra $\cA$ means that
$E[a a^*]\geq 0$ for all $a\in\cA$.)
\end{prop}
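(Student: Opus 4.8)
The plan is to realise $E$ as the compression by a distinguished vector of a $*$-representation on a Hilbert $\cB$-module built as a free product, so that positivity of $E$ becomes the tautological positivity of a $\cB$-valued inner product. First I would reduce to the universal case: let $\mathcal{W}:=\ast_\cB\,\cA_i$ be the algebraic free product of the $\cA_i$ with amalgamation over $\cB$, with canonical embeddings $\iota_i\colon\cA_i\hookrightarrow\mathcal{W}$. Since $\cA$ is generated by the $\cA_i$ and the inclusions $\cA_i\hookrightarrow\cA$ agree on $\cB$, there is a surjective $*$-homomorphism $q\colon\mathcal{W}\to\cA$ with $q\circ\iota_i$ the given inclusion of $\cA_i$, and then $E[aa^*]=(E\circ q)(\hat a\hat a^*)$ for any preimage $\hat a\in\mathcal{W}$ of $a$. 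The map $E\circ q$ is a unital $\cB$-bimodule map, and for an alternating word $\iota_{i_1}(a_1)\cdots\iota_{i_k}(a_k)$ with $i_1\neq\cdots\neq i_k$ and $(E\circ q)[\iota_{i_j}(a_j)]=0$ one gets $(E\circ q)[\iota_{i_1}(a_1)\cdots\iota_{i_k}(a_k)]=E[a_1\cdots a_k]=0$ by freeness of the $\cA_i$ in $\cA$; hence the subalgebras $\iota_i(\cA_i)$ are free over $\cB$ with respect to $E\circ q$, and by Proposition \ref{prop:5.2} this is the \emph{only} unital $\cB$-bimodule map on $\mathcal{W}$ that makes the $\iota_i(\cA_i)$ free over $\cB$ and restricts to $E|_{\cA_i}$ on each copy $\iota_i(\cA_i)\cong\cA_i$.

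Next I would perform, for each $i$, a purely algebraic GNS construction over $\cB$: equip $\cA_i$ with the $\cB$-valued form $\la a,a'\ra_i:=E[a^*a']$, a positive $\cB$-valued semi-inner product — here one uses only positivity of $E|_{\cA_i}$, since $\la a,a\ra_i=E[a^*a]\ge 0$ — and pass to the quotient $\mathcal{H}_i:=\cA_i/N_i$ by its null space $N_i$ (a right $\cB$-submodule on which $\pi_i(a)$ below is well defined, both by Cauchy--Schwarz over the $C^*$-algebra $\cB$); this is a pre-Hilbert $\cB$-module with unit vector $\xi_i:=[1]$ (so $\la\xi_i,\xi_i\ra_i=1$) and left action $\pi_i(a)[a']:=[aa']$. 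One checks $\pi_i$ is a $*$-homomorphism into the formally adjointable operators on $\mathcal{H}_i$ with $\la\xi_i,\pi_i(a)\xi_i\ra_i=E[a]$. Since $\cA_i$ is only a $*$-algebra, $\pi_i(a)$ need not be bounded, so I would not complete; everything below uses algebraic tensor products and direct sums over $\cB$. Then I would form the reduced free product of the pointed modules: writing $\mathcal{H}_i=\cB\xi_i\oplus\mathcal{H}_i^\circ$ with $\mathcal{H}_i^\circ:=\{x\in\mathcal{H}_i:\la\xi_i,x\ra_i=0\}$, set
$$\mathcal{H}:=\cB\xi\ \oplus\ \bigoplus_{k\ge1}\ \bigoplus_{i_1\neq i_2\neq\cdots\neq i_k}\mathcal{H}_{i_1}^\circ\otimes_\cB\cdots\otimes_\cB\mathcal{H}_{i_k}^\circ ,$$
with vacuum vector $\xi$, and using the canonical identifications $\mathcal{H}\cong\mathcal{H}_i\otimes_\cB\mathcal{H}(i)$, where $\mathcal{H}(i)$ collects the summands not starting with $\mathcal{H}_i^\circ$, define $\lambda_i(a):=\pi_i(a)\otimes\id$ on $\mathcal{H}$. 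These are $*$-homomorphisms into the formally adjointable operators on $\mathcal{H}$, and by the universal property of $\mathcal{W}$ they assemble to a $*$-homomorphism $\Lambda$ on $\mathcal{W}$ (into the formally adjointable operators on $\mathcal{H}$) with $\Lambda\circ\iota_i=\lambda_i$.

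Finally, let $\psi_0(T):=\la\xi,T\xi\ra$; it is a unital $\cB$-bimodule map onto $\cB\xi\cong\cB$ with $\psi_0(\lambda_i(a))=\la\xi_i,\pi_i(a)\xi_i\ra_i=E[a]$, and the standard computation underlying Voiculescu's free product construction shows that the $\lambda_i(\cA_i)$ are free over $\cB$ with respect to $\psi_0$. Hence $\psi_0\circ\Lambda$ is a unital $\cB$-bimodule map on $\mathcal{W}$ making the $\iota_i(\cA_i)$ free over $\cB$ and agreeing with $E|_{\cA_i}$ on each copy; by the uniqueness recorded in the first paragraph, $\psi_0\circ\Lambda=E\circ q$. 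Therefore, for $a\in\cA$ with preimage $\hat a\in\mathcal{W}$ and since $\Lambda$ is a $*$-homomorphism,
$$E[aa^*]=(E\circ q)(\hat a\hat a^*)=\psi_0\bigl(\Lambda(\hat a)\Lambda(\hat a)^*\bigr)=\la\Lambda(\hat a)^*\xi,\,\Lambda(\hat a)^*\xi\ra\ \ge\ 0,$$
being a value of the $\cB$-valued inner product on $\mathcal{H}$. I expect the main obstacle to be the free-product step itself: verifying that the $\lambda_i$ are well defined and formally adjointable on $\mathcal{H}$, and above all that the $\lambda_i(\cA_i)$ are free over $\cB$ with respect to $\psi_0$ — routine in operator-valued free probability, but here it must be carried out in the algebraic pre-Hilbert-module category because the $\cA_i$ carry no norm.
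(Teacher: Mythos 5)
Your proof is correct, but it follows a genuinely different route from the paper's. The paper works entirely on the level of moments: it reduces to $a$ being a sum of alternating centred words of equal length and equal index pattern, observes that
$E[aa^*]$ becomes a nested expression of inner $E$'s, and then, using that $E|_{\cA_i}$ is completely positive as a positive conditional expectation into the $C^*$-algebra $\cB$, factorizes each inner Gram matrix $\bigl(E[a_n^{(k)}a_n^{(l)*}]\bigr)_{k,l}$ as $BB^*$ in $M_r(\cB)$ and iterates from the innermost level outward until $E[aa^*]$ is exhibited as a sum of squares. Your route instead constructs the representation: you build the universal amalgamated free product $\mathcal{W}=\ast_\cB\,\cA_i$, perform an algebraic GNS construction for each $E|_{\cA_i}$ to get pre-Hilbert $\cB$-modules $\mathcal H_i$, form Voiculescu's reduced free product $\mathcal H$ of the pointed modules, and use the uniqueness of the free-product expectation (Proposition~\ref{prop:5.2}) to identify $E\circ q$ with the vacuum expectation $\psi_0\circ\Lambda$, from which $E[aa^*]=\langle\Lambda(\hat a)^*\xi,\Lambda(\hat a)^*\xi\rangle\ge 0$ is immediate. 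The paper's argument is more elementary and self-contained (it only needs the matrix-factorization form of complete positivity, no module machinery), and it makes visible exactly where complete positivity of $E|_{\cA_i}$ enters at each nesting depth. Your argument is more conceptual and in some ways more informative — it actually produces a $*$-representation of the algebraic free product, i.e., it proves the existence half of a free-product construction at the same time — but it offloads the real work onto the verification that the interior tensor products $\mathcal H_{i_1}^\circ\otimes_\cB\cdots\otimes_\cB\mathcal H_{i_k}^\circ$ carry positive $\cB$-valued forms and that the $\lambda_i(\cA_i)$ are free with respect to $\psi_0$. Those verifications are standard in the $C^*$-module setting (Voiculescu, Speicher), and your observation that they go through algebraically because Cauchy--Schwarz and positivity of Gram matrices only use the $C^*$-structure of $\cB$, not completeness of the modules, is exactly right; in particular the usual bounded trick $a^*a\le\|a\|^2 1$ for showing $aN_i\subseteq N_i$ must be replaced, as you indicate, by the pre-Hilbert-module Cauchy--Schwarz inequality. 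So both proofs ultimately rest on the same algebraic input — complete positivity of each $E|_{\cA_i}$ over the $C^*$-algebra $\cB$ — but the paper uses it combinatorially while you use it to build a concrete module.
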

 
\begin{proof}
\begin{itemize}
\item[(i)]
As in the proof of Proposition \ref{prop:5.2} one can see by recursion that each element in $\cA$ can be written as a linear combination of elements of the form $a_1\cdots a_n$ with
\begin{itemize}
\item 
$n\in\NN_0$ ($n=0$ corresponds to elements from $\cB$);
\item
$a_k\in\cA_{i_k}$
\item
$i_1\not= i_2\not= \dots \not= i_n$
\item
$E[a_k]=0$ for all $k=1,\dots,n$.
\end{itemize}
If $a$ is a sum of such elements and we want to argue that $E[aa^*]\geq 0$, then we have to understand $E$ appplied to a product of two such elements.
\item[(ii)]
So let us consider two such elements $a_1\cdots a_n$ and $\tilde a_1\cdots
\tilde a_m$ as above, with 
$a_k\in\cA_{i_k}$ and $\tilde a_l\in \cA_{j_l}$. Then we have
$$E[a_1\cdots a_n \tilde a_m^*\cdots \tilde a_1^*]=
\delta_{nm} E\Bigl[a_1 E\bigl[ a_2\cdots E[a_n\tilde a_n^*]\cdots
\tilde a_2^*\bigr]\tilde a_1^*\Bigr],
$$
which is only different from 0 if $i_k=j_k$ for all $k=1,\dots,n$.
\newline
As an example for the derivation of the above formula consider $n=m=3$:
\begin{align*}
E[a_1a_2a_3\tilde a_3^*\tilde a_2^*\tilde a_1^*]
&=E[a_1a_2\cdot ( (a_3\tilde a_3^*)^o +E[a_3\tilde a_3^*])\cdot \tilde a_2^*\tilde 
a_1^*]\\
&=
E[a_1\underbrace{a_2 E[a_3\tilde a_3^*]\tilde a_2^*}_{(\dots)^o+E[\dots]}\tilde 
a_1^*]\\
&=E[a_1 E[a_2 E[a_3\tilde a_3^*]\tilde a_2^*]\tilde a_1^*].
\end{align*}
Hence, for the calculation of $E[aa^*]$, if suffices to consider $a$ which are sums of products of the same lenght and the same $i$-pattern.
\item[(iii)]
Consider
$$a=\sum_{k=1}^r a_1^{(k)}\cdots a_n^{(k)},\quad
\text{where $n\in\NN_0$, $r\in\NN$, $a_j^{(k)}\in A_{i_j}$ for all $k=1,\dots,r$}$$
with $i_1\not= i_2\not=\cdots \not= i_n$ and $E[a_j^{(k)}]=0$ for all
$j=1,\dots,n$ and $k=1,\dots,r$. Then we have
$$E[aa^*]=\sum_{k,l=1}^r E\Bigl[ a_1^{(k)}\cdots E\bigl[a_{n-1}^{(k)}
E[a_n^{(k)}a_n^{(l)*}]a_{n-1}^{(l)*}\bigr]\cdots a_1^{(l)*}\Bigr].
$$
Now note that $(E[a_n^{(k)}a_n^{(l)*}])_{k,l}$ is a positive matrix in $M_r(\cB)$
since $E$ is completely positive (see Exercise \ref{exercise:9}). But since $\cB$ and thus also $M_r(\cB)$ is a $C^*$-algebra, this means that we can write this positive matrix as $BB^*$ for some $B=(b_{r_n}^{(k)})_{k,r_n=1}^r
\in M_r(\cB)$, which yields then concretely that
$$E[a_n^{(k)}a_n^{(l)*}]=\sum_{r_n=1}^r b_{r_n}^{(k)} b_{r_n}^{(l)*}\qquad
\text{for all $k,l=1,\dots,r$.}$$
Thus we can continue our above calculation as follows
$$E[aa^*]=\sum_{k,l=1}^r
E\bigl[ a_1^{(k)}\cdots \underbrace{E[a_{n-1}^{(k)}\cdot \sum_{r_n=1}^r b_{r_n}^{(k)} b_{r_n}^{(l)*}\cdot a_{n-1}^{(l)*}]}_{
\sum_{r_n=1}^r {E[(a_{n-1}^{(k)} b_{r_n}^{(k)})(a_{n-1}^{(l)}b_{r_n}^{(l)})^*]}}
\cdots a_1^{(l)*}\bigr].
$$
Again, $(E[(a_{n-1}^{(k)} b_{r_n}^{(k)})(a_{n-1}^{(l)}b_{r_n}^{(l)})^*])_{k,l}$ is a positive matrix in $M_r(\cB)$ and its entries can thus be written in the form
$$ E[(a_{n-1}^{(k)} b_{r_n}^{(k)})(a_{n-1}^{(l)}b_{r_n}^{(l)})^*]=\sum_{r_{n-1}=1}^r
b_{r_{n-1},r_n}^{(k)} b_{r_{n-1},r_n}^{(l)*}
$$ 
for some $b_{r_{n-1},r_n}^{(k)}\in\cB$. Iterating this leads finally to
\begin{align*}
E[aa^*]&=\sum_{k,l=1}^r \sum_{r_1=1}^r\dots \sum_{r_n=1}^r
b_{r_1,\dots,r_n}^{(k)} b_{r_1,\dots,r_n}^{(l)*}\\
&=\sum_{r_1=1}^r\dots \sum_{r_n=1}^r
\underbrace{\Bigl( \sum_{k=1}^r b_{r_1,\dots,r_n}^{(k)}\Bigr) \Bigl(\sum_{l=1}^r b_{r_1,\dots,r_n}^{(l)}\Bigr)^*}_{\geq 0}\\
&\geq 0.
\end{align*}
\end{itemize}
\end{proof}

\begin{theorem}\label{thm:5.14}
Let $\cB$ be a unital $C^*$-algebra. Let $\mu_I\in \Sigma_\cB^{I,0}$ be a joint distribution on $\cB\lb x_i; i\in I\rb$ and $\mu_J\in \Sigma_\cB^{J,0}$ be a joint distribution on $\cB\lb y_j; j\in J\rb$, with $I\cap J=\emptyset$. Then there exists a uniquely determined $\mu\in \Sigma_\cB^{I\cup J,0}$ on
$\cB\lb x_i, y_j;i\in I, j\in J\rb$ such that:
\begin{itemize}
\item
$\mu$ restricted to $\cB\lb x_i; i\in I\rb$ is $\mu_I$ and 
$\mu$ restricted to $\cB\lb y_j; j\in J\rb$ is $\mu_J$;
\item
$\cB\lb x_i; i\in I\rb$ and $\cB\lb y_j; j\in J\rb$ are free with respect to $\mu$.
\end{itemize}
We write then $\mu=\mu_I * \mu_J$.
\end{theorem}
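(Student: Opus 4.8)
The plan is to obtain uniqueness from Proposition~\ref{prop:5.2} and existence by constructing the algebraic amalgamated free product of two realizing $C^*$-probability spaces over $\cB$, then to upgrade the positivity statement of Proposition~\ref{prop:5.13} to complete positivity by running it through all matrix amplifications (Proposition~\ref{prop:5.7}), and finally to read off exponential boundedness from the combinatorial structure of mixed moments of free variables. \textbf{Uniqueness.} For any $\mu$ as in the statement, the triple $(\cB\lb x_i,y_j;i\in I,j\in J\rb,\cB,\mu)$ is an operator-valued probability space, the algebra $\cB\lb x_i,y_j\rb$ is generated by its unital subalgebras $\cB\lb x_i;i\in I\rb$ and $\cB\lb y_j;j\in J\rb$, and these two subalgebras are free with respect to $\mu$; hence Proposition~\ref{prop:5.2} shows that $\mu$ is already determined on all of $\cB\lb x_i,y_j\rb$ by the restrictions $\mu\vert_{\cB\lb x_i\rb}=\mu_I$, $\mu\vert_{\cB\lb y_j\rb}=\mu_J$ and the freeness condition, so at most one such $\mu$ can exist.

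\textbf{Existence, construction.} First I would invoke the realization theorem for joint distributions (the multivariate version of Theorem~\ref{thm:4.9}) to obtain $\cB$-valued $C^*$-probability spaces $(\cA_I,\cB,E_I)$ and $(\cA_J,\cB,E_J)$ with selfadjoint $X_i\in\cA_I$ ($i\in I$) and $Y_j\in\cA_J$ ($j\in J$) such that $\mu_I=\mu_{(X_i)}$, $\mu_J=\mu_{(Y_j)}$; we may assume $\cA_I=\cB\lb X_i;i\in I\rb$ and $\cA_J=\cB\lb Y_j;j\in J\rb$. Next I would form the algebraic amalgamated free product $\cA:=\cA_I*_\cB\cA_J$ together with its canonical conditional expectation $E\colon\cA\to\cB$: writing $\cA_I=\cB\oplus\cA_I^\circ$, $\cA_J=\cB\oplus\cA_J^\circ$ with $\cA_I^\circ=\ker E_I$, $\cA_J^\circ=\ker E_J$ as $\cB$-bimodule decompositions, one realizes $\cA$ as $\cB\oplus\bigoplus_{n\geq 1}\bigoplus_{i_1\neq\cdots\neq i_n}\cA_{i_1}^\circ\otimes_\cB\cdots\otimes_\cB\cA_{i_n}^\circ$ over alternating words and takes $E$ to be the projection onto the $\cB$-summand; the standard checks show that $E$ is a conditional expectation extending $E_I$ and $E_J$ and that $\cA_I$, $\cA_J$ are free over $\cB$ with respect to $E$ (alternatively one defines $E$ on the abstract free product directly via the recursion of Proposition~\ref{prop:5.2} and verifies well-definedness). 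Then I set
\[
\mu:=\mu_{(X_i,Y_j)}\colon\cB\lb x_i,y_j\rb\to\cB,\qquad p\mapsto E[\,p(X_i,Y_j)\,].
\]
Unitality, the $\cB$-bimodule property and the restriction identities $\mu\vert_{\cB\lb x_i\rb}=\mu_I$, $\mu\vert_{\cB\lb y_j\rb}=\mu_J$ are immediate from the corresponding properties of $(\cA,\cB,E)$, and $\cB\lb x_i\rb$, $\cB\lb y_j\rb$ are free with respect to $\mu$ since $\cA_I$, $\cA_J$ are free with respect to $E$.

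\textbf{Existence, the analytic conditions.} It remains to verify complete positivity and exponential boundedness of $\mu$. For complete positivity I would fix $m\in\NN$ and pass to $(M_m(\cA),M_m(\cB),\id\otimes E)$: by Proposition~\ref{prop:5.7} the $*$-subalgebras $M_m(\cA_I)$ and $M_m(\cA_J)$ are free over $M_m(\cB)$, they generate $M_m(\cA)$ (the matrix units lie in $M_m(\CC)\subset M_m(\cB)$), and $\id\otimes E$ restricted to them equals $\id\otimes E_I$, resp.\ $\id\otimes E_J$, which are positive precisely because $\mu_I$ and $\mu_J$ are completely positive; Proposition~\ref{prop:5.13}, applied with the $C^*$-algebra $M_m(\cB)$ in the role of the coefficient algebra, then yields that $\id\otimes E$ is positive on $M_m(\cA)$, and since $m$ was arbitrary this gives complete positivity of $\mu$. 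For exponential boundedness, let $M_I$, $M_J$ be exponential bounds for $\mu_I$, $\mu_J$ and put $M:=4\max(M_I,M_J)$; by the structure of formulas for mixed moments in free variables (the iterated centering underlying Proposition~\ref{prop:5.2} and Example~\ref{ex:5.3}, in which only non-crossing factorizations occur), $\mu(x_{i_1}b_1\cdots b_{k-1}x_{i_k})$ is a sum of at most $|NC(k)|\leq 4^k$ nested products of $\mu_I$- and $\mu_J$-moments whose $x$/$y$-degrees add up to $k$ and in which each $b_1,\dots,b_{k-1}$ occurs exactly once (aside from intermediate $\cB$-values, whose norms are themselves estimated recursively); bounding each factor via the exponential bounds for $\mu_I$ and $\mu_J$ gives $\Vert\mu(x_{i_1}b_1\cdots b_{k-1}x_{i_k})\Vert\leq M^k\Vert b_1\Vert\cdots\Vert b_{k-1}\Vert$. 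Hence $\mu\in\Sigma_\cB^{I\cup J,0}$, and combined with uniqueness this proves the theorem.

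\textbf{Expected main obstacle.} The genuinely load-bearing step is the complete positivity of $\mu$: Proposition~\ref{prop:5.13} delivers only positivity of $E$ on the base level, so one is forced to pass to every matrix amplification, and this is exactly where the compatibility of operator-valued freeness with matrix amplifications (Proposition~\ref{prop:5.7}) is needed. A more routine but still delicate point is making the algebraic amalgamated free product construction---well-definedness of the multiplication on the alternating-word model and of the conditional expectation $E$---fully rigorous; this is classical but deserves care. The exponential-boundedness estimate, by contrast, is essentially bookkeeping once the non-crossing moment formula is in hand.
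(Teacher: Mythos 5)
Your argument is correct and rests on the same pillars the paper uses: Proposition~\ref{prop:5.2} for uniqueness, Propositions~\ref{prop:5.13} and~\ref{prop:5.7} for complete positivity, and the non-crossing structure of mixed moments for exponential boundedness. The construction route and the handling of the last point differ from the paper, though. The paper defines $\mu$ directly on the formal algebra $\cB\lb x_i,y_j\rb$ by decomposing every element into a linear combination of alternating centred words and declaring $\mu$ to vanish on all such words (and to be $b\mapsto b$ on $\cB$); you instead invoke the realization theorem to get concrete operators and then form the algebraic amalgamated free product $\cA_I*_\cB\cA_J$. The two are equivalent, but the paper's construction is purely algebraic and does not need the realization theorem; your version imports that theorem as an ingredient. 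On exponential boundedness, the paper postpones the verification to the free-cumulant machinery (see Example~\ref{ex:9.6} and Proposition~\ref{prop:9.7}), whereas you sketch a direct estimate. The direction is right, but the specific bound you write down is not quite justified: iterating the centering recursion underlying Proposition~\ref{prop:5.2} produces, before cancellations, far more than $|NC(k)|$ summands, so the triangle inequality applied at that stage does not give a $4^k$ count; and passing to the cleaner cumulant-based non-crossing formula reinstates M\"obius-function coefficients whose absolute values are themselves exponentially large, so the constant ends up larger than $4\max(M_I,M_J)$. Since any exponential bound suffices, this does not affect the truth of the claim, but the clean justification really does want the cumulant apparatus, which is exactly why the paper defers this step.
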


\begin{proof}
As a linear map we can define $\mu$ (uniquely!) by the knowledge of $\mu_I$ and $\mu_J$ and the freeness condition, by writing each element in
$\cB\lb x_i, y_j; i\in I, j\in J\rb$ as a linear combination of alternating products of centred elements from $\cB\lb x_i;i\in I\rb$ and from $\cB\lb y_j;j\in J\rb$.
On all such products $\mu$ is set to 0, only on constant terms $b\in\cB$ it is
$\mu(b)=b$.

One has then to check the properties (i)-(iv) from Definition \ref{def:5.5} in order to see that $\mu\in\Sigma_\cB^{I\cup J,0}$.
(i) and (ii) are clear.
(iii) on the base level ist just Proposition \ref{prop:5.13}; that it is also true for the matrix amplifications follows from the same proposition, if we take also
into account that freeness between $\cB\lb x_i; i\in I\rb$ and $\cB\lb y_j; j\in J\rb$ goes also over to matrices, by Proposition \ref{prop:5.7}. For (iv) we have to see that we get also exponential bounds for mixed moments in $x_i$ and $y_j$, if they are free, and if we have such bounds for the $x_i$, $i\in I$, and for the $y_j$, $j\in J$, separateley. We will see this later, when we have developed more theory for the structure of such mixed moments; see Example \ref{ex:9.6}.
\end{proof}

\begin{cor}\label{cor:5.15}
Let $\cB$ be a unital $C^*$-algebra. For each $p=p(x_i;i\in I)\in
\cB\lb x_i; i\in I\rb$ with $p=p^*$ we have a corresponding operation
$p^\square$ on $\Sigma_\cB^0$ given by
\begin{align*}
p^\square:\underbrace{\Sigma_\cB^0\times \cdots\times \Sigma_\cB^0}_{\text{$\vert I\vert$-times}}\to \Sigma_\cB^0,\qquad
(\mu_i)_{i\in I}\mapsto p^\square (\mu_i;i\in I),
\end{align*}
where $p^\square (\mu_i;i\in I)$ is the distribution of $p(x_i; i\in I)$ with respect to $\underset{i\in I}* \mu_i$.
\end{cor}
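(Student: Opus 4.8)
The plan is to realize $p^\square(\mu_i;i\in I)$ as the $\cB$-valued distribution of the selfadjoint operator $p(X_i;i\in I)$ in a concrete realization of the free product $\underset{i\in I}{*}\mu_i$, and then to observe that this distribution depends only on the data $(\mu_i)_{i\in I}$. Since a polynomial involves only finitely many of the indeterminates $x_i$, I would first reduce to the case $\vert I\vert<\infty$ (the operation $p^\square$ will in any case only involve those $\mu_i$ whose variable actually occurs in $p$). Then, by Theorem \ref{thm:5.14} applied $\vert I\vert-1$ times, the free product $\mu:=\underset{i\in I}{*}\mu_i$ exists and lies in $\Sigma_\cB^{I,0}$; by the representation theorem for $\Sigma_\cB^{I,0}$ stated after Definition \ref{def:5.5} (which itself reduces to Theorem \ref{thm:4.9}) there is a $\cB$-valued $C^*$-probability space $(\cA,\cB,E)$ together with selfadjoint $X_i=X_i^*\in\cA$ ($i\in I$) such that $\mu_{(X_i;i\in I)}=\mu$; by construction the $X_i$ are free over $\cB$, with individual distributions $\mu_i$.

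Since $p=p^*$ and the evaluation map $\cB\lb x_i;i\in I\rb\to\cA$, $x_i\mapsto X_i$, is a unital $*$-homomorphism, the element $P:=p(X_i;i\in I)\in\cA$ is selfadjoint, so it has a $\cB$-valued distribution $\mu_P$, and I would \emph{define} $p^\square(\mu_i;i\in I):=\mu_P$. That $\mu_P\in\Sigma_\cB^0$ is the easy direction (ii)$\Rightarrow$(i) of Theorem \ref{thm:4.9}: complete positivity and the $\cB$-$\cB$-bimodule property of $\mu_P$ are inherited from $E$ via the $*$-homomorphism $q(x)\mapsto q(P)$, and exponential boundedness holds because $P$, lying in a $C^*$-algebra, is a bounded operator.

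It remains to check well-definedness, i.e.\ that $\mu_P$ does not depend on the chosen realization $(\cA,\cB,E,(X_i)_i)$. Expanding the polynomial $p$ in
$$\mu_P(b_0xb_1x\cdots b_{k-1}xb_k)=E\bigl[b_0\,P\,b_1\,P\cdots b_{k-1}\,P\,b_k\bigr]$$
turns the right-hand side into a finite $\cB$-multilinear combination of joint moments $E[b_0'X_{j_1}b_1'X_{j_2}\cdots X_{j_\ell}b_\ell']$ of the tuple $(X_i)_{i\in I}$, with coefficients read off from $p$ alone. Each such moment is by definition a value of $\mu_{(X_i;i\in I)}=\mu=\underset{i\in I}{*}\mu_i$, which Theorem \ref{thm:5.14} produces \emph{uniquely} from the $\mu_i$; hence $\mu_P$ is determined by $(\mu_i)_{i\in I}$, and $p^\square$ is a well-defined map $\Sigma_\cB^0\times\cdots\times\Sigma_\cB^0\to\Sigma_\cB^0$. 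I expect no genuine obstacle within this corollary: the only substantial input is Theorem \ref{thm:5.14} (notably the exponential boundedness of a free product, whose proof the text postpones to Example \ref{ex:9.6}), which we are entitled to assume, and everything else above is bookkeeping with moments and $*$-homomorphisms.
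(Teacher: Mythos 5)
Your proof is correct and fills in exactly the bookkeeping that the paper leaves implicit: it presents Corollary \ref{cor:5.15} as a direct consequence of Theorem \ref{thm:5.14} (free product existence and uniqueness) combined with the realization theorem after Definition \ref{def:5.5} and the easy direction of Theorem \ref{thm:4.9}. The only minor point worth flagging is that iterating Theorem \ref{thm:5.14} to form $\underset{i\in I}{*}\mu_i$ for $|I|>2$ tacitly uses associativity of the free product, which follows from the uniqueness clause of that theorem but is not stated; everything else matches the intended argument.
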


\begin{remark}
\begin{enumerate}
\item
Note that via matrix amplifications we can also do the same for all selfadjoint
$p\in M_n(\cB\lb x_i;i\in I\rb)$.
\item
$\square$ is the generic symbol for an operation with free variables, to be used with care and imagination; for example, we have the \emph{free convolution}
$\mu_1\boxplus \mu_2$ for $p(x_1,x_2)=x_1+x_2$ and the \emph{free commutator}
$[\mu_1\square \mu_2]$  for $p(x_1,x_2)=x_1x_2+x_2x_1$ or the \emph{free anti-commutator}
$\{\mu_1\square\mu_2\}$ for $p(x_1,x_2)=\frac 1i (x_1x_2-x_2x_1)$.
\item
In the scalar-valued case, $\cB=\CC$, all those operations $p^\square$ are on the level of compactly supported probability measures. In the \href{https://rolandspeicher.files.wordpress.com/2019/08/free-probability.pdf}{Free Probability Lecture Notes} we saw how to deal with $\mu_1\boxplus \mu_2$, but we could not
address general $p^\square$. We will see later that in our operator-valued context we have tools for dealing with such general $p^\square$.
\end{enumerate}
\end{remark}

\chapter{Operator-Valued Free Central Limit Theorem and Operator-Valued Semicircular
Elements}\label{chapter:6}

Our benchmark distribution of free semicircular variables from Section \ref{section:1.3} corresponds on the operator-valued
level to an operator-valued semicircular element. This arises also abstractly in the operator-valued
theory canonically as the limit distribution in a free central limit theorem; furthermore, this operator-valued
distribution has
a very concrete and nice description both on a combinatorial level (via moments) as well as on an
analytic level (via an explicit equation for its operator-valued Cauchy transform).

\section{Operator-valued free central limit theorem}

\begin{remark}
\begin{enumerate}
\item
A central limit theorem asks about the limit distribution of 
$$D_{1/\sqrt N} (\mu\boxplus \cdots \boxplus\mu) \overset {N\to\infty}\longrightarrow\quad ?$$
where $D_{1/\sqrt N}$ denotes dilation by a factor $1/\sqrt N$. In terms of random variables the question can be stated as
$$\frac{X_1+\cdots +X_N}{\sqrt N} \overset {N\to\infty}\longrightarrow\quad ?$$
if $X_i$ are free and identically distributed (f.i.d.).

The relevant convergence is ``in distribution'', which means that moments converge. Since moments are elements in $\cB$, we also have to specify the type
of convergence there -- we will usually take convergence in norm in $\cB$.
\item
The relevant information about the input distribution is the second moment (first moments are assumed to be zero); in the operator-valued case the second
moment is given by a mapping $\eta:\cB\to\cB$ with $\eta(b):=E[XbX]$.

In a $C^*$-setting $E$, and thus also $\eta$, must be completely positive: for 
$(b_{ij})_{i,j=1}^n\in M_n(\cB)$ we have
$$\id\otimes E[1\otimes X\cdot (b_{ij})_{i,j=1}^n\cdot 1\otimes X]=
(\underbrace{E[Xb_{ij}X]}_{\eta(b_{ij})})_{i,j=1}^n=\id\otimes \eta((b_{ij})_{i,j=1}^n),$$
and thus: $\id\otimes\eta(bb^*)=\id\otimes E[(1\otimes X\cdot b)(1\otimes X\cdot b)^*]\geq 0$.

We also have that every completely positive $\eta$ can show up as second moment of a $\mu\in\Sigma_\cB^0$, see Exercise \ref{exercise:16}.
\item
Much of the calculations for the central limit theorem and description of the limit
are similar to the scalar-valued situation (see Chapter 2 of the \href{https://rolandspeicher.files.wordpress.com/2019/08/free-probability.pdf}{Free Probability Lecture Notes}). Let us first check the calculation of the moments in the limit.

We consider $(X_i)_{i\in\NN}$ which are f.i.d.~with respect to $E$. We also assume that
\begin{itemize}
\item
$X_i$ centred: $E[X_i]=0$ for all $i\in\NN$;
\item
second moments are given by $\eta:\cB\to\cB$: $E[X_ibX_i]=\eta(b)$ for
all $i\in \NN$ and all $b\in\cB$.
\end{itemize}
Then we put $S_N:=(X_1+\cdots+ X_N)/\sqrt N$ and calculate its moments.
\begin{align*}
&E[S_Nb_1S_Nb_2\cdots S_N b_{k-1}S_N]
=\frac 1{N^{k/2}} \sum_{i:[k]\to [N]} E[X_{i(1)}b_1X_{i(2)}\cdots X_{i(k-1)}b_{k-1}
X_{i(k)}]\\
&=\frac 1{N^{k/2}} \sum_{\pi\in\cP(k)}\sum_{\substack{i:[k]\to [N]\\
\ker i=\pi}} \underbrace{E[X_{i(1)}b_1X_{i(2)}\cdots X_{i(k-1)}b_{k-1}
X_{i(k)}]}_{\substack{=:g(\pi)\\ \text{depends only on $\ker i$ by Prop. \ref{prop:5.2}}}}\\
&=\frac 1{N^{k/2}} \sum_{\pi\in\cP(k)} g(\pi)\cdot \underbrace{\#\{i:{k}\to [N]\mid \ker i=\pi\}}_{\sim N^{\# \pi}}.
\end{align*}
Now observe that if $\pi$ has a singleton, then $g(\pi)=0$; because we have
$E[X_i]=0$ and by the factorization \eqref{eq:factorization}. This implies then that only $\pi\in\cP(k)$ without singleton contribute; for those we have necessarily
$\# \pi\leq k/2$. Now we have enough information to go to the limit $N\to\infty$. There only $\pi$ with $\#\pi=k/2$ survive; but those have to be
pairings $\pi\in\cP_2(k)$.

If $\pi$ is crossing, then the definition of freeness (and interval stripping) gives
$g(\pi)=0$; here is an example which illustrates this:
\begin{align*}
 g(\,\begin{tikzpicture}[baseline=0pt]
    \begin{scope}[yshift=.8em, scale=.5]
 \draw (0em,-0em) --  (0em,-1em) -- (2em,-1em) -- (2em,-0em);
\draw (1em,-0em) --  (1em,-2em) -- (5em,-2em) -- (5em,-0em);
\draw (3em,-0em) --  (3em,-1em) -- (4em,-1em) -- (4em,-0em);
    \end{scope}
  \end{tikzpicture}  \, )&=
E[\underset{
\begin{tikzpicture}[baseline=0pt]
    \begin{scope}[yshift=0em, scale=2.1]
 \draw (0em,-0em) --  (0em,-.5em) -- (2em,-.5em) -- (2em,-0em);
\draw (1em,-0em) --  (1em,-1em) -- (5em,-1em) -- (5em,-0em);
\draw (3em,-0em) --  (3em,-.5em) -- (4em,-.5em) -- (4em,-0em);
    \end{scope}
  \end{tikzpicture}  
}{X_1b_1X_2b_2X_1b_3X_3b_4X_3b_5X_2]}\\\quad\\&=
E[\underbrace{X_1b_1}\underbrace{X_2b_2}\underbrace{X_1b_3E[X_3b_4X_3]}\underbrace{b_5X_2}]\qquad\text{(alternating and centred)}
\\
&=0
\end{align*}
So we get for our moment in the limit:
$$\lim_{N\to\infty} E[S_Nb_1S_Nb_2\cdots S_N b_{k-1}S_N]=\sum_{\pi\in
NC_2(k)} g(\pi).$$
Up to this point we just repeated the arguments for the scalar-valued case. But now there will be a difference, namely $g(\pi)$ is not the same for all
$\pi\in NC_2(k)$. We have
$$g(
\begin{tikzpicture}[baseline=0pt]
    \begin{scope}[yshift=.5em, scale=.8]
 \draw (0em,-0em) --  (0em,-.5em) -- (1em,-.5em) -- (1em,-0em);
    \end{scope}
  \end{tikzpicture} 
)=E[
\underset{
\begin{tikzpicture}[baseline=0pt]
    \begin{scope}[yshift=0em, scale=2.1]
 \draw (0em,-0em) --  (0em,-.5em) -- (1em,-.5em) -- (1em,-0em);
    \end{scope}
  \end{tikzpicture}  }{
X_1b_1X_1}]=\eta(b_1)$$
\begin{align*}
g(
\begin{tikzpicture}[baseline=0pt]
    \begin{scope}[yshift=0.6em, scale=.6]
 \draw (0em,-0em) --  (0em,-1em) -- (1em,-1em) -- (1em,-0em);
 \draw (2em,-0em) --  (2em,-1em) -- (3em,-1em) -- (3em,-0em);
    \end{scope}
  \end{tikzpicture}
)=E[\underset{
\begin{tikzpicture}[baseline=0pt]
    \begin{scope}[yshift=0em, scale=2.1]
 \draw (0em,-0em) --  (0em,-.5em) -- (1em,-.5em) -- (1em,-0em);
 \draw (2em,-0em) --  (2em,-.5em) -- (3em,-.5em) -- (3em,-0em);
    \end{scope}
  \end{tikzpicture}  }{
X_1b_1X_1b_2X_2b_3X_2}]
=E[\underbrace{E[X_1b_1X_1]}_{\eta(b_1)}b_2 \underbrace{E[X_2b_3X_2]}_{\eta(b_3)}]
=\eta(b_1)b_2\eta(b_3)
\end{align*}

\begin{align*}
g(
\begin{tikzpicture}[baseline=0pt]
    \begin{scope}[yshift=0.6em, scale=.6]
 \draw (1em,-0em) --  (1em,-.5em) -- (2em,-.5em) -- (2em,-0em);
 \draw (0em,-0em) --  (0em,-1em) -- (3em,-1em) -- (3em,-0em);
    \end{scope}
  \end{tikzpicture}
)=E[\underset{
\begin{tikzpicture}[baseline=0pt]
    \begin{scope}[yshift=0em, scale=2.1]
 \draw (1em,-0em) --  (1em,-.5em) -- (2em,-.5em) -- (2em,-0em);
 \draw (0em,-0em) --  (0em,-1em) -- (3em,-1em) -- (3em,-0em);
    \end{scope}
  \end{tikzpicture}  }{
X_1b_1X_2b_2X_2b_3X_1}]
=E[X_1b_1\underbrace{E[X_2b_2X_2]}_{\eta(b_2)}b_3X_1]
=\eta(b_1\eta(b_2)b_3)
\end{align*}
Thus the limit variable $S$ has moments
$$
E[Sb_1S]=\eta(b_1),\qquad
E[Sb_1Sb_2Sb_3S]=\eta(b_1)b_2\eta(b_3)+\eta(b_1\eta(b_2)b_3)$$
and in general
$$E[Sb_1S\cdots Sb_{k-1}S]=\sum_{\pi\in NC_2(k)} \eta_\pi(b_1,\dots,b_{k-1}),$$
where $\eta_\pi:\cB^{k-1}\to\cB $ is the $\CC$-multilinear map given by
$$\eta_\pi(b_1,\dots,b_{k-1})=E_\pi[X_ib_1,X_ib_2,\dots,X_ib_{k-1},X_i].$$
\item
Note that even in the case where all $b_1,\dots,b_{k-1}$ are equal to 1, the 
contributions of the $\eta_\pi(1,1,\dots,1)$ are different in general.
\begin{align*}
\eta_{\,\begin{tikzpicture}[baseline=0pt]
    \begin{scope}[yshift=0.4em, scale=.6]
 \draw (0em,-0em) --  (0em,-1em) -- (1em,-1em) -- (1em,-0em);
 \draw (2em,-0em) --  (2em,-1em) -- (3em,-1em) -- (3em,-0em);
    \end{scope}
  \end{tikzpicture}\,}(1,1,1)=\eta(1)\cdot 1\cdot \eta(1)=\eta(1)^2,
\quad
\eta_{\,
\begin{tikzpicture}[baseline=0pt]
    \begin{scope}[yshift=0.4em, scale=.6]
 \draw (1em,-0em) --  (1em,-.5em) -- (2em,-.5em) -- (2em,-0em);
 \draw (0em,-0em) --  (0em,-1em) -- (3em,-1em) -- (3em,-0em);
    \end{scope}
  \end{tikzpicture}\/}(1,1,1)=\eta(1\cdot\eta(1)\cdot 1)=\eta(\eta(1)).
\end{align*}
Note that $\eta$ does not need to be unital: $\eta(1)\not=1$ in general.
\end{enumerate}
\end{remark}

Let us collect our observations in the following theorem.

\begin{theoremanddefinition}\label{thm:6.2}
Let $(\cA,\cB,E)$ be a $\cB$-valued $C^*$-probability space. Consider
selfadjoint $X_i\in\cA$, $i\in \NN$, which are f.~i.~d. (free and identically
distributed) with
\begin{itemize}
\item
$E[X_i]=0$ for all $i\in\NN$;
\item $E[X_ib X_i]=\eta(b)$ for all $i\in\NN$ and $b\in\cB$, for a completely
positive $\eta:\cB\to\cB$.
\end{itemize}
Put $S_N:=(X_1+\cdots +X_N)/\sqrt N$.
Then $\mu_{S_N}$ converges in distribution for $N\to\infty$ to $\nu_\eta\in
\Sigma_\cB^0$, which is given by 
\begin{equation}\label{eq:nueta}
\nu_\eta(b_0xb_1\cdots b_{k-1}xb_k)=\sum_{\pi\in NC_2(k)}
b_0\eta_\pi(b_1,\dots,b_{k-1})b_k
\end{equation}
for $k\in\NN$ and $b_0,\dots,b_k\in \cB$. In particular, this says that all odd moments are zero. 

Such a distribution $\nu_\eta\in \Sigma_\cB^0$, given by \eqref{eq:nueta}, is called \emph{$\cB$-valued semicircular distribution}, with covariance $\eta$.
A selfadjoint element $S$ with $\mu_S=\nu_\eta$ is called \emph{($\cB$-valued) semicircular element}.
\end{theoremanddefinition}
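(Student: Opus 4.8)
The plan: the convergence statement is, in essence, already established by the moment computation in the remark preceding the theorem, so what genuinely remains is to check that the limiting functional $\nu_\eta$ is well defined and lies in $\Sigma_\cB^0$, and to read off that odd moments vanish. For the convergence $\mu_{S_N}\to\nu_\eta$ in distribution: fix $k\in\NN$ and $b_0,\dots,b_k\in\cB$ and expand $E[b_0 S_N b_1 S_N\cdots b_{k-1}S_N b_k]$ over all index functions $i\colon[k]\to[N]$, grouped by $\pi=\ker i$. By Proposition \ref{prop:5.2} the contribution $g(\pi):=E[X_{i(1)}b_1\cdots b_{k-1}X_{i(k)}]$ depends only on $\pi$, while the number of $i$ with $\ker i=\pi$ grows like $N^{\#\pi}$. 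Since $E[X_i]=0$, the factorization \eqref{eq:factorization} kills every $\pi$ having a singleton, so $\#\pi\le k/2$ for the surviving $\pi$; after dividing by $N^{k/2}$ only the pairings remain as $N\to\infty$, and among those, freeness (interval stripping) forces $g(\pi)=0$ for crossing $\pi$. For $\pi\in NC_2(k)$ one identifies $g(\pi)=\eta_\pi(b_1,\dots,b_{k-1})$ with $\eta_\pi$ the iterated composition of $\eta$'s attached to $\pi$ introduced before the theorem (note $\eta_\pi$ depends on the input only through $\eta$, which is what makes the covariance the only relevant datum). Reassembling with $b_0,\dots,b_k$ and the bimodule property gives \eqref{eq:nueta}; since $NC_2(k)=\emptyset$ for odd $k$, all odd moments are zero.

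It then remains to verify $\nu_\eta\in\Sigma_\cB^0$. Defining $\nu_\eta\colon\cB\lb x\rb\to\cB$ on the monomial basis by \eqref{eq:nueta} (and by $b\mapsto b$ on constants) gives a well-defined linear map; unitality and the $\cB$-$\cB$-bimodule property are immediate because $b_0$ and $b_k$ factor out of every summand. For exponential boundedness, note that a pairing $\pi\in NC_2(k)$ carries exactly $k/2$ applications of $\eta$, so $\|\eta_\pi(b_1,\dots,b_{k-1})\|\le\|\eta\|^{k/2}\|b_1\|\cdots\|b_{k-1}\|$; combined with $|NC_2(k)|=C_{k/2}\le 2^k$ this yields $\|\nu_\eta(xb_1x\cdots b_{k-1}x)\|\le(2\sqrt{\|\eta\|})^{k}\|b_1\|\cdots\|b_{k-1}\|$, a bound of the form required in Definition \ref{def:4.7}. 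For complete positivity, fix $n$ and $p\in M_n(\cB\lb x\rb)$: each $S_N$ is a genuine selfadjoint element of a $\cB$-valued $C^*$-probability space, so $\mu_{S_N}^{(n)}(p^*p)\ge 0$ in $M_n(\cB)$; the moment convergence just established gives $\mu_{S_N}^{(n)}(p^*p)\to\nu_\eta^{(n)}(p^*p)$ in norm, and since the positive cone of the $C^*$-algebra $M_n(\cB)$ is norm-closed, $\nu_\eta^{(n)}(p^*p)\ge 0$. Hence $\nu_\eta\in\Sigma_\cB^0$, the ``Definition'' part of the statement being pure terminology; by Theorem \ref{thm:4.9} such a $\nu_\eta$ is moreover automatically realized as $\mu_S$ for some selfadjoint $S$.

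I expect complete positivity of $\nu_\eta$ to be the only delicate point: the combinatorial formula \eqref{eq:nueta} does not display it, so one must either inherit it from the approximants $\mu_{S_N}$ via closedness of the positive cone, as above, or else exhibit a concrete model --- a creation-plus-annihilation operator $S=\ell+\ell^*$ on an operator-valued full Fock space built from the completely positive map $\eta$ --- and verify $\mu_S=\nu_\eta$ by the same non-crossing pair-partition bookkeeping; the latter route also recovers the norm bound $\|S\|\le 2\sqrt{\|\eta\|}$ intrinsically. I would present the limit argument as the primary proof and only mention the Fock model.
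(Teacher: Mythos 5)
Your proposal is correct and follows the same route as the paper: the convergence argument (grouping by $\ker i$, killing singletons via $E[X_i]=0$, killing crossings by freeness, identifying $g(\pi)=\eta_\pi$ on $NC_2(k)$) reproduces the paper's Remark preceding the theorem, and your verification that $\nu_\eta\in\Sigma_\cB^0$ (complete positivity inherited from the $\mu_{S_N}$ by norm-closedness of the positive cone, exponential bound from $\|\eta_\pi\|\le\|\eta\|^{k/2}\prod\|b_i\|$ and $|NC_2(k)|\le 2^k$) matches the paper's Remark following it, including the passing mention of the Fock-space model as an alternative.
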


\section{Some basic properties of operator-valued semicircular elements}

\begin{remark}\label{rem:6.3}
\begin{enumerate}
\item
Note that this definition is compatible with amplifications: if $S$ is a semicircular element in $(\cA,\cB,E)$ with covariance $\eta:\cB\to\cB$, then
$1\otimes S$ is a semicircular element in $(M_n(\cA),M_n(\cB),\id\otimes E)$ with covariance $\id\otimes\eta:M_n(\cB)\to M_n(\cB)$. For a more general version of this see also Exercise \ref{exercise:17}.
\item
Let us check that indeed $\nu_\eta\in\Sigma_\cB^0$, i.e., that we have positivity and exponential boundedness. On can do this by contructing bounded operators on the full Fock space which have $\nu_\eta$ as distribution (for this see Exercise \ref{exercise:19}). We do it here more abstractly.
\begin{enumerate}
\item
positivity

Since positivity is preserved in a central limit, we only need a distribution $\mu\in\Sigma_\cB^0$ which has first moment zero and second moment given by $\eta$. In Exercise \ref{exercise:16} we construct such a distribution, an operator-valued Bernoulli element.
\item
exponential boundedness

We have to estimate the norm of 
$$\nu_\eta(xb_1\cdots b_{k-1}x)=\sum_{\pi\in NC_2(k)}
\eta_\pi(b_1,\dots,b_{k-1})$$
for $k=2m$ even. Note first that $\eta$ as a positive map is bounded, i.e.,
$$\Vert \eta(b)\Vert\leq \Vert\eta\Vert\cdot\Vert b\Vert\qquad
\text{for all $b\in\cB$,}\qquad\text{where $\Vert\eta\Vert<\infty$.}$$
This implies that we have for each $\pi\in NC_2(2m)$
$$\Vert \eta_\pi(b_1,\dots,b_{k-1})\Vert \leq \Vert\eta\Vert^m\cdot\Vert b_1\Vert\cdots \Vert b_{2m-1}\Vert.$$
As an illustration for this let us have a look on the estimates for the two contributions of order 4:
\begin{align*}
\eta_{\,\begin{tikzpicture}[baseline=0pt]
    \begin{scope}[yshift=0.4em, scale=.6]
 \draw (0em,-0em) --  (0em,-1em) -- (1em,-1em) -- (1em,-0em);
 \draw (2em,-0em) --  (2em,-1em) -- (3em,-1em) -- (3em,-0em);
    \end{scope}
  \end{tikzpicture}\,}(b_1,b_2,b_3)&=\Vert \eta(b_1)\cdot b_2\cdot \eta(b_3)\Vert\\
&\leq
\Vert\eta(b_1)\Vert\cdot\Vert b_2\Vert\cdot \Vert \eta(b_3)\Vert\\
&\leq \Vert\eta\Vert^2\cdot\Vert b_1\Vert\cdot \Vert b_2\Vert\cdot\Vert b_3\Vert
\end{align*}
and
\begin{align*}
\eta_{\,
\begin{tikzpicture}[baseline=0pt]
    \begin{scope}[yshift=0.4em, scale=.6]
 \draw (1em,-0em) --  (1em,-.5em) -- (2em,-.5em) -- (2em,-0em);
 \draw (0em,-0em) --  (0em,-1em) -- (3em,-1em) -- (3em,-0em);
    \end{scope}
  \end{tikzpicture}\/}(b_1,b_2,b_3)&=\Vert\eta(b_1\cdot\eta(b_2)\cdot b_3)\Vert\\&\leq
\Vert\eta\Vert\cdot \Vert b_1\cdot\eta(b_2)\cdot b_3\Vert\\
&\leq \Vert\eta\Vert \cdot\Vert b_1\Vert\cdot \Vert\eta(b_2)\Vert\cdot \Vert b_3\Vert\\
&\leq \Vert\eta\Vert^2\cdot\Vert b_1\Vert\cdot \Vert b_2\Vert\cdot\Vert b_3\Vert
\end{align*}
Thus -- by also using the fact that the number of elements of $NC_2(2m)$ is given by the $m$-th Catalan number, which is smaller than $4^m$ -- we can now get our exponential bound:
\begin{align*}
\Vert\nu_\eta(xb_1\cdots b_{k-1}x)\Vert &\leq \# NC_2(2m)\cdot\Vert \eta\Vert^m\cdot\Vert b_1\Vert\cdots \Vert b_{2m-1}\Vert\\[0.5em]
&\leq \underbrace{2^{2m}\Vert\eta\Vert^m}_{(2\Vert\eta\Vert)^{2m}}\cdot\Vert b_1\Vert\cdots \Vert b_{2m-1}\Vert.
\end{align*}
\end{enumerate}
\item\label{rem:6.3.3}
In (1) we said that if $S$ is $\cB$-valued semicircular, then
$$1\otimes S=\begin{pmatrix}
S&0&\dots&0\\
0&S&\hdots&0\\
\vdots&\vdots&\ddots&\vdots\\
0&0&\hdots&S\end{pmatrix}$$
is also semicircular, over $M_m(\cB)$. This is true more general; if we have free semicircular elements over $\cB$ and put linear combinations of them as entries into an selfadjoint $m\times m$-matrix, then this is an $M_m(\cB)$-valued semicircular element. 
The proof can be done by using our free central limit theorem. Let us elaborate on this via the example
$$S=\begin{pmatrix}
0&S_1\\
S_1& S_2
\end{pmatrix},$$
where $S_1$ and $S_2$ are free and semicircular over $\cB$, with covariances $\eta_1$ and $\eta_2$, respectively. Then we can realize $S_1$ and $S_2$ as
$$S_1=\lim_{N\to\infty}\frac {X_1+\cdots +X_N}{\sqrt N},\qquad
S_2=\lim_{N\to\infty}\frac {Y_1+\cdots +Y_N}{\sqrt N},$$
where all $X_i,Y_j$ are free and $E[X_i]=0=E[Y_j]$, $E[X_ibX_i]=\eta_1(b)$,
$E[Y_jbY_j]=\eta_2(b)$. This gives us for $S$ the realization
\begin{align*}
S&=\lim_{N\to\infty}\begin{pmatrix}
0&{(X_1+\cdots +X_N)}/{\sqrt N}\\
(X_1+\cdots +X_N)/{\sqrt N}& (Y_1+\cdots +Y_N)/{\sqrt N}\end{pmatrix}\\[0.5em]
&=\lim_{N\to\infty} \frac 1{\sqrt N}\left[\begin{pmatrix}
0& X_1\\
X_1& Y_1\end{pmatrix}
+\cdots+
\begin{pmatrix}
0& X_N\\
X_N& Y_N\end{pmatrix}\right]
\end{align*}
The summands in the last sum are f.~i.~d. with respect to $\id\otimes E$ with vanishing first moment, and thus, by our central limit theorem, $S$ is an $M_2$-valued semicircular element. Its variance $\eta$ is given by the second moment
\begin{align*}
\eta\begin{pmatrix}
b_{11}&b_{12}\\
b_{21}& b_{22}\end{pmatrix}&=
\id\otimes E \left[
\begin{pmatrix}
0&S_1\\ S_1& S_2 \end{pmatrix}
\begin{pmatrix}
b_{11}&b_{12}\\
b_{21}& b_{22}\end{pmatrix}
\begin{pmatrix}
0& S_1\\ S_1& S_2 \end{pmatrix}\right]\\[0.5em]
&=\id\otimes E\begin{pmatrix}
S_1b_{22}S_1& S_1b_{21}S_1+S_1b_{22}S_2\\
S_1b_{12}S_1+S_2b_{22}S_1& S_1b_{11}S_1+S_2b_{21}S_1+S_1b_{12}S_2+S_2b_{22}S_2
\end{pmatrix}\\[0.5em]
&=\begin{pmatrix}
\eta_1(b_{22})& \eta_1(b_{21})\\
\eta_1(b_{12})& \eta_1(b_{11})+ \eta_2(b_{22})
\end{pmatrix}
\end{align*}

\end{enumerate}
\end{remark}

\section{Equation for the Cauchy transform of the semicircle}

\begin{remark}
In order to derive an equation for the Cauchy transform of $\nu_\eta$ we are
looking for recursions among the moments. Consider, with $\mu_S=\nu_\eta$,
$$E[Sb_1Sb_2\cdots b_{2m-1}S]=\sum_{\pi\in NC_2(2m)} \eta_\pi(b_1,\dots,b_{2m-1}).$$
We write $\pi\in NC_2(2m)$ in the form
$\pi=(1,l)\cup \pi_1\cup \pi_2$, where necessarily $l=2k$ even.
 $$ \begin{tikzpicture}
    \node[circle, fill=white, draw=black, scale=0.4, label=above:{$1$}] (n1) at (0,0) {};
    \node[circle, fill=white, draw=black, scale=0.4] (n2) at (2,0) {};
    \node[circle, fill=white, draw=black, scale=0.4, label=above:{$l$}] (n3) at (2.5,0) {};
    \node[circle, fill=white, draw=black, scale=0.4] (n4) at (3,0) {};
    \node[circle, fill=white, draw=black, scale=0.4, label=above:{$2m$}] (n5) at (5,0) {};
    \path (n1) -- node [pos=0.5] {$\ldots$} (n2);
    \path (n4) -- node [pos=0.5] {$\ldots$} (n5);
    \draw[gray, dashed] ($(n1)+(0.2,-0.15)$)  rectangle ($(n2)+(0.2,0.15)$);
    \draw[gray, dashed] ($(n4)+(-0.2,-0.15)$)  rectangle ($(n5)+(0.2,0.15)$);
    \draw (n1) -- ++ (0,-1) -| (n3);
    \node[align=left] at (1.5,-.5) {$\pi_1$};
 \node[align=left] at (4,-.5) {$\pi_2$};
    \node at ($(n1)+(-1.5em,0em)$) {$\pi$};
  \end{tikzpicture}
$$

In this parametrization we can express $\eta_\pi$ as
\begin{align*}
\eta_\pi(b_1,\dots,b_{2m-1})&=
E_\pi[\underset{
\begin{tikzpicture}
\begin{scope}[xshift=-12.5em, scale=2.9]
    \draw (0,0) -- (0,-.7em) -- (2em,-.7em) -- (2em,0);
  \node at (1em,-.3em) {$\pi_1$};
\node at (3em,-.3em) {$\pi_2$};
\end{scope}
  \end{tikzpicture}}
{Sb_1,\dots Sb_{l-1}, S b_l,S\dots b_{2m-1},S}]\\[0.5em]
&=\eta\bigl(E_{\pi_1}[b_1S,\dots, Sb_{l-1}]\bigr)\cdot 
E_{\pi_2}[b_lS,\dots ,b_{2m-1}S]
\end{align*}
Thus
\begin{align*}
&E[Sb_1\cdots b_{2m-1}S]=\sum_{k=1}^m\sum_{\substack{\pi_1\in
NC_2(2(k-1))\\ \pi_2\in NC_2(2(m-k))}}
\eta\bigl(E_{\pi_1}[b_1S,\dots ,Sb_{2k-1}]\bigr)\cdot 
E_{\pi_2}[b_{2k}S,\dots ,b_{2m-1}S]\\
&=
\sum_{k=1}^m \eta\bigl(\sum_{\pi_1\in NC_2(2(k-2))}
E_{\pi_1}[b_1S,\dots, Sb_{2k-1}]\bigr)
\cdot 
\sum_{\pi_2\in NC_2(2(m-k))}
E_{\pi_2}[b_{2k}S,\dots, b_{2m-1}S]\\
&=\sum_{k=1}^m \eta\bigl( E[b_1S\cdots Sb_{2k-1}]\bigr)
\cdot E[b_{2k}S\cdots S b_{2m-1} S].
\end{align*}
Consider now the operator-valued Cauchy transform (on the base level)
\begin{align*}
G:=G_S:H^+(\cB)\to H^-(\cB);\qquad
z\mapsto G(z)=E[(z-S)^{-1}].
\end{align*}
For large $\Vert z\Vert$ we have
\begin{align*}
G(z)&= z^{-1}\sum_{m\geq 0} E[(Sz^{-1})^{2m}]\\
&=z^{-1}+z^{-1}\sum_{m\geq 1} {E[(Sz^{-1})^{2m}]}\\
&=z^{-1}+z^{-1}\sum_{m\geq 1}
{\sum_{k=1}^m
\eta\bigl(z^{-1} E[(Sz^{-1})^{2(k-1)}]\bigr)\cdot z^{-1}  E[(Sz^{-1})^{2(m-k)}]}
\\
&=z^{-1} +z^{-1}\cdot\eta\bigl(\sum_{k=1}^\infty 
z^{-1} E[(Sz^{-1})^{2(k-1)}]\bigr)\cdot\bigl(\sum_{m-k=0}^\infty z^{-1} \cdot E[(Sz^{-1})^{2(m-k)}]\bigr)\\
&=z^{-1}+z^{-1}\cdot \eta(G(z))\cdot G(z),
\end{align*}
or equivalently
\begin{equation}\label{eq:semicirc}
zG(z)=1+\eta(G(z))\cdot G(z).
\end{equation}
So we conclude: $G(z)$ satisfies for large $\Vert z\Vert$ the Equation \eqref{eq:semicirc}; by analytic extension it must then satisfy \eqref{eq:semicirc} also for all $H^+(\cB)$.

The same calculation and arguments work also for all matricial amplifications of $G$.
\end{remark}

\section{Solution of the equation for the semicircle}

\begin{remark}
\begin{enumerate}
\item
In the case $\cB=\CC$ and the normalization $\eta(z)=z$ ($z\in\CC$) -- 
corresponding to $\ff(S^2)=1$ -- we get the quadratic equation for the 
Cauchy transform $G_S:H^+(\CC)\to H^-(\CC)$ of  a scalar-valued
semicircle:
\begin{equation}\label{eq:semicirc-scalar}
zG(z)=1+G(z)^2.
\end{equation}
This can be solved explicitly as
$$G(z)=\frac {z\pm \sqrt{z^2-4}}2,$$
where we have to choose the ``$-$'' sign, since we have for Cauchy
transforms $\lim_{y\to\infty} iy G(iy)=1$; see Remark \ref{rem:4.12}.

From this explicit form for the Cauchy transform one can derive then via the Stieltjes inversion formula the semicircle density.

Note that of the two solutions of \eqref{eq:semicirc-scalar} only one, namely $G(z)$, lies in the right space $H^-(\CC)$, the other solution is in $H^+(\CC)$.
\item
How can we deal with \eqref{eq:semicirc} for general $\cB$ and $\eta$. Note first that \eqref{eq:semicirc} is, in the case $\cB=M_n(\CC)$, actually a system of quadratic equations for the entries of the $n\times n$-matrix $G(z)$. There are no explicit solutions nor a general theory for such systems.
\item
Usually there can be many solutions of such equations; we are, however, interested in a solution which lies in $H^-(\cB)$. To get an idea, consider the very
simple example: $\cB=M_2(\CC)$, $\eta=\id$, 
$z=
z_1\oplus z_2$
with $z_1,z_2\in\CC$ and we are just looking for solutions of the form
$G(z)=w=
w_1\oplus w_2$
with $w_1,w_2\in\CC$. Then \eqref{eq:semicirc} decouples into
$$z_1w_1=1+w_1^2,\qquad z_2w_2=1+w_2^2.$$
Hence we have two solutions for $w_1$ and two solutions for $w_2$:
$$w_1^\pm=\frac {z_1\pm \sqrt{z_1^2-4}}2,\qquad
w_2^\pm=\frac {z_2\pm \sqrt{z_2^2-4}}2.$$
This yields four possible solutions for $w$, of which only $w_1^-\oplus w_2^-$ is in $H^-(M_2(\CC))$. 

We want to show that this is true in general: of the many possible solutions there is exactly one in $H^-(\cB)$.
\item
The idea to see this is to rewrite the Equation \eqref{eq:semicirc} as a fixed point equation:
\begin{align*}
zG(z)=1+\eta(G(z))\cdot G(z) &\quad\Leftrightarrow\quad
z=G(z)^{-1}+\eta(G(z))\\[.4em]
&\quad\Leftrightarrow\quad
G(z)=[z-\eta(G(z))]^{-1},
\end{align*}
i.e., with 
$F_z: w\mapsto [z-\eta(w)]^{-1}$
we have that $G(z)$ is a fixed point of $F_z$.

To see the existence and uniqueness of the fixed point, $F_z$ should be a 
contraction. For large $z$ (i.e., small $\Vert z^{-1}\Vert$) this is true in
operator norm. For general $z\in H^+(\cB)$ the operator norm does not
work any more, but one gets a contraction in an ``analytic'' metric. The following is a kind of general version of the Schwarz Lemma or Denjoy-Wolff Theorem (for the lattter, see 5.6 and Assignment 9 of \href{https://rolandspeicher.files.wordpress.com/2019/08/free-probability.pdf}{Free Probability Lecture Notes}). See also \cite{Har,Kra} for nice expositions around the Earle--Hamilton Theorem.
\end{enumerate}
\end{remark}

\begin{theorem}[Earle, Hamilton 1968]\label{thm:Earle-Hamilton}
Let $D$ be a non-empty domain in a complex Banach space $X$ and let
$h:D\to D$ be a bounded holomorphic function. If $h(D)$ lies strictly inside $D$ -- i.e., there is some $\ee>0$ such that $B_\ee(h(x))\subset D$ whenever $x
\in D$ -- then $h$ is a strict contraction in some (namely, Carath\'eodory-Riffen-Finsler) metric $\rho$, and thus has a unique fixed point in $D$. Furthermore, there is a constant $m>0$ sucht that one has for all $x,y\in D$ that $\rho(x,y)\geq m \Vert x-y\Vert$, and thus $(h^n(x_0))_{n\in\NN}$ converges also in norm, for any $x_0\in D$, to this fixed point.
\end{theorem}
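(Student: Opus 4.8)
The plan is to run the classical Earle--Hamilton argument through the Carath\'eodory--Riffen--Finsler (CRF) metric. \textbf{Setup.} First I would recall the CRF metric on a domain $D$ in a complex Banach space $X$: its infinitesimal form is
$$\alpha_D(w,v):=\sup\bigl\{\,|f'(w)v|\ :\ f\in\mathrm{Hol}(D,\D)\,\bigr\},\qquad w\in D,\ v\in X,$$
where $\mathrm{Hol}(D,\D)$ are the holomorphic maps $D\to\D$, and the associated metric is $\rho_D(x,y):=\inf_\gamma\int_0^1\alpha_D\bigl(\gamma(t),\gamma'(t)\bigr)\,\td t$, the infimum over piecewise $C^1$ paths $\gamma$ in $D$ from $x$ to $y$. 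I would then record two elementary facts, neither of which uses the hypotheses on $h$: (a) every holomorphic $\psi:D_1\to D_2$ is CRF--nonexpanding, since for $f\in\mathrm{Hol}(D_2,\D)$ one has $f\circ\psi\in\mathrm{Hol}(D_1,\D)$, and the chain rule gives $|f'(\psi(w))\psi'(w)v|\le\alpha_{D_1}(w,v)$, hence $\alpha_{D_2}(\psi(w),\psi'(w)v)\le\alpha_{D_1}(w,v)$ and, integrating along paths, $\rho_{D_2}(\psi(x),\psi(y))\le\rho_{D_1}(x,y)$; (b) $\alpha_D(w,\cdot)$ is positively homogeneous, and along the complex line through $a$ the one-variable Schwarz lemma gives $\alpha_{B_r(a)}(a,v)=\Vert v\Vert/r$.

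\textbf{The strict contraction (the crux).} Since $h$ is bounded, $d:=\diam h(D)<\infty$; I may assume $d>0$, else $h$ is constant and the theorem is trivial. Fix $w\in D$ and a parameter $0<\ee'<\ee$, and introduce the \emph{amplified map}
$$g:D\to X,\qquad g(u):=h(u)+\tfrac{\ee'}{d}\bigl(h(u)-h(w)\bigr).$$
The point of $g$ is that it magnifies displacements from $h(w)$ by the factor $1+\ee'/d$ yet still lands in $D$: indeed $h(u)\in h(D)$ and $\Vert\tfrac{\ee'}{d}(h(u)-h(w))\Vert\le\ee'<\ee$, so the hypothesis $B_\ee(h(u))\subseteq D$ forces $g(u)\in D$. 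Thus $g$ is a holomorphic self-map of $D$ with $g(w)=h(w)$ and $g'(w)=\bigl(1+\tfrac{\ee'}{d}\bigr)h'(w)$, and applying (a) to $g$ at $w$ together with homogeneity of $\alpha_D$ yields
$$\bigl(1+\tfrac{\ee'}{d}\bigr)\,\alpha_D\bigl(h(w),h'(w)v\bigr)=\alpha_D\bigl(g(w),g'(w)v\bigr)\le\alpha_D(w,v).$$
Letting $\ee'\uparrow\ee$ gives $\alpha_D\bigl(h(w),h'(w)v\bigr)\le k\,\alpha_D(w,v)$ with $k:=\tfrac{d}{d+\ee}<1$, uniformly in $w\in D$ and $v\in X$; integrating this along paths (the derivative of $h\circ\gamma$ is $h'(\gamma)\gamma'$) and taking the infimum over $\gamma$ gives $\rho_D(h(x),h(y))\le k\,\rho_D(x,y)$. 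The hard part of the whole proof is precisely this step --- spotting the map $g$ and seeing that the ``$\ee$ of room, diameter $d$'' budget is exactly what makes the amplification factor $1+\ee'/d$ admissible; the rest is soft.

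\textbf{Fixed point and convergence in norm.} For the remaining assertions I would first replace $D$ by the bounded subdomain $D':=h(D)+B_\ee(0)=\bigcup_{x\in D}B_\ee(h(x))$, which is open, connected (a continuous image of $h(D)\times B_\ee(0)$ under addition) and bounded (as $h$ is bounded), with $D'\subseteq D$; one checks $h(D')\subseteq h(D)\subseteq D'$ and $B_\ee(h(u))\subseteq D'$ for every $u\in D'$, so $h|_{D'}$ satisfies the same hypotheses, and since every fixed point of $h$ lies in $h(D)\subseteq D'$ and $h(x_0)\in D'$ for every $x_0\in D$, proving everything for $D'$ suffices (with the metric $\rho$ of the statement read as $\rho_{D'}$). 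So assume now $D\subseteq B_R(a)$ is bounded. Given $x,y\in D$, Hahn--Banach supplies $\varphi\in X^*$ with $\Vert\varphi\Vert=1$ and $\varphi(x-y)=\Vert x-y\Vert$; then $f:=\tfrac1R(\varphi(\cdot)-\varphi(a))\in\mathrm{Hol}(D,\D)$, so $\alpha_D(w,v)\ge|f'(w)v|=\tfrac1R|\varphi(v)|$, and integrating along any path from $x$ to $y$ gives $\rho_D(x,y)\ge\tfrac1R\Vert x-y\Vert$; hence $m:=1/R$ works and $\rho_D$ is a genuine metric. Finally, the contraction estimate gives $\rho_D(h^{n+1}(x_0),h^{n}(x_0))\le k^{\,n}\rho_D(h(x_0),x_0)$, so $\sum_{n}\Vert h^{n+1}(x_0)-h^{n}(x_0)\Vert\le\tfrac1m\sum_n k^{\,n}\rho_D(h(x_0),x_0)<\infty$; thus $(h^{n}(x_0))_n$ is norm-Cauchy in $X$ and converges to some $w^\ast$. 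Since $h^{n}(x_0)\in\overline{h(D)}$ and $\overline{h(D)}\subseteq D$ (a limit of points $h(x_n)$ is eventually within $\ee$ of some $h(x_n)$, hence in $B_\ee(h(x_n))\subseteq D$), continuity of $h$ gives $h(w^\ast)=w^\ast$. Uniqueness is immediate: if $h(w_1)=w_1$ and $h(w_2)=w_2$, then $\rho_D(w_1,w_2)=\rho_D(h(w_1),h(w_2))\le k\,\rho_D(w_1,w_2)$ with $\rho_D(w_1,w_2)<\infty$, forcing $\rho_D(w_1,w_2)=0$ and $w_1=w_2$.
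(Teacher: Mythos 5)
The paper quotes this theorem without proof, pointing instead to the expositions \cite{Har,Kra}, so there is no internal argument for you to match; what you wrote is the standard Earle--Hamilton proof, and it is correct. The crux is indeed the auxiliary map $g(u)=h(u)+\tfrac{\ee'}{d}\bigl(h(u)-h(w)\bigr)$: it exploits the ``$\ee$ of room'' to dilate the Carath\'eodory infinitesimal metric at $w$ by the factor $1+\ee'/d$ while remaining a self-map of $D$, which forces the uniform contraction constant $k=d/(d+\ee)<1$ after integrating along paths. Your reduction to the bounded subdomain $D'=h(D)+B_\ee(0)$ is not merely convenient but necessary: if $D$ itself is unbounded (for instance $D=X$), then by the Banach-space Liouville theorem every holomorphic $D\to\D$ is constant, so $\rho_D$ degenerates on $D$, and the inequality $\rho(x,y)\geq m\Vert x-y\Vert$ can only hold for $\rho=\rho_{D'}$; the theorem as printed in these notes elides this point, and your reading of it is the right one. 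The auxiliary verifications --- that $D'$ is open, connected and bounded, that $h$ maps $D'$ into itself with the same strictly-inside property, that $\overline{h(D)}\subseteq D'$ so the norm limit of the iterates lands back in the domain, and the Hahn--Banach argument giving $\rho_{D'}(x,y)\geq\Vert x-y\Vert/R$ --- all check out, and the uniqueness argument is valid once one is working on $D'$ where $\rho_{D'}$ is an honest metric.
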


We want to apply this to our fixed point equation for the semicircular distribution. In the next proposition we check that the assumptions of the Earle--Hamilton Theorem are satisfied in this case. We follow here the original work of Helton, Rashidi Far, Speicher \cite{HRS}.

\begin{prop}
Let $\cB$ be a unital $C^*$-algebra and $\eta:\cB\to\cB$ a positive linear map.
For fixed $z\in H^+(\cB)$ we define the map
$$F_z: w\mapsto F_z(w):= [z-\eta(w)]^{-1}.$$
Then we have
\begin{itemize}
\item[(i)]
$F_z:H^-(\cB)\to H^-(\cB)$.
\item[(ii)]
$F_z$ is bounded, with
$$\Vert F_z(w)\Vert \leq \Vert (\Im z)^{-1}\Vert\qquad\text{for all $w\in H^-(\cB)$.}$$
\item[(iii)]
For $R>0$ we put
$$H^-_R(\cB):=\{w\in H^-(\cB)\mid \Vert w\Vert<R\}.$$
Then, for $R> \Vert( \Im z)^{-1}\Vert$, we have that $F_z(H_R^-(\cB))$ lies
strictly inside $H_R^-(\cB)$.
\end{itemize}
\end{prop}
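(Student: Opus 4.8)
The plan is to verify the three claims in turn, in each case exploiting the structure $F_z(w) = [z-\eta(w)]^{-1}$ together with Proposition \ref{prop:4.3} and the positivity of $\eta$.

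\textbf{Part (i).} First I would observe that for $w \in H^-(\cB)$ we have $\Im w \leq -\ee \cdot 1$ for some $\ee>0$, hence $\Im(\eta(w)) = \eta(\Im w) \leq 0$ (here one uses that $\eta$ is positive, so it maps negative elements to negative elements; strictly it gives $\Im(\eta(w)) \leq -\ee\,\eta(1)$, but $\eta(1)\geq 0$ suffices to conclude $\leq 0$). Therefore $\Im(z - \eta(w)) = \Im z - \Im(\eta(w)) \geq \Im z > 0$, so $z - \eta(w) \in H^+(\cB)$, and by Proposition \ref{prop:4.3} its inverse $F_z(w) = [z-\eta(w)]^{-1}$ lies in $H^-(\cB)$. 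Note this computation also shows $\Im(z-\eta(w)) \geq \Im z \geq \ee_z \cdot 1$ where $\ee_z > 0$ witnesses $z \in H^+(\cB)$, which I will reuse below.

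\textbf{Part (ii).} I would repeat the functional-calculus estimate from the proof of Theorem \ref{thm:4.5}: writing $A := z - \eta(w) \in H^+(\cB)$ with $Y := \Im A \geq \Im z$, the factorization $A = Y^{1/2}[\,\tilde X + i\,]Y^{1/2}$ with $\tilde X := Y^{-1/2}(\Re A)Y^{-1/2}$ selfadjoint gives $\|A^{-1}\| \leq \|Y^{-1/2}\|^2 = \|Y^{-1}\|$, since $\|(\tilde X+i)^{-1}\| \leq 1$ by functional calculus. Because $Y = \Im A \geq \Im z > 0$ we have $Y^{-1} \leq (\Im z)^{-1}$ as positive operators, hence $\|Y^{-1}\| \leq \|(\Im z)^{-1}\|$, and therefore $\|F_z(w)\| = \|A^{-1}\| \leq \|(\Im z)^{-1}\|$ for every $w \in H^-(\cB)$, uniformly.

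\textbf{Part (iii).} Fix $R > \|(\Im z)^{-1}\|$ and set $\delta := R - \|(\Im z)^{-1}\| > 0$. By part (ii), for every $w \in H^-_R(\cB)$ we have $\|F_z(w)\| \leq \|(\Im z)^{-1}\| = R - \delta$. I claim that the open $\delta$-ball around $F_z(w)$ is contained in $H^-_R(\cB)$, which is exactly the ``strictly inside'' condition with $\ee := \delta$. Indeed, if $\|v - F_z(w)\| < \delta$ then $\|v\| < R - \delta + \delta = R$, giving the norm bound; and the imaginary-part bound needs care, since $H^-(\cB)$ is open but a $\delta$-perturbation of $F_z(w)$ need not stay in $H^-(\cB)$ unless $F_z(w)$ is bounded away from the boundary. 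Here is where I expect the main work: one must show $\Im F_z(w) \leq -c \cdot 1$ for a constant $c$ \emph{independent of $w \in H^-_R(\cB)$}. Revisiting the computation in Proposition \ref{prop:4.3}, with $A = z-\eta(w)$, $Y = \Im A$, one gets
$$\Im(A^{-1}) = -Y^{-1/2}(\tilde X^2 + 1)^{-1} Y^{-1/2} \leq -\frac{1}{\|\,\tilde X^2 + 1\,\|}\, Y^{-1} \leq -\frac{1}{\|\,\tilde X^2+1\,\|}\,\|Y\|^{-1}\cdot 1.$$
To make this uniform I would bound $\|Y\| = \|\Im(z-\eta(w))\| \leq \|\Im z\| + \|\eta\|\cdot R$ and bound $\|\tilde X^2 + 1\| = \|A^{-1}\|^{-1}\cdot(\text{something})$ — more directly, $\|(\tilde X^2+1)^{-1}\|^{-1} = 1 + \|\tilde X\|_{\min}^2$ is awkward, so instead I would note $(\tilde X^2+1)^{-1} \geq (\|\tilde X\|^2+1)^{-1}\cdot 1$ and bound $\|\tilde X\| \leq \|Y^{-1}\|\cdot\|\Re A\| \leq \|(\Im z)^{-1}\|\cdot(\|\Re z\| + \|\eta\| R)$, all finite and $w$-independent on $H^-_R(\cB)$. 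This produces a uniform $c = c(z,R,\eta) > 0$ with $\Im F_z(w) \leq -c\cdot 1$; then choosing $\ee := \min(\delta, c)$ ensures $B_\ee(F_z(w)) \subset H^-_R(\cB)$ for all such $w$, which is the assertion. The one genuine obstacle is keeping every bound in this last estimate uniform over the (unbounded-in-$\Re$) set $H^-_R(\cB)$; the norm restriction $\|w\| < R$ is exactly what rescues $\|\tilde X\|$ and $\|Y\|$ from blowing up.
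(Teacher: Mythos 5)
Your proof is correct. Parts (i) and (ii) coincide with the paper's argument (the paper simply cites the estimate from the proof of Theorem \ref{thm:4.5}, which you reconstruct explicitly; and your use of $\Im(\eta(w))=\eta(\Im w)\le 0$ in part (i) is a slightly more careful formulation of the same positivity argument). Part (iii) takes a genuinely different computational route to the same uniform bound on $\Im F_z(w)$. You start from the explicit formula $\Im(A^{-1})=-Y^{-1/2}(\tilde X^2+1)^{-1}Y^{-1/2}$ inherited from the proof of Proposition \ref{prop:4.3}, then push the spectral estimate $(\tilde X^2+1)^{-1}\ge(\|\tilde X\|^2+1)^{-1}\cdot 1$ and $Y^{-1}\ge\|Y\|^{-1}\cdot 1$ through, using $\|w\|<R$ to bound $\|\Re A\|$ and $\|Y\|$. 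The paper instead first establishes the order inequality $\Im F_z(w)\le -F_z(w)^*\,\Im z\,F_z(w)$, rewrites the right-hand side as $\bigl[F_z(w)^{-1}(\Im z)^{-1}F_z(w)^{*-1}\bigr]^{-1}$, and bounds the inner expression from above (again using $\|w\|<R$) before inverting. Both routes exploit exactly the same mechanism -- the norm restriction $\|w\|<R$ keeps $\|z-\eta(w)\|$ under control -- and yield a $w$-independent constant; your version has the small advantage of reusing the explicit imaginary-part formula already derived in Proposition \ref{prop:4.3}, while the paper's version avoids the intermediate self-adjoint conjugate $\tilde X$ and gives the cleaner closed-form constant $\bigl(\|z\|+\|\eta\|R\bigr)^{-2}\|(\Im z)^{-1}\|^{-1}$. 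Both are valid.
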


\begin{proof}

\begin{itemize}
\item[(i)]
For $w\in H^-(\cB)$ we have, by the positivity of $\eta$, that $\eta(w)\in H^-(\cB)$, and thus $-\eta(w)\in H^+(\cB)$. But then we have, for $z\in H^+(\cB)$, that also $z-\eta(w)\in H^+(\cB)$. Taking the inverse moves us then into
$H^-(\cB)$.
\item[(ii)]
In the proof of Theorem \ref{thm:4.5} we have seen (put $X=0$ there) that
$\Vert z^{-1}\Vert\leq \Vert (\Im z)^{-1}\Vert$ for $z\in H^+(\cB)$, and thus also
$$\Vert [z-\eta(w)]^{-1}\Vert\leq \Vert [\Im (z-\eta(w))]^{-1}\Vert.$$
In order to estimate this further, note
$$\Im (z-\eta(w))=\Im z - \underbrace{\Im \eta(w)}_{\leq 0}\geq \Im z>0,$$
which implies
$$0<[\Im (z-\eta(w))]^{-1}\leq (\Im z)^{-1},$$
and thus finally
$$\Vert [\Im (z-\eta(w))]^{-1}\Vert\leq \Vert(\Im z)^{-1} \Vert.$$
\item[(iii)]
Note that (ii) shows that for $R>\Vert (\Im z)^{-1}\Vert$ we have
$F_z:H_R^-(\cB)\to H_R^-(\cB)$. We have to see that $F_z(w)$ stays away from the boundary of $H_R^-(\cB)$. For the part $\Vert w\Vert =R$ this is clear, there it stays away at least by an amount $R-\Vert (\Im z)^{-1}\Vert$. In order to see that it also stays away from the ``real axis'' we need an estimate for $\Im F_z(w)$, uniform in $w\in H_R^-(\cB)$. We have
\begin{align*}
\Im F_z(w)&=\frac 1{2i}[F_z(w)-F_z(w)^*]\\[0.5em]
&=F_z(w)^*\underbrace{\left[\frac{F_z(w)^{*-1}-F_z(w)^{-1}}{2i}\right]}_{=\Im(z^*-\eta(w)^*)\leq \Im z^*} F_z(w)\\[0.5em]
&\leq F_z(w)^*\cdot \Im z^* \cdot F_z(w)\\[0.5em]
&= -F_z(w)^*\cdot \Im z\cdot F_z(w).
\end{align*}
Let us write the  last term, without the minus-sign, in the form
\begin{align*}
F_z(w)^*\cdot \Im z\cdot F_z(w)=
\left[ F_z(w)^{-1}\cdot (\Im z)^{-1}\cdot F_z(w)^{*-1}\right]^{-1}.
\end{align*}
We estimate now
\begin{align*}
F_z(w)^{-1}\cdot (\Im z)^{-1}\cdot F_z(w)^{*-1}&\leq \Vert F_z(w)^{-1}\Vert^2\cdot \Vert (\Im z)^{-1}\Vert\cdot 1\\[0.5em]
&=\Vert z-\eta(w)\Vert^2\cdot \Vert (\Im z)^{-1}\Vert\cdot 1\\[0.5em]
&\leq (\Vert z\Vert +\Vert\eta\Vert\cdot\Vert w\Vert)^2 \cdot \Vert (\Im z)^{-1}\Vert\cdot 1\\[0.5em]
&\leq (\Vert z\Vert +\Vert\eta\Vert\cdot R)^2 \cdot \Vert (\Im z)^{-1}\Vert\cdot 1.
\end{align*}
and thus, by taking the inverse and by noting that 
$F_z(w)^{-1}\cdot (\Im z)^{-1}\cdot F_z(w)^{*-1}$ is positive:
$$F_z(w)^*\cdot \Im z\cdot F_z(w)\geq \frac 1{(\Vert z\Vert +\Vert\eta\Vert\cdot R)^2 \cdot \Vert (\Im z)^{-1}\Vert}\cdot 1.$$
Putting everything together gives then the wanted estimate
$$\Im F_z(w)\leq - \frac 1{(\Vert z\Vert +\Vert\eta\Vert\cdot R)^2 \cdot \Vert (\Im z)^{-1}\Vert}\cdot 1,$$
which is independent of $w\in H^-(\cB)$.
\end{itemize}
\end{proof}

\begin{theorem}[Helton, Rashidi Far, Speicher 2007]\label{thm:HRS}
Let $\cB$ be a unital $C^*$-algebra and $\eta:\cB\to\cB$ a positive linear map. For fixed $z\in H^-(\cB)$ there exists exactly one solution $w\in H^-(\cB)$ to
\begin{equation}\label{eq:HRS-fixed-point}
zw=1+\eta(w)\cdot w.
\end{equation}
This $w$ is the limit of iterates $w_n=F_z^n(w_0)$ for any $w_0\in H^-(\cB)$.
Furthermore, we have that
$$\Vert w\Vert\leq \Vert (\Im z)^{-1}\Vert\qquad\text{and}\qquad
\Im w\leq 
- \frac 1{\bigl(\Vert z\Vert +\Vert\eta\Vert\cdot \Vert (\Im z)^{-1}\Vert\bigr)^2 \cdot \Vert (\Im z)^{-1}\Vert}\cdot 1.$$
\end{theorem}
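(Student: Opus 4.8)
The plan is to deduce the theorem from the Earle--Hamilton Theorem (Theorem~\ref{thm:Earle-Hamilton}) applied to the map $F_z$ studied in the preceding Proposition. Fix $z\in H^+(\cB)$ (as in that Proposition; the hypothesis ``$z\in H^-(\cB)$'' in the statement should read ``$z\in H^+(\cB)$''). First I would record the equivalence, valid on $H^-(\cB)$, between the quadratic equation~\eqref{eq:HRS-fixed-point} and the fixed-point equation $w=F_z(w)=[z-\eta(w)]^{-1}$: if $w\in H^-(\cB)$ then $-w\in H^+(\cB)$ is invertible by Proposition~\ref{prop:4.3}, so $w$ is invertible, and right-multiplying $zw=1+\eta(w)w$ by $w^{-1}$ gives $z=w^{-1}+\eta(w)$, i.e.\ $w=[z-\eta(w)]^{-1}$; conversely any fixed point of $F_z$ solves~\eqref{eq:HRS-fixed-point}. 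Hence it suffices to exhibit a unique fixed point of $F_z$ in $H^-(\cB)$.

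Next I would fix any $R>\Vert(\Im z)^{-1}\Vert$ and take $D:=H^-_R(\cB)$, a non-empty open (in fact convex) subset of the Banach space $\cB$. The preceding Proposition supplies exactly the hypotheses of Earle--Hamilton: part~(ii) gives that $F_z$ is bounded on $D$ with $\Vert F_z(w)\Vert\le\Vert(\Im z)^{-1}\Vert$, and part~(iii) gives that $F_z(D)$ lies strictly inside $D$. Holomorphy of $F_z$ on $D$ is clear, since $F_z$ is the composition of the affine map $w\mapsto z-\eta(w)$ with inversion, and inversion is holomorphic on the open set of invertibles (cf.\ Example~\ref{ex:3.13}). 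Earle--Hamilton then produces a unique fixed point $w\in D$ of $F_z$, together with a metric $\rho$ and a constant $m>0$ with $\rho(x,y)\ge m\Vert x-y\Vert$, such that $F_z^n(x_0)\to w$ in $\rho$, hence in norm, for every $x_0\in D$.

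Then I would pass from $D=H^-_R(\cB)$ to all of $H^-(\cB)$. For uniqueness: any solution $w'\in H^-(\cB)$ of~\eqref{eq:HRS-fixed-point} is a fixed point of $F_z$, so part~(ii) gives $\Vert w'\Vert=\Vert F_z(w')\Vert\le\Vert(\Im z)^{-1}\Vert<R$, whence $w'\in D$ and therefore $w'=w$. For the iterates: for arbitrary $w_0\in H^-(\cB)$, part~(i) gives $w_1:=F_z(w_0)\in H^-(\cB)$ and part~(ii) gives $\Vert w_1\Vert\le\Vert(\Im z)^{-1}\Vert<R$, so $w_1\in D$ and the Earle--Hamilton convergence applies from $w_1$ onwards; thus $w_n=F_z^n(w_0)\to w$ in norm.

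Finally, the two quantitative bounds are read off the Proposition's estimates at the fixed point. From part~(ii), $\Vert w\Vert=\Vert F_z(w)\Vert\le\Vert(\Im z)^{-1}\Vert$. The chain of inequalities proving part~(iii) in fact yields, for every $w\in H^-(\cB)$,
$$\Im F_z(w)\le-\frac{1}{\bigl(\Vert z\Vert+\Vert\eta\Vert\cdot\Vert w\Vert\bigr)^2\cdot\Vert(\Im z)^{-1}\Vert}\cdot 1;$$
applying this to the fixed point and enlarging the denominator via $\Vert w\Vert\le\Vert(\Im z)^{-1}\Vert$ (which only strengthens the inequality) gives
$$\Im w\le-\frac{1}{\bigl(\Vert z\Vert+\Vert\eta\Vert\cdot\Vert(\Im z)^{-1}\Vert\bigr)^2\cdot\Vert(\Im z)^{-1}\Vert}\cdot 1.$$
The only non-routine ingredient is the uniform boundedness of $F_z$ on $H^-_R(\cB)$ together with the fact that its image sits strictly inside, and this has already been established in the preceding Proposition; the rest is bookkeeping plus one application of Earle--Hamilton. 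I do not expect a genuine obstacle beyond being careful to invoke the equivalence of~\eqref{eq:HRS-fixed-point} with the fixed-point form only on $H^-(\cB)$, where invertibility of $w$ is automatic.
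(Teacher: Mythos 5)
Your proposal is correct and follows essentially the same route as the paper: apply Earle--Hamilton to $F_z$ on the bounded half-plane $H_R^-(\cB)$ using the estimates of the preceding Proposition, then pass to all of $H^-(\cB)$. You are also right that the hypothesis should read $z\in H^+(\cB)$; this is a typo in the statement. The only slight variation is that the paper handles an arbitrary starting point $w_0$ by enlarging $R$ so that $w_0\in H_R^-(\cB)$, whereas you fix $R$ once and observe that $F_z(w_0)$ already lands in $H_R^-(\cB)$ after one step; both are fine, and your explicit derivation of the $\Im w$ bound from the $\|w\|\le\|(\Im z)^{-1}\|$ estimate at the fixed point fills in a step the paper leaves implicit.
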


\begin{proof}
By the Earle--Hamilton Theorem \ref{thm:Earle-Hamilton}, each $H_R^-(\cB)$ contains, for $R> \Vert (\Im z)^{-1}\Vert$, exactly one fixed point of $F_z$, i.e., a solution to \eqref{eq:HRS-fixed-point}. (Note that our map $F_z$ is holomorphic.) For any $w_0\in H^-(\cB)$ we choose $R$ such that $w_0\in H_R^-(\cB)$ (i.e., $R> \Vert w_0\Vert$), then Earle--Hamilton guarantees that
$F_z^n(w_0)$ converges in $H_R^-(\cB)$ to $w$.
\end{proof}

\begin{remark}
\begin{enumerate}
\item
Clearly, this solution $w$ from Theorem \ref{thm:HRS} must be the value $G(z)$ of the Cauchy transform of our operator-valued semicircular element $S$ with covariance $\eta$.
\item
The linearity of $\eta$ is not essential for the arguments; one can generalize Theorem \ref{thm:HRS} in the same way to the case where $\eta:H^+(\cB)\to
H^+(\cB)$ is an analytic and bounded map.
\item
The theorem does not give estimates for the speed of convergence. In particular, for small $\Im z$, the convergence can be very slow. One can usually improve 
this by taking averages of the iterates. For example, replace $w\mapsto F_z(w)$ by
$w\mapsto G_z(w):=\frac 12 w+\frac 12 F_z(w)$. $G_z$ has the same fixed point
as $F_z$ and maps $H_R^-(\cB)$ strictly into its interior. Thus, by Earle--Hamilton, sequences $(G_z^n(w_0))_{n\in\NN}$ converge also (and usually faster) to the wanted fixed point of $F_z$.
\end{enumerate}
\end{remark}

\chapter{Matrices of Semicirculars and Matrix-Valued Semicirculars (and Block Random Matrices)}

Here we want to be a bit more concrete about the relation between matrices of free semicirculars and matrix-valued semicircular elements. 
We will here also encounter the idea that we can consider our matrices both as scalar-valued and as operator-valued elements. Understanding the relation between these two points of view will be crucial for applications of operator-valued free probability to random matrix models with some more structure, like block matrices.

\section{Matrix-valued semicirculars}

\begin{remark}
In Remark \ref{rem:6.3} we have seen that matrices of free semicirculars are
matrix-valued semicirculars. We restrict here to the special case where $\cB=\CC$, i.e., the entries of our matrices are scalar-valued free semicirculars. Let us first give the precise statement for this.
\end{remark}

\begin{prop}\label{prop:7.2}
Let $(\cA,\ff)$ be a $C^*$-probability space and $S_1,\dots,S_d$ be free standard semicirculars (i.e., $\ff(S_i^2)=1$). For $n\geq 1$ and selfadjoint
$b_1,\dots,b_d\in M_n(\CC)$ we consider
$$S:=b_1\otimes S_1+\cdots +b_d\otimes S_d\in M_n(\CC)\otimes\cA\,\hat=\, M_n(\cA).$$
Then $S$ is in the matrix-valued $C^*$-probability space $(M_n(\cA),M_n(\CC),
\id\otimes\ff)$ a matrix-valued semicircular element with covariance
$$\eta:M_n(\CC)\to M_n(\CC)\qquad \text{given by}\qquad
\eta(b)=\sum_{j=1}^d b_jbb_j.$$
\end{prop}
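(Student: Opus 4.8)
The plan is to deduce the statement from the operator-valued free central limit theorem (Theorem and Definition \ref{thm:6.2}) together with the compatibility of freeness with matrix amplifications (Proposition \ref{prop:5.7}), i.e.\ to make precise the sketch indicated in Remark \ref{rem:6.3}. First I would realize the tuple $(S_1,\dots,S_d)$ as a central limit: for each $N\in\NN$ pick, in some $C^*$-probability space, a free family $\{X_k^{(i)}\mid 1\le i\le d,\ 1\le k\le N\}$ of standard semicircular elements. By the (multivariate, scalar-valued) free central limit theorem, the tuple $\bigl(\tfrac1{\sqrt N}\sum_{k=1}^N X_k^{(1)},\dots,\tfrac1{\sqrt N}\sum_{k=1}^N X_k^{(d)}\bigr)$ converges in distribution, as $N\to\infty$, to a family of $d$ free standard semicircular elements, i.e.\ to a copy of $(S_1,\dots,S_d)$. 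Since every $M_n(\CC)$-valued joint moment of $\sum_i b_i\otimes(\cdot)$ is a polynomial in the entries of the $b_i$ and in the scalar joint moments of the arguments, it follows that
\[
S^{(N)}:=\sum_{i=1}^d b_i\otimes\Bigl(\tfrac1{\sqrt N}\sum_{k=1}^N X_k^{(i)}\Bigr)=\tfrac1{\sqrt N}\sum_{k=1}^N Y_k,\qquad Y_k:=\sum_{i=1}^d b_i\otimes X_k^{(i)}\in M_n(\cA),
\]
converges in $M_n(\CC)$-valued distribution, with respect to $\id\otimes\ff$, to $S$.

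Next I would check that the $Y_k$ satisfy the hypotheses of Theorem \ref{thm:6.2} over $\cB=M_n(\CC)$. Let $\cA_k\subset\cA$ be the unital $*$-subalgebra generated by $X_k^{(1)},\dots,X_k^{(d)}$; the $\cA_k$ are free over $\CC$, so by Proposition \ref{prop:5.7} the subalgebras $M_n(\cA_k)\subset M_n(\cA)$ are free over $M_n(\CC)$ with respect to $\id\otimes\ff$. Since $Y_k\in M_n(\cA_k)$, the $Y_k$ are free over $M_n(\CC)$; they are identically distributed over $M_n(\CC)$ because the families $(X_k^{(i)})_{i}$ all have the same (scalar) joint distribution; and they are centred: $\id\otimes\ff[Y_k]=\sum_i b_i\,\ff(X_k^{(i)})=0$. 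Hence Theorem \ref{thm:6.2} applies: $S^{(N)}$ converges in distribution to the $M_n(\CC)$-valued semicircular element $\nu_\eta$ with covariance $\eta(b)=\id\otimes\ff[Y_1\,b\,Y_1]$. Comparing the two limits forces $\mu_S=\nu_\eta$; selfadjointness of $S$ is clear from $b_i=b_i^*$ and $S_i=S_i^*$.

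Finally I would compute the covariance explicitly. For $b\in M_n(\CC)$, identified with $b\otimes 1$,
\[
\eta(b)=\id\otimes\ff\Bigl[\sum_{i,j=1}^d (b_i\otimes X_1^{(i)})(b\otimes 1)(b_j\otimes X_1^{(j)})\Bigr]=\sum_{i,j=1}^d b_i b\, b_j\cdot\ff\bigl(X_1^{(i)}X_1^{(j)}\bigr)=\sum_{i=1}^d b_i b\, b_i,
\]
where I used that free standard semicirculars satisfy $\ff(X_1^{(i)}X_1^{(j)})=\delta_{ij}$ (for $i\neq j$ this is freeness plus centredness, for $i=j$ the normalization $\ff(S_i^2)=1$). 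This gives exactly $\eta(b)=\sum_{j=1}^d b_j b b_j$, completing the argument.

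\textbf{Where the difficulty lies.} None of the individual steps is hard, but the load-bearing point is that the approximants $Y_k$ really are free over $M_n(\CC)$: this is precisely where Proposition \ref{prop:5.7} is used, and it is the step one is tempted to take for granted. A second point that needs a word is that scalar convergence in distribution of the $\tfrac1{\sqrt N}\sum_k X_k^{(i)}$ upgrades to $M_n(\CC)$-valued convergence of $S^{(N)}$, which holds simply because operator-valued moments are polynomial expressions in the scalar ones. A purely combinatorial alternative — expanding $\id\otimes\ff[S c_1 S c_2\cdots c_{k-1}S]$ directly over the scalar semicircular moment formula and matching the resulting sum of products of $b_i$'s, for each $\pi\in NC_2(k)$, with $\eta_\pi$ via the recursion of Definition \ref{def:5.10} — also works, but it requires careful tracking of the nesting of blocks, so I would prefer the central-limit route above.
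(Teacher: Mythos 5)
Your proof is correct and follows exactly the route the paper has in mind: the paper relegates this to Exercise \ref{exercise:21}, but Remark \ref{rem:6.3}\,(3) sketches precisely this strategy — realize the $S_i$ as central-limit sums, rearrange into a sum of matrices $Y_k$, invoke Proposition \ref{prop:5.7} for freeness of the $M_n(\cA_k)$ over $M_n(\CC)$, and apply Theorem \ref{thm:6.2}; your covariance computation $\eta(b)=\sum_i b_i b b_i$ is also correct. One small remark that makes the final ``comparing limits'' step even more transparent: if you choose the approximants $X_k^{(i)}$ to themselves be free standard semicirculars (as you do), then $\bigl(\tfrac1{\sqrt N}\sum_k X_k^{(1)},\dots,\tfrac1{\sqrt N}\sum_k X_k^{(d)}\bigr)$ has the same joint distribution as $(S_1,\dots,S_d)$ for \emph{every} finite $N$, so $\mu_{S^{(N)}}=\mu_S$ identically; the CLT then gives $\mu_S=\lim_N\mu_{S^{(N)}}=\nu_\eta$ without any separate convergence argument for $S^{(N)}\to S$.
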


The proof of this is an assignment, Exercise \ref{exercise:21}.

\section{Treating matrix-valued semicirculars as scalar-valued variables}

\begin{remark}
\begin{enumerate}
\item
We are now, however, interested in $S$ as a scalar-valued random variable in the $C^*$-probability space $(M_n(\cA),\tr\otimes \ff)$, i.e., instead of the 
operator-valued Cauchy transform
$$G_S:H^+(M_n(\CC))\to H^-(M_n(\CC)),\qquad b\mapsto G_S(b)=\id\otimes\ff [(b-S)^{-1}]$$
we need the scalar-valued Cauchy transform 
$$g_S:H^+(\CC)\to H^-(\CC),\qquad z\mapsto g_s(z)=\tr\otimes\ff [(b-S)^{-1}].$$
Note that for $z\in\CC$ we clearly have
$$g_S(z)=\tr[G_S(z\cdot 1)].$$
So if we can calculate $G_S$, we can from this also get $g_S$.
\item
Note that being semicircular on an operator-valued level does in general not
imply to be semicircular on a scalar-valued level. Let us check this in the next example.
\end{enumerate}
\end{remark}

\begin{example}\label{ex:7.4}
Consider, for $\alpha,\beta\in\RR$,
$$S=\begin{pmatrix}
\alpha S_1&0\\ 0& \beta S_2
\end{pmatrix}
=\begin{pmatrix}
\alpha&0\\ 0&0
\end{pmatrix}\otimes S_1+
\begin{pmatrix}
0&0\\ 0&\beta
\end{pmatrix}\otimes S_2.$$
Then $S$ is for all $\alpha,\beta$ an $M_2(\CC)$-valued semicircular element.
However, on the scalar level we have the second moment
$$\tr\otimes\ff[S^2]=\frac 12\bigl(\alpha^2\ff(S_1^2)+\beta\ff(S_2^2)\bigr)=
\frac 12 (\alpha^2+\beta^2);$$
and if $S$ is semicircular, then its fourth moment must be given by twice the square of this, i.e., by 
$2(\tr\otimes\ff[S^2])^2=(\alpha^2+\beta^2)^2/2$. On the other hand we can calculate the fourth moment directly as 
$$\tr\otimes\ff[S^4]=\frac 12\bigl(\alpha^4\ff(S_1^4)+\beta^4\ff(S_2^4)\bigr)=\alpha^4+\beta^4.$$
But $\alpha^4+\beta^4=(\alpha^2+\beta^2)^2/2$ if and only if $\vert\alpha\vert=\vert \beta\vert$. Thus in general, semicircularity is not preserved; but there are special cases where it is.
\end{example}

\begin{theorem}\label{thm:7.5}
Consider unital $C^*$-algebras $\cD\subset\cB\subset\cA$ with conditional
expectations $E_\cB:\cA\to\cB$ and $E_\cD:\cA\to\cD$ which are compatible in the sense that $E_\cD\circ E_\cB=E_\cD$. Consider a $\cB$-valued semicircular element $S\in\cA$, with covariance $\eta:\cB\to\cB$ with $\eta(b)=E_\cB[SbS]$.
If $\eta(\cD)\subset \cD$, then $S$ is also a $\cD$-valued semicircular element, with covariance given by the restriction of $\eta$ to $\cD$.
\end{theorem}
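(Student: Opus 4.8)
The plan is to compare the $\cD$-valued Cauchy transform $G_S^\cD$ of $S$ (and all its matrix amplifications) with the $\cB$-valued one $G_S^\cB$, and to use the fixed-point characterization of operator-valued semicircular Cauchy transforms from Theorem \ref{thm:HRS} to identify $G_S^\cD$ with the Cauchy transform of a $\cD$-valued semicircular element of covariance $\eta|_\cD$. First I would record the routine reductions. Since $\cD\subset\cB$ are unital $C^*$-subalgebras of $\cA$ with the same unit, one has $H^+(M_n(\cD))\subset H^+(M_n(\cB))\subset H^+(M_n(\cA))$, so for $z\in H^+(M_n(\cD))$ the element $z-1\otimes S$ is invertible by Proposition \ref{prop:4.3}, and the compatibility $E_\cD=E_\cD\circ E_\cB$ amplifies to $\id\otimes E_\cD=(\id\otimes E_\cD)\circ(\id\otimes E_\cB)$; hence
$$G_S^{\cD,(n)}(z)=\id\otimes E_\cD\big[(z-1\otimes S)^{-1}\big]=\id\otimes E_\cD\big[G_S^{\cB,(n)}(z)\big],\qquad z\in H^+(M_n(\cD)).$$
I would also note that $\eta|_\cD$ is a positive (indeed completely positive) map $\cD\to\cD$: positivity of an element is intrinsic to a unital $C^*$-subalgebra, $\eta$ is positive, and $\eta(\cD)\subset\cD$ by hypothesis; likewise $\id\otimes\eta$ restricts to a positive map $M_n(\cD)\to M_n(\cD)$.

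The heart of the argument is to show that $G_S^{\cB,(n)}(z)\in M_n(\cD)$ for $z\in H^+(M_n(\cD))$. By Theorem \ref{thm:HRS} applied inside $M_n(\cD)$ with covariance $\id\otimes(\eta|_\cD)$, there is a unique $\tilde w\in H^-(M_n(\cD))$ with $z\tilde w=1+(\id\otimes\eta)(\tilde w)\,\tilde w$ (on $M_n(\cD)$ the maps $\id\otimes\eta$ and $\id\otimes(\eta|_\cD)$ agree). This $\tilde w$ lies in $M_n(\cD)\subset M_n(\cB)$ and in $H^-(M_n(\cD))\subset H^-(M_n(\cB))$, and it therefore also solves the equation \eqref{eq:semicirc} for $1\otimes S$ inside $M_n(\cB)$, where $1\otimes S$ is an $M_n(\cB)$-valued semicircular element with covariance $\id\otimes\eta$ by Remark \ref{rem:6.3}(1). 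But $G_S^{\cB,(n)}(z)$ is the unique solution of that equation in $H^-(M_n(\cB))$ (Theorem \ref{thm:HRS} again, for $M_n(\cB)$). Uniqueness forces $G_S^{\cB,(n)}(z)=\tilde w\in M_n(\cD)$. Plugging this back into the displayed identity gives $G_S^{\cD,(n)}(z)=\id\otimes E_\cD[\tilde w]=\tilde w$.

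It remains to conclude. We have shown that for every $n$ and every $z\in H^+(M_n(\cD))$, the value $G_S^{\cD,(n)}(z)$ is the unique $H^-(M_n(\cD))$-solution of the semicircular equation with covariance $\id\otimes(\eta|_\cD)$; by Theorem and Definition \ref{thm:6.2} together with Remark \ref{rem:6.3}(1), this is precisely the $n$-th amplification of the Cauchy transform of a $\cD$-valued semicircular element $\nu_{\eta|_\cD}$. Thus $G_S^\cD=G_{\nu_{\eta|_\cD}}$ as fully matricial functions, and since the operator-valued Cauchy transform determines the distribution through its expansion at infinity (Proposition \ref{prop:4.11}), we obtain $\mu_S^\cD=\nu_{\eta|_\cD}$; that is, $S$ is a $\cD$-valued semicircular element with covariance $\eta|_\cD$.

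The main obstacle is the middle step: recognizing that the $\cB$-valued Cauchy transform already takes values in $\cD$ on $H^+(M_n(\cD))$, which is exactly where the uniqueness from Theorem \ref{thm:HRS} (via Earle--Hamilton) is essential; everything else is bookkeeping. An alternative route avoiding Cauchy transforms is purely combinatorial: using the moment formula \eqref{eq:nueta} and the bimodule property of $E_\cD$ it suffices to treat moments with leading and trailing coefficient $1$, and then for $d_1,\dots,d_{k-1}\in\cD$ one has $E_\cD[Sd_1S\cdots d_{k-1}S]=E_\cD\big[E_\cB[Sd_1S\cdots d_{k-1}S]\big]=\sum_{\pi\in NC_2(k)}E_\cD[\eta_\pi(d_1,\dots,d_{k-1})]$; one checks by induction on the number of blocks that each $\eta_\pi(d_1,\dots,d_{k-1})$ lies in $\cD$ — it is built solely from $\eta$, which preserves $\cD$, and products of the $d_j$ — so $E_\cD$ acts trivially and the right-hand side equals $\sum_\pi(\eta|_\cD)_\pi(d_1,\dots,d_{k-1})$, the defining moment formula for $\nu_{\eta|_\cD}$. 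In this route the obstacle migrates to that induction on the structure of $\eta_\pi$.
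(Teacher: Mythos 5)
Your proof is correct and follows the same overall strategy as the paper: reduce to showing that $G_S^{\cB,(n)}(z)\in M_n(\cD)$ for $z\in H^+(M_n(\cD))$, and then descend the semicircular equation to $\cD$. Both proofs invoke Theorem \ref{thm:HRS} for the key step, but use different halves of what Earle--Hamilton gives: the paper runs the iteration $w_{k+1}=F_z(w_k)$ from a starting point $w_0\in H^-(\cD)$, noting that $F_z$ preserves $\cD$ since $\eta$ does, so the norm-limit $G(z)$ stays in the closed subalgebra $\cD$; you instead exhibit the unique solution $\tilde w$ of the fixed-point equation inside $H^-(M_n(\cD))$ and then identify it with $G_S^{\cB,(n)}(z)$ by uniqueness inside $H^-(M_n(\cB))$. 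These are interchangeable, though your uniqueness argument cleanly sidesteps the (automatic, but worth noticing) fact that $\cD$ is norm-closed in $\cB$, which the paper's limiting argument implicitly uses. You also carry the argument through all matrix amplifications explicitly, where the paper leaves this implicit. The combinatorial alternative you sketch at the end is sound and is essentially the moment-level shadow of the same observation — $\eta_\pi$ stays in $\cD$ when its arguments do and $\eta(\cD)\subset\cD$ — proved by induction on the nesting of blocks in $\pi$.
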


\begin{example}
Before we prove this let us reconsider Example \ref{ex:7.4}; there $\cD=\CC$,
$\cB=M_2(\CC)$, $E_\cD=\ff$, $E_\cB=\id\otimes \ff$, and $\eta:\cB\to\cB$
is given by
$$\eta\begin{pmatrix}
b_{11}&b_{12}\\ b_{21}& b_{22}
\end{pmatrix}=
\id\otimes\ff
\begin{pmatrix}
b_{11} \alpha^2S_1^2& b_{12}\alpha\beta S_1S_2\\
b_{21} \beta\alpha S_2S_1& b_{22}\beta^2 S_2^2
\end{pmatrix}=
\begin{pmatrix}
\alpha^2 b_{11}& 0\\ 0& \beta^2 b_{22}
\end{pmatrix}.$$
To check that $\eta(\cD)\subset\cD$ we just have to see that $\eta(1)\in\CC$;
but
$$\eta\begin{pmatrix} 1&0\\ 0&1 \end{pmatrix}
=\begin{pmatrix}
\alpha^2&0\\ 0& \beta^2
\end{pmatrix}\in \CC\cdot 1 \qquad\text{if and only if}\qquad \alpha^2=\beta^2.$$
One might note that $\eta$ maps always into diagonal matrices $\tilde\cD$, and thus in this case $S$ is always a $\tilde\cD$-valued semicircular.
\end{example}

For another example of the application of Theorem \ref{thm:7.5} see Exercise \ref{exercise:22}.

\begin{proof}[Proof of Theorem \ref{thm:7.5}]
We have the Cauchy transforms
$$G(b)=E_\cB[(b-S)^{-1}]\qquad\text{for $b\in H^+(\cB)$}$$
and
$$g(d)=E_\cD[(b-S)^{-1}]\qquad\text{for $d\in H^+(\cD)$.}$$
Note that $H^+(\cD)\subset H^+(\cB)$ and that
$$g(d)=E_\cD \underbrace{E_\cB[(d-S)^{-1}]}_{=G(d)}=E_\cD[G(d)].$$
The main claim is to see that
\begin{equation}\label{eq:GdD}
G(d)\in\cD\qquad\text{for all $d\in H^+(\cD)$;}
\end{equation}
then we have that $g(d)=G(d)$ for all $d\in H^+(\cD)$ and the equation
$$b G(b)=1+\eta(G(b))\cdot G(b)\qquad (b\in H^+(\cB))$$
gives for $b=d\in H^+(\cD)$:
$$dg(d)=1+\eta(g(d))\cdot g(d),$$
which shows that $g$ is the Cauchy transform of a $\cD$-valued semicircular element with covariance $\eta\vert_\cD$.

So it remains to prove \eqref{eq:GdD}.  We know, by Theorem \ref{thm:HRS}, that we get $G(d)\in H^-(\cB)$ as the limit of iterates $w_n=F_d^n(w_0)$ for arbitrary $w_0\in H^-(\cB)$, with $F_d(w):=(d-\eta(w))^{-1}$. Now note that since $\eta$ maps $\cD$ to $\cD$, the map $F_d$ also maps $\cD$ to $\cD$; hence if we choose $w_0\in H^-(\cD)\subset H^-(\cB)$  (as we are free to do), all iterates $w_n$, and thus also their limit $G(d)$, are in $\cD$.
\end{proof}

\section{Operator-valued semicirculars as limits of block random matrices} 

\begin{remark}
Note the relevance of this for random matrices. If $X_1^{(N)}, \dots, X_d^{(N)}$ are independent Gaussian $N\times N$ random matrices, then we know (see Chapter 6 of the \href{https://rolandspeicher.files.wordpress.com/2019/08/free-probability.pdf}{Free Probability Lecture Notes}) that for $N\to\infty$
$$(X_1^{(N)},\dots,X_d^{(N)}) \to (S_1,\dots,S_d)$$
in distribution. But this implies that for $N\to\infty$
$$b_1\otimes X_1^{(N)}+\cdots+ b_d\otimes X_d^{(N)}\to S=b_1\otimes S_1+\cdots + b_d\otimes S_d$$
in distribution with respect to $\tr_n\otimes\tr_N$ and $\tr_n\otimes \ff$, respectively. The matrices on the left side are $nN\times nN$ block random matrices, considered as scalar-valued random variables. Thus the scalar-valued distribution of $S$ gives us the asymptotic eigenvalue distribution of the block matrices. See Exercise \ref{exercise:20} for an example of this.
\end{remark}

\chapter{Polynomials in Free Semicirculars and Linearization}

Going over to matrices over a non-commutative algebra gives surprising flexibility in dealing with problems in the algebra. In particular, one can rewrite non-linear problems in the algebra into linear problems in the matrices. This linearization idea has tremenduous impact in our context; it allows to reduce the calculation of polynomials in free variables to the calculation of operator-valued free convolution. We follow here quite closely the presentation in
\cite{HMS,MSp}.

\section{The idea of linearization}
\begin{remark}
\begin{enumerate}
\item
In Proposition \ref{prop:7.2} we saw that we can deal with linear matrices
$$S=b_1\otimes S_1+\cdots + b_d\otimes S_d\qquad (b_1,\dots,b_d\in M_n(\CC))$$
in free semicirculars $S_1,\dots,S_d$. Note that we can also consider ``affine'' matrices by adding a constant $b_0\otimes 1$ $\hat=$ $b_0\in M_n(\CC)$, since this gives only a shift in the argument of the Cauchy transform:
$$G_{b_0+S}(b)=
\id\otimes\ff[(b-(b_0+S))^{-1}]=G_S(b-b_0).$$
Since we consider selfadjoint random variables we need $b_0=b_0^*$ and thus we have
$$\Im (b-b_0)=\Im b\in H^+(M_n(\CC)).$$
So we can calculate $G_S(b-b_0)$ (at least numerically) and from this also the scalar-valued Cauchy transform
$$g_{b_0+S}(z)=\tr [G_{b_0+S}(z\cdot 1)]=\tr [G_S(z\cdot 1-b_0)].$$
\item
In Corollary \ref{cor:5.15} we saw that also for arbitrary selfadjoint polynomials $p\in\CC\lb x_1,\dots, x_d\rb$ the distribution of this polynomial applied to our free semicirculars, $p(S_1,\dots,S_d)$, is uniquely determined; however, up to now it is not clear how to calculate this. We will now see that we can do this by relating this problem with a corresponding problem in affine matrices.
\end{enumerate}
\end{remark}

\begin{example}\label{ex:8.2}
Let us consider the example
$$p(x_1,x_2)=x_1x_2+x_2x_1+x_1^2,\qquad\text{i.e.,}\qquad
P:=p(S_1,S_2)=S_1S_2+S_2S_1+S_1^2.$$
Note that $P=P^*$.
The distribution of $P$ is given by its Cauchy transform
$G_P(z)=\ff[(z-P)^{-1}]$, for $z\in H^+(\CC)$. We lift the problem of calculating the inverse now from the ground level $\CC$ to matrices by finding there a factorization of $P$ into affine terms
$$-P=\begin{pmatrix}
S_1& \frac{S_1}2+S_2
\end{pmatrix}\cdot 
\begin{pmatrix}
0&-1\\ -1&0
\end{pmatrix}\cdot
\begin{pmatrix}
S_1\\ \frac{S_1}2+S_2
\end{pmatrix}.$$
Let us denote
$$U:=\begin{pmatrix}
S_1& \frac{S_1}2+S_2
\end{pmatrix},\quad
Q^{-1}:=\begin{pmatrix}
0&-1\\ -1&0
\end{pmatrix},\quad
V:=\begin{pmatrix}
S_1\\ \frac{S_1}2+S_2
\end{pmatrix},$$
then we have  $P=-UQ^{-1}V$, where $U,Q,V$ are affine in $S_1$ and $S_2$. This does not directly give a factorization for $P^{-1}$, since $U$ and $V$ are
not invertible, but we get a factorization of a lifted version of $z-P$ into invertible factors:
$$\begin{pmatrix}
z-P& 0\\ 0&-Q \end{pmatrix}=
\begin{pmatrix}
1& -UQ^{-1}\\ 0& 1\end{pmatrix}\cdot
\begin{pmatrix}
z& -U\\ -V& -Q \end{pmatrix}
\cdot
\begin{pmatrix}
1&0\\ -Q^{-1}V& 1
\end{pmatrix}.$$
Since the first and third term are always invertible
$$\begin{pmatrix}
1& A\\ 0&1\end{pmatrix}^{-1}=
\begin{pmatrix}
 1&-A\\ 0& 1\end{pmatrix},\qquad
\begin{pmatrix}
1& 0\\ B&1\end{pmatrix}^{-1}=
\begin{pmatrix}
 1&0\\ -B& 1\end{pmatrix},
$$
we have
$$\begin{pmatrix}
(z-P)^{-1}&0\\ 0& -Q^{-1}\end{pmatrix}=
\begin{pmatrix}
z-P&0\\0& -Q\end{pmatrix}^{-1}=
\begin{pmatrix}
1&0\\ Q^{-1}V& 1
\end{pmatrix}\cdot
\begin{pmatrix}
z& -U\\ -V& -Q
\end{pmatrix}^{-1}\cdot
\begin{pmatrix}
1& UQ^{-1}\\0&1\end{pmatrix}.$$
If we put
$$\hat P:=\begin{pmatrix}
0& U\\ V& Q \end{pmatrix},\qquad
\Lambda(z):=\begin{pmatrix}
z&0\\0&0
\end{pmatrix},$$
then we have
$$\begin{pmatrix}
(z-P)^{-1}&0\\ 0& -Q^{-1}\end{pmatrix}=
\begin{pmatrix}
[(\Lambda(z)-\hat P)^{-1}]_{1,1}& *\\ *&*
\end{pmatrix}$$
(where $[A]_{1,1}$ denotes the $(1,1)$-entry of the $3\times 3$-matrix $A$),
i.e.,
$$(z-P)^{-1}=[(\Lambda(z)-\hat P)^{-1}]_{1,1},$$
and thus
$$G_P(z)=\ff[(z-P)^{-1}]=\ff\bigl\{[(\Lambda(z)-\hat P)^{-1}]_{1,1}\bigr\}
=\bigl\{\underbrace{\id\otimes \ff[(\Lambda(z)-\hat P)^{-1}]}_{G_{\hat P}(\Lambda(z))}\bigr\}_{1,1}.
$$
Note that
$$\hat P=
\begin{pmatrix}
0&S_1& \frac {S_1}2+S_2\\
S_1&0& -1\\
\frac{S_1}2+S_2&-1&0
\end{pmatrix}
$$
is an affine matrix in free semicirculars, thus an $M_3(\CC)$-valued semicircular shifted by a constant, for which we can calculate its $M_3(\CC)$-valued Cauchy transform $G_{\hat P}(b)$. Note also that 
$$\Lambda(z)=\begin{pmatrix}
z&0&0\\ 0&0&0\\ 0&0&0 \end{pmatrix}$$
is not in $H^+(M_3(\CC))$, so our theory from Chapter \ref{chapter:6} for solving for $G_{\hat P}(\Lambda(z))$ does not apply directly. But since, by the above calculation, $\Lambda(z)-\hat P$ is invertible, the function $G_{\hat P}$ is holomorphic, hence continuous, in a neighborhood of $b=\Lambda(z)$ and thus we have
$$G_{\hat P}(\Lambda(z))=\lim_{\ee\searrow 0} G_{\hat P}(\Lambda_\ee(z)),\quad
\text{where}\quad
\Lambda_\ee(z):=\underbrace{\begin{pmatrix}
z&0&0\\ 0& i\ee& 0\\ 0&0& i\ee
\end{pmatrix}}_{\substack{\in H^+(M_3(\CC))\\ \text{for all $\ee>0$}}}
\overset{\ee\searrow 0}\longrightarrow \Lambda(z).$$
\end{example}

\section{Rigorous theory of linearization}

\begin{remark}
The main ingredient in the above calculation was that we can factorize our polynomial as $P=-UQ^{-1}V$, with affine $U,Q,V$. This works for all
polynomials and is actually independent from having semicircular elements as variables. Let us now do the general case on the level of formal variables,
$\CC\lb x_1,\dots,x_d\rb$.
\end{remark}

\begin{definition}
Let $p\in\CC\lb x_1,\dots,x_d\rb$ be given. A matrix
$$\hat p=\begin{pmatrix}
0& u\\ v& q
\end{pmatrix}\in M_n(\CC\lb x_1,\dots,x_d\rb),$$
where
\begin{itemize}
\item
$n\in\NN$,
\item
$q\in M_{n-1}(\CC\lb x_1,\dots,x_d\rb)$ is invertible as a matrix over polynomials,
\item
$u\in M_{1,n}(\CC\lb x_1,\dots,x_d\rb)$ is a row and $v\in M_{n,1}(\CC\lb x_1,\dots,x_d\rb)$ is a column,
\end{itemize}
is called a \emph{linearization} of $p$, if the following two conditions are satisfied: 
\begin{enumerate}
\item[(i)]
$\hat p$ is an affine matrix in $x_1,\dots,x_d$, i.e., there are $b_0,b_1,\dots,
b_d\in M_n(\CC)$ such that
$\hat p=b_0\otimes 1+b_1\otimes x_1+\cdots + b_d\otimes x_d$;
\item[(ii)]
$p=-uq^{-1}v$.
\end{enumerate}

\end{definition}

\begin{theorem}\label{thm:8.5}
For any polynomial $p\in \CC\lb x_1,\dots,x_d\rb$ there exists a linearization. If $p$ is selfadjoint, then there is also a selfadjoint linearization.
\end{theorem}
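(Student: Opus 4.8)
The plan is to give an explicit, constructive proof by induction on the structure of the polynomial $p$, producing the linearization by combining smaller linearizations. First I would reduce to the case where $p$ has no constant term: if $p = \alpha_0 + p'$ with $p'$ having zero constant term, a linearization of $p'$ can be turned into one of $p$ by absorbing $\alpha_0$ into $q$ (enlarging $n$ by one, putting $-\alpha_0$ in a new diagonal slot of $q$ and matching entries in $u$ and $v$), so WLOG $p$ is a sum of nonconstant monomials. Then the two elementary building blocks are: (a) linearizing a single monomial $x_{i_1}x_{i_2}\cdots x_{i_k}$, and (b) combining linearizations of $p_1$ and $p_2$ into a linearization of $p_1 + p_2$.

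For a monomial $m = x_{i_1}\cdots x_{i_k}$ of degree $k\ge 1$, I would take the standard companion-type matrix of size $k\times k$:
\[
\hat m = \begin{pmatrix}
0 & x_{i_1} & 0 & \cdots & 0\\
0 & -1 & x_{i_2} & \cdots & 0\\
\vdots & & \ddots & \ddots & \vdots\\
0 & 0 & \cdots & -1 & x_{i_{k-1}}\\
x_{i_k} & 0 & \cdots & 0 & -1
\end{pmatrix},
\]
which is manifestly affine in the $x_j$, with $q$ the lower-right $(k-1)\times(k-1)$ block (upper-triangular with $-1$'s on the diagonal, hence invertible over the polynomial ring), and one checks by back-substitution, exactly as in Example \ref{ex:8.2}, that $-uq^{-1}v = x_{i_1}\cdots x_{i_k}$. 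For a sum $p = \sum_{j=1}^r m_j$ with linearizations $\hat m_j = \begin{pmatrix} 0 & u_j \\ v_j & q_j\end{pmatrix}$, I would form the block matrix $\hat p = \begin{pmatrix} 0 & u_1 & u_2 & \cdots & u_r \\ v_1 & q_1 & & & \\ v_2 & & q_2 & & \\ \vdots & & & \ddots & \\ v_r & & & & q_r \end{pmatrix}$, with $q = q_1 \oplus \cdots \oplus q_r$ (still affine, still invertible since block-diagonal with invertible blocks), and verify $-uq^{-1}v = \sum_j (-u_j q_j^{-1} v_j) = \sum_j m_j = p$. Iterating these two steps over all monomials of $p$ yields the desired linearization; affineness and invertibility of $q$ are preserved at each stage by construction.

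For the selfadjoint refinement, suppose $p = p^*$. I would symmetrize: given any linearization $\hat p = \begin{pmatrix} 0 & u \\ v & q\end{pmatrix}$ of $p$ (not necessarily selfadjoint), consider the $2\times 2$-block enlargement that also encodes $p^*$, e.g. work with $\begin{pmatrix} 0 & u & v^* \\ v & q & 0 \\ u^* & 0 & q^* \end{pmatrix}$ and then conjugate/row-reduce so that the Schur complement onto the first coordinate is $-uq^{-1}v - u^*(q^*)^{-1}v^* = p + p^* = 2p$; rescaling gives $p$. The point is that this combined matrix is visibly selfadjoint (its constant part $b_0$ and each coefficient matrix $b_j$ are Hermitian by inspection), still affine, and its lower-right block $q\oplus q^*$ is invertible. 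Alternatively, and more cleanly, one observes that the monomial block $\hat m$ above, together with the block for $m^*$ read right-to-left, can be merged into a single Hermitian affine matrix whose relevant Schur complement is $m + m^*$; since a selfadjoint $p$ is a real-linear combination of terms $m + m^*$ (and $i(m - m^*)$, handled similarly), assembling these as in step (b) produces a selfadjoint linearization of $p$.

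**Main obstacle.** The genuinely delicate point is not the algebra of Schur complements — that is routine — but keeping the bookkeeping honest in the selfadjoint case: one must make sure that after all the block assembly and the symmetrization trick, the matrix is \emph{simultaneously} (i) globally selfadjoint as an element of $M_n(\CC\lb x_1,\dots,x_d\rb)$, (ii) affine with all coefficient matrices $b_j \in M_n(\CC)$ Hermitian, and (iii) equipped with an invertible (over the polynomial ring) lower-right corner $q$, all while the Schur complement reproduces exactly $p$ and not a scalar multiple or a shifted version. I expect the cleanest route is to first establish the non-selfadjoint case fully, then apply a single uniform symmetrization/normalization step, checking carefully that invertibility of $q$ is not destroyed (it is not, since a direct sum of invertible matrices is invertible) and that the affine-Hermitian structure is visible entrywise.
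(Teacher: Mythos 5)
Your overall strategy matches the paper's: linearize monomials explicitly, assemble a linearization of a sum by block-direct-summing, and then symmetrize for the selfadjoint case. The $k\times k$ bidiagonal monomial matrix you give works for degree $k\ge 2$ (the paper uses an equivalent anti-diagonal arrangement), and your block-sum construction is exactly the paper's.

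However, there is a genuine error in the symmetrization step. You write that
\[
\begin{pmatrix} 0 & u & v^* \\ v & q & 0 \\ u^* & 0 & q^* \end{pmatrix}
\]
is ``visibly selfadjoint,'' but it is not: its adjoint is
\[
\begin{pmatrix} 0 & v^* & u \\ u^* & q^* & 0 \\ v & 0 & q \end{pmatrix},
\]
which is your matrix with the second and third block rows and columns swapped. For a matrix to be selfadjoint you need $(\hat p)_{2,1} = (\hat p)_{1,2}^*$, i.e.\ the block directly below $u$ must be $u^*$, not $v$; in general $u^* \neq v$. The correct arrangement, which is what the paper's proof uses, puts $q$ and $q^*$ \emph{off-diagonal} inside the lower-right corner:
\[
\hat p \;=\; \frac{1}{2}\begin{pmatrix} 0 & u & v^* \\ u^* & 0 & q^* \\ v & q & 0 \end{pmatrix}.
\]
Here selfadjointness is immediate entrywise, the lower-right $2\times 2$-block $\frac{1}{2}\begin{pmatrix} 0 & q^* \\ q & 0\end{pmatrix}$ is invertible over the polynomial ring (with inverse $2\begin{pmatrix} 0 & q^{-1} \\ (q^*)^{-1} & 0\end{pmatrix}$), and the Schur complement at the $(1,1)$ corner equals $\frac 12\bigl(-uq^{-1}v - v^*(q^*)^{-1}u^*\bigr) = \frac 12(p+p^*) = p$, so no ad-hoc rescaling is needed. (Your displayed Schur complement also has a small slip: $(-uq^{-1}v)^* = -v^*(q^*)^{-1}u^*$, not $-u^*(q^*)^{-1}v^*$.)

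A minor second point: your $k\times k$ companion formula degenerates for $k=1$, since there is then no $q$-block at all, so the degree-$0$ and degree-$1$ cases need to be handled separately (the paper uses $2\times 2$ matrices $\begin{pmatrix} 0 & \alpha \\ 1 & -1\end{pmatrix}$ and $\begin{pmatrix} 0 & \alpha x_i \\ 1 & -1\end{pmatrix}$). Once you incorporate those two base cases and correct the block arrangement in the symmetrization, the proof is complete and coincides with the paper's.
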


\begin{remark}
Such linearizations are not unique; it is interesting to find minimal ones, where the matrix size $n$ is as small as possible. Our algorithm in the following proof will not produce minimal realizations in general.
\end{remark}

\begin{proof}[Proof of Theorem \ref{thm:8.5}]
\begin{enumerate}
\item
For monomials we have linearizations:
\begin{enumerate}
\item
for degree 0, $p=\alpha$ ($\alpha\in\CC$)
$$\hat p=\begin{pmatrix}
0&\alpha\\
1&-1\end{pmatrix}\in M_2(\CC);\qquad \text{as\quad $\alpha=-\alpha\cdot (-1)^{-1}\cdot 1$;}$$
\item
for degree 1, $p=\alpha x_i$
$$\hat p=\begin{pmatrix}
0& \alpha x_i\\ 1&-1\end{pmatrix}\in M_2(\CC);\qquad\text{as\quad $\alpha x_i=
-\alpha x_i\cdot (-1)^{-1}\cdot 1$;}$$
\item
for degree $k\geq 2$, $p=\alpha x_{i_1}\cdots x_{i_k}$
$$\hat p=
\begin{pmatrix}
0& 0&\dots&0&0& \alpha x_{i_1}\\
0&0& \dots&0& x_{i_2}& -1\\
0&0&\dots &x_{i_3}&-1&0\\
\vdots&\vdots&\udots&\udots&\vdots&\vdots\\
0& x_{i_{k-1}}&\udots&0&0&0\\
x_{i_k}&-1&\dots&0&0&0
\end{pmatrix};$$
to be concrete, consider for example $k=3$:
$$\begin{pmatrix}
0& 0& \alpha x_{i_1}\\
0& x_{i_2}& -1\\
x_{i_3}& -1&0
\end{pmatrix}$$
note that matrices corresponding to $q$ are always invertible
$$\begin{pmatrix}
x_{i_2}& -1\\ -1&0 \end{pmatrix}^{-1}
=\begin{pmatrix} 0&-1\\ -1& - x_{i_2}
\end{pmatrix}$$
and we have
$$- \begin{pmatrix}
0&\alpha x_{i_1}\end{pmatrix}\cdot
\begin{pmatrix} 0&-1\\ -1& - x_{i_2}
\end{pmatrix}\cdot
\begin{pmatrix}
0\\ x_{i_3}\end{pmatrix}=
\alpha x_{i_1}x_{i_2} x_{i_3}.$$
\end{enumerate}
\item
If we have linearizations
$\begin{pmatrix} 
0& u_i\\ v_i& q_i
\end{pmatrix}$
for polynomials $p_i$ ($i=1,\dots, r$), then their sum $p_1+\dots + p_r$ has a
linearization
$$\begin{pmatrix}
0&u_1& u_2&\dots& u_r\\
v_1&q_1&0&\dots&0\\
v_2&0&q_2&\dots&0\\
\vdots&\vdots&\vdots&\ddots&\vdots\\
v_r&0&0&\dots&q_r
\end{pmatrix}.$$
Thus we can build linearizations for any polynomial out of linearizations for its monomials.
\item
If $p$ has a linearization 
$\begin{pmatrix} 
0& u\\ v& q
\end{pmatrix}$,
then $p^*$ has a linearization
$\begin{pmatrix} 
0& v^*\\ u^*& q^*
\end{pmatrix}$.
If $p=p^*$ we want to take a linearization of $p=\frac 12
(p+p^*)$. The construction in (2) however does not give a selfadjoint $\hat p$. 
Instead we take
$$\frac 12\begin{pmatrix}
0& u& v^*\\
u^*& 0& q^*\\
v&q&0
\end{pmatrix}.$$
\end{enumerate}
\end{proof}

\section{Linearization and Cauchy transforms}

\begin{remark}
The linearization and the calculations for expressing the Cauchy transform of
$P$ in terms of the Cauchy transform of $\hat P$ are independent of the concrete nature of our random variables, neither freeness nor being semicircular is important. Thus we have the following.
\end{remark}

\begin{theorem}\label{thm:8.8}
Let $(\cA,\ff)$ be a $C^*$-probability space and consider selfadjoint $X_1$,\dots ,$X_d$ $\in\cA$. For a selfadjoint polynomial $p\in\CC\lb x_1,\dots,x_d\rb$ 
let 
$$\hat p=b_0\otimes 1+b_1\otimes x_1+\dots+ b_d\otimes x_d,\qquad
\text{with $b_0,b_1,\dots,b_d\in M_n(\CC)$,}$$ 
be a selfadjoint linearization of $p$. Put
$P:=p(X_1,\dots,X_d)\in\cA$ and
$$\hat P=\hat p(X_1,\dots,X_d)=b_0\otimes 1+ b_1\otimes X_1+\dots b_d\otimes X_d\in M_n(\cA).$$ Then we have for $z\in H^+(\CC)$
$$G_P(z)=[G_{\hat P}(\Lambda(z))]_{1,1}=\lim_{\ee\searrow 0}[G_{\hat P}(\Lambda_\ee))]_{1,1}$$
with
$$\Lambda(z)=
\begin{pmatrix}
z&0&\dots&0\\0&0&\dots&0\\
\vdots&\vdots&\ddots&\vdots\\
0&0&\dots&0
\end{pmatrix}\qquad\text{and}\qquad
\begin{pmatrix}
z&0&\dots&0\\0&i\ee&\dots&0\\
\vdots&\vdots&\ddots&\vdots\\
0&0&\dots&i\ee
\end{pmatrix}\in H^+(M_n(\CC)).
$$
\end{theorem}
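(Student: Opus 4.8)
The plan is to follow the blueprint laid out in Example \ref{ex:8.2} but carry it out for a general selfadjoint linearization rather than the specific cubic polynomial considered there. The heart of the matter is a purely algebraic Schur-complement identity: writing $\hat p = \begin{pmatrix} 0 & u \\ v & q \end{pmatrix}$ with $q$ invertible over $\CC\lb x_1,\dots,x_d\rb$ and $p = -uq^{-1}v$, one has the factorization
\begin{equation*}
\begin{pmatrix} z-P & 0 \\ 0 & -Q \end{pmatrix} = \begin{pmatrix} 1 & -U Q^{-1} \\ 0 & 1 \end{pmatrix} \begin{pmatrix} z\cdot e_{11} - \hat P + \Lambda(z)\text{-bookkeeping} \end{pmatrix} \cdots
\end{equation*}
— more precisely, $\Lambda(z) - \hat P = \begin{pmatrix} z & -U \\ -V & -Q \end{pmatrix}$ factors through upper/lower unitriangular matrices (which are always invertible, with the explicit inverses displayed in the example) times $\operatorname{diag}(z-P, -Q)$. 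Evaluating the polynomial identities at $X_1,\dots,X_d$ via the $*$-homomorphism of Definition \ref{def:1-9} transports everything into $M_n(\cA)$, and taking the $(1,1)$-entry of the inverse yields $(z-P)^{-1} = [(\Lambda(z)-\hat P)^{-1}]_{1,1}$ whenever $z \in H^+(\CC)$, since then $z - P$ is invertible (as $P=P^*$ and $\Im z > 0$), forcing $\Lambda(z) - \hat P$ to be invertible too.

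First I would record the algebraic linearization identity at the level of $\CC\lb x_1,\dots,x_d\rb$ (or rather $M_n$ thereof): from $p = -uq^{-1}v$ one gets, by direct multiplication of the three block matrices, that $\Lambda(x_0) - \hat p$ — with a formal scalar $x_0$ in the top-left corner — has Schur complement $x_0 - p$ with respect to the block $-q$. Then I would apply the evaluation homomorphism $x_i \mapsto X_i$ (noting $q(X_1,\dots,X_d)$ stays invertible because $q^{-1}$ is itself a matrix of polynomials, so its image is the genuine inverse of $Q$), and substitute $x_0 \mapsto z$. This gives the matrix identity in $M_n(\cA)$ and hence $(z-P)^{-1} = [(\Lambda(z)-\hat P)^{-1}]_{1,1}$. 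Applying the conditional expectation $\id\otimes\ff : M_n(\cA) \to M_n(\CC)$ and reading off the $(1,1)$-entry gives $G_P(z) = [G_{\hat P}(\Lambda(z))]_{1,1}$, because the $(1,1)$-entry commutes with $\id\otimes\ff$.

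The remaining point is the limit $G_{\hat P}(\Lambda(z)) = \lim_{\ee\searrow 0} G_{\hat P}(\Lambda_\ee(z))$. Here I would argue exactly as in Example \ref{ex:8.2}: $\Lambda(z) - \hat P$ is invertible in $M_n(\cA)$ for $z \in H^+(\CC)$ (by the Schur identity just established, since $z - P$ and $q(X)$ are invertible), so $b \mapsto \id\otimes\ff[(b - \hat P)^{-1}]$ is defined and holomorphic — in particular norm-continuous — on a neighborhood of $b = \Lambda(z)$ in $M_n(\cA)$; and $\Lambda_\ee(z) \to \Lambda(z)$ in norm as $\ee\searrow 0$ with $\Lambda_\ee(z) \in H^+(M_n(\CC))$, where $G_{\hat P}$ agrees with the Cauchy transform of the (shifted) matrix-valued semicircular/self-adjoint element from Theorem \ref{thm:4.5}. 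Continuity then transfers the limit to $G_{\hat P}(\Lambda(z))$, and the $(1,1)$-entry passes through the limit since matrix-entry extraction is continuous.

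I do not expect a serious obstacle: every ingredient is either a finite block-matrix manipulation or an invocation of results already proved (Theorem \ref{thm:4.5} for fully matriciality and the norm bound, Example \ref{ex:3.13}(2) for continuity of inversion, Theorem \ref{thm:8.5} for existence of the selfadjoint linearization). The only place demanding a little care is verifying that the formal inverse $q^{-1} \in M_{n-1}(\CC\lb x_1,\dots,x_d\rb)$ really evaluates to the operator inverse of $Q$ — this is automatic because evaluation is a unital algebra homomorphism, so it sends the identity $q q^{-1} = 1$ to $Q\, q^{-1}(X) = 1$ — and that the $3\times 3$ (resp.\ $2\times 2$) block unitriangular conjugators are genuinely invertible in $M_n(\cA)$, which is immediate from the explicit formulas $\begin{pmatrix} 1 & A \\ 0 & 1 \end{pmatrix}^{-1} = \begin{pmatrix} 1 & -A \\ 0 & 1 \end{pmatrix}$. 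So the proof is essentially "run Example \ref{ex:8.2} verbatim with $U,V,Q$ read off from the given linearization, replacing the ad hoc factorization of $-P$ by the defining relation $p = -uq^{-1}v$."
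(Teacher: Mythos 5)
Your proof is correct and follows essentially the same route as the paper: the paper states Theorem~\ref{thm:8.8} without a separate proof, pointing instead to Remark~8.7 and the calculation in Example~\ref{ex:8.2}, and your argument is precisely that calculation carried out for a general selfadjoint linearization, with the block factorization recognized as an instance of the Schur complement identity (which the paper itself makes explicit only later, in Theorem~\ref{thm:Schur} and Theorem~\ref{thm:11.5}). The ingredients you invoke — evaluation being a unital $*$-homomorphism so that $Q$ stays invertible, invertibility of $z-P$ for $z\in H^+(\CC)$ forcing invertibility of $\Lambda(z)-\hat P$, commutation of entry-extraction with $\id\otimes\ff$, and norm-continuity of $G_{\hat P}$ near $\Lambda(z)$ to handle the $\ee\searrow 0$ limit — match the paper's reasoning.
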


\begin{remark}
Thus, in order to deal with polynomials of variables, we need to understand
linear matrices in the variables. If the variables are free semicirculars we understand linear matrices in them, as they are operator-valued semicirculars. But how about general free variables: If $X_1,\dots,X_d$ are free, what can we say about $X=b_0\otimes 1+b_1\otimes X_1+\cdots +b_d\otimes X_d$. 
Note
\begin{itemize}
\item[(i)]
$b_0\otimes 1$ is just a shift of the argument in $G_X$, thus easy to deal with;
\item[(ii)]
the operator-valued Cauchy transform of $b_i\otimes X_i$ is determined (theoretically and numerically) in terms of the Cauchy transform of $X_i$;
\item[(iii)]
if $X_1,\dots,X_d$ are free in $(\cA,\ff)$, then $b_1\otimes X_1,\dots,
b_d\otimes X_d$ are, by  Proposition \ref{prop:5.7}, free in $(M_n(\cA),M_n(\CC),\id\otimes \ff)$; hence we have to understand how to deal with sums of free variables on an operator-valued level -- i.e., we need to have a closer look on how to describe operator-valued free convolution.

\end{itemize}

\end{remark}

\chapter{Combinatorial and Analytic Description of Operator-Valued Freeness: Free Cumulants and $R$-Transforms}

Up to now we looked on moments and Cauchy transforms. As in the scalar-valued case it is advantegeous to go over to cumulants and $R$-transforms. Much of the theory is the same, modulo ``respecting the nesting'', as in the scalar-valued case, see Chapters 3 and 4 of \href{https://rolandspeicher.files.wordpress.com/2019/08/free-probability.pdf}{Free Probability Lecture Notes}. We we are not going to give proofs of the operator-valued statements, but we urge the reader (for example, in Exercise \ref{exercise:24}) to check that the scalar-valued arguments are not affected by the requirement that we now have to respect the nesting.

\section{Operator-valued free cumulants}

\begin{definition}
Let $(\cA,\cB,E)$ be an operator-valued probability space. We denote by $E_n$, for $n\in\NN$, the $\cB$-balanced map
$$E_n:\cA^n\to\cB;\qquad (a_1,\dots,a_n)\mapsto E_n(a_1,a_2,\dots,a_n):=
E[a_1a_2\cdots a_n],$$
and by $E_\pi$, for all $n\in\NN$, $\pi\in NC(n)$, the corresponding multiplicative map $E_\pi:\cA^n\to \cB$, for $\pi\in NC(n)$; see Definition
\ref{def:5.10}. Then we define the corresponding \emph{(operator-valued) free cumulants} $\kk_n:\cA^n\to \cB$ by 
\begin{equation}\label{eq:def-ocumulants}
\kk_n(a_1,\dots,a_n):=
\sum_{\pi\in NC(n)} \mu(\pi,1) E_\pi(a_1,\dots,a_n),
\end{equation}
where $\mu$ is the M\"obius function of $NC(n)$.
\end{definition}

\begin{remark}
The $\kk_n$ are also $\cB$-balanced and with their multiplicative extension the Equation \eqref{eq:def-ocumulants} is equivalent to
$$E[a_1\cdots a_n]=E_n(a_1,\dots,a_n)=\sum_{\pi\in NC(n)} \kk_\pi(a_1,\dots,a_n).$$
\end{remark}

\begin{example}
\begin{enumerate}
\item
For $n=1$ we have
$$E[a_1]=\kk_{\,
\begin{tikzpicture}[baseline=0pt]
    \begin{scope}[yshift=0.2em,xshift=1.4em, scale=0.8]
 \draw (0em,-0em) --  (0em,-.5em) ;
    \end{scope}
  \end{tikzpicture} 
}(a_1)=\kk_1(a_1).$$
\item
For $n=2$ we have
\begin{align*}
E[a_1a_2]&=\kk_{
\begin{tikzpicture}[baseline=0pt]
    \begin{scope}[yshift=0.2em,xshift=0.2em, scale=0.8]
 \draw (0em,-0em) --  (0em,-.5em) -- (.5em,-.5em) -- (.5em,-0em);
    \end{scope}
  \end{tikzpicture} 
}(a_1,a_2)+\kk_{
\begin{tikzpicture}[baseline=0pt]
    \begin{scope}[yshift=0.2em,xshift=0.2em, scale=0.8]
 \draw (0em,-0em) --  (0em,-.5em) ;
\draw  (.5em,-.5em) -- (.5em,-0em);
    \end{scope}
  \end{tikzpicture} 
}(a_1,a_2)\\
&=\kk_2(a_1,a_2)+\kk_1(a_1)\kk_1(a_2)\\
&=\kk_2(a_1,a_2)+E[a_1]E[a_2],
\end{align*}
and thus
$$\kk_2(a_1,a_2)=\underbrace{E[a_1a_2]}_{E_{
\begin{tikzpicture}[baseline=0pt]
    \begin{scope}[yshift=0.2em,xshift=0.2em, scale=0.8]
 \draw (0em,-0em) --  (0em,-.5em) -- (.5em,-.5em) -- (.5em,-0em);
    \end{scope}
  \end{tikzpicture} 
}(a_1,a_2)}-\underbrace{E[a_1]E[a_2]}_{E_{
\begin{tikzpicture}[baseline=0pt]
    \begin{scope}[yshift=0.2em,xshift=0.2em, scale=0.8]
 \draw (0em,-0em) --  (0em,-.5em) ;
\draw  (.5em,-.5em) -- (.5em,-0em);
    \end{scope}
  \end{tikzpicture} 
}(a_1,a_2)}.$$
\item
For $n=3$ we have
\begin{multline*}
E[a_1a_2a_3]=\kk_{\,
\begin{tikzpicture}[baseline=0pt]
    \begin{scope}[yshift=0.2em,xshift=0.2em, scale=0.8]
 \draw (0em,-0em) --  (0em,-.5em) -- (.5em,-.5em) -- (.5em,-0em);
\draw (.5em,-.5em) -- (1em,.-.5em) -- (1em,0em);
    \end{scope}
  \end{tikzpicture} 
}(a_1,a_2,a_3) +
\kk_{\,
\begin{tikzpicture}[baseline=0pt]
    \begin{scope}[yshift=0.2em,xshift=0.2em, scale=0.8]
 \draw (0em,-0em) --  (0em,-.5em) -- (.5em,-.5em) -- (.5em,-0em);
\draw (1em,.-.5em) -- (1em,0em);
    \end{scope}
  \end{tikzpicture} 
}(a_1,a_2,a_3)\\
+\kk_{\,
\begin{tikzpicture}[baseline=0pt]
    \begin{scope}[yshift=0.2em,xshift=0.2em, scale=0.8]
 \draw (0em,-0em) --  (0em,-.5em);
\draw (.5em,-.5em) -- (.5em,-0em);
\draw (.5em,-.5em) -- (1em,.-.5em) -- (1em,0em);
    \end{scope}
  \end{tikzpicture} 
}(a_1,a_2,a_3)+\kk_{\,
\begin{tikzpicture}[baseline=0pt]
    \begin{scope}[yshift=0.2em,xshift=0.2em, scale=0.8]
 \draw (0em,-0em) --  (0em,-.5em) -- (.5em,-.5em);
\draw (.5em,-.3em) -- (.5em,-0em);
\draw (.5em,-.5em) -- (1em,.-.5em) -- (1em,0em);
    \end{scope}
  \end{tikzpicture} 
}(a_1,a_2,a_3)
+\kk_{\,
\begin{tikzpicture}[baseline=0pt]
    \begin{scope}[yshift=0.2em,xshift=0.2em, scale=0.8]
 \draw (0em,-0em) --  (0em,-.5em);
\draw (.5em,-.5em) -- (.5em,-0em);
\draw (1em,.-.5em) -- (1em,0em);
    \end{scope}
  \end{tikzpicture} 
}(a_1,a_2,a_3).
\end{multline*}
The interesting term is here
\begin{align*}
\kk_{\,
\begin{tikzpicture}[baseline=0pt]
    \begin{scope}[yshift=0.2em,xshift=0.2em, scale=0.8]
 \draw (0em,-0em) --  (0em,-.5em) -- (.5em,-.5em);
\draw (.5em,-.3em) -- (.5em,-0em);
\draw (.5em,-.5em) -- (1em,.-.5em) -- (1em,0em);
    \end{scope}
  \end{tikzpicture} 
}(a_1,a_2,a_3)&=\kk_2(a_1\kk_1(a_2),a_3)\\[0.5em]
&=E\bigl[a_1 E[a_2] a_3\bigr]-E\bigl[a_1 E[a_2]\bigr]\cdot E[a_3]\\[0.5em]
&=E\bigl[a_1 E[a_2] a_3\bigr]-E[a_1]\cdot E[a_2]\cdot E[a_3]
.\end{align*}
\end{enumerate}
This leads in the end to
\begin{multline*}
\kk_3(a_1,a_2,a_3)=E[a_1a_2a_3]-E[a_1]\cdot E[a_2a_3]
-E[a_1a_2]\cdot E[a_3]\\[0.5em]
-E\bigl[a_1 E[a_2] a_3\bigr]+2E[a_1]\cdot E[a_2]\cdot E[a_3].
\end{multline*}
\end{example}

As in the scalar-valued case (compare 3.23 and 3.24 of \href{https://rolandspeicher.files.wordpress.com/2019/08/free-probability.pdf}{Free Probability Lecture Notes}) one proves the
following characterization of freeness.

\begin{theorem}[freeness $\hat =$ vanishing of mixed cumulants]
Let $(\cA,\cB,E)$ be a $\cB$-valued probability space and $(\kk_n)_{n\in\NN}$ the corresponding free cumulants.
\begin{enumerate}
\item
Consider subalgebras $\cB\subset\cA_i\subset \cA$  for $i\in I$. Then the following are equivalent.
\begin{enumerate}
\item
The subalgebras $\cA_i$, $i\in I$, are free with respect to $E$.
\item
Mixed cumulants in the subalgebras vanish, i.e., 
$\kk_n(a_1,\dots,a_n)=0$ whenever:
$n\geq 2$; $a_j\in\cA_{i_j}$ for $j=1,\dots,n$; and there exist $1\leq k,l\leq n$ such that $i_k\not=i_l$.
\end{enumerate}
\item
Consider random variables $X_i\in\cA$  for $i\in I$. Then the following are
equivalent.
\begin{enumerate}
\item
The random variables $X_i$, $i\in I$, are free with respect to $E$.
\item
Mixed cumulants in the random variables vanish, i.e.,
$$\kk_n(X_{i_1}b_1,X_{i_2}b_2,\dots,X_{i_{n-1}}b_{n-1},X_{i_n})=0$$ 
whenever:
$n\geq 2$;
$i_1,\dots,i_n\in I$;
there exist $1\leq k,l\leq n$ such that $i_k\not= i_l$;
and $b_1,\dots,b_{n-1}\in\cB$.
\end{enumerate}
\end{enumerate}
\end{theorem}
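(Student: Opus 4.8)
The plan is to mimic the scalar-valued argument (Chapters 3–4 of the Free Probability Lecture Notes) but carefully tracking that the multiplicative extension $\kk_\pi$ and $E_\pi$ respect the nesting of blocks, as set up in Definition \ref{def:5.10}. First I would observe that part (2) follows from part (1) by taking $\cA_i:=\cB\lb X_i\rb$: the generators of these subalgebras are of the form $X_i b$ (and $b\in\cB$ alone, but cumulants $\kk_n$ with an entry from $\cB$ in a non-singleton position are handled by the $\cB$-balancedness, which reduces length), so vanishing of all $\kk_n$ with neighbouring — actually with \emph{any} — indices differing is equivalent on the algebra level to the statement for the $X_i$. So the work is entirely in part (1).

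For (1), the direction (b) $\Rightarrow$ (a) is the easy one: using the moment-cumulant formula $E[a_1\cdots a_n]=\sum_{\pi\in NC(n)}\kk_\pi(a_1,\dots,a_n)$, take alternating centred $a_j\in\cA_{i_j}$ with $i_1\neq i_2\neq\cdots\neq i_n$ and $E[a_j]=0$. Every non-singleton block of every $\pi\in NC(n)$ must, by the non-crossing structure, contain two neighbouring elements of the \emph{original} word once inner blocks are stripped — hence it mixes indices, so by hypothesis the corresponding cumulant factor vanishes; and every singleton block gives $\kk_1(a_j)=E[a_j]=0$. Thus $E[a_1\cdots a_n]=0$, which is freeness. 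Here one just has to be slightly careful that $\kk_\pi$ is evaluated with the nested insertions, but the vanishing of any mixing cumulant still kills the whole product because $\kk_n$ sits inside $\kk_\sigma$ as one of its arguments (after absorbing $\cB$-elements), and a zero argument propagates.

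The harder direction is (a) $\Rightarrow$ (b). I would proceed by a double induction on $n$ and on the number of "blocks of constancy" of the index tuple $(i_1,\dots,i_n)$, exactly as in the scalar case: reduce first to the situation where the entries are centred ($E[a_j]=0$), which is legitimate because $\kk_n$ is $\cB$-multilinear and $\cB$-balanced so subtracting $E[a_j]\in\cB$ only changes things by lower-order cumulants; then, if two \emph{neighbouring} indices coincide, say $i_k=i_{k+1}$, use the recursion expressing $\kk_n(\dots,a_k,a_{k+1},\dots)$ in terms of $\kk_{n-1}(\dots,a_ka_{k+1},\dots)$ plus products $\kk_\bullet\cdot\kk_\bullet$ coming from partitions that separate $k$ and $k+1$ — all of which by the induction hypothesis and by freeness reduce to $0$; finally, if no two neighbouring indices coincide, the tuple is alternating, and one shows $\kk_n=0$ by expanding $0=E[a_1\cdots a_n]$ (freeness, centred alternating) via moment-cumulant inversion and peeling off all terms except $\kk_n$ using the inductive hypothesis. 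The main obstacle is bookkeeping: one must verify that every manipulation — subtracting $E[a_j]$, merging neighbours, splitting a partition at a gap — produces cumulant expressions $\kk_\pi$ whose nested-insertion structure is exactly the one in Definition \ref{def:5.10}, so that the induction hypothesis genuinely applies to each factor; this is the "respecting the nesting" caveat flagged in the remark, and it is where a reader re-deriving the scalar proof (as in Exercise \ref{exercise:24}) has to be attentive. Once that compatibility is checked, the combinatorics of $NC(n)$ and the Möbius inversion go through verbatim.
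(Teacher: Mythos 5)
The paper gives no proof of this theorem — it defers to 3.23–3.24 of the scalar Free Probability Lecture Notes and to Exercise \ref{exercise:24}, asking the reader to verify that the scalar argument survives once the nesting in $\kk_\pi$ is tracked. Your outline is exactly that intended route, and your plan for the hard direction (1a)$\Rightarrow$(1b) — reduce to centred arguments, merge neighbouring equal indices via the product recursion, then handle the alternating tuple by Möbius inversion — is the standard Nica–Speicher strategy and is sound as a plan. Two places are nevertheless imprecise and are precisely where one has to be careful.

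In (1b)$\Rightarrow$(1a) you claim that every non-singleton block of every $\pi\in NC(n)$ ``contains two neighbouring elements of the original word once inner blocks are stripped — hence it mixes indices.'' This is false block-by-block: for an alternating index tuple, a block such as $\{1,3\}$ nesting the singleton $\{2\}$ is \emph{not} mixed when $i_1=i_3$. The correct observation is that every $\pi\in NC(n)$ either contains a singleton block (this factor is killed by $\kk_1(a_j)=E[a_j]=0$ inside the nested $\kk_\pi$), or has no singleton, in which case some innermost block is an interval of size $\geq 2$ and is genuinely mixed since $i_k\neq i_{k+1}$; either way one factor vanishes and multilinearity of $\kk_\pi$ propagates the zero, covering $\pi=1_n$ as well. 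Separately, your reduction of part (2) to part (1) via $\cA_i:=\cB\lb X_i\rb$ glosses over the key lemma: passing from ``cumulants of the generators $X_ib$ vanish'' to ``cumulants of all elements of $\cB\lb X_i\rb$ vanish'' requires the operator-valued \emph{cumulants with products as arguments} formula, i.e.\ the nested analogue of writing $\kk_m$ applied to a tuple of monomials as a sum of $\kk_\sigma$ over those $\sigma\in NC$ whose join with the relevant interval partition is $1$. The $\cB$-balancedness you invoke only absorbs coefficients from $\cB$; it does not by itself split a monomial $b_0X_ib_1X_i\cdots X_ib_k$ across several cumulant entries. Both gaps are fixable with standard lemmas, but they are exactly the nesting-sensitive steps the paper's Exercise \ref{exercise:24} is asking you to check.
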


\begin{example}\label{ex:9.6}
This yields then formulas for the calculation of mixed moments; those formulas show that mixed moments of free variables are exponentially bounded, if this is true for of of the variables -- thus providing the missing argument for our proof of Theorem \ref{thm:5.14}. 

As a concrete calculation, consider for $X$ and $Y$ free the following mixed moment:
$$
E[XYXY]=\sum_{\pi\in NC(4)} \kk_\pi(X,Y,X,Y).
$$
Because of the vanishing of mixed cumulants in $X$ and $Y$ only non-crossing $\pi$ with $\pi\leq 
\begin{tikzpicture}[baseline=0pt]
    \begin{scope}[yshift=.5em,xshift=0.2em, scale=1]
 \draw (0em,-0em) --  (0em,-.5em)--(1em,.-.5em) -- (1em,0em);
\draw (.5em,-0em) -- (.5em,-0.3em)-- (1.5em,-.3em)--(1.5em,0);
    \end{scope}
  \end{tikzpicture} 
$
will make a contribution; so we can  continue with
\begin{align*}
&E[XYXY]=\kk_{\,
\begin{tikzpicture}[baseline=0pt]
    \begin{scope}[yshift=.2em,xshift=0.2em, scale=.8]
 \draw (0em,-0em) --  (0em,-.5em) (1em,.-.3em) -- (1em,0em);
\draw (.5em,-0em) -- (.5em,-0.5em)-- (1.5em,-.5em)--(1.5em,0);
    \end{scope}
  \end{tikzpicture} 
}(X,Y,X,Y)
+\kk_{\,
\begin{tikzpicture}[baseline=0pt]
    \begin{scope}[yshift=.2em,xshift=0.2em, scale=.8]
 \draw (0em,-0em) --  (0em,-.5em)--(1em,.-.5em) -- (1em,0em);
\draw (.5em,-0em) -- (.5em,-0.3em)  (1.5em,-.5em)--(1.5em,0);
    \end{scope}
  \end{tikzpicture} 
}(X,Y,X,Y)
+\kk_{\,
\begin{tikzpicture}[baseline=0pt]
    \begin{scope}[yshift=.2em,xshift=0.2em, scale=.8]
 \draw (0em,-0em) --  (0em,-.5em)  (1em,.-.5em) -- (1em,0em);
\draw (.5em,-0em) -- (.5em,-0.5em) (1.5em,-.5em)--(1.5em,0);
    \end{scope}
  \end{tikzpicture} 
}(X,Y,X,Y)\\[0.5em]
&=\kk_1(X)\cdot \kk_2(Y \kk_1(X),Y)+
\kk_2(X\kk_1(Y),X)\cdot\kk_1(Y)+
\kk_1(X)\cdot \kk_1(Y)\cdot \kk_1(X)\cdot \kk_1(Y)\\[0.5em]
&=E[X]\cdot \Bigr(E\bigl[YE[X]Y\bigr]-E\bigl[YE[X]\bigr]\cdot E[Y]\Bigr)\\
&\qquad +
 \Bigr(E\bigl[XE[Y]X\bigr]-E\bigl[XE[Y]\bigr]\cdot E[X]\Bigr)\cdot E[Y]
+E[X]\cdot E[Y]\cdot E[X]\cdot E[Y]\\[0.5em]
&=
 E[X]\cdot E\bigl[YE[X]Y\bigr]+E\bigl[XE[Y]X\bigr]\cdot E[X]
-E[X]\cdot E[Y]\cdot E[X]\cdot E[Y].
\end{align*}
This recovers the formula from Example \ref{ex:5.3} (3).
\end{example}

\begin{prop}\label{prop:9.7}
Let $(\cA,\cB,E)$ be a $\cB$-valued probability space with corresponding cumulants $(\kk_n)_{n\in\NN}$. Consider, for $n\in\NN$, random variables $X_1,\dots,X_n\in \cA$ and $b_1,\dots,b_{n-1}\in\cB$. Then we have
\begin{multline*}
E[X_1b_1X_2b_2\cdots X_{n-1}b_{n-1}X_n]\\[0.5em]
=\sum_{s=1}^n
\sum_{1=j_1<j_2<\dots < j_s\leq n}\kk_s\bigl(X_1 E[b_1X_2\cdots X_{j_2-1} b_{j_2-1} ],X_{j_2} E[b_{j_2}X_{j_2+1}\cdots X_{j_3-1}b_{j_3-1}],\dots,X_{j_s}\bigr)\\ \times E[b_{j_s}\cdots b_{n-1}X_{n}].
\end{multline*}
\end{prop}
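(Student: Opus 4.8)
The plan is to reduce the identity to the $\cB$-valued moment-cumulant relation $E[a_1\cdots a_n]=\sum_{\pi\in NC(n)}\kk_\pi(a_1,\dots,a_n)$ (recorded in the Remark following the definition of the operator-valued free cumulants), and to reorganize the sum over $\pi$ according to the block of $\pi$ that contains the point $1$. First I would set $a_i:=X_ib_i$ for $i=1,\dots,n-1$ and $a_n:=X_n$, so that $E[X_1b_1X_2\cdots b_{n-1}X_n]=E[a_1\cdots a_n]=\sum_{\pi\in NC(n)}\kk_\pi(a_1,\dots,a_n)$. For $\pi\in NC(n)$ let $V=\{1=j_1<j_2<\dots<j_s\}$ be its block containing $1$. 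Since $1$ is the least point and $\pi$ is non-crossing, no block can surround $V$; and if some block had points in two distinct gaps $\{j_k+1,\dots,j_{k+1}-1\}$ and $\{j_l+1,\dots,j_{l+1}-1\}$ with $k<l$, it would cross $V$ at $j_{k+1}$. Hence every block other than $V$ lies inside exactly one of the intervals $I_k:=\{j_k+1,\dots,j_{k+1}-1\}$ for $k=1,\dots,s-1$ or inside $I_s:=\{j_s+1,\dots,n\}$, and conversely $V$ together with an arbitrary choice of $\sigma_k\in NC(I_k)$ for each $k$ returns a non-crossing partition; so the sum over $\pi$ with block-of-$1$ equal to $V$ becomes a sum over $(\sigma_1,\dots,\sigma_s)$.

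Next I would apply the recursive definition of the multiplicative extension (Definition~\ref{def:5.10}), iterated to strip off the blocks inside each $I_k$ from the inside out: this collapses $\kk_{\sigma_k}(a_{j_k+1},\dots,a_{j_{k+1}-1})$ onto the $k$-th argument slot of $\kk_V=\kk_s$, yielding
$$\kk_\pi(a_1,\dots,a_n)=\kk_s\big(a_{j_1}\kk_{\sigma_1}(a_{j_1+1},\dots,a_{j_2-1}),\ \dots,\ a_{j_s}\kk_{\sigma_s}(a_{j_s+1},\dots,a_n)\big),$$
with an empty $\kk_{\sigma_k}()$ read as $1$. Summing over all $\sigma_1,\dots,\sigma_s$ and using that $\kk_s$ is $\CC$-multilinear — each sum over $\sigma_k$ affecting only the $k$-th slot — I can pull the sums inside and apply the moment-cumulant relation in reverse on each interval, $\sum_{\sigma_k\in NC(I_k)}\kk_{\sigma_k}(a_{j_k+1},\dots,a_{j_{k+1}-1})=E[a_{j_k+1}\cdots a_{j_{k+1}-1}]$ (and $=E[a_{j_s+1}\cdots a_n]$ for $k=s$). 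The $k$-th argument is then $a_{j_k}E[a_{j_k+1}\cdots a_{j_{k+1}-1}]$. Unwinding $a_i=X_ib_i$ and using the bimodule property of $E$ to absorb each $b_{j_k}$, namely $b_{j_k}E[X_{j_k+1}b_{j_k+1}\cdots]=E[b_{j_k}X_{j_k+1}b_{j_k+1}\cdots]$, the $k$-th argument becomes $X_{j_k}E[b_{j_k}X_{j_k+1}\cdots X_{j_{k+1}-1}b_{j_{k+1}-1}]$ for $k<s$, and $X_{j_s}\cdot E[b_{j_s}X_{j_s+1}\cdots b_{n-1}X_n]$ for $k=s$. Since this last factor lies in $\cB$ and $\kk_s$ is $\cB$-balanced in its final argument, it factors out of $\kk_s$ to the right as the claimed tail $E[b_{j_s}\cdots b_{n-1}X_n]$. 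Summing over $V$, i.e. over $s=1,\dots,n$ and $1=j_1<\dots<j_s\le n$, gives the formula.

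I expect the step needing real care — rather than being a one-line check — to be the factorization of $\kk_\pi$ along the outer block $V$: one must verify that iterating the interval-stripping recursion of Definition~\ref{def:5.10} genuinely produces the nested expression above for every internal configuration of the $\sigma_k$ (this is exactly the ``respecting the nesting'' point), and that the subsequent multilinearity manipulation is legitimate because the $\sigma_k$-sum touches only the $k$-th slot of $\kk_s$. The non-crossing geometry of the block containing $1$ and the $\cB$-bimodule bookkeeping of the $b_i$ are routine and run parallel to the scalar-valued recursion in the Free Probability Lecture Notes; as the text emphasizes, none of those arguments are affected by having to keep track of the order of the variables.
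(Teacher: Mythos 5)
Your proof is correct, and it is precisely the argument the paper has in mind: the text explicitly does not prove Proposition~\ref{prop:9.7} but defers it to Exercise~\ref{exercise:24}, which asks the reader to verify that the standard scalar-valued recursion (grouping $NC(n)$ by the block of $1$, peeling the remaining blocks from the non-crossing gaps via the interval-stripping recursion of Definition~\ref{def:5.10}, and resumming each gap with the moment--cumulant relation) goes through unchanged once one respects the nesting. Your reduction to $a_i=X_ib_i$, the geometric argument that the non-$V$ blocks distribute into the gaps, the use of $\CC$-multilinearity to resum the $\sigma_k$ slot by slot, and the final $\cB$-bimodule/balanced bookkeeping to absorb the $b_{j_k}$ and peel off the right-hand factor are exactly the expected steps.
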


\section{Operator-valued $R$-transform}

\begin{theorem}
Let $(\cA,\cB,E)$ be a $\cB$-valued $C^*$-probability space and $X=X^*\in\cA$. Consider the following fully matricial functions on a uniform neighborhood of 0, given via the coefficients in the power series expansion about 0:
\begin{itemize}
\item
the Cauchy transform $G_X$ given via $H_X(z)=G_X(z^{-1})$ by
$$\partial^{n+1} H_X(0,\dots,0)\sharp (b_0,\dots,b_n)=E[b_0Xb_1\cdots b_{n-1} Xb_n]$$
\item
the $R$-transform $R_X$ given by
$$\partial^{n} R_X(0,\dots,0)\sharp (b_1,\dots,b_n)= \kk_{n+1}(Xb_1,Xb_2,\dots,Xb_n,X)
$$
\end{itemize}
Then we have that on suitable domains
\begin{equation}\label{eq:G-R}
zG(z)=1+R[G(z)]\cdot G(z),
\end{equation}
and $G$ and $R$ determine each other via \eqref{eq:G-R}.
\end{theorem}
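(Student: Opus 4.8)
The plan is to reproduce the scalar‑valued derivation of the moment–cumulant (equivalently Cauchy–$R$) relation, the only new issue being that the $\cB$‑valued cumulants $\kk_s$ are merely $\cB$‑balanced, so every grouping has to respect the linear order of the arguments. I would work first with the power‑series expansions about $\infty$ on the region $\Vert z^{-1}\Vert<1/\Vert X\Vert$, simultaneously on every matricial level (harmless, since moments, cumulants and the operators $\partial^{k}$ are all fully matricial), and then extend the resulting identity by analytic continuation to the common domain of the two fully matricial functions in \eqref{eq:G-R}.

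First I would record, exactly as in the computation leading to \eqref{eq:semicirc}, the expansion
\begin{equation*}
G(z)=\sum_{k\geq 0}E[z^{-1}(Xz^{-1})^k]=\sum_{k\geq 0}z^{-1}E[(Xz^{-1})^k],
\end{equation*}
so that, using that $E$ is a $\cB$‑bimodule map,
\begin{equation*}
zG(z)-1=\sum_{k\geq 1}E[(Xz^{-1})^k]=\Bigl(\sum_{k\geq 1}E[X(z^{-1}X)^{k-1}]\Bigr)z^{-1}.
\end{equation*}
Then I would expand each moment $E[X(z^{-1}X)^{k-1}]=E[X_1b_1X_2\cdots b_{k-1}X_k]$, with $X_i:=X$ and $b_i:=z^{-1}$, by Proposition \ref{prop:9.7}, grouping the terms according to the tuple $1=j_1<j_2<\dots<j_s\leq k$ — equivalently, according to the block of the underlying non‑crossing partition containing the first point. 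The key step is the resummation: the multi‑index $(k;j_1=1,j_2,\dots,j_s)$ is in bijection with $(s;g_1,\dots,g_{s-1},g_s)$ where $g_t:=j_{t+1}-j_t-1$ for $t<s$ and $g_s:=k-j_s$ are arbitrary nonnegative integers, and under this bijection the $t$‑th interior argument of $\kk_s$ is $X\cdot E[z^{-1}(Xz^{-1})^{g_t}]$ while the trailing factor (after the $z^{-1}$ pulled out on the right is reabsorbed) is $E[z^{-1}(Xz^{-1})^{g_s}]$. Since $\sum_{g\geq 0}E[z^{-1}(Xz^{-1})^g]=G(z)$ and $\kk_s$ is multilinear, summing over all $g_t$ gives
\begin{equation*}
zG(z)-1=\sum_{s\geq 1}\kk_s\bigl(XG(z),XG(z),\dots,XG(z),X\bigr)\cdot G(z),
\end{equation*}
which by the definition of the coefficients of $R_X$ (namely $\partial^{n}R_X(0,\dots,0)\sharp(b_1,\dots,b_n)=\kk_{n+1}(Xb_1,\dots,Xb_n,X)$, together with the Taylor–Taylor expansion $R_X(w)=\sum_{n\geq 0}\partial^{n}R_X(0,\dots,0)\sharp(w,\dots,w)$) is exactly $R[G(z)]\cdot G(z)$. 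This proves \eqref{eq:G-R} near $\infty$, and analyticity of both sides propagates it. For the mutual determination I would rewrite \eqref{eq:G-R} as $G(z)=(z-R[G(z)])^{-1}$, exhibiting $G$ as the unique fixed point with $zG(z)\to 1$ (the same Earle–Hamilton/uniqueness reasoning as for the semicircular equation and in Theorem \ref{thm:Williams}); conversely, matching the Taylor–Taylor coefficients of the two sides of \eqref{eq:G-R} lets one solve recursively for all $\kk_{n+1}(Xb_1,\dots,Xb_n,X)$ in terms of the moments $E[b_0Xb_1\cdots Xb_n]$ and back.

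The main obstacle is the combinatorial bookkeeping in the resummation: one must check that grouping by the first block of a non‑crossing partition, combined with the insertion rules for $\cB$‑valued multiplicative functionals, places the ordered arguments $XG(z),\dots,XG(z),X$ in precisely the right slots and never interchanges left and right multiplication — this is exactly where ``respecting the nesting'' does real work, and where Proposition \ref{prop:9.7} is the essential input. Once that identity is pinned down, norm convergence on $\Vert z^{-1}\Vert<1/\Vert X\Vert$ and the upgrade to fully matricial functions on a uniform neighbourhood of $0$ are routine.
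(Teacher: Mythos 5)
Your derivation is correct and is precisely the argument the paper declines to write out: it tells the reader to "check that the scalar-valued arguments are not affected by the requirement that we now have to respect the nesting," and your resummation of Proposition \ref{prop:9.7} via the reindexing $(k;j_1,\dots,j_s)\leftrightarrow(s;g_1,\dots,g_s)$, together with the bimodule property to reabsorb the trailing $z^{-1}$, is exactly that scalar computation carried out order-consciously. It is also the direct generalization of the Chapter \ref{chapter:6} derivation of $zG(z)=1+\eta(G(z))G(z)$, where the "first block" is a pair $(1,l)$, so the approach matches the one the paper uses in the only case it proves explicitly.
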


\begin{remark}
\begin{enumerate}
\setcounter{enumi}{-1}
\item
Note that $R$ has a constant term, whereas $H$ starts with the linear term; on the base level we have the power series expansions, for $z=b\in\cB$
$$H_X(b)= b+bE[X]b+bE[XbX]b+bE[XbXbX]b+\cdots$$
and
$$R_X(b)=\kk_1(X)+\kk_2(Xb,X)+\kk_3(Xb,Xb,X)+\cdots$$
\item
Note that with $G=(G^{(n)})_{n\in\NN}$ and $R=(R^{(n)})_{n\in\NN}$,
\eqref{eq:G-R} means that there exists $R>0$ such that for each $n\in\NN$ we have
$$zG^{(n)}(z)=1+R^{(n)}[G^{(n)}(z)]\cdot G^{(n)}(z)\qquad
\text{for $z\in M_n(\cB)$ with $\Vert z\Vert>R$.}$$
\item
For our applications to polynomials in Theorem \ref{thm:8.8},
$$G_P(z)=\lim_{\ee\searrow 0}[G_{\hat P} (\Lambda_\ee(z))]_{1,1},$$
we actually only need the base level $n=1$ of $G_{\hat P}$.
\item
Since mixed cumulants in free variables vanish we have for free $X_1,X_2$ that
\begin{multline*}
\kk_{n+1}\bigl((X_1+X_2)b_1,(X_1+X_2)b_2,\dots,(X_1+X_2)b_n,(X_1+X_2)\bigr)\\[0.5em]
=
\kk_{n+1}(X_1b_1,X_1b_2,\dots,X_1b_n,X_1)+
\kk_{n+1}(X_2b_1,X_2b_2,\dots,X_2b_n,X_2),
\end{multline*}
and thus also 
$$R_{X_1+X_2}(z)=R_{X_1}(z)+R_{X_2}(z) \qquad\text{for $\Vert z\Vert$ sufficiently small}.$$
This allows in principle to express $G_{X_1+X_2}$ in terms of $G_{X_1}$ and $G_{X_2}$: for $i=1,2$ we calculate from $G_{X_i}$ its $R$-transform $R_{X_i}$ via \eqref{eq:G-R}, then we get easily the $R$-transform of the sum, $R_{X_1+X_2}=R_{X_1}+R_{X_2}$, and use again \eqref{eq:G-R} (now in the other direction) to get from this $G_{X_1+X_2}$. There is, however, a problem with this, namely \eqref{eq:G-R} can usually not be solved explicitly and there is also no good numerical algorithm for dealing with \eqref{eq:G-R}. Hence, as in the scalar-valued
case, we will rewrite the $R$-transform approach into the ``subordination'' language.

\end{enumerate}
\end{remark}

\chapter{Operator-Valued Free Convolution via Subordination Function and the Distribution of Polynomials in Free Variables}

The subordination description of operator-valued free convolution yields as in the scalar-valued case algorithms which can be analytically controlled.
Combining this with the linearization idea solves then the problem of calculating the distribution of polynomials in free variables, which in turn can be used to calculate the asymptotic eigenvalue distribution of polynomials in random matrices. We follow here the presentation from \cite{MSp}, by refering the proof of the main statement to the original paper \cite{BMS}. 

\section{Subordination for operator-valued free convolution}

\begin{remark}
\begin{enumerate}
\item
We want to describe $X_1+X_2$, for $X_1$ and $X_2$ free, in a subordinated form via
$$G_{X_1+X_2}(z)=G_{X_1}(\omega_1(z)),\qquad\text{and}\qquad
G_{X_1+X_2}(z)=G_{X_2}(\omega_2(z))$$
for some subordination functions $\omega_1,\omega_2$.
Let us check, on a formal level, the properties of those (compare also
5.1 of \href{https://rolandspeicher.files.wordpress.com/2019/08/free-probability.pdf}{Free Probability Lecture Notes}):
$$\omega_1(z)=G_{X_1}^{<-1>}(G_{X_1+X_2}(z)).$$
Note that $zG(z)=1+R[G(z)]\cdot G(z)$ means that (for $z=G^{<-1>}(b)$):
$$G^{<-1>}(b)\cdot b=1+R(b)\cdot b,\qquad\text{i.e.,}\qquad 
G^{<-1>}(b)=b^{-1}+R(b).$$
Put now $G_1=G_{X_1}$, $G_2=G_{X_2}$, $G=G_{X_1+X_2}$, and the same for $R$. Then we have
$$\omega_1(z)=G_1^{<-1>}(G(z))=G(z)^{-1}+R_1(G(z))\quad\text{and}\quad
\omega_2(z)=G(z)^{-1}+R_2(G(z))$$
and thus
\begin{align*}
\omega_1(z)+\omega_2(z)&=2 G(z)^{-1}+\underbrace{R_1(G(z))+R_2(G(z))}_{=R[G(z)]=z-G(z)^{-1}}\\
&=z+G(z)^{-1}\\
&=z+G_1(\omega_1(z))^{-1}\\
&=z+F_1(\omega_1(z)),
\end{align*}
and thus
$$\omega_2(z)=z+\underbrace{F_1(\omega_1(z))-\omega_1(z)}_{h_1(\omega_1(z))},$$
where we put
$$F_1(z):=G_1(z)^{-1},\qquad\text{and}\qquad
h_1(z):=F_1(z)-z=G_1(z)^{-1}-z.$$
So we have
$$\omega_2(z)=z+h_1(\omega_1(z))$$
and, by symmetry,
$$\omega_1(z)=z+h_2(\omega_2(z)).$$
Inserting the first equation into the second gives finally
$$\omega_1(z)=z+h_2(z+h_1(\omega_1(z))).$$
This is a fixed point equation for $\omega_1(z)$, which can be used for calculating $\omega_1(z)$ via iterations.
\item
The crucial point is that the fixed point equation can be used to define $\omega_1(z)$ (and, in the same way, $\omega_2(z)$) not just on some
suitably chosen domain, but always on all of $H^+(\cB)$. To show the convergence of the iterates on all of $H^+(\cB)$ one uses again the Earle--Hamilton Theorem.
\item
To make the formal calculations above rigorous is much harder than in the scalar-valued case (in particular, as the involved domains are harder to control), but it can be done. We only give the final result from \cite{BMS}. It would be nice to find a simpler, more streamlined proof of this theorem.
\end{enumerate}
\end{remark}

\begin{theorem}[Belinschi, Mai, Speicher 2017]
Let $(\cA,\cB,E)$ be an operator-valued $C^*$-probability space and consider
selfadjoint $X_1,X_2\in\cA$ which are free with respect to $E$. Then there exists a unique pair of Fr\'echet analytic maps
$$\omega_1,\omega_2: H^+(\cB)\to H^+(\cB)$$
such that
\begin{itemize}
\item[(i)] $\Im \omega_j(z)\geq \Im z$ for all $z\in H^+(\cB)$ and $j=1,2$;
\item[(ii)]
for all $z\in H^+(\cB)$
$$F_1(\omega_1(z))+z=F_2(\omega_2(z))+z=\omega_1(z)+\omega_2(z);$$
\item[(iii)]
for all $z\in H^+(\cB)$
$$G_1(\omega_1(z))=G_2(\omega_2(z))=G(z).$$
\end{itemize}
Moreover, if $z\in H^+(\cB)$, then $\omega_1(z)$ is the unique fixed point of the map $f_z:H^+(\cB)\to H^+(\cB)$, given by
$$f_z(w):=h_2(h_1(w)+z)+z;$$
and $\omega_1(z)=\lim_{n\to\infty} f_z^n(w)$ for any $w\in H^+(\cB)$.
The same statements hold for $\omega_2$, where $f_z$ is replaced by
$$w\mapsto h_1(h_2(w)+z)+z.$$
\end{theorem}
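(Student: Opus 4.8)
The plan is to reduce everything to a single application of the Earle--Hamilton Theorem \ref{thm:Earle-Hamilton} on the domain $H^+(\cB)$ (or, more precisely, on suitable bounded sub-domains), using the formal computation in the preceding remark as a blueprint but now making each step rigorous. The key analytic input is that for a selfadjoint $X\in\cA$ the map $h_X(z):=F_X(z)-z=G_X(z)^{-1}-z$ sends $H^+(\cB)$ into $\overline{H^+(\cB)}$ and in fact satisfies $\Im h_X(z)\leq 0$, so that $w\mapsto h_X(w)+z$ moves one \emph{strictly} up in imaginary part by at least $\Im z\geq\ee\cdot 1$; composing two such maps gives the candidate $f_z(w)=h_2(h_1(w)+z)+z$, and one checks it maps $H^+(\cB)$ into itself with image bounded away from the boundary on each slice $\{w: \Im w\geq\delta\cdot 1,\ \Vert w\Vert\leq R\}$.

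**Key steps, in order.** First I would record the elementary properties of $F_X$ and $h_X$ on $H^+(\cB)$: that $G_X$ maps $H^+(\cB)\to H^-(\cB)$ (Theorem \ref{thm:4.5}), hence $F_X=G_X^{-1}$ is well-defined with $\Im F_X(z)\leq 0$ is \emph{false} in the naive sense — rather one needs the correct statement $\Im F_X(z)\geq \Im z$ and $\Im h_X(z)\leq 0$, which follows from the operator-valued analogue of the Nevanlinna representation of $F_X$ (this is where I would cite or reprove the standard fact that $F_X(z)=z-E[X]+\tilde G(z)$ with $\tilde G$ a $\cB$-valued Herglotz function having nonpositive imaginary part). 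Second, using these, show $f_z:H^+(\cB)\to H^+(\cB)$ and that $f_z$ is Fréchet analytic (composition of analytic maps, each analytic because $G_X$ is fully matricial hence analytic by the results of Chapter 3–4). Third, fix $z$ with $\Im z\geq\ee\cdot 1$ and exhibit a bounded convex $f_z$-invariant subdomain $D\subset H^+(\cB)$ on which $f_z(D)$ lies strictly inside $D$: one takes $D=\{w:\Im w\geq\ee\cdot 1,\ \Vert w\Vert<R\}$ for $R$ large, using the imaginary-part estimate to stay off the ``real'' boundary and a norm bound on $h_1,h_2$ near such a slice to stay inside $\Vert w\Vert<R$. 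Fourth, apply Earle--Hamilton to get a unique fixed point $\omega_1(z)\in D$ and norm-convergence of $f_z^n(w)$ for every $w\in H^+(\cB)$ (shrinking to a common invariant $D$ containing $w$); analyticity of $z\mapsto\omega_1(z)$ follows from uniform convergence of the iterates on compacts plus Vitali/Montel for Banach-space-valued analytic functions. Fifth, verify (i)–(iii): property (i) is immediate from $\omega_1(z)=z+h_2(\omega_2(z))$ and $\Im h_2\leq 0$ being the \emph{wrong} sign — so instead (i) comes from $\Im\omega_1(z)=\Im z+\Im h_2(\omega_2(z))$ requires care; here one uses that the fixed point lies in the slice $\Im w\geq\ee\cdot1$ and the sharper fact that $\omega_1(z)-z=h_2(\omega_2(z))$ actually has \emph{nonnegative} imaginary part because $h_2$ restricted to the relevant region is a Herglotz map in the sense that $\Im(F_2(w)-w)\geq 0$ on $H^+$. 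Then (ii) is the defining fixed-point relation unwound, and (iii) follows by setting $G(z):=G_1(\omega_1(z))$ and checking, via the $R$-transform identity $G_i^{<-1>}(b)=b^{-1}+R_i(b)$ and additivity $R_{X_1+X_2}=R_1+R_2$, that this $G$ equals $G_{X_1+X_2}(z)$; uniqueness of the pair follows from uniqueness of the fixed point together with the fact that any pair satisfying (ii)–(iii) must satisfy the fixed-point equation.

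**Main obstacle.** The genuinely hard part is controlling the domains — specifically, proving that the imaginary-part inequality $\Im\omega_j(z)\geq\Im z$ propagates correctly and that $f_z$ really does map $H^+(\cB)$ into a region where Earle--Hamilton applies uniformly enough to yield a globally defined analytic $\omega_1$ on all of $H^+(\cB)$, not merely on a small sub-domain. In the scalar case this is handled by explicit Nevanlinna/Pick-function estimates; operator-valued, one must replace these by completely-positive-map arguments and be careful that ``$\Im\geq 0$'' for elements of $\cB$ is not a total order, so estimates like $\Vert F_z(w)\Vert\leq\Vert(\Im z)^{-1}\Vert$ from the semicircular case do not transfer verbatim. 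I expect this is exactly why the authors quote \cite{BMS} rather than prove it here, so I would be honest that a fully rigorous treatment of the domain issues is lengthy; the plan above gives the correct skeleton, and the routine-but-long part is the bookkeeping of imaginary parts and norms on the nested slices $\{\Im w\geq\delta\cdot1,\ \Vert w\Vert\leq R\}$.
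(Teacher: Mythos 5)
The paper does not prove this theorem: it explicitly states only the result and defers the argument to the original paper \cite{BMS}, remarking that ``to make the formal calculations above rigorous is much harder than in the scalar-valued case (in particular, as the involved domains are harder to control)''. So there is no paper proof to compare against; your proposal is an outline of the kind of argument that \cite{BMS} actually carries out, and its overall skeleton (establish an operator-valued Nevanlinna/Pick estimate for $F_X$ and hence for $h_X$, verify that $f_z$ maps $H^+(\cB)$ strictly into bounded slices of itself, apply Earle--Hamilton, then unwind the fixed-point relation to get (ii) and combine with the $R$-transform identity to get (iii)) is the right one.

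There is, however, a genuine error in the middle of your proposal that you half-catch but never cleanly resolve: you first write ``$\Im F_X(z)\geq \Im z$ and $\Im h_X(z)\leq 0$''. These two statements contradict each other, since $\Im h_X(z)=\Im F_X(z)-\Im z$. The correct fact, which is what makes the whole chain work, is $\Im h_X(z)\geq 0$ for $z\in H^+(\cB)$, i.e.\ $h_X$ is itself a $\cB$-valued Pick/Herglotz map. This is what allows one to conclude $\Im(h_1(w)+z)\geq \Im z>0$, hence $h_1(w)+z\in H^+(\cB)$, hence $\Im f_z(w)\geq \Im z$, and also directly yields property (i) via $\omega_1(z)-z=h_2(\omega_2(z))$. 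Your later sentence does arrive at the correct sign, but calling it a ``sharper fact on the relevant region'' misdiagnoses the situation: it is the same inequality $\Im F_X(w)\geq \Im w$ applied once more, valid on all of $H^+(\cB)$, not a refinement. You should fix the sign claim at the outset so that the ingredient is stated once and used consistently. Beyond this, your identification of the genuinely hard part — uniform control on norm and imaginary-part estimates for $h_1,h_2$ on nested slices $\{\Im w\geq\delta\cdot1,\ \Vert w\Vert\leq R\}$ so as to produce a common $f_z$-invariant domain containing an arbitrary starting point $w_0$, together with the analyticity of $z\mapsto\omega_j(z)$ on all of $H^+(\cB)$ — is exactly right, and is precisely why the authors defer the proof.
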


\section{Distribution of polynomials in free variables}

\begin{remark}
This can then be used, together with the linearization idea, to compute numerically distributions of polynomials in free variables. This has relevance for the 
asymptotic eigenvalue distribution of random matrices. Assume that
$X_1^{(N)},\dots,X_d^{(N)}$ are $N\times N$ random matrices which are
asymptotically free, i.e.,
$$(X_1^{(N)},\dots,X_d^{(N)}) \overset{N\to\infty}
\longrightarrow (X_1,\dots, X_d),$$
where $X_1,\dots,X_d$ are free. Then, for any polynomial $p\in\CC\lb x_1,\dots,x_d\rb$, we also have
$$p(X_1^{(N)},\dots,X_d^{(N)}) \overset{N\to\infty}
\longrightarrow p(X_1,\dots, X_d);$$
and the distribution of the limit can be calculated via linearization and operator-valued free convolution.

Note the following typical situations for asymptotically free random matrices:
\begin{itemize}
\item[(i)]
independent \GUE\ are asymptotically free;
\item[(ii)] 
 $\GUE$ are asymptotically free from deterministic (e.g., diagonal) matrices;
\item[(iii)]
``randomly rotated'' matrices are asymptotically free: for $D_1^{(N)},
D_2^{(N)}$ deterministic (e.g., diagonal) matrices and $U_N$ Haar unitary
$N\times N$ random matrices, we have that
$D_1^{(N)}$ and $U_N D_2^{(N)} U_N^*$ are asymptotically free; so, in particular, asymptotically the eigenvalue distribution of
$p(D_1^{(N)},U_N D_2^{(N)} U_N^*)$ is given by the distribution of $p(X_1,X_2)$ where $X_1$ and $X_2$ are free and 
$\mu_{D_1^{(N)}}\to \mu_{X_1}$ and 
$\mu_{D_2^{(N)}}\to \mu_{X_2}$.
\end{itemize}
\end{remark}

\begin{example}
Let us compare, for the polynomial $p(x,y)=xy+yx+x^2$, the distribution of asymptotically free random matrices with the limit distribution, which we calculate by our linearization and operator-valued convolution machinery. 

\begin{enumerate}
\item
Consider first, for $N=4000$, a \GUE(N) matrix
$A_N$ and
 a deterministic diagonal matrix $X_N$ with 2000 eigenvalues -2, 1000 eigenvalues -1 and 1000 eigenvalues 1.
We compare the histogram of the $N$ eigenvalues of
$p(X_N,A_N)$
with the distribtion (red curve) of $p(X,S)$, where $S$ and $X$ are free, $S$ is a semicircular element and $X$ has distribution
$\mu_X=\frac 14(2\delta_{-2}+\delta_{-1}+\delta_{+1})$.
$$\includegraphics[width=11cm]{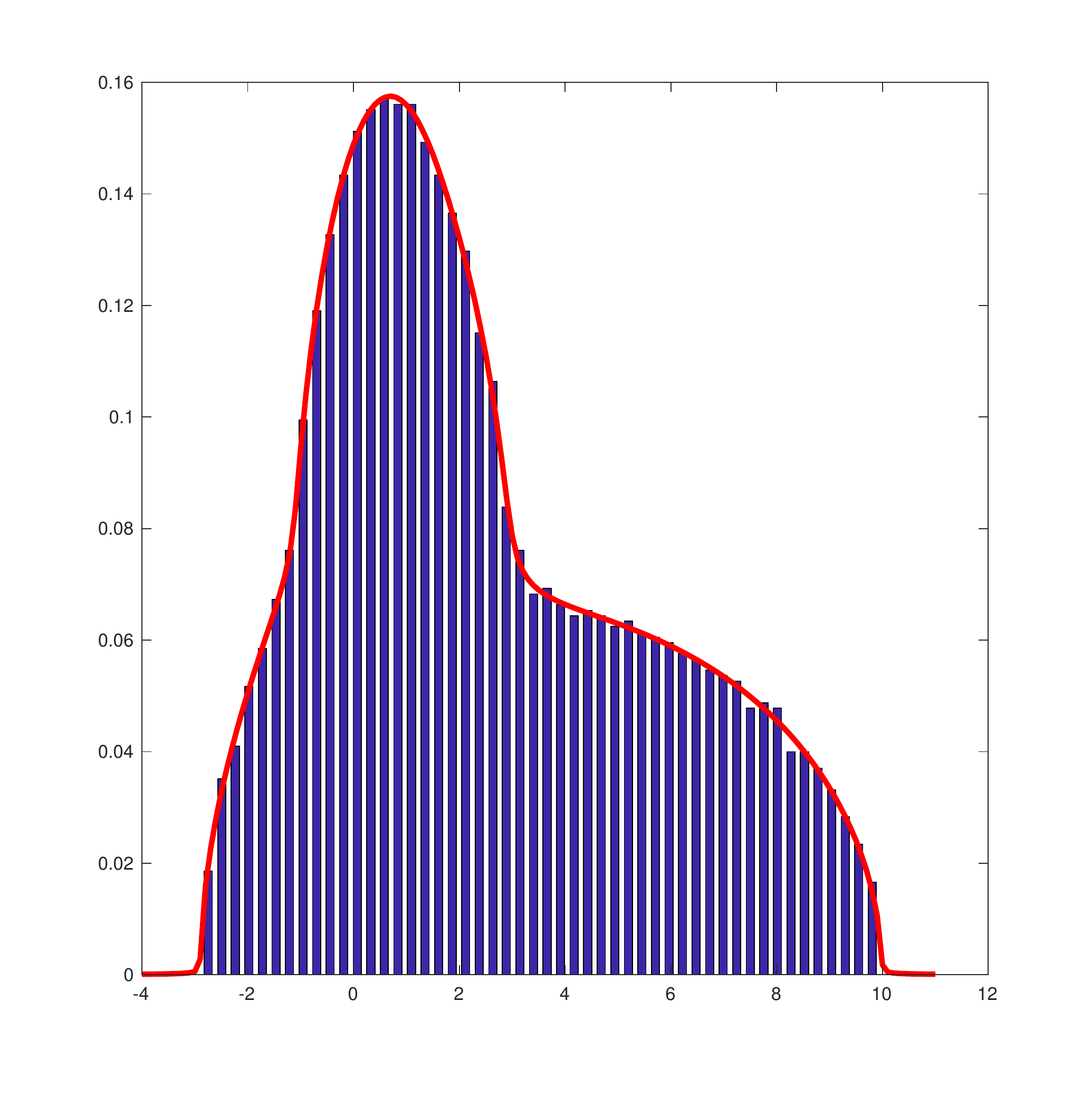}$$

\item
Consider now, again for $N=4000$, two deterministic diagonal matrices $X_N$ and
$Y_N$;  $Y_N$ has 2000 eigenvalues 1 and 2000 eigenvalues 3; and $X_N$ is the same as before, i.e., a diagonal matrix with 2000 eigenvalues -2, 1000 eigenvalues -1 and 1000 eigenvalues 1. In addition we take now a Haar unitary random matrix $U_N$ and compare the histogram of the $N$ eigenvalues of
$p(X_N,U_NY_N U_N^*)$ 
with the distribution (red curve) of $p(X,Y)$, where $X$ and $Y$ are free, with distribution
$\mu_X=\frac 14(2\delta_{-2}+\delta_{-1}+\delta_{+1})$ and
$\mu_Y=\frac 12(\delta_{1}+\delta_{3})$.

\hskip-1cm\includegraphics[width=16cm]{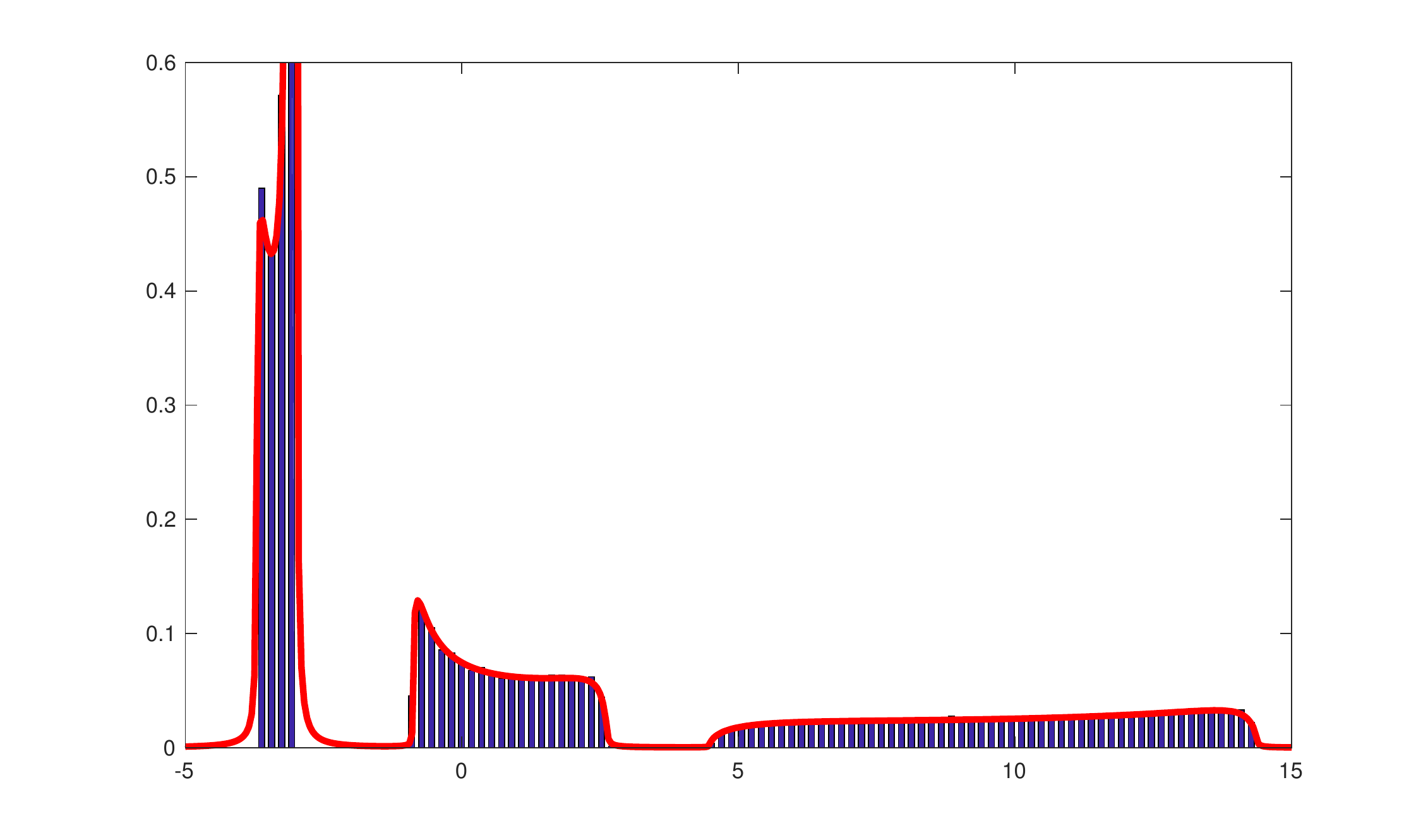}
\end{enumerate}

\end{example}

\chapter{Distribution of Rational Expressions in Free Random Variables}

The linearization idea is usually (i.e., in other contextes than free probability) used for dealing with rational functions, not just polynomials. Thus it looks feasible to try to extend our results to rational functions. We will follow quite closely \cite{HMS}, where one can also find more information on the history of the linearization idea and more details on non-commutative rational functions.

\section{Linearization for non-commutative rational functions}

\begin{remark}
Recall the idea of the linearization of a polynomial. For a polynomial
$P=p(X_1,\dots,X_d)\in\cA$ we need to find $U,Q,V$ with
\begin{itemize}
\item[$\circ$]
$U,Q,V$ are affine matrices in $X_1,\dots,X_d$;
\item[$\circ$]
$Q$ is invertible;
\item[$\circ$]
$P=-UQ^{-1}V$.
\end{itemize}
Then the linearization
$\hat P
=\begin{pmatrix}
0&U\\ V& Q\end{pmatrix}$
knows a lot about $P$, namely
\begin{equation}\label{eq:11.1}
G_P(z)=[G_{\hat P}(\Lambda(z))]_{1,1}.
\end{equation}
\emph{Question}: Can we linearize more general ``functions''?
\end{remark}

\begin{example}
Note that in $P=-UQ^{-1}V$ the inverse shows up, which suggests that we might also linearize inverses. Try the simplest case, $P=X^{-1}$. We can write this as
$$P=- (1)\cdot (-X)^{-1}\cdot (1),$$
i.e., $U=1$, $Q=-X$, $V=1$, all $1\times 1$ matrices, and thus
$$\hat P=\begin{pmatrix}
0&1\\1&-X \end{pmatrix}
\in M_2(\cA).$$
(Note that $\hat P$ is here also selfadjoint!) If we assume that $Q=-X$ is invertible, then this satisfies all properties of our linearization and \eqref{eq:11.1} allows to calculate the distribution of $X^{-1}$ via the linear matrix $\hat P$. (Of course, in this case of one variable we would calculate the distribution of $X^{-1}$ form the distribution of $X$ just via ordinary function calculus.) 

Note that in this case invertibility of $Q$ is not just an algebraic issue, which is true for all $p(X)$, but depends on the existence of $p(X)=X^{-1}$ for the concretely considered $X$. We have to be careful that the existence of $P$ implies the existence of all inverses which show up in our calculations. The basic ingredient for all this is the following well-known formula.
\end{example}

\begin{theorem}[Schur complement formula]\label{thm:Schur}
Let $\cA$ be a complex unital algebra. Let matrices
$$a\in M_k(\cA),\quad b\in M_{k,l}(\cA),\quad c\in M_{l,k}(\cA),\quad
d\in M_l(\cA)$$
be given and assume that $d$ is invertible in $M_l(\cA)$. Then the following are equivalent.
\begin{itemize}
\item[(i)]
$\begin{pmatrix} a&b\\c&d\end{pmatrix}\in M_{k+l}(\cA)$ is invertible.
\item[(ii)]
The \emph{Schur complement} $a-bd^{-1}c$ is invertible in $M_k(\cA)$.
\end{itemize}
If those are satisfied, then we have
$$\begin{pmatrix}
a&b\\c&d\end{pmatrix}^{-1}=
\begin{pmatrix}
(a-bd^{-1}c)^{-1}&*\\ *&*\end{pmatrix}.$$
\end{theorem}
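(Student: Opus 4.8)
The plan is to reduce the whole statement to one block factorization. Since $d$ is invertible in $M_l(\cA)$, I would first establish the identity
$$\begin{pmatrix} a&b\\c&d\end{pmatrix}=
\begin{pmatrix} 1 & bd^{-1}\\ 0 & 1\end{pmatrix}
\begin{pmatrix} a-bd^{-1}c & 0\\ 0 & d\end{pmatrix}
\begin{pmatrix} 1 & 0\\ d^{-1}c & 1\end{pmatrix}$$
by direct block multiplication: the product of the two rightmost factors has first row $(a-bd^{-1}c,\ 0)$ and second row $(c,\ d)$, and left-multiplying by the first factor adds $bd^{-1}c$ to the $(1,1)$ block (restoring $a$) and $bd^{-1}d=b$ to the $(1,2)$ block. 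This is the only computation in the proof, and it is short. The one subtlety is that $\cA$ need not be commutative, so the three matrix factors must be kept in the stated order throughout; but at no point is a product of scalars reordered, so this causes no difficulty.

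Granting the factorization, both claims fall out. The two outer factors are unipotent block-triangular, hence invertible in $M_{k+l}(\cA)$ with inverses obtained by flipping the sign of the off-diagonal block. Therefore the product on the right-hand side is invertible if and only if the middle, block-diagonal factor is; and since $d$ is already assumed invertible, the block-diagonal matrix with diagonal blocks $a-bd^{-1}c$ and $d$ is invertible precisely when $a-bd^{-1}c$ is invertible in $M_k(\cA)$. This proves the equivalence of (i) and (ii).

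Finally, when (i) and (ii) hold, I would invert the factorization by reversing the order of the three factors and inverting each. Writing $s:=(a-bd^{-1}c)^{-1}$, this gives
$$\begin{pmatrix} a&b\\c&d\end{pmatrix}^{-1}=
\begin{pmatrix} 1 & 0\\ -d^{-1}c & 1\end{pmatrix}
\begin{pmatrix} s & 0\\ 0 & d^{-1}\end{pmatrix}
\begin{pmatrix} 1 & -bd^{-1}\\ 0 & 1\end{pmatrix},$$
and multiplying out one reads off that the $(1,1)$ entry is $s=(a-bd^{-1}c)^{-1}$, as claimed; the remaining entries (the $*$'s) come out of the same product if one wants them, but they are not needed. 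There is no genuine obstacle: the result is a bookkeeping identity for $2\times 2$ block matrices, and the noncommutativity of $\cA$ is dealt with simply by never permuting factors.
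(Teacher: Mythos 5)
Your proof is correct and follows essentially the same route as the paper: you use the identical UDL block factorization with unipotent triangular outer factors and the block-diagonal matrix $\mathrm{diag}(a-bd^{-1}c,\,d)$ in the middle, deduce the equivalence from the invertibility of the triangular factors, and read off the $(1,1)$ block of the inverse by inverting the factorization. Nothing to correct.
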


\begin{proof}
We have
\begin{equation}\label{eq:11.2}
\begin{pmatrix}
a&b\\ c&d\end{pmatrix}=
\begin{pmatrix}
1& bd^{-1}\\ 0&1\end{pmatrix}
\begin{pmatrix}
a-bd^{-1}c& 0\\ 0&d \end{pmatrix}
\begin{pmatrix}
1&0\\ d^{-1} c& 1 \end{pmatrix}.
\end{equation}
Since the first and the third factor are always invertible, the invertibility of the left hand side is equivalent to the invertibility of 
$$\begin{pmatrix}
a-bd^{-1}c& 0\\ 0&d \end{pmatrix},$$
which in turn is equivalent to the invertibility of $a-bd^{-1}c$ (since $d$ is invertible by assumption).

The formula for the inverse follows by taking the inverse of \eqref{eq:11.2}.
\end{proof}

\begin{definition}
Let $r$ be a rational expression in the formal variables $x_1,\dots,x_d$. A
\emph{linear representation} $\rho=(u,q,v)$ of $r$ consists of 
\begin{itemize}
\item[$\circ$]
an affine matrix $q$ in the variables $x_1,\dots,x_d$, of size $n\times n$ for some $n\in\NN$,
\item[$\circ$]
an $1\times n$ matrix $u$ over $\CC$,
\item[$\circ$]
and an $n\times 1$ matrix $v$ over $\CC$
\end{itemize}
such that we have for any unital algebra $\cA$ and any $X_1,\dots,X_d\in\cA$:
whenever $r(X_1,\dots,X_d)$ makes sense in $\cA$ (i.e., all inverses appearing in $r$ must exist in $\cA$), then $q(X_1,\dots,X_d)$ is also invertible in $M_n(\cA)$ and we have then
$$r(X_1,\dots,X_d)=-u q(X_1,\dots,X_d)^{-1} v.$$
\end{definition}

\section{Distribution of non-commutative rational functions}

\begin{theorem}\label{thm:11.5}
Let $r$ be a selfadjoint rational expression and $\rho=(u,q,v)$ a selfadjoint linear representation of $r$ (i.e., $u=v^*$, $q=q^*$). Consider a 
$C^*$-probability space $(\cA,\ff)$ and selfadjoint random variables 
$X_1,\dots,X_d\in\cA$ such that $r(X_1,\dots,X_d)$ is defined in $\cA$ (necessarily as bounded operator). Then, with
$$\hat R:=\begin{pmatrix}
0& u\\ v& q(X_1,\dots,X_d) \end{pmatrix}\in M_{n+1}(\cA),$$
we have for all $z\in H^+(\CC)$
$$G_{r(X_1,\dots,X_d)}(z)=\lim_{\ee\searrow 0}[G_{\hat R}(\Lambda_\ee(z))]_{1,1}.$$
\end{theorem}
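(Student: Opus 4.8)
The plan is to mimic the proof of Theorem \ref{thm:8.8} (the polynomial case), replacing the algebraic factorization $p = -uq^{-1}v$ by the linear representation $r = -uq(X_1,\dots,X_d)^{-1}v$ from the definition of a linear representation, and controlling invertibility via the Schur complement formula (Theorem \ref{thm:Schur}). First I would fix $z \in H^+(\CC)$ and $\ee > 0$, write $Q := q(X_1,\dots,X_d) \in M_n(\cA)$ and $R := r(X_1,\dots,X_d) \in \cA$, and observe that the hypothesis ``$r(X_1,\dots,X_d)$ is defined in $\cA$'' guarantees, by the defining property of the linear representation $\rho=(u,q,v)$, that $Q$ is invertible in $M_n(\cA)$ and $R = -uQ^{-1}v$.

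Next I would run the Schur-complement bookkeeping. With
$$\Lambda_\ee(z) = \begin{pmatrix} z & 0 \\ 0 & i\ee \cdot 1_{n-1} \end{pmatrix} \quad\text{... no, more precisely}\quad \Lambda_\ee(z)-\hat R = \begin{pmatrix} z & -u \\ -v & i\ee\cdot 1_n - Q \end{pmatrix},$$
one applies Theorem \ref{thm:Schur} with $a = z$, $b = -u$, $c = -v$, $d = i\ee\cdot 1_n - Q$. The block $d$ is invertible in $M_n(\cA)$ because $Q = Q^*$ is selfadjoint (as $q = q^*$ and the $X_i$ are selfadjoint), so $i\ee$ is not in its spectrum; hence the Schur complement is $z - u(i\ee - Q)^{-1}v$, which for small $\ee$ is a small perturbation of $z - u(-Q)^{-1}v = z + uQ^{-1}v = z - R \in H^+(\cA)$, in particular invertible. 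Therefore $\Lambda_\ee(z) - \hat R$ is invertible and
$$\bigl[(\Lambda_\ee(z)-\hat R)^{-1}\bigr]_{1,1} = \bigl(z - u(i\ee\cdot 1_n - Q)^{-1}v\bigr)^{-1}.$$
Applying $\id\otimes\ff$ and taking the $(1,1)$-entry gives $[G_{\hat R}(\Lambda_\ee(z))]_{1,1} = \ff\bigl((z - u(i\ee - Q)^{-1}v)^{-1}\bigr)$.

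Finally I would let $\ee \searrow 0$. As $\ee \to 0$ one has $u(i\ee\cdot 1_n - Q)^{-1}v \to -uQ^{-1}v = R$ in norm (the map $\ee \mapsto (i\ee - Q)^{-1}$ is norm-continuous at $0$ since $Q$ is selfadjoint and bounded, so $0$ lies in the resolvent set of the skew-shifted operator... rather: $i\ee - Q$ is invertible for all $\ee \geq 0$ because $-Q$ is selfadjoint hence $i\ee-Q = i\ee - Q$ is invertible for $\ee>0$ and $=-Q$ need not be invertible, but $z-R$ being in $H^+$ saves the outer inverse), hence $z - u(i\ee - Q)^{-1}v \to z - R \in H^+(\cA) \subset \cA^{-1}$, so by continuity of inversion on the invertibles and of $\ff$ we get $[G_{\hat R}(\Lambda_\ee(z))]_{1,1} \to \ff\bigl((z-R)^{-1}\bigr) = G_R(z)$. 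One should also record that $\Lambda_\ee(z) \in H^+(M_{n+1}(\CC))$ for $\ee>0$, so $G_{\hat R}$ is genuinely evaluated inside its natural domain before the limit is taken.

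The main obstacle I expect is the careful handling of invertibility and continuity at $\ee = 0$: the block $d = i\ee\cdot 1_n - Q$ is fine for $\ee > 0$ but $-Q$ itself need not be invertible, so the Schur formula cannot be applied directly at $\ee = 0$; one must instead argue with the Schur complement $z - u(i\ee-Q)^{-1}v$, whose invertibility at $\ee=0$ comes from $z - R \in H^+(\cA)$ (Proposition \ref{prop:4.3}) together with a perturbation estimate for small $\ee$, and then invoke norm-continuity of the relevant inverses. The purely algebraic identities and the passage through $\id\otimes\ff$ are routine and parallel Theorem \ref{thm:8.8}; the only real content is verifying that every inverse appearing in the computation actually exists for $\ee$ in a neighborhood of $0$, which rests precisely on the defining property of a linear representation (existence of $R$ forces invertibility of $Q$) and on selfadjointness of $q$.
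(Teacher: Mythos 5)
Your proof is correct and follows essentially the same route as the paper's: factor $\Lambda(z)-\hat R$ via the Schur complement (Theorem \ref{thm:Schur}), identify the Schur complement with $z-r(X_1,\dots,X_d)$, and pass to the limit $\ee\searrow 0$ by continuity. One remark: your parenthetical worry that ``$-Q$ need not be invertible'' contradicts the fact you correctly stated at the outset, namely that the definition of a linear representation guarantees $Q=q(X_1,\dots,X_d)$ is invertible whenever $r(X_1,\dots,X_d)$ is defined; the paper exploits this to apply Schur directly at $\ee=0$ with block $d=-Q$, which streamlines the argument and makes the $\ee>0$ detour through selfadjointness of $Q$ unnecessary.
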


\begin{proof}
Compare also Example \ref{ex:8.2}. We have
$$\Lambda(z)-\hat R=\begin{pmatrix}
z& -u\\ -v& -q(X_1,\dots,X_d)
\end{pmatrix};$$
by definition of linear representation, $q(X_1,\dots,X_d)$ is invertible; so, by the Schur complement formula \ref{thm:Schur}, $\Lambda(z)-\hat R$ is invertible
if and only if 
$$z-u(-q(X_1,\dots,X_d))^{-1}v=z-r(X_1,\dots,X_d)$$
is invertible, and then
$$[(\Lambda(z)-\hat R)^{-1}]_{1,1}=(z-r(X_1,\dots,X_d))^{-1}.$$
Applying $\ff$ to this, and taking into account the continuity in $\ee$ as in Example \ref{ex:8.2}, gives the statement on Cauchy transforms. 
\end{proof}

\textbf{11.6. Algorithm for linear representations.}
For every rational expression one can build a linear representation according to the following algorithm.
\begin{enumerate}
\item
Scalars $\lambda\in\CC$ and variables $x_j$ have respective linear representations
$$\left(
\begin{pmatrix}
0& 1
\end{pmatrix},
\begin{pmatrix}
\lambda& -1\\ -1& 0 \end{pmatrix},
\begin{pmatrix}
0\\ 1
\end{pmatrix}
\right)\qquad\text{and}\qquad
\left(
\begin{pmatrix}
0& 1
\end{pmatrix},
\begin{pmatrix}
x_j& -1\\ -1& 0 \end{pmatrix},
\begin{pmatrix}
0\\ 1
\end{pmatrix}
\right).
$$
\item
If $(u_1,q_1,v_1)$ is a representation of $r_1$ and $(u_2,q_2,v_2)$ is
a representation of $r_2$, then representations for $r_1+r_2$ and for $r_1\cdot r_2$ are respectively given by
$$\left( 
\begin{pmatrix}
u_1& u_2 \end{pmatrix},
\begin{pmatrix}
q_1& 0\\ 0& q_2\end{pmatrix},
\begin{pmatrix}
v_1\\ v_2 \end{pmatrix}\right)\qquad\text{and}\qquad
\left(\begin{pmatrix}
0& u_1 \end{pmatrix},
\begin{pmatrix}
v_1u_2& q_1\\ q_2& 0\end{pmatrix},
\begin{pmatrix}
0\\ v_2 \end{pmatrix}\right).$$
\item
If $(u,q,v)$ is a representation of $r\not=0$, then
$$\left(\begin{pmatrix}
1& 0 \end{pmatrix},
\begin{pmatrix}
0& u\\ v& -q\end{pmatrix},
\begin{pmatrix}
1\\ 0 \end{pmatrix}\right)$$
is a representation of $r^{-1}$.
\end{enumerate}
\begin{proof}
Let us just check (3). We have to see: if $r^{-1}(X_1,\dots,X_d)$ makes sense
(i.e., $r(X_1,\dots,X_d)\not=0$ and invertible in $\cA$), then
$$\begin{pmatrix}
0& u\\ v& -q(X_1,\dots,X_d)\end{pmatrix}$$
is invertible. Since $r(X_1,\dots,X_d)$ makes sense, $q(X_1,\dots,X_d)$ is 
invertible (by the definition of a linear representation) and, by the Schur complement formula \ref{thm:Schur}, the matrix above is invertible if and only if
$-uq(X_1,\dots,X_d)v=r(X_1,\dots,X_d)$ is invertible; but this is the case by our assumption; and then we have, still by \ref{thm:Schur},
$$
\begin{pmatrix}
1&0
\end{pmatrix}
\begin{pmatrix}
0& u\\ v& -q(X_1,\dots,X_d)\end{pmatrix}^{-1}
\begin{pmatrix} 1\\0\end{pmatrix}
=\left[\begin{pmatrix}
0& u\\ v& -q(X_1,\dots,X_d)\end{pmatrix}^{-1}\right]_{1,1}=r(X_1,\dots,X_d)^{-1}.$$

\end{proof}
\setcounter{theorem}{6}
\begin{example}
Let us apply the above algorithm to $r(x,y)=[x^{-1}+y^{-1}]^{-1}$. First, for $x^{-1}$ and $y^{-1}$ we have the linearizations
$$\left(
\begin{pmatrix}
1&0&0 \end{pmatrix},
\begin{pmatrix}
0&0&1\\ 0& -x& 1\\
1&1&0
\end{pmatrix},
\begin{pmatrix}
1\\0\\0
\end{pmatrix}
\right)
\qquad\text{and}\qquad
\left(
\begin{pmatrix}
1&0&0 \end{pmatrix},
\begin{pmatrix}
0&0&1\\ 0& -y& 1\\
1&1&0
\end{pmatrix},
\begin{pmatrix}
1\\0\\0
\end{pmatrix}
\right),
$$
which gives for $x^{-1}+y^{-1}$ the linearization
$$\left(
\begin{pmatrix}
1&0&0&1&0&0
\end{pmatrix},
\begin{pmatrix}
0&0&1&0&0&0\\
0&-x&1&0&0&0\\
1&1&0&0&0&0\\
0&0&0&0&0&1\\
0&0&0&0&-y&1\\
0&0&0&1&1&0
\end{pmatrix},
\begin{pmatrix}
1\\0\\0\\1\\0\\0
\end{pmatrix}
\right).$$
Finally, the inverse $[x^{-1}+y^{-1}]^{-1}$ has then the linearization
$$\left(
\begin{pmatrix}
1&0&0&0&0&0&0
\end{pmatrix},
\begin{pmatrix}
0&1&0&0&1&0&0\\
1&0&0&-1&0&0&0\\
0&0&x&-1&0&0&0\\
0&-1&-1&0&0&0&0\\
1&0&0&0&0&0&-1\\
0&0&0&0&0&y&-1\\
0&0&0&0&-1&-1&0
\end{pmatrix},
\begin{pmatrix}
1\\0\\0\\0\\0\\0\\0
\end{pmatrix}
\right).
$$

\end{example}

\chapter{Unbounded Rational Expressions}

Evaluating rational expressions in operators will typically lead to unbounded operators. Here we will see that we can also say quite a bit about such a situation. Actually, understanding what is going on there is crucial for getting a grasp on one of the most basic regularity questions about non-commutative distributions: the absence of atoms in the distribution of polynomials or rational functions of our operators. The material here relies on the original work \cite{MSY}, where one can also find more details about the algebraic description of non-commutative rational functions as the ``free skew field'', and the ``fullness'' of matrices in this context.

\section{Going unbounded}

\begin{remark}
Note that we have to restrict to $X_1,\dots,X_d\in\cA$ for which $r(X_1,\dots,X_d)$ is defined in $\cA$, for a rational expression $r$. Up to now we
considered this in a $C^*$-algebra $\cA$, which means that $r(X_1,\dots,X_d)$ has to exist in $\cA$, i.e., as a bounded operator. 
Can we weaken this?
\end{remark}

\begin{example}
Let $X:\Omega\to\RR$ be a classical real-valued random variable, defined on
a probability space $(\Omega,\frak{A},P)$. When does $Y:=X^{-1}=1/X$ make sense as a random variable. Since our functions are defined only almost
everywhere, we need
\begin{equation}\label{eq:measurezero}
\mu_X(\{0\})=P(X=0)=0.
\end{equation}
If we consider $X$ as multiplication operator on $L^2(\Omega,P)$, then
\eqref{eq:measurezero} says that the kernel
$$\ker(X):=\{f\in L^2(\Omega)\mid Xf=0\}$$
is trivial, i.e., $\ker(X)=\{0\}$. Under this condition, $X^{-1}$ exists, but it might
be an unbounded operator, namely if $0$ is in the spectrum $\sigma(X)$ of $X$.
\begin{enumerate}
\item
For example, let $X=S$ be a semicircular variable with distribution
$$\mu_S=
\begin{tikzpicture}[baseline=0pt]
\begin{scope}[yshift=-0em, scale=.7]
	\tikzset{dot/.style={circle,fill=#1,inner sep=0,minimum size=4pt}}

	\node[dot=black] at (-2, 0)   (1) {};
	\node[dot=black] at (2, 0)   (2) {};
	\node[dot=black] at (0, 2)   (3) {};
		
	\draw (-2,-0.5) -- node {$-2$} (-2,-0.5);
	\draw (2,-0.5) -- node {$2$} (2,-0.5);
	\draw (0.5,2.6) -- node {$\frac{1}{\pi}$} (0.5,2.6);

	\draw [->] (-2.5,0) -- (2.5,0);	
	\draw [->] (0,-0.5) -- (0,2.5);	
	\draw (-2,0) -- (2,0) arc(0:180:2) --cycle;
\end{scope}
	\end{tikzpicture}
$$
We can realize $X$ as multiplication operator on the interval $[-2,2]$; i.e.,
$$(Xf)(t)=tf(t)\qquad\text{for $f\in L^2([-2,2],\mu_S)$.}$$
Then $0\in\sigma(S)$ and $S^{-1}$ does not exist as bounded operator, but
makes sense as unbounded operator:
$(X^{-1}f)(t)=t^{-1}f(t)$ for $f$ such that $t\mapsto t^{-1}f(t)$ is in $ L^2([-2,2],\mu_S)$. Note that we only need injectivity of $X$ -- i.e., $\ker(X)=\{0\}$ -- to ensure ``surjectivity'' --  i.e., that the image of $X$ is dense, so that we can invert it there. This is like for matrices, but of course is not true for general infinite-dimensional operators.
\item
Without injectivity we have no chance of making sense of $X^{-1}$, even as unbounded operator. E.g., if $\mu_X=\frac 12(\delta_0+\delta_1)$, there is
no $X^{-1}$.
\end{enumerate}
\end{example}

\section{Affiliated unbounded operators}

\begin{definition}
Let $M\subset B(\HH)$ be a von Neumann algebra. A densely defined and closed unbounded operator $X$ on $\HH$ is \emph{affiliated} with $M$, if for every unitary $U\in M'$ ($M'$ is the commutant) we have $UX=XU$. [Equivalently, in the polar
decomposition $X=U\vert X\vert$ we have $U\in M$ and $\vert X\vert$ is affiliated with $M$, i.e., all spectral projections of $\vert X\vert$ are in $M$.]
We write $\tilde M$ for the set of operators affiliated to $M$.
\end{definition}

\begin{example}
\begin{enumerate}
\item
If $M=B(\HH)$, then $\tilde M$ consists of all unbounded densely defined and closed operators on $\HH$; for $\dim \HH=\infty$ this is a nasty object without much structure.
\item
If $M=L^\infty(\mu)$, then $\tilde M$ is the $*$-algebra of all $\mu$-measurable functions.
\item
If $M$ is a finite von Neumann algebra (i.e., it has a faithful normal trace $\tau$), then the situation is as nice as in the classical commutative case or in the case of matrices; namely, then $\tilde M$ is a $*$-algebra and for $X\in\tilde M$ the  inverse $X^{-1}\in\tilde M$ exists if and only if $X$ is injective, i.e., $\ker(X)=\{0\}$. [Those are results of Murray and von Neumann.]
\end{enumerate}
\end{example}

\begin{remark}
\begin{enumerate}
\item
Note that the case of a finite von Neumann algebra is relevant for us; our $C^*$-probability spaces $(\cA,\ff)$ are usually $W^*$-probability spaces $(M,\tau)$ where $M$ is a von Neumann algebra and $\tau$ is a trace. In particular, free semicirculars $S_1,\dots,S_d$ are living in a finite von Neumann algebra. More general, limits of random matrices do so, since our $\ff$ as the limit of traces
on matrices is necessarily also a trace.
\item
If we are in a finite von Neumann algebra setting $(M,\tau)$, then we can 
replace $\cA$ in Theorem \ref{thm:11.5} by $\tilde M$ and thus also treat $r(X_1,\dots,X_d)$ which are defined as unbounded affiliated operators. Via our linearization $r$ $\hat=$ $(u,q,v)$ this requirement on the existence of $r(X_1,\dots,X_d)$ as unbounded operator is the same as the existence of the inverse of $q(X_1,\dots,X_d)$ as unbounded operator; and then we still have
$r(X_1,\dots,X_d)=-uq(X_1,\dots,X_d)^{-1} v$. 
\item
This raises the question whether there are operators $X_1,\dots,X_d$ for which
\begin{enumerate}
\item all rational expressions $r(X_1,\dots,X_d)$ are defined as unbounded operators or
\item
all inverses of $q(X_1,\dots,X_d)$ exist as unbounded operators.
\end{enumerate}
Note that we have to specify more precisely which $r$ and $q$ we mean.
\begin{enumerate}
\item
We have to make sure that we never invert 0; thus $0^{-1}$ is not allowed as $r$; but there can also be more subtle versions of this, like
$$(yxx^{-1}-y)^{-1}\qquad\text{or}\qquad
\bigl\{x-[x^{-1}+(y^{-1}-x)^{-1}]^{-1}-xyx\bigr\}^{-1}.$$
\item
The $q$ arising in our linearization algorithm are \emph{full} in the following sense: $q$ has no proper rectangular factorization in matrices over $\CC\lb x_1,\dots,x_d\rb$, i.e. if we can factorize $q\in M_n(\CC\lb x_1,\dots,x_d\rb)$ as
$q=q_1q_2$ with $q_1\in M_{n,r}(\CC\lb x_1,\dots,x_d\rb)$ and $q_2\in M_{r,n}(\CC\lb x_1,\dots,x_d\rb)$, then we necessarily have $r\geq n$.

Note: if $q$ is not full, then
$q(X_1,\dots,X_d)=q_1(X_1,\dots,X_d)\cdot q_2(X_1,\dots,X_d)$; since $q_2(X_1,\dots,X_d)$ as an $r\times n$ matrix with $r<n$ has no dense image, it must also have a kernel, but then $q(X_1,\dots,X_d)$ has also a kernel. So we need clearly fullness as a requirement for the considered $q$.
\end{enumerate}
\end{enumerate}
\end{remark}

\section{Realization of non-commutative rational functions as unbounded operators}

\begin{theorem}[Mai, Speicher, Yin 2019]
Let $(M,\tau)$ be a tracial $W^*$-probability space and consider $X_1,\dots,X_d\in M$. Then the following are equivalent.
\begin{itemize}
\item[(i)]
For all meaningful rational expressions $r\not=0$, the operator $r(X_1,\dots,X_d)$ exists as unbounded operator in $\tilde M$ and is invertible in $\tilde M$.
\item[(ii)]
For all full affine $q\in M_n(\CC\lb x_1,\dots,x_d\rb)$ the operator
$q(X_1,\dots,X_d)\in M_n(M)$ is invertible in $M_n(\tilde M)$.
\item[(iii)]
$\Delta(X_1,\dots,X_d)=d$, which means the following: if we have finite
rank operators $T_1,\dots,T_d$ on $L^2(M,\tau)$ such that
$\sum_{k=1}^d [T_k,X_k]=0$, then necessarily $T_1=\dots =T_d=0$.
\end{itemize}
\end{theorem}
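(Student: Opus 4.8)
The plan is to prove the cycle of implications $(i)\Rightarrow(ii)\Rightarrow(iii)\Rightarrow(i)$, exploiting the linearization machinery from Chapter 11 to translate between rational expressions and full affine matrices, and a Fredholm/rank argument to handle the condition on commutators.

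\textbf{Step 1: $(i)\Rightarrow(ii)$.} Given a full affine $q\in M_n(\CC\lb x_1,\dots,x_d\rb)$, the goal is to manufacture a meaningful rational expression $r$ whose defining inverse is exactly $q(X_1,\dots,X_d)^{-1}$; then $(i)$ forces this inverse to exist in $M_n(\tilde M)$. The key input here is the algebraic theory of the free skew field: a full matrix over $\CC\lb x_1,\dots,x_d\rb$ becomes invertible over the free field, and its inverse entries are honest rational expressions. Concretely, one picks the $(1,1)$-entry $r$ of $q^{-1}$ taken in the free skew field; by construction $r$ has a linear representation whose denominator matrix is (stably associated to) $q$, so "$r(X_1,\dots,X_d)$ is meaningful" is equivalent to "$q(X_1,\dots,X_d)$ is invertible in $M_n(\tilde M)$". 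Running this over all $(i,j)$-entries and using the Schur complement formula \ref{thm:Schur} to pass between the invertibility of $q$ and of its Schur complements gives the implication. One must also check the edge cases flagged in the excerpt --- that we never secretly invert $0$ --- which is exactly where \emph{fullness} of $q$ is used: a non-full $q$ would have a genuine kernel and the corresponding rational expression would be ill-defined, so fullness is precisely the hypothesis that keeps us inside the class of "meaningful" expressions.

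\textbf{Step 2: $(ii)\Rightarrow(iii)$, and $(iii)\Rightarrow(ii)$.} This is the analytic heart. Suppose $T_1,\dots,T_d$ are finite-rank on $L^2(M,\tau)$ with $\sum_k[T_k,X_k]=0$ but not all zero. The idea is to encode $(T_1,\dots,T_d)$ into a single finite-rank perturbation that detects non-invertibility of some full affine $q(X_1,\dots,X_d)$. Form the affine pencil $q = b_0\otimes 1 + \sum_k b_k\otimes x_k$ for suitable scalar matrices $b_k$ chosen so that the relation $\sum_k[T_k,X_k]=0$ translates into the statement that $q(X_1,\dots,X_d)$ commutes, modulo finite rank, with an operator built from the $T_k$; since finite-rank operators are never invertible in the semifinite setting (their range is not dense), one extracts that $q(X_1,\dots,X_d)$ fails to be invertible in $M_n(\tilde M)$, contradicting $(ii)$. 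For the reverse direction, if some full affine $q(X_1,\dots,X_d)$ is not invertible in $M_n(\tilde M)$, then by Murray--von Neumann theory (cited in the excerpt's Example on affiliated operators) it has a non-trivial kernel projection $P\neq 0$ in $M_n(M)$; differentiating the relation $q(X_1,\dots,X_d)P = 0$ against the matrix units in $b_1,\dots,b_d$ and projecting onto a one-dimensional subspace of $\HH$ produces finite-rank operators $T_k$ with $\sum_k[T_k,X_k]=0$, not all zero. The quantity $\Delta(X_1,\dots,X_d)$ is being identified with a "rank" of the derivation map $(T_k)\mapsto \sum_k[T_k,X_k]$, and the equivalence is the statement that this map is injective on finite-rank tuples iff no full affine pencil degenerates.

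\textbf{Step 3: $(ii)\Rightarrow(i)$ and closing the loop.} This is the linearization argument of Theorem \ref{thm:11.5} run in reverse: every meaningful rational expression $r\neq 0$ has, by the algorithm in 11.6, a linear representation $(u,q,v)$ with $q$ affine and --- crucially --- \emph{full}; then $(ii)$ guarantees $q(X_1,\dots,X_d)$ is invertible in $M_n(\tilde M)$, so $r(X_1,\dots,X_d) = -u\,q(X_1,\dots,X_d)^{-1}v$ exists in $\tilde M$. That it is moreover invertible in $\tilde M$ follows because $r\neq 0$ is meaningful, so $r^{-1}$ is also a meaningful rational expression and has its own full linear representation, to which $(ii)$ applies again. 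One subtlety: the algorithm in 11.6 must be checked to always output a \emph{full} $q$; this is a theorem from \cite{MSY} about minimal linear representations, and I would simply invoke it. The main obstacle, I expect, is Step 2 --- in particular making rigorous the passage between the kernel projection of a full affine operator and the finite-rank commutator condition, i.e.\ correctly identifying $\Delta$ as the relevant rank invariant and controlling the finite-dimensionality at both ends. The implications $(i)\Leftrightarrow(ii)$ are essentially bookkeeping with the free skew field and Schur complements; the genuinely new analytic content, and the place where the trace $\tau$ and the semifiniteness of $M$ are indispensable, is the $(ii)\Leftrightarrow(iii)$ equivalence.
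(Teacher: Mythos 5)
The paper does not actually prove this theorem; after stating it, the surrounding remark gives only the one key computation linking (ii) and (iii) and then points to \cite{MSY} for everything else: given a selfadjoint affine $\hat R=b^{(0)}\otimes 1+\sum_k b^{(k)}\otimes X_k$ and any $f=(f_1,\dots,f_n)\in L^2(M)^n$ in its kernel, one sets $T_k:=\sum_{i,j}b^{(k)}_{ij}\,\lb\cdot,f_i\rb f_j$, and $\hat Rf=0$ together with its adjoint gives $\sum_k[T_k,X_k]=0$. Your Steps 1 and 3, tying (i) and (ii) together via full linear representations over the free skew field and the Schur complement, are consistent with the paper's one-line indication that this direction is ``more or less the linearization idea'', and you correctly flag the two things that must be checked there (fullness of the $q$ output by the algorithm; every full affine $q$ arising as a denominator).

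Your Step 2, however, has a genuine gap in both directions. For $(iii)\Rightarrow(ii)$, the description ``differentiating $q(X_1,\dots,X_d)P=0$ against the matrix units in $b_1,\dots,b_d$ and projecting onto a one-dimensional subspace'' is not what happens and does not produce the right objects: the $b^{(k)}$ are arbitrary selfadjoint matrices, not matrix units, there is no differentiation, and the correct and much more direct move is exactly the explicit formula $T_k=\sum_{i,j}b^{(k)}_{ij}\lb\cdot,f_i\rb f_j$ built from a kernel vector $f$ (which exists by Murray--von Neumann since non-invertibility in $M_n(\tilde M)$ over a \emph{finite} von Neumann algebra forces a kernel); one must then still argue that $T_1,\dots,T_d$ are not all zero, which is where fullness is used and which your sketch omits. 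For $(ii)\Rightarrow(iii)$, the claim that a suitable affine $q(X_1,\dots,X_d)$ ``commutes, modulo finite rank, with an operator built from the $T_k$'' and that this forces non-invertibility is not a valid inference: commuting modulo finite rank with something of finite rank places no constraint on invertibility. The actual argument reverses the other direction: decompose each $T_k$ into rank-one pieces $T_k=\sum_j\lb\cdot,\xi^{(k)}_j\rb\eta^{(k)}_j$ and assemble the vectors $\xi^{(k)}_j,\eta^{(k)}_j$ into an explicit non-zero element of the kernel of a concrete full affine pencil, with $\sum_k[T_k,X_k]=0$ serving precisely as the identity that puts this vector in the kernel. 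Finally, a small but real slip: the argument lives in the \emph{finite} tracial setting, not the semifinite one.
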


\begin{remark}
\begin{enumerate}
\item
The equivalence between (i) and (ii) is more or less the linearization idea; the relation between (ii) and (iii) relies on the following. Consider linear and selfadjoint
$$\hat R=b^{(0)}\otimes 1+b^{(1)}\otimes X_1+\cdots+ b^{(d)}\otimes X_d$$
with $b^{(0)}, b^{(1)},\dots,b^{(d)}\in M_n(\CC)$ selfadjoint, and assume we have an element $f=(f_1,\dots,f_n)$, with $f_i\in L^2(M)$ for $i=1,\dots ,n$, in the kernel of $\hat R$, i.e., $\hat R f=0$; then put
$$T_k:=\sum_{i,j=1}^n b_{ij}^{(k)} \lb \cdot, f_i\rb f_j \qquad (k=0,1,\dots,d).$$
Those $T_0,T_1,\dots,T_d$ are finite rank operators and $\hat R f=0$ is then
$$T_0+\sum_{k=1}^k X_kT_k=0.$$
Since the $T_i$ are selfadjoint, we get by taking the adjoint
$$T_0+\sum_{k=1}^d T_kX_k=0.$$
By taking the difference between those two equations we have then
$$\sum_{k=1}^d \underbrace{(T_kX_k-X_kT_k)}_{[T_k,X_k]}=0.$$
The theorem holds also for non-selfadjoint $X_i$, the arguments are getting then more involved.
\item
It is not obvious how to check whether $\Delta(X_1,\dots,X_d)=d$ is satisfied or not. However, there are a couple of free probability tools to decide on this, like ``maximality of free entropy dimension'' or ``existence of a dual system''. So we know, for example, that $\Delta(S_1,\dots,S_d)=d$ for free semicirculars $S_1,\dots,S_d$.
\end{enumerate}
\end{remark}

The above gives us directly some strong implications about the absence of atoms in the distribution of polynomials, or even rational functions, of operators
which satisfy $\Delta(X_1,\dots,X_d)=d$. Let us formulate this just for the most prominent case of free semicirculars.

\begin{cor}
Let $(M,\tau)$ be a finite $W^*$-probability space and $S_1,\dots,S_d\in M$ free
semicirculars.
\begin{enumerate}
\item
For any meaningful rational expression $r$ the operator $r(S_1,\dots,S_d)\in
\tilde M$ exists as unbounded operator. If $r=r^*$ and not constant, then
$\mu_{r(S_1,\dots,S_d)}$ has no atoms.
\item
For any full $q\in M_n(\CC\lb x_1,\dots,x_d\rb)$ the operator $q(S_1,\dots,S_d)$ is invertible in $M_n(\tilde M)$. If $q=q^*$, then $\mu_{q(S_1,\dots,S_d)}$, with respect to $(M_n(M),\tr_n\otimes\tau)$, has no atom at 0.
\end{enumerate}
\end{cor}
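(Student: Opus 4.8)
The plan is to deduce the corollary directly from the preceding theorem of Mai, Speicher and Yin, using the one fact about free semicirculars that the excerpt flags as available: $\Delta(S_1,\dots,S_d)=d$ (known via maximality of free entropy dimension or existence of a dual system). So the very first step is to invoke this to put $(S_1,\dots,S_d)$ into case (iii) of the theorem, hence also into cases (i) and (ii). That immediately gives the existence statements in both parts of the corollary: for any meaningful rational expression $r$, the operator $r(S_1,\dots,S_d)$ exists in $\tilde M$ and is invertible there (part (1)); and for any full affine $q\in M_n(\CC\lb x_1,\dots,x_d\rb)$, the operator $q(S_1,\dots,S_d)$ is invertible in $M_n(\tilde M)$ (part (2), existence half). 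Note that $M$ is a finite von Neumann algebra because $\tau$ is a faithful normal trace, so the algebra $\tilde M$ of affiliated operators behaves as nicely as in the matrix case — in particular the equivalence ``injective $\iff$ invertible'' holds.

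The substantive part is the absence of atoms. For part (2): an atom of $\mu_{q(S_1,\dots,S_d)}$ (with respect to $\tr_n\otimes\tau$ on $M_n(M)$) at $0$ would mean that $q(S_1,\dots,S_d)\in M_n(M)$ has a nontrivial kernel as an operator on $L^2(M,\tau)^{\oplus n}$, i.e.\ is not injective. Since $q$ is full, (ii) tells us $q(S_1,\dots,S_d)$ is invertible in $M_n(\tilde M)$, and in a finite von Neumann algebra invertibility in the affiliated algebra forces trivial kernel; so there is no atom at $0$. For part (1): write a selfadjoint linear representation $r\,\hat=\,(u,q,v)$ with $q=q^*$ affine of size $n$, so that $r(S_1,\dots,S_d)=-u\,q(S_1,\dots,S_d)^{-1}v$ with $q(S_1,\dots,S_d)$ invertible in $M_n(\tilde M)$. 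An atom of $\mu_{r(S_1,\dots,S_d)}$ at some $\lambda\in\RR$ means $r(S_1,\dots,S_d)-\lambda\cdot 1$ has nontrivial kernel. Now $r-\lambda$ is again a meaningful selfadjoint rational expression (subtracting a scalar is harmless), and if $r$ is nonconstant then $r-\lambda\neq 0$ as a rational expression, so by (i) the operator $r(S_1,\dots,S_d)-\lambda\cdot 1$ is itself invertible in $\tilde M$, hence injective, hence has no kernel — contradiction. Therefore $\mu_{r(S_1,\dots,S_d)}$ has no atoms.

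The one point that needs a little care, and which I expect to be the main obstacle, is the passage from ``invertible in $\tilde M$'' to ``the distribution has no atom'', i.e.\ making precise that an atom of the (scalar, trace-induced) distribution of a selfadjoint affiliated operator $Y=Y^*\in\tilde M$ at $\lambda$ is exactly the statement $\tau(\mathbf{1}_{\{\lambda\}}(Y))>0$, equivalently that the spectral projection $\mathbf{1}_{\{\lambda\}}(Y)$ is a nonzero projection in $M$, equivalently $\ker(Y-\lambda)\neq\{0\}$ in $L^2(M,\tau)$; this uses faithfulness of $\tau$. For the matrix-valued statement in part (2) one additionally notes that an atom at $0$ of the $\tr_n\otimes\tau$-distribution corresponds to the spectral projection of $q(S_1,\dots,S_d)$ at $0$ being a nonzero projection in $M_n(M)$, again equivalent to a nontrivial kernel on $L^2(M,\tau)^{\oplus n}$. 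Everything else — invoking $\Delta(S_1,\dots,S_d)=d$, citing the theorem, and using that $r-\lambda$ is still a meaningful nonconstant rational expression — is formal.
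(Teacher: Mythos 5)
Your proposal is correct and matches the approach the paper intends: the corollary is stated as a direct consequence of the Mai--Speicher--Yin theorem combined with $\Delta(S_1,\dots,S_d)=d$, and your write-up simply makes that implication explicit (passing from invertibility in $\tilde M$, resp.\ $M_n(\tilde M)$, to trivial kernel via faithfulness of the trace, and noting that $r-\lambda$ remains a meaningful nonzero rational expression when $r$ is nonconstant). Nothing to correct.
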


\chapter{Exercises}

\section{Assignment 1}\label{assignment:1}

In Examples \ref{ex:1.7} and \ref{ex:1.8} we saw two realizations of the most important non-com-mutative distribution, namely $n$ free semicircular elements.
In this assignment you are asked to familiarize yourself with the meaning of this.
For the notion of freeness you might watch Lecture 1 and 2 from the class ``
\href{https://www.math.uni-sb.de/ag/speicher/web_video/freeprobws1819/free_prob_ws1819.html}{Free Probability Theory}'' from last term or read the corresponding Chapter 1 of the \href{https://rolandspeicher.files.wordpress.com/2019/08/free-probability.pdf}{class notes}. For random matrices you might watch Lecture 17 and 18 or read Chapter 6.

\begin{exercise}\label{exercise:1}
Let $S_1,\dots,S_n$ be the operators on the full Fock space from Example \ref{ex:1.7}.
\begin{enumerate}
\item[(i)]
Saying that each $S\in\{S_i \mid 1\leq i\leq n\}$ is a semicircular variable means that its odd moments are zero and the even moments are given by the Catalan numbers, i.e.
$$\ff(S^{2k+1})=0\quad\text{and}\quad\ff(S^{2k})=\frac1{k+1} \binom{2k}k.$$
Check the latter for small $k$, i.e. show that
$$\ff(S^2)=1,\qquad \ff(S^4)=2,\qquad \ff(S^6)=5,\qquad \ff(S^8)=14.$$
\item[(ii)]
Saying that the $S_1,\dots,S_n$ are free means that special mixed moments vanish. Show this for the following special cases.
$$\ff(S_1S_2S_1S_2)=0,\qquad \ff((S_1^4-2)(S_2^6-5)(S_1^2-1))=0.$$
\end{enumerate}
\end{exercise}

\begin{exercise}\label{exercise:2}
Let $X_i^{(N)}$ be the independent Gaussian random matrices from Example \ref{ex:1.8}.
Familiarize yourself with computer programs (e.g., matlab) to produce random matrices and calculate and plot histograms of their eigenvalues.
\begin{enumerate}
\item[(i)]
Saying that, for each $i$, $X_i^{(N)}$ is asymptotically a semicircular variable means that for large $N$ the eigenvalue distribution of the $N$ eigenvalues of such a matrix is close to the semicircle distribution.
Check this by producing a histogram for a $1000\times 1000$ Gaussian random matrix.
\item[(ii)] Saying that $X_1^{(N)},\dots,X_n^{(N)}$ are asymptotically free means that special mixed moments (with respect to the normalized trace $\tr$) are, for large $N$, close to zero. Check this numerically for the following special cases:
$$\tr(ABAB),\qquad \tr((A^4-2)(B^6-5)(A^2-1)),$$
where $A$ and $B$ are two independent $1000\times 1000$ Gaussian random matrices.
\end{enumerate}
\end{exercise}

\section{Assignment 2}

\begin{exercise}
Let $(\cC,\ff)$ be a non-commutative probability space. Put
$$\cA:=M_n(\cC),\qquad \cB:=M_n(\CC),\qquad E:=\id\otimes \ff:\cA\to\cB.$$

\begin{enumerate}

\item[(i)] Show that $(\cA,\cB,E)$ is an operator-valued probability space.

\item[(ii)] Assume that $(\cC,\ff)$ is a $C^*$-probability space. Show that $(\cA,\cB,E)$ is then an operator-valued $C^*$-probability space.

\item[(iii)] Show that in the $C^*$-case we also have: if $\ff$ is faithful, then $E$ is also faithful. [Faithful means: $E(A^*A)=0$ implies that $A=0$.]

\item[(iv)] Assume that $\ff$ is a trace, i.e., $\ff(AB)=\ff(BA)$ for all $A,B\in \cC$. Does then also $E$ have the tracial property? Give a proof or counter example!

\end{enumerate}

\end{exercise}

\begin{exercise}\label{exercise:4}
Let $\cB$ be a unital algebra. Consider a collection of functions $F=(F_m)_{m\in \NN}$
\begin{align*}
F_m:M_m(\cB)\to M_m(\cB),\quad z\mapsto F_m(z).
\end{align*}
\begin{enumerate}

\item[(i)] We say that $F$ respects direct sums if
\begin{align*}
F_{m_1+m_2}\begin{pmatrix}
z_1&0\\0&z_2
\end{pmatrix}
=\begin{pmatrix}
F_{m_1}(z_1)&0\\0&F_{m_2}(z_2)
\end{pmatrix}
\end{align*}
for all $m_1,m_2\in \NN$, $z_1\in M_{m_1}(\cB)$, $z_2\in M_{m_2}(\cB)$.

\item[(ii)] We say that $F$ respects similarities if
\begin{align*}
F_m(SzS^{-1})=SF_m(z)S^{-1}
\end{align*}
for all $m\in \NN$, $z\in M_m(\cB)$ and all invertible $S\in M_m(\CC)$.

\item[(iii)] We say that $F$ respects intertwininigs if for  all $n,m\in \NN$, $z_1\in M_n(\cB)$, $z_2\in M_m(\cB)$, $T\in M_{n,m}(\CC)$ (the latter are the $n\times m$ matrices with complex entries) we have the following:
\begin{align*}
z_1T=Tz_2 \implies F_n(z_1)T=TF_m(z_2).
\end{align*}

\end{enumerate}
Prove that [(i) and (ii)] is equivalent to (iii).
\end{exercise}

\section{Assignment 3}

\begin{exercise}\label{exercise:5}
Prove the second item from the proof of Lemma \ref{lem:partial-linear}: Let $f$ be a non-commutative function, then we have for $z_1\in M_n(\cB)$, $z_2\in M_m(\cB)$ that 
$$\partial f(z_1,z_2)\sharp (w_1+w_2)=\partial f(z_1,z_2)\sharp w_1 +
\partial f(z_1,z_2)\sharp w_2$$
for all $w_1,w_2\in M_{n,m}(\cB)$.

\end{exercise}

\begin{exercise}\label{exercise:6}
Let $r\in \NN$ and $b_0,b_1,\dots,b_{r+1}\in\cB$ be given and consider the monomial $f$
$$f(z)=b_0zb_1zb_2z\cdots b_rzb_{r+1}.$$
\begin{enumerate}

\item[(i)] Show that  $f=(f_m)_{m\in \NN}$ is a non-commutative function. (For this, also give first the precise definition of all $f_m:M_m(\cB)\to M_m(\cB)$.)

\item[(ii)] 
Calculate the first and second order derivatives of $f$, i.e.,
$$\partial f(z_1,z_2)\sharp w,\qquad\text{and}\qquad
\partial^{2} f(z_1,z_2,z_3)\sharp (w_1,w_2).$$

\end{enumerate}
\end{exercise}

\begin{exercise}\label{exercise:7}
For a non-commutative function $f$ we define the mappings 
$$\partial^{k-1}(z_1,\dots,z_k)\sharp (w_1,\dots, w_{k-1})$$ by
\begin{multline*}
f
\begin{pmatrix}
z_1& w_1 &0&\dots& 0\\
0&z_2& w_2 &\dots&0\\
\vdots&\vdots&\ddots&\ddots&\vdots\\
0& 0&\hdots & z_{k-1}& w_{k-1}\\
0& 0&\hdots&0&z_k
\end{pmatrix}
\qquad\\
\\=
\begin{pmatrix}
f(z_1) &\partial f(z_1,z_2)\sharp w_1&\partial^{2}(z_1,z_2,z_3)\sharp(w_1,w_2)
&\hdots&\partial^{k-1} f(z_1,\dots,z_k)\sharp (w_1,\dots,w_{k-1})\\
0& f(z_2)& \partial f(z_2,z_3)\sharp w_2&\hdots& \partial^{k-2} f(z_2,\dots,z_k)\sharp (w_2,\dots,w_{k-1}) \\
\vdots&\vdots &\vdots&\vdots&\vdots\\
0&0&0&\vdots& \partial f(z_{k-1},z_k)\sharp w_{k-1}\\
0&0&0&\hdots &f(z_k)
\end{pmatrix}
\end{multline*}
Show that for each $N\in\NN$ we have the expansion
$$f(z+tw)=
\sum_{k=0}^N t^k\partial^{k}(z,\dots,z,z)\sharp(w,\dots,w)
+t^{N+1}\partial^{N+1}f(z,\dots,z,z+tw)\sharp (w,\dots,w)$$
for $m\in\NN$, $z,w\in M_m(\cB)$ and $t\in\CC$. \newline
You can assume for this that 
$\partial^{k-1}(z_1,\dots,z_k)\sharp (w_1,\dots, w_{k-1})$ is linear in the arguments $w_i$.

Hint: It might be helpful, to consider the matrix
$$y:=\begin{pmatrix}
z&tw&0&\hdots&0&0\\
0&z&tw&\hdots&0&0\\
\vdots&\vdots&\vdots&\vdots&\vdots&\vdots \\
0&0&0&\hdots & z& tw\\
0&0&0&\hdots &0&z+tw
\end{pmatrix}
$$
and observe that 
$$y \cdot
\begin{pmatrix}
1\\
\vdots\\
1
\end{pmatrix}=
\begin{pmatrix}
1\\
\vdots\\
1
\end{pmatrix}\cdot 
\begin{pmatrix}
z+tw
\end{pmatrix}
$$
\end{exercise}

\begin{exercise}

Consider the $C^*$-algebra $M_n(\CC)$ of $n\times n$ matrices over $\CC$. We define its upper half-plane by
$$\mathbb H^+(M_n(\CC)):=\{b\in M_n(\CC)\mid \exists \varepsilon>0: \Im (b)\geq \varepsilon 1\},$$ 
where $\Im(b):=(b-b^*)/(2i)$.
\begin{itemize}
\item[(i)] In the case $n=2$, show that in fact
$$\mathbb H^+(M_2(\CC)):=\left\{
\begin{pmatrix} b_{11} &b_{12}\\ b_{21}&b_{22} \end{pmatrix}\right\vert \left.
\Im(b_{11})>0, \Im (b_{11})\Im(b_{22})> \frac 14 \vert b_{12}-\overline{b_{21}}
\vert^2\right\}.$$

\item[(ii)] For general $n\in\NN$, prove: if a matrix $b\in M_n(\CC)$ belongs to
$\mathbb H^+(M_n(\CC))$ then all eigenvalues of $b$ lie in the complex upper half-plane $\mathbb H^+(\CC)$. Is the converse also true?
\end{itemize}
\end{exercise}

\section{Assignment 4}

Let $\cA$ and $\cB$ be unital $C^*$-algebras.
A linear map $\Phi:\cA\to\cB$ is called \emph{completely positive} if all matrix amplifications $\Phi\otimes \id: M_n(\cA)\to M_n(\cB)$ are positive.

\begin{exercise}\label{exercise:9}
Show that the following are equivalent:
\begin{itemize}
\item[(i)]
$ \Phi:\cA\to\cB$ is completely positive.
\item[(ii)]
For each $n\in \N$ and all $a_1,\dots,a_n\in\cA$ the matrix
$(\Phi(a_ia_j^*))_{i,j=1}^n \in M_n(\cB)$ is positive.
\end{itemize}
\end{exercise}

\begin{exercise}
Show that the transpose map on $2\times 2$ matrices,
$$\Phi: M_2(\CC)\to M_2(\CC), \qquad\begin{pmatrix} a_{11}&a_{12}\\a_{21}&a_{22}
 \end{pmatrix}\mapsto \begin{pmatrix} a_{11}&a_{21}\\a_{12}&a_{22}
 \end{pmatrix},
$$
is positive, but not completely positive.
\end{exercise}

\begin{exercise}
Show that a positive conditional expectation $E:\cA\to\cB$ is completely positive. What does this tell us about the complete positivity of states $\ff:\cA\to\CC$?

Hint: For this you can use the following characterization: A matrix $(b_{ij})_{i,j=1}^n \in M_n(\cB)$ is positive if and only if we have
$$\sum_{i,j=1}^n b_ib_{ij}b_j^*\geq 0\qquad\text{for all $b_1,\dots,b_n\in\cB$.}$$
\end{exercise}

\begin{exercise}
\begin{itemize}
\item[(i)] Let $(\cA,\cB,E)$ be a $\cB$-valued $C^*$-probability space. Consider a ``constant'' selfadjoint random variable $b=b^*\in\cB\subset \cA$. Calculate the fully matricial Cauchy transform of $b$.
\item[(ii)] Consider a $C^*$-probability space $(\cA,\ff)$ as a special case of an operator-valued $C^*$-probability space, where $\cB=\CC$. Consider a selfadjoint $X=X^*\in \cA$. Its distribution $\mu_X$ is then a probability measure on $\R$. Express the fully matricial $\CC$-valued Cauchy transform $G_X$ in terms of $\mu_X$.

\item[(iii)] Assume that $X_1$ and $X_2$ are classical (thus commuting) bounded selfadjoint random variables. Hence they have a classical distribution, which is a probability measure on $\R^2$ with compact support. Consider now the $2\times 2$ matrix
$$X=\begin{pmatrix} X_1& 0\\ 0& X_2
\end{pmatrix}.$$
The $M_2(\CC)$-valued Cauchy transform of $X$, as a fully matricial function, should now be determined in terms of this classical data. Make this concrete!

\end{itemize}
\end{exercise}

\section{Assignment 5}

\begin{exercise}\label{exercise:13}
Show the following easy direction of Theorem \ref{thm:4.9}: Let $(\cA,\cB,E)$ be an operator-valued $C^*$-probability space and $X=X^*\in \cA$. Show that
$\mu_X\in \Sigma_\cB^0$.

\end{exercise}

\begin{exercise}
Let $\Phi:\cA\to\cB$ be a completely positive map between two unital $C^*$-algebras with $\Phi(1)=1$. Show that $\Phi$ satisfies the following kind of Cauchy-Schwarz inequality: for $a\in\cA$ we have $\Phi(a)^*\Phi(a)\leq \Phi(a^*a)$.

Hint: Consider the positive matrix
$$\begin{pmatrix}
a^*a& a^*\\
a& 1
\end{pmatrix}
$$

\end{exercise}

\begin{exercise}
Let $X$ and $Y$ be free in an operator-valued probability space $(\cA,\cB,E)$. Calculate the mixed moment
$E[Xb_1Yb_2Xb_3Y]$, for $b_1,b_2,b_3\in\cB$,
in terms of moments of $X$ and of $Y$.

\end{exercise}

\section{Assignment 6}

\begin{exercise}\label{exercise:16}
Let $\eta:\cB\to\cB$ be a completely positive map on the unital $C^*$-algebra $\cB$. We want to construct an operator $X$ which has $\eta$ as its second moment; this will be a kind of operator-valued Bernoulli element. For this we consider the degenerate Fock space
$$\F:=\cB\oplus \cB x\cB \subset \cB\la x\ra,$$
equipped with the $\cB$-valued inner product
$$\la\cdot,\cdot\ra:\F\times\F\to \cB,$$
given by linear extension of
$$\la b_0+b_1xb_2,\tilde b_0+\tilde b_1x\tilde b_2\ra:= b_0^*\tilde b_0+
b_2^*\eta(b_1^*\tilde b_1)\tilde b_2.$$
On $\F$ we define the creation operator $l^*$ by
$$l^*b=xb\qquad l^*b_1xb_2=0,$$
and the annihilation operator $l$ by
$$l b=0,\qquad l b_1xb_2=\eta(b_1)b_2.$$
Let $\cA$ be the $*$-algebra which is generated by $l$ and by elements $b\in\cB$ acting as multiplication operators on $\F$. We also put
$$E:\cA\to\cB, \qquad A\mapsto E[A]:= \la 1,A1\ra.$$
\begin{itemize}
\item[(i)] 
Show that the inner product is positive and that $l$ and $l^*$ are adjoints of each other.
\item[(ii)] 
Show that $E$ is positive.
\item[(iii)] 
Show that the second moment of the selfadjoint operator $X=l+l^*$ is given by $\eta$.
\item[(iv)] 
What is the formula for a general moment of $X$?
\end{itemize}

\end{exercise}

\begin{exercise}\label{exercise:17}
Let $S\in\cA$ be a $\cB$-valued semicircular element with covariance $\eta:\cB\to\cB$. Fix $n\in\N$ and $b\in M_n(\cB)$. Consider now
$$\hat S:=b (1\otimes S) b^*=b 
\begin{pmatrix}
S&\dots&0\\
\vdots&\ddots&\vdots\\
0&\dots&S
\end{pmatrix} b^*
\in M_n(\cA).$$
Show that $\hat S$ is an $M_n(\cB)$-valued semicircular element and calculate its covariance $$\hat \eta:M_n(\cB)\to M_n(\cB).$$
Compare also Remark \ref{rem:6.3}.

\end{exercise}

\begin{exercise}
Assume that we have $X^{(1)}_i$ ($i\in\N$) which are f.i.d., with first moment zero and second moment given by a covariance $\eta_1:\cB\to\cB$; and that we have $X^{(2)}_i$ ($i\in\N$) which are f.i.d with first moment zero and second moment given by a covariance $\eta_2:\cB\to\cB$. According to the operator-valued version of the free central limit theorem we know then that the normalized sum of the $X^{(1)}_i$ converges to an operator-valued semicircular element $S_1$ with covariance $\eta_1$ and that the normalized sum of the $X^{(2)}_i$ converges to an operator-valued semicircular element $S_2$ with covariance $\eta_2$. 

Assume now that the $X^{(1)}_i$ and $X^{(2)}_i$ are realized in the same $C^*$-probability space and are also free for each $i$. Then the joint distribution of
$(X^{(1)}_i, X^{(2)}_i)$ converges to the joint distribution of the pair $(S_1,S_2)$. Convince yourself that our argument (from the \href{https://rolandspeicher.files.wordpress.com/2019/08/free-probability.pdf}{Free Probability Lecture Notes}, Assignment 3, Exercise 4) for the scalar-valued case that freeness goes over to the limit remains valid in the operator-valued case. Thus we get in the limit two semicircular elements which are free.

By repeating the calculation in our proof of the central limit theorem, Theorem \ref{thm:6.2}, for this multivariate setting derive the formula for mixed moments of two free semicircular elements $S_1$ and $S_2$, with covariance mappings $\eta_1$ and $\eta_2$, respectively.

\end{exercise}

\section{Assignment 7}

\begin{exercise}\label{exercise:19}
Let $\eta:\cB\to\cB$ be a completely positive map on the $C^*$-algebra $\cB$. We want to construct a semicircular operator $X$ which has $\nu_\eta$ as its distribution. This operator will be constructed on an operator-valued version of the full Fock space. The latter is nothing but our polynomials
$\cB \la x\ra$, equipped with the $\cB$-valued inner product
$$\la b_0xb_1x\cdots b_nxb_{n+1}, \tilde b_0x\tilde b_1x\cdots\tilde b_mx\tilde b_{m+1}\ra:=\delta_{nm} b_{n+1}^* \eta\left( b_n^*\cdots \eta\bigl(b_1^*\eta(b_0^*\tilde b_0) \tilde b_1\bigr)\cdots \tilde b_n\right) \tilde b_{n+1}.
$$
On this full fock space $\F$ we define again a creation operator $l^*$, now given by
$$l^*b_0xb_1\cdots x b_{n+1}:= xb_0xb_1\cdots x b_{n+1},$$
and an annihilation operator $l^*$, given by $l b=0$ and 
$$lb_0xb_1 x\cdots x b_{n+1} := (\eta(b_0)b_1)x\cdots x b_{n+1}.$$
Elements from $\cB$ act on $\F$ by left multiplication. For $\cA$ we take now the $*$-algebra which is generated by $l$ and by all multiplication operators from $\cB$. Furthermore, we put
$$E:\cA\to\cB, \qquad A\mapsto E[A]:=\la 1, A1\ra.$$
\begin{itemize}
\item[(i)] Show that the inner product on $\F$ is positive and that $l$ and $l^*$ are adjoints of each other.
\item[(ii)] Show that $E$ is positive.
\item[(iii)] Calculate explicitly the second and the fourth moments of $X:=l+l^*$.
\item[(iv)] Prove that $X=l+l^*$ has semicircular distribution $\nu_\eta$.
\end{itemize}
\end{exercise}

\begin{exercise}\label{exercise:20}
Let $S_1$ and $S_2$ be two free (scalar-valued) standard semicircular elements and consider 
$$S:=\begin{pmatrix}
0&S_1\\
S_1&S_2
\end{pmatrix}.$$
We have seen in item \ref{rem:6.3.3} of Remark \ref{rem:6.3} that $S$ is then an $M_2(\C)$-valued semicircular element whose covariance function $\eta:M_2(\C)\to M_2(\C)$ is given by
$$\eta \begin{pmatrix}
b_{11}& b_{12}\\
b_{21}& b_{22}
\end{pmatrix}=
\begin{pmatrix}
b_{22}& b_{21}\\
b_{12}& b_{11}+b_{22}
\end{pmatrix}.
$$
Refresh your memory on the relation between free semicircular elements and independent $\GUE$ random matrices (for example, from Chapter 6 of the \href{https://rolandspeicher.files.wordpress.com/2019/08/free-probability.pdf}{Free Probability Lecture Notes}). From this it follows that $S$ is the limit of a random matrix
$$X_N=\begin{pmatrix}
0 & A_N\\
A_N & B_N
\end{pmatrix},$$
where $A_N$ and $B_N$ are independent \GUE\ random matrices.
(If $A_N$ and $B_N$ are $N\times N$ matrices, then $X_N$ is of course a $2N\times 2N$ matrix.)
Since
$$g(z)=\tr E[(z-S)^{-1}]=\tr G(z)$$
is the scalar-valued Cauchy transform of $S$ with respect to 
$\tr\circ E$ ($\tr$ is here the normalized trace over $2\times 2$ matrices), we can calculate the Cauchy transform $g(z)$ of the limiting eigenvalue distribution of $X_N$ by first calculating the $M_2(\CC)$-valued Cauchy transform $G(z)$ of $S$ and then taking the trace of this. For invoking the Cauchy-Stieltjes inversion formula, we should calculate this for $z$ close to the real axis.
\begin{itemize}
\item[(i)] We know that the operator-valued Cauchy transform (on the ground level) $G(b)$ satisfies  the matrix equation
$$b G(b) =1 +\eta (G(b)) G(b).$$
This is true for all $b\in M_2(\CC)$, but we are here only interested in arguments of the form $b=z 1$, where $z\in H^+(\CC)$.
Try to solve this equation (exactly or numerically) for $z\in H^+(\CC)$ close to the real axis, so that you can produce from this a density for the scalar-valued distribution of $S$.
\item[(ii)] Realize for large $N$ the random matrix $X_N$ and calculate histograms for its eigenvalue distribution. Compare this with the result from (i).
\end{itemize}

\end{exercise}

\section{Assignment 8}

\begin{exercise}\label{exercise:21}
Prove Proposition \ref{prop:7.2}: Let $(\cA,\ff)$ be a $C^*$-probability space and $S_1,\dots,S_d\in\cA$ free standard semicirculars (i.e., $\ff(S_i^2)=1$). For $n\geq1$ and selfadjoint matrices $b_1,\dots,b_d\in M_n(\CC)$ we consider
$$S:=b_1\otimes S_1+\cdots + b_d\otimes S_d \in M_n(\cA).$$
Then $S$ is in the matrix-valued $C^*$-probability space $(M_n(\cA),M_n(\CC),\id\otimes\ff)$ a matrix-valued semicircular element with covariance $\eta:M_n(\CC)\to M_n(\CC)$ given by
$$\eta(b)=\sum_{j=1}^d b_jbb_j.$$
\end{exercise}

\begin{exercise}\label{exercise:22}
Let $S_{ij}$ for $i\geq j$ be free standard semicircular elements, and put 
$S_{ij}=S_{ji}$. Furthermore, let $\alpha_{ij}\in\R$ with $\alpha_{ij}=\alpha_{ji}$ be given. Then we consider
$$S:=(\alpha_{ij} S_{ij})_{i,j=1}^n.$$
From the previous exercise we know that this is an $M_n(\CC)$-valued semicircular element. Give, by relying on Theorem \ref{thm:7.5}, a criterium to decide whether $S$ is also a scalar-valued semicircular element. Use this to decide whether the following are scalar-valued semicircular elements (for $S_1,\dots, S_6$ free standard semicirculars):
$$S=\begin{pmatrix}
3 S_1 &0 &4 S_2\\
0& 5 S_3 &0\\
4 S_2&0&3 S_4
\end{pmatrix}
\qquad\text{or}\qquad
\tilde S=
\begin{pmatrix}
3 S_1 &6 S_5 &4 S_2\\
6 S_5& 5 S_3 & 6S_6\\
4 S_2&6 S_6&3 S_4
\end{pmatrix}.
$$

\end{exercise}

\begin{exercise}
Check your conclusion from the last exercise numerically by producing histograms, for $N=1000$ or higher, of the eigenvalues of the matrices
$$X^{(3N)}=\begin{pmatrix}
3 X^{(N)}_1 &0 &4 X^{(N)}_2\\
0& 5 X^{(N)}_3 &0\\
4 X^{(N)}_2&0&3 X^{(N)}_4
\end{pmatrix}
\qquad\text{or}\qquad
\tilde X^{(3N)}=
\begin{pmatrix}
3 X^{(N)}_1 &6 X^{(N)}_5 &4 X^{(N)}_2\\
6 X^{(N)}_5& 5 X^{(N)}_3 & 6X^{(N)}_6\\
4 X^{(N)}_2&6 X^{(N)}_6&3 X^{(N)}_4
\end{pmatrix},
$$
where $X^{(N)}_1,\dots,X^{(N)}_6$ are independent \GUE(N) random matrices.
\end{exercise}

\section{Assignment 9}

\begin{exercise}\label{exercise:24}
Prove the recursion between moments and free cumulants from Proposition \ref{prop:9.7}, by checking that the arguments from the scalar-valued case work also in the operator-valued situation.
\end{exercise}

\begin{exercise}
Write down explicitly the linearization for a monomial of degree $k=5$, as given in the proof of Theorem 8.5 and check that this satisfies indeed all the requirements for a linearization.
\end{exercise}

\begin{exercise}
Find a linearization $\hat p$ of the polynomial
$$p(x,y)=xy^2+y^2x-y.$$
\end{exercise}

\textbf{Bonus Questions:}

\begin{exercise}\label{exercise:27}
Calculate, via linearization and numerical calculation of the corresponding operator-valued semicircular or of the corresponding operator-valued free convolution, the distribution of $p(X,Y)=XY^2+Y^2X-Y$, where 
\begin{itemize}
\item 
$X$ and $Y$ are free standard semicircular elements
\item
$X$ and $Y$ are free random variables, with 
$$\mu_X=\frac 12 (\delta_{0}+\delta_1),\qquad
\mu_Y=\frac 12 (\delta_{-1}+\delta_1).$$
\end{itemize}
\end{exercise}

\begin{exercise}
Realize $X$ and $Y$, as given in Exercise \ref{exercise:27}, (asymptotically) via large $N\times N$ random matrices $X_N$ and $Y_N$, and produce histograms of the eigenvalue distribution of $p(X_N,Y_N)$. Compare the results with the calculations from Exercise \ref{exercise:27}.
\end{exercise}

\chapter*{Some Off-the-Record Remarks} 
\addcontentsline{toc}{chapter}{Some Off-the-Record Remarks}

The preceding presentation has hopefully convinced the reader that we have developed powerful analytic tools to deal with non-commutative distributions and that we have reached a deep understanding of many facets of this non-commutative world.



Let us reconsider what we have achieved so far. We have different ways to describe non-commutative distributions, namely by
\begin{itemize}
\item
presenting concrete operators on Hilbert spaces
\item
by describing the joint moments or the joint cumulants of the operators
\item
by giving the Cauchy transform of the distributions
\item
or by describing the classical distribution of all polynomials (or maybe even
all rational functions) in the operators
\end{itemize}
We consider a situation nice and well-understood when we have something to say about all of those ways and usually progress comes from being able to switch between the different points of view. In particular, we should be able to get our hands on the Cauchy transform and distributions of polynomials.

In the case of free variables, so in particular for free semicirculars, we are in such a nice situation.

Also if we move away from free variables many of our tools still apply and lead to quite non-trivial statements.
In particular, the statements about the abscence of atoms in polynomials for operators which have maximal $\Delta$ are of this type and in the continuation of such investigations we have many more qualitative results on regularity properties of polynomials in such variables; like, for example, in the recent work \cite{BM} on H\"older continuity of the distribution function of such polynomials.

In this chapter we want to point out that there are of course also situations where the situation is not so satisfactory, and that we still hope for many more
exciting discoveries in the non-commutative territory.

\section*{Is there anything special about distributions of generators of non-embeddable von Neumann algebras}
\addcontentsline{toc}{section}{Is there anything special about distributions of generators of non-embeddable von Neumann algebras}
By the refutation \cite {JNVWY} of Connes embedding problem we know now that there are tracial von Neumann algebras which cannot be embedded into the ultrapower of the hyperfinite factor - or to put it more in our language: there are operators in a tracial $W^*$-probability space, whose joint moments cannot be approximated well by moments of matrices. Up to now nobody was able to construct explicit examples of such objects. Can our theory of non-commutative distributions say anything about the distribution of such operators? Can we address them by any of the above mentioned ways to deal with non-commutative distributions? Let us have a look.
\begin{itemize}
\item
We do not know any concrete operators - that's of course what we would like to find!
\item
Neither do we know any candidates for joint moments or joint cumulants. Since positivity is always an issue here, it is not only the problem of coming up with moments which are unreachable for matrices, but one also needs arguments guaranteeing that those are really moments of selfadjoint operators.
\item
Again, we would have to come up with a Cauchy transform which is not reachable by Cauchy transforms of matrices (and which is indeed a Cauchy transform, so satisfies Theorem \ref{thm:Williams}). It's hard to imagine how to get one without writing it down concretely, or maybe at least writing down an equation for it. Actually, a ``random'' Cauchy transform might do the job, but it is not clear how to make this rigorous.
\item
This is even more unclear; without having knowledge about the Cauchy transform it seems quite unlikely to get a grasp on other functions of the variables.

\end{itemize}

So, for the moment, there is nothing we have to offer from our non-commutative disribution perspective and we can only hope for some more
insights.

\section*{The $q$-Gaussian operators}
\addcontentsline{toc}{section}{The $q$-Gaussian operators}

Since we had no place to start in the preceding case it is not surprising that we could not say anything. So one might still have the hope that given some concrete operators, of which we have at least some knowldege, we should have good chances of saying something about its Cauchy transform and then mabye also the distribution of polynomials in them. Here comes an example which shows that even then the situation is not so promising. This is a deformation of the situation of free semicirculars, but as they have no freeness in them we have problems with getting a grasp on their Cauchy transform.

The $q$-Gaussian distribution, also known as $q$-semicircular distribution, was 
introduced in \cite{Mare-1991,BKS} in the context of non commutative probability. 
Let us review some basic definitions. In the following $q\in[-1,1]$ is fixed.
Consider a Hilbert space $\HH$. 
The following is a $q$-deformation of the contructions from Example \ref{ex:1.7}.
On the algebraic full Fock space
$\bigoplus_{n\geq 0}\HH^{\otimes n}$
-- where $\HH^0=\mathbb{C}\Omega$ with a norm one vector $\Omega$, called ``vacuum'' 
-- we define a $q$-deformed inner product as follows: 
 \begin{equation*}
   \langle h_1\otimes\cdots\otimes h_n,g_1\otimes\cdots\otimes g_m\rangle_q 
       = \delta_{nm}
           \sum_{\sigma\in S_n}
               \prod^n_{r=1}
                   \langle h_r,g_{\sigma(r)}\rangle q^{i(\sigma)},
 \end{equation*}
where 
$$i(\sigma)=\#\{(k,l)\mid 1\leq k<l\leq n; \sigma(k)>\sigma(l)\}$$
is the number of inversions of a permutation $\sigma\in S_n$.
In \cite{Mare-1991} it was shown that this inner product is positive definite, and has a kernel only for $q=1$ and $q=-1$.

 The $q$-Fock space is then defined as the completion of the algebraic full Fock space 
 with respect to this inner product
\begin{equation*}
 \mathcal{F}_q(\mathcal{H})
    =\overline{\bigoplus_{n\geq 0}\mathcal{H}^{\otimes n}}^{\langle\cdot,\cdot\rangle_q}.
\end{equation*}
In the cases $q=1$ and $q=-1$ we have to first divide out the kernel, thus leading to the symmetric and anti-symmetric Fock space, respectively.

Now for $h\in\mathcal{H}$ we define the $q$-creation operator $a^*(h)$, given by
 \begin{align*}
 a^*(h)\Omega&=h,\\
  a^*(h)h_1\otimes\cdots\otimes h_n&=h\otimes h_1\otimes\cdots\otimes h_n.
 \end{align*}
 Its adjoint (with respect to the $q$-inner product), the $q$-annihilation operator 
 $a(h)$, is given by
  \begin{align*}
     a(h)\Omega&=0,\\
     a(h)h_1\otimes\cdots\otimes h_n&=
        \sum_{r=1}^n 
           q^{r-1} 
             \langle h,h_r\rangle 
                h_1\otimes \cdots \otimes h_{r-1}\otimes h_{r+1}\otimes\cdots \otimes h_n.
 \end{align*}

[Never mind that we have switched here the convention whether the creation or the annihilation operator gets the $*$. There are two conflicting traditions, one
from physics, where creation goes with the $*$, and one from operator theory where, in the case $q=0$, the left shift $l$, and not its adjoint $l^*$, is the basic isometry. Since we are now more on the physics side, our inner product has also become linear in its second argument.]

Those operators satisfy the $q$-commutation relations
$$a(f)a^*(g)-q a^*(g)a(f)=\langle f,g\rangle \cdot 1\qquad (f,g\in \mathcal{H}).$$
For $q=1$, $q=0$, and $q=-1$ this reduces to the CCR-relations, the Cuntz relations, 
and the CAR-relations, respectively. With the exception of the case $q=1$, the operators 
$a^*(f)$ are bounded.

Let $\xi_1,\dots,\xi_n$ be an orthonormal system of vectors in $\HH$, then we consider the selfadjoint operators 
$X_i:=a(\xi_i)+a^*(\xi_i)$ ($i=1,\dots,n$).
For $\ff$ we take again the vacuum expectation state
$\ff(A):=\lb \Omega,A\Omega\rb$.
We are now interested in the non-commutative distribution $\mu_{X_1,\dots,X_n}$ of the operators $X_1,\dots,X_n$ in the $C^*$-probability space $(B(\cF_q(\HH)),\ff)$. We call this the \emph{(multivariate) $q$-Gaussian distribution}. For $q=0$ it reduces to the non-commutative distribution of $n$ free semicirculars. The $q$-deformation has still some of the features of the $q=0$ case. First of all, by definition we have nice and concrete operators with this non-commutative distribution. Also the formula for mixed moments in free semicirculars survives the deformation and one has the following $q$-deformed Wick formula: for any 
$\ee:\{1,\dots,k\}\rightarrow\{1,\dots,n\}$ we have
        \begin{equation*}
         \ff(X_{\ee(1)}\cdots X_{\ee(k)})
             =\sum_{\stackrel{\pi\in\cP_2(k)}{\pi\leq \ker\ee}}
                q^{cr(\pi)},
        \end{equation*}
where $cr(\pi)$ denotes the number of crossings of the pair-partition $\pi$, i.e., the number of pairs of blocks which have a crossing.

So, this looks quite good: we have a nice realization of the $q$-Gaussian distribution by very concrete operators and we have nice combinatorial formulas for all joint moments. But does this mean that we understand this non-commutative distribution well? Unfortunately, not really. In particular, we do not get a hold on its operator-valued Cauchy transform.

Following our general strategy of going over from tuples of non-commuting operators to one operator-valued operator we put our operators $X_1,\dots,X_n$ on the diagonal of an $n\times n$ matrix
\begin{equation}
X=\begin{pmatrix}\label{eq:matrix-X}
X_1& 0&\dots&0\\
0&X_2&\dots&0\\
\vdots&\vdots&\ddots&\vdots\\
0&0&\hdots&X_n
\end{pmatrix}.
\end{equation}
Understanding the distribution of $(X_1,\dots,X_n)$ is now the same as understanding the $\cB$-valued distribution of $X$, where we have
put $\cB:=M_n(\CC)$; the matrix $X$ is what we would call an operator-valued $q$-semicircular element. In order to deal with this we should understand the $\cB$-valued Cauchy transform $G_X=(G_X^{(k)})_{k\in\NN}$. For a full understanding we need its structure as a fully matricial function with all its matrix amplifications, but for many applications even the knowledge just on the base level would be very helpful.
But here we are stuck. We do not have any nice concrete analytic description of this Cauchy transform. 

From the situation for $q=0$, the case of free semicirculars, one might have
got the impression that the one-dimensional and the multi-variate case are not so different after all. In that case the quadratic equation for the Cauchy transform of the scalar-valued semicircular distribution was replaced by a corresponding operator-valued quadratic equation. The latter was of course harder than its scalar-valued counterpart, but we could still deal with it. This might give the impression that also in the case of general $q$ we should be able to extend  results from the $n=1$ case to  the general operator-valued situation. This is, unfortunately, not the case. We understand the $n=1$ case for all $q$ quite well, but all the nice structure there does not extend into the operator-valued regime.

For $n=1$, the $q$-Gaussian distribution is a probability measure on the interval 
$[-2/\sqrt{1-q}, 2/\sqrt{1-q}]$, with analytic formulas for its density, see Theorem 
1.10 in \cite{BKS}. For its Cauchy transform $G$ we do not have an algebraic equation, but we know 
a good continued fraction expansion of the form
$$G(z)=\cfrac 1{z-\cfrac 1{z- \cfrac {1+q}{ z-\cfrac {1+q+q^2}{z-\dots}}}}.$$
The naive guess that one might also have a corresponding operator-valued version of such a continued fraction expansion is unfortunately not true. Whereas in the scalar case any probability measure has a continued fraction expansion for its Cauchy transform, this does not hold any more in the operator-valued setting (see \cite{AW}), and it is easy to check that the matrix $X$ in \eqref{eq:matrix-X} for the $q$-Gaussian distribution is one of the basic examples where this fails. 

So in a sense, at the moment our machinery for operator-valued Cauchy-transforms has unfortunately nothing to offer for dealing with $q$-Gaussian distributions.
Of course, Cauchy transforms are not everything and we have also other approaches and tools to understand non-commutative distributions. In particular, there has been quite some progress \cite{GSh,Jek2} in our understanding of the $q$-Gaussian distributions, by describing them as free Gibbs states and using non-commutative versions of transport to relate different $q$'s. Combined with \cite{BM} this gives then also regularity properties of polynomials in $q$-Gaussian operators. What is missing, compared to the free case, is a way to calculate the distribution of polynomials in $q$-Gaussians. 

But this would be the content of another lecture series ...



\end{document}